\newcommand{\NB}[1]{}
\newcommand{\TODO}[1]{}
\newcommand{\tom}[1]{}
\newcommand{\mike}[1]{}
\renewcommand{\todo}[1]{}
\newcommand{\NB}[1]{\todo[color=gray!40]{#1}}
\newcommand{\TODO}[1]{\todo[color=red]{#1}}
\newcommand{\tom}[1]{\todo[color=green]{#1}}
\newcommand{\mike}[1]{\todo[color=blue!20]{#1}}
\newcommand{\Et}{\mathrm{Et}}
\def\naive{\mathrm{naive}}
\newcommand{\comp}{{{\kern -.5pt}\wedge}}
\newcommand{\SL}[1]{\mathrm{SL}_{#1}}
\newcommand{\GL}{\mathrm{GL}}
\newcommand{\veff}{\mathrm{veff}}
\numberwithin{equation}{section}
\def\A{\mathbb A}
\def\P{\mathbb P}
\def\Alg{\mathrm{Alg}}
\def\CAlg{\mathrm{CAlg}}
\def\Id{\mathrm{Id}}
\newcommand{\R}{{\mathbb R}}
\newcommand{\C}{{\mathbb C}}
\newcommand{\Z}{{\mathbb Z}}
\newcommand{\Q}{{\mathbb Q}}
\newcommand{\F}{{\mathbb F}}
\newcommand{\id}{\operatorname{id}}
\newcommand{\cof}{\mathrm{cof}}
\newcommand{\aone}{{\mathbb A}^1}
\newcommand{\SH}{\mathrm{SH}}
\newcommand{\Spc}{\mathrm{Spc}}
\newcommand{\Sm}{\mathrm{Sm}}
\newcommand{\Ft}{\mathrm{Ft}}
\newcommand{\Nis}{\mathrm{Nis}}
\newcommand{\Shv}{\mathrm{Shv}}
\newcommand{\Fun}{\mathrm{Fun}}
\newcommand{\Gmp}[1]{{\mathbb{G}_m^{\wedge #1}}}
\newcommand{\Gm}{{\mathbb{G}_m}}
\newcommand{\ret}{{r\acute{e}t}}
\newcommand{\et}{{\acute{e}t}}
\renewcommand{\1}{\mathbbm{1}}
\newcommand{\Sch}{\mathrm{Sch}}
\newcommand{\op}[1]{\operatorname{#1}}
\newcommand{\Spec}{\op{Spec}}
\newcommand{\wequi}{\simeq}
\DeclareRobustCommand{\ul}{\underline}
\def\Map{\mathrm{Map}}
\def\iMap{\ul{\mathrm{Map}}}
\def\ph{\mathord-}
\DeclareMathOperator*{\colim}{colim}
\DeclareMathOperator*{\diag}{diag}
\let\lim=\relax
\DeclareMathOperator*{\lim}{lim}
\newcommand{\Addresses}{{
		\bigskip
		\footnotesize
		
		A.~Asok, Department of Mathematics, University of Southern California, 3620 S. Vermont Ave.,
		Los Angeles, CA 90089-2532, United States; \textit{E-mail address:} \url{asok@usc.edu}
		
		\medskip
		
		T.~Bachmann, Department of Mathematics, Johannes-Gutenberg-Universität, Staudingerweg 9, 55128 Mainz, Germany; \textit{E-mail address:} \url{tom.bachmann@zoho.com}
		\medskip
		
		E.~Elmanto, Department of Mathematics, Harvard University, One Oxford Street, Cambridge, MA 02138, United States \textit{E-mail address:} \url{elmanto@math.harvard.edu}
		\medskip
		
		M.J.~Hopkins, Department of Mathematics, Harvard University, One Oxford Street, Cambridge, MA 02138, United States \textit{E-mail address:} \url{mjh@math.harvard.edu}

}}
\newcounter{intro}
\theoremstyle{plain}
\newtheorem{theorem}{Theorem}[section]
\newtheorem{lem}[theorem]{Lemma}
\newtheorem{cor}[theorem]{Corollary}
\newtheorem{proposition}[theorem]{Proposition}
\newtheorem*{claim*}{Claim} 
\newtheorem*{thm*}{Theorem}
\newtheorem*{problem*}{Problem}
\theoremstyle{definition}
\newtheorem{defn}[theorem]{Definition}
\theoremstyle{remark}
\newtheorem{rem}[theorem]{Remark}
\newtheorem{ex}[theorem]{Example}
\numberwithin{equation}{subsection}
\begin{document}
\pagestyle{fancy}
\renewcommand{\sectionmark}[1]{\markright{\thesection\ #1}}
\fancyhead{}
\fancyhead[LO,R]{\bfseries\footnotesize\thepage}
\fancyhead[LE]{\bfseries\footnotesize\rightmark}
\fancyhead[RO]{\bfseries\footnotesize\rightmark}
\chead[]{}
\cfoot[]{}
\setlength{\headheight}{1cm}	
	
\title{{\bf Unstable motivic and real-{\'e}tale homotopy theory}}
\date{\today}

\author{Aravind Asok\thanks{Aravind Asok was partially supported by National Science Foundation Awards DMS-1802060 and DMS-2101898} \and Tom Bachmann \and Elden Elmanto \and Michael J. Hopkins}
\maketitle

\begin{abstract}
We prove that for any base scheme $S$, real étale motivic (unstable) homotopy theory over $S$ coincides with unstable semialgebraic topology over $S$ (that is, sheaves of spaces on the real spectrum of $S$).
Moreover we show that for pointed connected motivic spaces over $S$, the real étale motivic localization is given by smashing with the telescope of the map $\rho: S^0 \to \Gm$.
\end{abstract}

\begin{footnotesize}
\setcounter{tocdepth}{1}
\tableofcontents
\end{footnotesize}

\section{Introduction}
The real étale topology is a Grothendieck topology on the category of schemes that is closely tied to {\em semi-algebraic geometry}, i.e., polynomial equations and inequalities or, alternatively, rings and fields equipped with orderings.  After its introduction by Coste and Roy \cite{MR653174}, this topology has been studied extensively, notably in Scheiderer's seminal work \cite{real-and-etale-cohomology}.

Motivic homotopy theory, invented by Morel and Voevodsky, is a homotopy theory for (usually smooth) schemes over a base equipped with a Grothendieck topology.  Typically, this Grothendieck topology is taken to be the Nisnevich topology, but numerous results have pointed the way to consideration of what one mighgt call ``designer'' Grothendieck topologies, e.g., the cdh topology on all schemes, which is particularly useful in the presence of resolution of singularities.

This paper is concerned with studying motivic homotopy theory using the real-\'etale topology on smooth schemes.  In that context, the groundwork for this paper was laid by the second author in \cite{bachmann-real-etale}, where {\em stable} motivic homotopy theory was considered using the real-\'etale topology.  That paper observed that the real-\'etale topology interacted extremely well with the ``real realization'' functor \cite[\S3.3]{A1-homotopy-theory}, and, in fact, served to provide a computationally useful description of this functor.

To set the stage, recall that in studying ordered fields, a distinguished role is played by $-1$; in our context, this is encoded in the map $\rho: S^0 \to \Gm$ which sends the non-basepoint to $-1$.  In any ordered field, units can be classified as positive or negative leading to the idea that the map $\rho$ should be a homotopy equivalence in ``real-étale'' motivic homotopy theory.  Loosely speaking, the main result of \cite{bachmann-real-etale} asserts that inverting real-étale equivalences in stable motivic homotopy theory is essentially the same thing as inverting $\rho$; either way, the resulting homotopy theory is what one might call stable semialgebraic topology over the given base.

Here, we aim to ``destabilize'' the results of \cite{bachmann-real-etale}.  Write $\Spc(S)$ for the unstable motivic $\infty$-category (underlying the model category) of Morel--Voevodsky \cite[\S3.3]{A1-homotopy-theory}, realized as a sub-category of the $\infty$-category $\mathrm{P}(\Sm_S)$ of presheaves of spaces on $\Sm_S$.  We can then summarize our main results as follows.
\begin{theorem}[See Theorems \ref{thm:main} and \ref{thm:main-comparison-connected}.] \label{thm:intro}
Let $S$ be any scheme.
\begin{enumerate}[noitemsep,topsep=1pt]
\item If $X \in \Spc(S)$ is a sheaf in the real étale topology, then:
  \begin{enumerate}[noitemsep,topsep=1pt]
    \item $X$ is real étale locally constant, that is, is obtained by left Kan extension (and real-étale sheafification) from a sheaf on the small real-étale site of $S$.
    \item The map $\rho^*: \Omega_\Gm X \to X$ is an equivalence.
  \end{enumerate}
\item Locally constant real étale sheaves are $\A^1$-invariant, and consequently the subcategory of $\Spc(S)$ consisting of motivic spaces satisfying real-étale descent is equivalent to the category of locally constant real-étale sheaves on $S$.

\item If $X \in \Spc(S)_*$ is connected, then the telescope \[ X[\rho^{-1}] := \colim \left(X \xrightarrow{\rho} \Gm \wedge X \xrightarrow{\rho} \Gmp{2} \wedge X \xrightarrow{\rho} \dots\right) \] is a real-étale sheaf, and in fact is the initial ($\A^1$-invariant) real-étale sheaf under $X$.
\end{enumerate}
\end{theorem}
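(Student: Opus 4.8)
All three statements are proved together; the plan is to pin down the full subcategory of motivic spaces satisfying real-\'etale descent, $\Spc(S)^\ret \subseteq \Spc(S)$, in two complementary ways --- as the essential image of sheaves on the small real-\'etale site, $\Shv(S_\ret)$ (this gives (1)(a) and (2)), and, on pointed connected objects, as the $\rho$-periodic motivic spaces (this gives (1)(b) and (3)) --- and then to recognize the telescope $X[\rho^{-1}]$ as the real-\'etale localization $L_\ret X$.

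For (1) and (2) the geometric inputs are the real spectra of $\A^1$ and $\Gm$: the real spectrum of $\A^1$ over a real closed field is a semialgebraic closed interval, so $\A^1$ is $\ret$-contractible; and the real spectrum of $\Gm$ has two semialgebraic components, i.e.\ $(\Gm,1)_\ret \simeq S^0$, compatibly with $\rho$ (under which identification $\rho$ is an equivalence). With Scheiderer's structural results on real-\'etale topoi, (1)(a) follows because real-\'etale descent together with $\A^1$-invariance forces a motivic space to be left Kan extended (and $\ret$-sheafified) from $S_\ret$; (1)(b) follows by running $\rho$ through $(\Gm,1)_\ret \simeq S^0$, which makes $\rho^*\colon \Omega_\Gm X \to X$ an equivalence for every real-\'etale sheaf $X$; and (2) follows because $\ret$-contractibility of $\A^1$ makes objects left Kan extended from $S_\ret$ automatically $\A^1$-invariant. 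Altogether this gives a reflective equivalence $\Spc(S)^\ret \simeq \Shv(S_\ret)$.

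For (3), first observe that $X[\rho^{-1}]$ again lies in $\Spc(S)_*$ and is again connected, since $\Gm \wedge Z$ is connected whenever $Z$ is (a $\pi_0$ computation). Next, $X \to X[\rho^{-1}]$ is a real-\'etale equivalence: by (1)(b) and the Yoneda lemma each transition map $Z \xrightarrow{\rho \wedge \id} \Gm \wedge Z$ is inverted by $L_\ret$, and $L_\ret$, being a left adjoint, commutes with the sequential colimit. It therefore suffices to prove that $X[\rho^{-1}]$ is itself a real-\'etale sheaf; granting this, $X[\rho^{-1}] \simeq L_\ret(X[\rho^{-1}]) \simeq L_\ret X$ is by construction the initial $\A^1$-invariant real-\'etale sheaf under $X$. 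To prove $X[\rho^{-1}]$ is a real-\'etale sheaf I would work through its Postnikov tower. Since homotopy sheaves commute with filtered colimits, $\pi_n X[\rho^{-1}] = \colim_k \pi_n(\Gmp{k} \wedge X)$, and since the telescope's shift map $\rho \wedge \id\colon X[\rho^{-1}] \to \Gm \wedge X[\rho^{-1}]$ is an equivalence, $X[\rho^{-1}]$ is $\Gm$-periodic. The key claim is that, as a consequence, each $\pi_n X[\rho^{-1}]$ is a $\rho$-inverted strictly (resp.\ strongly) $\A^1$-invariant sheaf. Granting this: for such sheaves Nisnevich cohomology computes real-\'etale cohomology --- the cohomological input of \cite{bachmann-real-etale}, ultimately resting on Scheiderer's comparison \cite{real-and-etale-cohomology} --- so each Eilenberg--MacLane layer $K(\pi_n X[\rho^{-1}], n)$ is a real-\'etale sheaf, hence so is each truncation $\tau_{\le n} X[\rho^{-1}]$ (real-\'etale sheaves being closed under limits and the finitely many extensions involved), and convergence of the Postnikov tower carries this to $X[\rho^{-1}]$ itself.

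I expect the key claim to be the main obstacle. The difficulty is that $\Gm$ is \emph{not} $\A^1$-connected, so smashing with it does not raise connectivity and there is no ``free'' stabilization onto a $\Gm$-stable homotopy sheaf; the $\rho$-invertedness of $\pi_n X[\rho^{-1}]$ has to be extracted directly from the colimit, combining the $\Gm$-periodicity with Morel's machinery for $\Omega_\Gm$ and the contraction functor $(-)_{-1}$ on homotopy sheaves of connected spaces (one wants to upgrade the periodicity to the ``looped'' form $\rho^*\colon \Omega_\Gm X[\rho^{-1}] \isomto X[\rho^{-1}]$, using that $\Omega_\Gm$ also commutes with the defining colimit). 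A secondary point is that convergence of the Postnikov towers must be checked over an arbitrary base scheme $S$; for this one uses that the real spectrum has bounded covering dimension, so the real-\'etale cohomological dimension is finite, after reducing to the Noetherian finite-dimensional case. The remaining ingredients --- the $\pi_0$ and descent bookkeeping, and the passage of the cohomological comparison of \cite{bachmann-real-etale} to Eilenberg--MacLane objects --- should be routine.
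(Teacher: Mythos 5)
Your reduction of part (3) to the statement that $X[\rho^{-1}]$ is itself a real-\'etale sheaf is correct (and the tool you want for passing from homotopy sheaves to the space exists in the paper as Lemma~\ref{lem:ret-sheaves-detect}), but the step you yourself flag as ``the key claim'' is a genuine gap, and it is exactly the unstable crux that the paper is organized around avoiding. Unstably there is no mechanism making $\ul\pi_n(X[\rho^{-1}])$ into a ``$\rho$-inverted'' object of the kind treated in \cite{bachmann-real-etale}: $\Gm$ is not connected, so smashing with it neither raises connectivity nor induces any suspension isomorphism on homotopy sheaves, and the transition maps in $\colim_k \ul\pi_n(\Gmp{k}\wedge X)$ are not multiplication by $\rho$ on a fixed sheaf with contractions. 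Passing from the tautological $\Gm\wedge(\ph)$-periodicity of the telescope to the looped periodicity $\rho^*\colon \Omega_\Gm X[\rho^{-1}]\isomto X[\rho^{-1}]$ is itself an unstable stabilization problem (the unit $Y\to\Omega_\Gm(\Gm\wedge Y)$ is far from an equivalence), and even granting some periodicity, the input you invoke from \cite{bachmann-real-etale} --- that $\rho$-inverted strictly $\A^1$-invariant sheaves satisfy real-\'etale descent --- is proved there for homotopy modules, i.e.\ strictly $\A^1$-invariant sheaves equipped with $\Gm$-contractions and transfers coming from the stable category, over a (perfect) field; unstable $\ul\pi_1$ need not even be abelian, the higher $\ul\pi_n$ of the telescope carry no such structure a priori, and your whole homotopy-sheaf analysis would anyway only run over fields, with the general base unaddressed. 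So the proposal does not contain a proof of (3).

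For comparison, the paper's route is quite different and is designed to sidestep homotopy sheaves of the telescope: it forms the James construction $J_\rho$ (the free $\mathscr E_1$-algebra with unit factoring through $\rho$), proves $\rho$ is $S^1$-central via the explicit model $\Gm\wedge\op{J}_1\wequi\SL{2}$ (Proposition~\ref{prop:equiv-of-spheres}, Lemma~\ref{lem:rho-S1-central}), identifies $X[\rho^{-1}]\wequi J_\rho\wedge X$ as the $\Sigma\rho$-localization for connected $X$ (Lemma~\ref{lem:rho-J-tel}), computes $\ul\pi_1\Sigma J_\rho\wequi a_\ret\Z$ using Morel's $K^{MW}$ computation and Jacobson's theorem --- the only place where the stable input you had in mind actually enters, and only for this one sheaf --- and then proves descent by a geometric transfer: for a real-\'etale cover $\Spec(l)\to\Spec(k)$ a modified Morel/Pontryagin--Thom transfer gives a section $\Sigma J_\rho\to\Spec(l)_+\wedge\Sigma J_\rho$ (Lemma~\ref{lem:ret-transfers}), whence $\Sigma|\Spec(l)^{\times\bullet+1}|\wedge J_\rho=*$, and the general base is handled by reducing pointwise to fields (Proposition~\ref{prop:unstable-loc}) plus a continuity argument. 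I would also note that your sketch of (1)--(2) largely restates the conclusions: the actual content there is (i) the contractibility of the sheafified interval $\mathscr O_{[0,1]}$, which rests on the non-subcanonicity of the real-\'etale topology (the section $(2/(1+x^2)-1)^2$) and gives $\rho\colon S^0\to\Gm$ an inverse homotopy, and (ii) the interval-shaped-neighborhood analysis of $R\aone_X$ together with an induction on dimension showing smooth schemes are real-\'etale locally $\A^1$-contractible; ``$R\A^1$ is an interval'' and ``$R\Gm$ has two components'' by themselves do not yield either the fully faithfulness or the essential surjectivity of $\Shv(S_\ret)\to\Spc_\ret(S)$.
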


\begin{rem}[Analogs and hypotheses]
Sending a smooth scheme over $\R$ to its associated complex space equipped with action of $\mathrm{Gal}(\C/\R)$ extends to a realization functor from the category of motivic spaces over $\real$ to $C_2$-equivariant homotopy theory.  This realization functor can be used to elucidate the above theorem.
To begin with, let $a: S^0 \to S^\sigma \in \Spc^{C_2}_*$ be the inclusion of the fixed points of the sign representation sphere, that is, $C_2$-equivariant analog of $\rho$.
Then the telescope $S^0[a^{-1}]$ has fixed points $S^0$ and contractible underlying space.
It follows that for any pointed $C_2$-space $X$, the analog of real-étale localization is $X[a^{-1}]$.
This establishes the analog of part (3) of the above theorem in the $C_2$-equivariant world, even without the connectivity assumption.

In contrast, let $k=\C$ and consider $S^0[\rho^{-1}]$.
Since the presheaf $\Gmp{n}$ is already an $\A^1$-invariant Nisnevich sheaf (see \cite[Lemma 3.36]{A1-alg-top} for the sheaf property), we see that $S^0[\rho^{-1}]$ has non-contractible $\C$-points, and so this space is not real-étale local.  In other words, the connectivity assumption is necessary in order for the $\rho$-periodization to coincide with the real-étale localization.
\end{rem}


\subsection*{Proof structure}
The proof of Theorem~\ref{thm:intro} consists of several arguments of rather different flavor, which are largely independent of each other, so we take a moment to discuss some of the required ingredients here.
\begin{enumerate}[noitemsep,topsep=1pt]
\item We show that $\rho$ becomes an equivalence in unstable real-étale motivic homotopy theory by providing an explicit homotopy.  The key insight is that the real-étale topology is not subcanonical: loosely speaking, this means that sheafification introduces many new functions.  For example, $1/(x^2+1)$ is a well-defined function on the real-étale version of $\A^1$.

\item We show that locally constant real-étale sheaves (of spaces) are $\A^1$-invariant by essentially arguing from first principles.  The argument reduces to considering the affine line over a real closed field, where the result is essentially a claim about the structure of endpoints of intervals.

\item To prove that all $\A^1$-invariant real-étale sheaves are locally constant, we effectively show that all smooth varieties over a real closed field are, locally for the real-étale topology, $\A^1$-contractible.  To illustrate how this happens, consider for example $X \in \Sm_S$ which is étale over $\A^1_S$.  Since étale morphisms are local homeomorphisms in the real-étale topology, $X$ can be covered by real-étale open subsets which are isomorphic to real-étale open subsets of $\A^1_S$.  For each point in such a subset $U$, locally around its image in $S$ we can find a section $s$ of the projection $p$ to $S$.  In that case, the subset $V$ of $U$ consisting of all points $x$ such that the interval in $\A^1_{px}$ joining $x$ and $spx$ lies in $U$.  Then, we argue that $V$ is a real-étale open subset, which by construction is $\A^1$-homotopy equivalent to a real-étale open subset of $S$.  This observation can then be fed into an argument proceedingly inductively on the dimension of the variety.

\item To show that the $\rho$-telescope on a connected motivic space satisfies real-étale descent, we employ a variety of techniques.  One key idea is that if $l/k$ is a field extension (in characteristic $0$, say), then we can build a transfer map $\Sigma \Spec(k)_+ [\rho^{-1}] \to \Sigma \Spec(l)_+[\rho^{-1}]$ coming from the Morel transfer (a.k.a. Pontyragin--Thom construction) 
\[ 
S^1 \wedge \Gm \wedge \Spec(k)_+ \wequi \P^1 \to \P^1/\P^1 \setminus \Spec(l) \wequi S^1 \wedge \Gm \wedge \Spec(l)_+. 
\]
Careful analysis establishes that if $l/k$ is a real-étale cover, then an appropriate modification of this transfer is actually a section of the canonical projection.  Granted this, we may show that the suspended $\rho$-telescope of the Čech nerve of $\Spec(l) \to \Spec(k)$ is contractible and eventually reduce the descent assertion to this one.
\end{enumerate}

\subsection*{Organization}
We begin with several preliminary sections.
In \S\ref{sec:prelim1} we collect some preliminaries about motivic spaces: in \S\ref{subsec:contract} we show that contractibility of motivic spaces over a base can (often) be checked after pullback to the residue fields, and in the remaining subsections we provide a certain explicit model for the equivalence $\Gm \wedge \P^1 \wequi \SL{2}$.
In the next preliminary section, \S\ref{sec:james}, we recall some facts about relative \emph{James constructions} for which we could not find a suitable reference.  Given a map $\alpha: \1 \to A$ in a symmetric monoidal $\infty$-category, the James construction $J_\alpha$ is the free $\mathscr E_1$-algebra with unit factoring through $\alpha$.
We relate this, under favorable hypotheses, to both the symmetric monoidal localization at the map $\alpha$, and $\alpha$-telescopes.
In the final preliminary section \S\ref{sec:small-ret} we recall basic facts about the real-étale topology.  We then establish our main results in two more sections: in \S\ref{sec:main1} we prove parts (1) and (2) of Theorem \ref{thm:intro}, and in \S\ref{sec:motivic-real-etale-locn} we establish part (3).  We conclude with some complementary results and applications in \S\ref{sec:realizations}.

\subsection*{Acknowledgements}
We would like to thank Marc Levine for helpful comments.\todo{more?}

\section{Motivic spheres, contractibility and loop spaces} \label{sec:prelim1}
In this preliminary section, we collect some facts about motivic spaces that we could not find references for.

\subsection{Contractibility of motivic spaces} \label{subsec:contract}
It is well-known that as a consequence of the localization theorem, equivalences of motivic spectra can be checked pointwise (see e.g. \cite[Proposition B.3]{bachmann-norms}).
The following is an unstable analog, with essentially the same proof.

\begin{proposition} \label{prop:unstable-loc}
	Let $S$ be a scheme locally of finite Krull dimension.
	For $s \in S$ denote by $i_s: s \to S$ the inclusion.
	\begin{enumerate}[noitemsep,topsep=1pt]
		\item Let $X \in Spc(S)$.
		Then $X = *$ if and only if $i_s^*(X) = * \in Spc(s)$ for all $s \in S$.
		\item Let $f: X \to Y \in Spc(S)_*$.
		If $i_s^*(f)$ is an equivalence for all $s$, then $\Sigma f$ is an equivalence.
	\end{enumerate}
\end{proposition}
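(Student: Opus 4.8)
The plan is to run the unstable analogue of the proof of \cite[Proposition B.3]{bachmann-norms}: an induction on Krull dimension, glued together by the \emph{unstable} localization theorem \cite[\S3]{A1-homotopy-theory} in place of the stable recollement, plus one purely formal step at the end of (2) to produce the suspension. The two structural inputs I would use are: (a) \emph{continuity} of the assignment $S \mapsto \Spc(S)$ — it carries cofiltered limits of qcqs schemes along affine transition maps to filtered colimits of $\infty$-categories (and in such a filtered colimit a morphism that becomes an equivalence is already one at a finite stage), together with nil-invariance; and (b) the localization theorem in the form: for a closed--open decomposition $Z \xrightarrow{i} S \xleftarrow{j} U$ the pullbacks $i^*,j^*$ on $\Spc(S)$ are jointly conservative, so that $j^* X \wequi \ast$ and $i^* X \wequi \ast$ force $X \wequi \ast$. (Unlike in $\SH(S)$, the family $\{i_s^*\}_{s\in S}$ need not be jointly conservative on $\Spc(S)$, which is exactly why (2) can only assert that $\Sigma f$ is an equivalence.)

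For (1), only the implication ``$i_s^* X \wequi \ast\ \forall s \Rightarrow X \wequi \ast$'' needs an argument; the converse is immediate since each $i_s^*$ preserves the terminal object. Using Zariski descent and nil-invariance I would first reduce to $S = \Spec A$ with $A$ reduced of finite Krull dimension, and then induct on $d = \dim A$ (the case $S = \varnothing$ being vacuous). For the inductive step, fix $X$ with $i_s^* X \wequi \ast$ for all $s$. For each minimal prime $\mathfrak p \subset A$ one has $\Spec \kappa(\mathfrak p) \wequi \lim_{g \notin \mathfrak p} D(g)$ with localization (hence affine) transition maps, so continuity upgrades $i_{\mathfrak p}^* X \wequi \ast$ to $X|_{D(g_{\mathfrak p})} \wequi \ast$ for some $g_{\mathfrak p} \notin \mathfrak p$. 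Set $U = \bigcup_{\mathfrak p\ \mathrm{minimal}} D(g_{\mathfrak p})$, a dense open, and let $W = S \setminus U$ with its reduced structure; since $W$ misses every generic point of $S$ it meets each irreducible component in a proper closed subset, so $\dim W < d$. Zariski descent gives $X|_U \wequi \ast$; and since $i_w^*(i^* X) \wequi i_w^* X \wequi \ast$ for every $w \in W$ (the residue field of $w$ in $W$ agrees with that in $S$), the inductive hypothesis applied to $W$ gives $i^* X \wequi \ast$. The localization theorem then forces $X \wequi \ast$, completing the induction.

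For (2), let $f\colon X \to Y$ in $\Spc(S)_*$ with $i_s^* f$ an equivalence for all $s$, and put $C = \cof(f)$. Since each $i_s^*$ preserves colimits and the zero object, $i_s^* C \wequi \cof(i_s^* f) \wequi \ast$; applying (1) to the underlying motivic space of $C$ (using that $i_s^*$ commutes with the forgetful functor $\Spc(S)_* \to \Spc(S)$, and that a pointed space with contractible underlying space is the zero object) yields $C \wequi \ast$. Now invoke the cofiber (Puppe) sequence $X \xrightarrow{f} Y \to C \xrightarrow{\partial} \Sigma X \xrightarrow{\pm\Sigma f} \Sigma Y$, each consecutive pair of which is a cofiber sequence. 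As $C \wequi \ast$, the connecting map $\partial$ is nullhomotopic, so $\Sigma Y \wequi \cof(\partial) \wequi \cof(\ast \to \Sigma X) \wequi \Sigma X$, and chasing the identification shows that $\pm\Sigma f\colon \Sigma X \to \Sigma Y$ is exactly this equivalence; hence $\Sigma f$ is an equivalence.

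I expect the only real friction to be in (1): making the continuity/dimension induction run uniformly over a non-Noetherian base of finite Krull dimension, and isolating the exact form of the unstable localization theorem needed — namely that $i^*$ and $j^*$ jointly detect contractibility for a closed--open pair. The Puppe step in (2) is formal, but it is the conceptual heart of that part: from the hypothesis one extracts only $\cof(f)\wequi\ast$, and $\cof(f)\wequi\ast$ is precisely enough to invert $\Sigma f$ and no more (it does not imply $f$ itself is an equivalence).
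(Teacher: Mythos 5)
Your proposal is correct in substance, and part (2) — extract only $\cof(f)\wequi *$ from (1) and rotate the cofiber sequence to $\cof(f)\to\Sigma X\to\Sigma Y$ — is exactly the paper's argument. Part (1), however, takes a genuinely different route. The paper reduces to a local scheme: since $\Spc(S)$ is hypercomplete for $S$ of finite Krull dimension \cite[Proposition A.3]{bachmann-norms}, contractibility of $X$ can be tested on the localizations $S_s$, and then a single application of the gluing square of \cite[\S3 Theorem 2.21]{A1-homotopy-theory} at the closed point, whose open complement has smaller dimension, runs the induction. You instead spread out from the generic points: continuity of $\Spc(-)$ along cofiltered limits with affine transition maps upgrades triviality over $\kappa(\mathfrak p)$ (which equals $A_{\mathfrak p}$ for $A$ reduced and $\mathfrak p$ minimal) to triviality over some $D(g_{\mathfrak p})$, and the complement of the resulting dense open has strictly smaller dimension. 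Both are inductions on Krull dimension hinged on the same localization theorem; yours trades hypercompleteness for continuity plus nil-invariance of $\Spc(-)$ — both true, but each needs a reference or a short proof (nil-invariance itself follows from the localization theorem applied with empty open complement), whereas the paper's reduction uses only inputs it already quotes and is shorter. One caution: do not phrase the localization theorem as joint conservativity of $(i^*,j^*)$ on $\Spc(S)$; unstably the theorem gives the pushout square, not a recollement, and the failure of pointwise conservativity is precisely why (2) only concerns $\Sigma f$. What you actually use — that $i^*X\wequi *$ and $j^*X\wequi *$ force $X\wequi *$ — does follow from the square: $j^*X\wequi *$ makes the left vertical map an equivalence, hence $X\wequi i_*i^*X\wequi i_*(*)\wequi *$; state it in that form, as the paper does.
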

\begin{proof}
	(1) The condition is clearly necessary; we prove sufficiency.
	By Zariski descent we may assume $S$ of finite Krull dimension and qcqs (e.g. affine).
	We prove the result by induction on the dimension of $S$.
	By hypercompleteness \cite[Proposition A.3]{bachmann-norms} it suffices to prove that for $s \in S$, the restriction of $X$ to $S_s$ is contractible\footnote{Namely, viewing $X$ as a hypercomplete sheaf on $\Sm_S$, it suffices to show that for $Y \in \Sm_S$ and $y \in Y$, the stalk of $X$ at $Y_y$ is contractible. But if $s$ is the image of $y$ in $S$, then $Y_y \to S$ factors through $S_s$, whence $X(Y_y)$ is determined by $X|_{S_s}$.}; hence we may assume $S = S_s$.
	Write $j: U \to S$ for the inclusion of the complement of $s$; note that $U$ has smaller dimension than $S$.
	The localization theorem \cite[\S3 Theorem 2.21]{A1-homotopy-theory} supplies us with a pushout square
	\begin{equation*}
		\begin{CD}
			j_\sharp j^* X @>>> X \\
			@VVV          @VVV    \\
			U @>>> i_{s*} i_s^* X.
		\end{CD}
	\end{equation*}
	By induction $j^* X = *$ and hence the left hand vertical map is an equivalence.
	Consequently so is the right hand vertical map.
	Thus $X = *$, since $i^*_s X = *$ by assumption. \newline
	
	\noindent (2) Combining the assumption that $i_s^*(f)$ is an equivalence for all $s$ with the conclusion of (1), we deduce that $\cof(f) = *$
	Processing in the cofiber sequence attached to $f$ yields the cofiber sequence:
	\[ 
	\cof(f) \longrightarrow \Sigma X \stackrel{\longrightarrow}{\Sigma f} \Sigma Y, 
	\] 
	and since $\cof(f) = \ast$, we conclude $\Sigma f$ is an equivalence.
\end{proof}

\subsection{An equivalence of motivic spheres}
If $R$ is a commutative ring, then write $\mathrm{Proj}_r(R)$ for the set of isomorphism classes of rank $r$ projective $R$-modules.  Fix a base commutative ring $k$.  If $n$ is an integer, recall that ${\mathbb P}^n$ represents the functor that attaches to a commutative $k$-algebra $R$ the set of pairs $(P,\varphi)$ where $P \in \mathrm{Proj}_1(R)$ and $\varphi: R^{\oplus n+1} \to P$ is an epimorphism.  One defines $\op{J}_n$ as the functor on commutative $k$-algebras given by the formula:
\[
\op{J}_n(R) := \{ (P,\varphi,\psi) | (P,\varphi) \in {\mathbb P}^n(R), \psi: P \to R^{\oplus n+1}, \varphi \circ \psi = id_P\}.
\]
The functor $\op{J}_n$ admits a natural transformation to ${\mathbb P}^n$ by forgetting $\psi$ in a triple $(P;\varphi,\psi)$.  The following well-known result summarizes the basic properties of $\op{J}_n$.
\todo{proof/reference?}
\begin{lem}
	\label{lem:JdeviceofPnproperties}
	The following statements hold.
	\begin{enumerate}[noitemsep,topsep=1pt]
		\item The functor $\op{J}_n$ is the functor of points of the complement of the incidence hyperplane in ${\mathbb P}^n \times {\mathbb P}^n$.
		\item The morphism $\op{J}_n \to \mathbb{P}^n$ is an affine vector bundle torsor realizing the source as an affine vector bundle torsor over the target; in particular this morphism is an equivalence in $\Spc(k)$.
	\end{enumerate}
\end{lem}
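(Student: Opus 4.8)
The statement records the well-known functor-of-points description of an affine-bundle resolution of $\mathbb{P}^n$ (the Jouanolou device), and both parts are essentially formal once the dictionary between splittings and the second copy of $\mathbb{P}^n$ is in place. I would organize the argument as follows.

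For part (1), the first step is to write down a natural transformation $\theta\colon \op{J}_n \to \mathbb{P}^n\times\mathbb{P}^n$. Given $(P,\varphi,\psi)\in\op{J}_n(R)$, dualizing $\varphi\circ\psi=\id_P$ shows that $\psi^\vee\colon(R^{\oplus n+1})^\vee\to P^\vee$ is a split epimorphism, so, using the standard (symmetric) self-duality $R^{\oplus n+1}\cong(R^{\oplus n+1})^\vee$, the pair $(P^\vee,\psi^\vee)$ is a second $R$-point of $\mathbb{P}^n$; set $\theta(P,\varphi,\psi)=\big((P,\varphi),(P^\vee,\psi^\vee)\big)$. The plan is then to show $\theta$ is a monomorphism of functors whose image is precisely the set of pairs $\big((P,\varphi),(M,\chi)\big)$ for which the canonical composite $c\colon M^\vee\xrightarrow{\chi^\vee}(R^{\oplus n+1})^\vee\cong R^{\oplus n+1}\xrightarrow{\varphi}P$ is an isomorphism of line bundles. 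In one direction, if the pair comes from $\op{J}_n$ then $c$ unwinds to $\varphi\circ\psi=\id_P$, which is invertible; conversely, given such a pair, $\psi:=(M^\vee\hookrightarrow R^{\oplus n+1})\circ c^{-1}$ is a splitting of $\varphi$, and one checks that $(P,\varphi,\psi)$ maps back to the given pair. Finally, $c$ (equivalently $\det c$) is the value of the canonical global section of $\mathcal{O}_{\mathbb{P}^n}(1)\boxtimes\mathcal{O}_{\mathbb{P}^n}(1)$, whose vanishing locus is by definition the incidence hyperplane (in coordinates $\sum_i x_i y_i=0$), so the image of $\theta$ is exactly its complement.

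For part (2), I would argue directly with the map $\pi\colon\op{J}_n\to\mathbb{P}^n$ forgetting $\psi$. Its fiber over $(P,\varphi)\in\mathbb{P}^n(R)$ is the set of $R$-linear splittings of $\varphi$: this is non-empty since $P$ is projective, and any two splittings differ by an element of $\mathrm{Hom}_R(P,\ker\varphi)$, so the fiber is a torsor under $\mathrm{Hom}_R(P,\ker\varphi)$. Globalizing over $\mathbb{P}^n$, with $0\to\mathcal{K}\to\mathcal{O}^{\oplus n+1}\to\mathcal{L}\to 0$ the universal quotient ($\mathcal{L}=\mathcal{O}(1)$, $\mathcal{K}$ of rank $n$), $\pi$ is a torsor under the rank-$n$ vector bundle $\mathcal{L}^\vee\otimes\mathcal{K}$, and it is Zariski-locally trivial because over the standard chart $U_i\subset\mathbb{P}^n$ the $i$-th coordinate inclusion $\mathcal{O}\hookrightarrow\mathcal{O}^{\oplus n+1}\twoheadrightarrow\mathcal{L}|_{U_i}$ provides a section. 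Hence $\pi$ is an affine vector bundle torsor, and invoking the standard fact that vector bundle torsors are $\mathbb{A}^1$-weak equivalences — which reduces, by Zariski descent, to $\mathbb{A}^1$-invariance for vector bundles, cf.\ \cite{A1-homotopy-theory} — we conclude that $\pi$ is an equivalence in $\Spc(k)$. (Alternatively, part (2) also follows from part (1): $\pi$ is then the first projection on the complement of the incidence hyperplane, and its fiber over a point is the complement of a hyperplane in $\mathbb{P}^n$, i.e.\ an affine space.)

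The only genuinely delicate step is the bookkeeping in part (1): keeping track of where the self-duality of $R^{\oplus n+1}$ is used, which requires it to be symmetric in order for the two factors of $\mathbb{P}^n$ to be matched correctly, and checking that the section $c$ is literally (a coordinate change of) the incidence section. This can be sidestepped by instead observing that both $\op{J}_n$ and the complement of the incidence hyperplane are total spaces of the $\mathcal{L}^\vee\otimes\mathcal{K}$-torsor from part (2) and constructing the comparison isomorphism directly over $\mathbb{P}^n$.
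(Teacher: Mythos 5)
Your argument is correct. Note that the paper itself offers no proof of this lemma — it is quoted as ``well-known'' (with an internal note asking for a proof or reference) — so there is nothing to compare against; your write-up is exactly the standard Jouanolou-device argument that fills this gap: the dictionary between splittings $\psi$ of $\varphi$ and points $(P^\vee,\psi^\vee)$ of the dual copy of $\mathbb{P}^n$ with non-vanishing incidence section, and the identification of $\op{J}_n\to\mathbb{P}^n$ as a Zariski-locally trivial torsor under $\mathcal{L}^\vee\otimes\mathcal{K}$, hence an $\aone$-equivalence, are both correct as stated.
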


Given an element $(P,\varphi,\psi) \in J_n(R)$, the splitting $\psi$ is equivalent to specifying an isomorphism $R^{\oplus n+1} \cong P \oplus \ker(\varphi)$.  The composite $\psi \circ \varphi \in \op{End}_R(R^{\oplus n+1})$ is a rank $1$ projection operator $\theta$ whose image is exactly $P$; we thus refer to $J_n$ as the scheme of rank $1$ projection operators.  

Define a morphism
\[
\Gm{}
\times \op{J}_n \longrightarrow \mathrm{GL}_{n+1}
\]
functorially by sending $(\lambda,(P,\varphi,\psi))$ to the invertible endomorphism of $P \oplus \ker(\varphi)$ given by $diag(\lambda,id_{\ker(\varphi)})$.  To produce a map with target $\SL{n+1}$ we can multiply by a further diagonal matrix (with respect to the standard basis). This yields a morphism $\Gm \times J_n \to \op{SL}_{n+1}$ which in terms of the associated projection operator $\theta$ can be written as
\[
R: (\lambda, \theta) \longrightarrow (\lambda \theta + (\Id - \theta))\cdot \op{diag}(\lambda^{-1},1).
\]
If we point $\op{J}_n$ with the projection operator $\op{diag}(1,0)$ corresponding to projection onto the first summand, and $\op{SL}_{n+1}$ with the identity matrix, then the morphism above is base-point preserving.  Moreover, direct computation using the formula above implies that $R(1,\theta) = \Id_{n+1}$ and $R(\lambda,\op{diag}(1,0)) = \Id_{n+1}$.  In other words, $R$ factors through a morphism (abusing notation slightly):
\[
R: \Gm{} \wedge \op{J}_n \longrightarrow \op{SL}_{n+1}.
\]
This morphism is an algebro-geometric avatar of Bott's generating complex; it is adjoint to a map $\op{J}_n \to \Omega^{1,1}\op{SL}_{n+1}$.

The description above can be made even more concrete in the case $n = 1$.  Indeed, a rank $1$ projection operator $\theta \in M_2(R)$ is characterized by the property that it has trace $1$ and determinant $0$.  Choosing coordinates $x_{ij}$ on $M_2(R)$, we then have $x_{11} + x_{22} = 1$ and $x_{11}x_{22} - x_{12}x_{21} = 0$.  In other words there is an isomorphism between $\op{J}_1$ and the smooth affine quadric $\op{Q}_2$ defined by the hypersurface $x_{12}x_{21} = x_{11}(1 - x_{11})$.  Our goal for the rest of this section is to establish the following result. After a preparatory digression, the proof can be found in \S\ref{sec:proof-of-equiv}.

	
	\begin{proposition}
		\label{prop:equiv-of-spheres}
		The morphism  
		\[
		R: \Gm \wedge \op{J}_{1} \longrightarrow \op{SL}_2 
		\]
		is a motivic equivalence.   
	\end{proposition}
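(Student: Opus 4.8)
The plan is to identify both the source and target of $R$ with the motivic sphere $S^1 \wedge \Gm^{\wedge 2}$ and then to verify that $R$ realizes this identification, the real content being a computation of the $\A^1$-degree of $R$. We may reduce to the case $S = \Spec k$ with $k$ a field: since $R$ and the spaces in play are defined over $\Z$ compatibly with base change, it suffices to treat $S = \Spec\Z$, where Proposition~\ref{prop:unstable-loc}(2) --- together with the fact that $\Sigma(\Gm \wedge \op{J}_1)$ and $\Sigma\,\SL{2}$ are $\A^1$-simply connected, so that $\Sigma$ reflects equivalences between them --- lets one check the assertion on the residue fields. By Lemma~\ref{lem:JdeviceofPnproperties} the map $\op{J}_1 \to \P^1$ is a motivic equivalence, so $\Gm \wedge \op{J}_1 \simeq \Gm \wedge \P^1 \simeq S^1 \wedge \Gm^{\wedge 2}$; meanwhile the first-column projection $\SL{2} \to \A^2 \setminus 0$ is a Zariski-locally trivial $\A^1$-bundle (it has an obvious section over each member of the standard cover of $\A^2 \setminus 0$), hence a motivic equivalence, while $\A^2 \setminus 0 \simeq S^1 \wedge \Gm^{\wedge 2}$ is the standard Morel--Voevodsky computation.

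Over a field, $\Gm \wedge \op{J}_1$ and $\SL{2}$ are then both $\A^1$-connected with first non-vanishing $\A^1$-homotopy sheaf $\mathbf{K}^{MW}_2$, and, lying in the stable range, are $\A^1$-simple; by the $\A^1$-Whitehead theorem together with Morel's classification of self-maps of this sphere, $R$ is a motivic equivalence if and only if the induced endomorphism of $\pi_1^{\A^1} \cong \mathbf{K}^{MW}_2$ is an isomorphism --- equivalently, if and only if the $\A^1$-degree $[R] \in \mathrm{GW}(k)$ is a unit.

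To compute $[R]$ one exploits that $R$ is birational: from the explicit formula one recovers $\lambda$ from $R(\lambda,\theta)$ by taking the trace of $R(\lambda,\theta) \cdot \op{diag}(\lambda,1)$, and then recovers $\theta$, so $R$ restricts to an isomorphism over a dense open subscheme of $\SL{2}$. A local-degree computation (in the sense of Morel and Kass--Wickelgren) at a $k$-point of this open locus at which $R$ is étale then yields $[R] = \langle u\rangle$ for some unit $u$, since the generic fibre of $R$ is a single reduced point. Alternatively, post-composing with the column projection presents $R$ as the affine-linear interpolation $(\lambda, \theta) \mapsto \lambda^{-1} e_1 + (1 - \lambda^{-1})\,\theta e_1 \in \A^2 \setminus 0$, and using the $\A^1$-contractibility of the fibres of $\op{J}_1 \to \P^1$ one homotopes this to a unit multiple of the standard equivalence $\Gm \wedge \P^1 \xrightarrow{\sim} \A^2 \setminus 0$.

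The step I expect to be the main obstacle is the degree computation. It is automatic that $R$ is \emph{some} map between two spaces abstractly equivalent to $S^1 \wedge \Gm^{\wedge 2}$; the work is to check that its degree is a \emph{unit} in $\mathrm{GW}(k)$ and not merely a class of rank one --- over a formally real field there are rank-one classes (for instance $2\langle 1\rangle - \langle -1\rangle \in \mathrm{GW}(\R)$) that are not units, and such a degree would produce a motivic homology isomorphism that is not a motivic equivalence. Carrying out the computation with care --- in particular verifying that the generic fibre of $R$ really is reduced and a single point, and keeping track of the loci collapsed in forming the smash products --- is where the substance of the argument lies.
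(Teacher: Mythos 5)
Your skeleton matches the paper's: reduce to the prime fields, identify both sides with $\A^2 \setminus 0$, invoke Morel's computation $[\A^2\setminus 0, \A^2\setminus 0] \cong GW(k)$, and show the class of $R$ is a unit. But two steps have real gaps. The smaller one is the desuspension over $\Z$: Proposition~\ref{prop:unstable-loc}(2) only yields that $\Sigma R$ is an equivalence, and your justification that ``$\Sigma$ reflects equivalences because $\Sigma(\Gm\wedge\op{J}_1)$ and $\Sigma\SL{2}$ are $\A^1$-simply connected'' does not work --- suspensions of connected spaces are always simply connected, and that property of the \emph{suspensions} says nothing about reflecting equivalences; over $\Z$ there is no homology Whitehead theorem to fall back on. What is actually needed is a property of the source and target themselves: both $\Gm\wedge\op{J}_1 \simeq \A^2\setminus 0$ and $\SL{2}$ are retracts of loop spaces, and for such spaces $\Sigma$ reflects equivalences (Lemma~\ref{lem:equivalencesofretractsofloopspaces}); this is exactly how the paper desuspends.

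The more serious gap is the one you flag yourself: the degree computation is not carried out, and the sketch offered does not obviously make sense. The recipe for inverting $R$ is circular as written (you multiply by $\op{diag}(\lambda,1)$ in order to recover $\lambda$); this is fixable, e.g.\ $\lambda = (1-d)/(a-1)$ on a dense open, so $R$ is indeed birational --- but the inference from ``the generic fibre is a single reduced point'' to ``$[R]=\langle u\rangle \in GW(k)$'' is not covered by the Morel/Kass--Wickelgren local degree formalism, which is set up for self-maps of spheres presented as $\A^n/(\A^n\setminus 0)$ or $\P^n/\P^{n-1}$ with finiteness/properness control; here $R$ is a map of smash quotients of three-folds modelling $S^1\wedge\Gmp{2}\simeq \A^2\setminus 0$, and counting points of the underlying map of three-folds computes at best the \emph{rank} of the degree, which (as you note) does not suffice over $\Q$. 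The paper sidesteps any direct geometric computation: it gets rank $\pm 1$ from the complex realization, transports it to finite fields via the constancy of the induced class in $H^3_{\et}(-,\Z_\ell(2))$ in the family over $\Z[1/\ell]$, and gets the signature from the real realization; units in $GW$ of a finite field (resp.\ of $\Q$) are detected by rank (resp.\ rank and signature). To keep your route you would need to prove a local-degree statement adapted to self-maps of $\A^2\setminus 0$, including control of the collapsed loci and of behaviour at infinity --- or else substitute the realization arguments.
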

	
	\subsection{Retracts of loop spaces}
	A pointed motivic space $\mathscr{X} \in \Spc(k)_*$ is a {\em retract of a loop space} if there is a pointed motivic space $\mathscr{Y}$ and maps $i: \mathscr{X} \to \Omega \mathscr{X}$, $r: \Omega \mathscr{Y} \to \mathscr{X}$ such that $r \circ i$ is homotopic to the identity.  Since $\Omega \mathscr{Y}$ is a module for the monad $\Omega \Sigma$, a map $\mathscr{X} \to \Omega \mathscr{Y}$ factors through a map $\Omega\Sigma \mathscr{X} \to \Omega \mathscr{Y}$.  Thus, a motivic space $\mathscr{X}$ is a retract of a loop space if and only if the unit map $\mathscr{X} \to \Omega \Sigma \mathscr{X}$ admits a retraction.  We use the following fact about equivalences between retracts of loop spaces.
	
	\begin{lem}
		\label{lem:equivalencesofretractsofloopspaces}
		Let $\mathscr X, \mathscr G_1, \mathscr G_2 \in \Spc(S)_*$. Assume $\mathscr G_1$ and $\mathscr{G}_2$ are retracts of loop spaces.  
		\begin{enumerate}[noitemsep,topsep=1pt]
			\item If $f_1,f_2: \mathscr{X} \to \mathscr{G}_1$ are two maps and if $\Sigma f_1$ is homotopic to $\Sigma f_2$, then $f_1$ is homotopic to $f_2$.	
			\item If $f: \mathscr{G}_1 \to \mathscr{G}_2$ is a pointed morphism (not necessarily compatible with the retractions), and $\Sigma f$ is an equivalence, then so is $f$.
		\end{enumerate}
	\end{lem}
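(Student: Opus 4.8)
The plan is to deduce both parts formally from the suspension--loop adjunction, together with the reformulation recorded just before the statement: a space $\mathscr Y\in\Spc(S)_*$ is a retract of a loop space exactly when the unit $\eta_{\mathscr Y}\colon\mathscr Y\to\Omega\Sigma\mathscr Y$ admits a retraction $r_{\mathscr Y}$, i.e.\ $r_{\mathscr Y}\circ\eta_{\mathscr Y}\simeq\id_{\mathscr Y}$. The one adjunction fact I will use repeatedly is that under the equivalence $\Map(\Sigma\mathscr X,\Sigma\mathscr G)\simeq\Map(\mathscr X,\Omega\Sigma\mathscr G)$ the suspension $\Sigma f$ of a map $f\colon\mathscr X\to\mathscr G$ corresponds to $\eta_{\mathscr G}\circ f$ --- this is just naturality of the unit --- so that the suspension map $\Map(\mathscr X,\mathscr G)\to\Map(\Sigma\mathscr X,\Sigma\mathscr G)$ is postcomposition with $\eta_{\mathscr G}$ followed by an equivalence.

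For part (1), fix a retraction $r_1$ of $\eta_{\mathscr G_1}$ (this is where the hypothesis on $\mathscr G_1$ enters; $\mathscr X$ is arbitrary). By the previous paragraph $\Sigma f_1\simeq\Sigma f_2$ forces $\eta_{\mathscr G_1}\circ f_1\simeq\eta_{\mathscr G_1}\circ f_2$ in $\Map(\mathscr X,\Omega\Sigma\mathscr G_1)$; postcomposing with $r_1$ and using $r_1\circ\eta_{\mathscr G_1}\simeq\id$ gives $f_1\simeq f_2$. Equivalently, postcomposition with $\eta_{\mathscr G_1}$ is a split monomorphism of mapping spaces, hence injective on path components, so $\Sigma$ is injective on $\pi_0\Map(\mathscr X,\mathscr G_1)$.

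For part (2), choose a homotopy inverse $g\colon\Sigma\mathscr G_2\to\Sigma\mathscr G_1$ of $\Sigma f$ and set
\[ h\;:=\;r_1\circ\Omega(g)\circ\eta_{\mathscr G_2}\colon\ \mathscr G_2\longrightarrow\mathscr G_1 . \]
Using naturality of the unit in the form $\eta_{\mathscr G_2}\circ f\simeq\Omega\Sigma(f)\circ\eta_{\mathscr G_1}$, functoriality of $\Omega$, and $g\circ\Sigma f\simeq\id_{\Sigma\mathscr G_1}$, one computes
\[ h\circ f\;\simeq\;r_1\circ\Omega(g\circ\Sigma f)\circ\eta_{\mathscr G_1}\;\simeq\;r_1\circ\eta_{\mathscr G_1}\;\simeq\;\id_{\mathscr G_1} . \]
Applying $\Sigma$ yields $\Sigma h\circ\Sigma f\simeq\id$, and since $\Sigma f$ is an equivalence this shows $\Sigma h\simeq(\Sigma f)^{-1}$ is an equivalence; hence $\Sigma(f\circ h)\simeq\Sigma f\circ(\Sigma f)^{-1}\simeq\id_{\Sigma\mathscr G_2}=\Sigma(\id_{\mathscr G_2})$. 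Now part (1), applied with source $\mathscr G_2$ and target $\mathscr G_2$ (a retract of a loop space, so the hypothesis applies), gives $f\circ h\simeq\id_{\mathscr G_2}$. Together with $h\circ f\simeq\id_{\mathscr G_1}$ this exhibits $f$ as an equivalence with inverse $h$.

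I do not anticipate a genuine obstacle here: the content is entirely formal. The only points requiring care are the adjunction bookkeeping (that $\Sigma f$ is adjoint to $\eta_{\mathscr G}\circ f$, and that $\Omega$ is functorial and so carries homotopy inverses to homotopy inverses), and the observation --- crucial for the ``not necessarily compatible with the retractions'' clause --- that $f$ never needs to interact with the retractions: only naturality of the unit, which is automatic, and the relations $r_i\circ\eta_{\mathscr G_i}\simeq\id$ are used.
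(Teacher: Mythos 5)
Your proof is correct and follows essentially the same route as the paper: part (1) via the split monomorphism $[\mathscr X,\mathscr G_1]\to[\mathscr X,\Omega\Sigma\mathscr G_1]$ and the adjunction, and part (2) by building the candidate inverse $r_1\circ\Omega(g)\circ\eta_{\mathscr G_2}$ (the paper's $r\circ(\Omega\Sigma f)^{-1}\circ\eta$), checking one composite directly and deducing the other from part (1). No gaps to report.
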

	
	\begin{proof}
		(1)	Since $\mathscr{G}_1$ is a retract of a loop space, the map $\mathscr{G}_1 \to \Omega \Sigma \mathscr{G}_1$ admits a retraction.  The existence of such a retraction implies that
		\[
		[\mathscr{X},\mathscr{G}_1] \longrightarrow [\mathscr{X},\Omega \Sigma \mathscr{G}_1]  
		\]
		is a split monomorphism.  To check that two maps in the source agree, it therefore suffices to check that they agree in the target.  If $f_1$ and $f_2$ lie in the source, then to check their images in the target coincide, it suffices by adjunction to check that $\Sigma f_1 = \Sigma f_2$, as claimed. \newline
		
		\noindent (2) Consider the commutative diagram
		\[
		\xymatrix{
			\mathscr{G}_1 \ar[r]\ar[d]^{f} &  \Omega \Sigma \mathscr{G}_1 \ar[d]^{\Omega \Sigma f} \\
			\mathscr{G}_2 \ar[r] & \Omega \Sigma \mathscr{G}_2
		}
		\]
		Since $\Sigma f$ is an equivalence, so is the right hand vertical map.  In that case, there is a map $\tilde{g}: \mathscr{G}_2 \to \Omega \Sigma \mathscr{G}_1$ making both triangles commute (compose $\mathscr G_2 \to \Omega \Sigma \mathscr G_2$ with an inverse of $\Omega\Sigma f$).  We may then choose a retraction $r: \Omega \Sigma \mathscr{G}_1 \to \mathscr{G}_1$, and then set $g$ to be $r \circ \tilde{g}$.  We claim that $g$ is an inverse of $f$.  Note that commutativity of the upper triangle implies that $g \circ f$ is equivalent to $Id_{\mathscr{G}_1}$, so it remains to analyze the composite $f \circ g$.  By assumption $\Sigma f$ is an equivalence, so it follows that $\Sigma g$ is an inverse to $\Sigma f$.  In that case, since $\Sigma (f \circ g) = \Sigma f \circ \Sigma g$, we conclude from the first part of the lemma.
	\end{proof}
	\begin{rem}
		This result actually has very little to do with motivic spaces, and holds in fact for any adjunction between categories.
		(Specialize to the situation at hand via the adjunction $\Sigma: \Spc(S)_* \leftrightarrows \Spc(S)_*: \Omega$.)
	\end{rem}
	
	\subsection{Proof of Proposition~\ref{prop:equiv-of-spheres}}\label{sec:proof-of-equiv}
	\begin{proof}
		The map $R$ is defined over $\Z$; therefore by base-change it suffices to check the result over $\Z$.  Note that $\Gm{} \wedge \op{J}_1$ and $\op{SL}_2$ are both retracts of loop spaces.  Indeed, $\op{SL}_2$ can be identified with $\Omega B\op{SL}_2$: by appealing to \cite[Theorem 3.1.1]{asok2015affine} we may apply \cite[Theorem 2.2.5]{asok2015affine} to the fiber sequence $\SL{2} \to * \to B_\Nis \SL{2}$.  On the other hand, since $\op{J}_1$ is equivalent to ${\mathbb P}^1$ by appeal to Lemma~\ref{lem:JdeviceofPnproperties}, it follows that $\Gm{} \wedge \op{J}_1$ is equivalent to ${\mathbb A}^2 \setminus 0$, which in turn is equivalent to $\op{SL}_2$ under the map $\op{SL}_2 \to {\mathbb A}^2 \setminus 0$, given by projection onto any row or column.  By appeal to Lemma~\ref{lem:equivalencesofretractsofloopspaces} to check that the given map $R$ is an equivalence, it suffices to show that $\Sigma R$ is an equivalence.  
		
		In fact, it is possible to establish this equivalence in a completely geometric way, but we give a shorter approach here.  Indeed, to check $\Sigma R$ is an equivalence, it suffices to check that the map $R$ is an equivalence over prime fields by appeal to Proposition~\ref{prop:unstable-loc}.  In that case, since both spaces are equivalent to ${\mathbb A}^2 \setminus 0$, the map $R$ defines an element of $[{\mathbb A}^2 \setminus 0,{\mathbb A}^2 \setminus 0]$ which is isomorphic to $GW(k)$ by Morel's computations \cite[Theorem 7.15]{A1-alg-top}.  Therefore, suffices to prove that $a_k := [R] \in GW(k)$ is a unit.
		Now recall that for a finite field $k$, an element $a_k \in GW(k)$ is a unit if and only if its rank is $\pm 1$, whereas an element $a_\Q \in GW(\Q)$ is a unit if and only if its rank and signature are $\pm 1$.
		Let us note that both the real and complex realization of $R$ are equivalences\todo{ref?}.
		This immediately implies that the rank and signature of $a_\Q$ are units, as needed.
		Let $\ell \ne p$ be primes.
		Working over $\Z[1/\ell]$, the map $R$ induces an element \[ a_\ell := R^*(1) \in H^3_{et}((\Gm \wedge J^1)_{\Z[1/\ell]}, \Z_\ell(2)) \wequi \Z_\ell \]
		Base changing to $\F_p$ or $\C$ we see that \[ deg(a_{\F_p}) = a_\ell = deg(a_\Q) \in \{\pm 1\}, \] as needed.
	\end{proof}
	
	
\section{James constructions and localizations} \label{sec:james}
Throughout this section, we fix a presentably symmetric monoidal $\infty$-category $\mathscr C$ and a map $\alpha: \1 \to A \in \mathscr C$.
More generally we work with a map $\beta: S \to B \in \mathscr C$; the previous case is recovered by setting $S=\1$ and $B=A$.

\subsubsection*{$\alpha$-localization}
We call an object $X \in \mathscr C$ $\beta$-local if $\iMap(\beta, X)$ is an equivalence.
In other words this means that for $Y \in \mathscr C$ the canonical map \[ \beta^*: \Map(Y \otimes B, X) \longrightarrow \Map(Y \otimes S, X) \] is an equivalence.
We denote the full subcategory of $\beta$-local objects by $\mathscr C[\beta^{-1}]$. The inclusion admits a left adjoint, yielding a Bousfield localization \cite[\S5.5.4]{HTT} 
\[ 
\xymatrix{
	\mathscr C \ar@<.4ex>[r]^{L_{\beta}} & \ar@<.4ex>[l] \mathscr C[\beta^{-1}].} \]
The most common case we have in mind is $\beta = \alpha$, but we will occasionally also use the generalization.
For the convenience of the reader, we collect the well-known facts about these Bousfield localizations we shall use.

\begin{lem}
	\label{lem:rholocalizationproperties}
	\begin{enumerate}[noitemsep,topsep=1pt]
		\item If $\mathscr{X} \in \mathscr C$ is $\beta$-local and $\mathscr{Y} \in \mathscr C$, then $\iMap(\mathscr{Y},\mathscr{X})$ is $\beta$-local as well.  
		\item Any limit of $\beta$-local objects is again $\beta$-local.
		\item The category $\mathscr C[\beta^{-1}]$ acquires a canonical symmetric monoidal structure such that the functor $\mathrm{L}_{\beta}$ becomes symmetric monoidal.
	\end{enumerate}
\end{lem}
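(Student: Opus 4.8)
The plan is to dispatch (1) and (2) by unwinding the definition of $\beta$-locality, and to reduce (3) to the standard criterion for a Bousfield localization of a symmetric monoidal $\infty$-category to be itself symmetric monoidal. For (1), I would check $\beta$-locality of $\iMap(\mathscr Y, \mathscr X)$ against an arbitrary test object $\mathscr Z \in \mathscr C$: the internal-hom adjunction identifies the map
\[
\beta^*\colon \Map(\mathscr Z \otimes B,\, \iMap(\mathscr Y, \mathscr X)) \longrightarrow \Map(\mathscr Z \otimes S,\, \iMap(\mathscr Y, \mathscr X))
\]
with $\beta^*\colon \Map((\mathscr Z\otimes\mathscr Y)\otimes B, \mathscr X)\to \Map((\mathscr Z\otimes\mathscr Y)\otimes S, \mathscr X)$, which is an equivalence because $\mathscr X$ is $\beta$-local. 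For (2), a limit $\mathscr X = \lim_i \mathscr X_i$ of $\beta$-local objects satisfies $\Map(-,\mathscr X)\simeq\lim_i\Map(-,\mathscr X_i)$, so the map $\beta^*$ attached to $\mathscr X$ is a limit of equivalences, hence an equivalence; equivalently, $\mathscr C[\beta^{-1}]$ is the essential image of a Bousfield localization and therefore closed under limits.

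The content is in (3), and there I would invoke Lurie's criterion (see \cite[Proposition~2.2.1.9]{HA}): the localization functor $L_\beta$ transports a canonical symmetric monoidal structure to $\mathscr C[\beta^{-1}]$, with $L_\beta$ symmetric monoidal, as soon as the class $W$ of $L_\beta$-equivalences is stable under $-\otimes\mathscr Z$ for every $\mathscr Z\in\mathscr C$. To check this I would first observe that every map of the form $Y\otimes\beta\colon Y\otimes S\to Y\otimes B$ lies in $W$ --- this is exactly the definition of $\beta$-locality --- and that, since $\mathscr C$ is presentable and $\beta$ is a single morphism, $W$ is the strongly saturated class generated by a small set of such maps; as $W$ contains all of them, it is in fact the strongly saturated class generated by $\{\,Y\otimes\beta : Y\in\mathscr C\,\}$. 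Since $-\otimes\mathscr Z$ preserves colimits and retracts and satisfies the two-out-of-three property relative to $W$, the preimage $(-\otimes\mathscr Z)^{-1}(W)$ is again strongly saturated; it contains every generator, because $(Y\otimes\beta)\otimes\mathscr Z \simeq (Y\otimes\mathscr Z)\otimes\beta \in W$, and hence it contains all of $W$. This yields the required tensor-stability, and the cited criterion then produces the symmetric monoidal structure on $\mathscr C[\beta^{-1}]$ making $L_\beta$ symmetric monoidal.

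I expect the only genuine point to be the two assertions used in (3) --- that $W$ is the strongly saturated class generated by the maps $Y\otimes\beta$, and that this class is stable under tensoring with an arbitrary object; parts (1) and (2), and the concluding appeal to the symmetric-monoidal-localization criterion, are purely formal manipulations with the adjunctions at hand.
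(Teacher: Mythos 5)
Your proposal is correct and follows essentially the same route as the paper: (1) and (2) are the formal adjunction and limit arguments the paper declares immediate, and (3) is the same appeal to Lurie's compatible-localization criterion (the paper cites \cite[Proposition 4.1.7.4]{lurie-ha}, the $\mathcal{O}$-monoidal version of the result you quote). Your verification that the $\beta$-local equivalences are the strongly saturated class generated by the maps $Y \otimes \beta$, and hence stable under tensoring, simply spells out what the paper subsumes under ``the construction of the localization.''
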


\begin{proof}
	(1) and (2) are immediate from the definitions.
	(3) follows from the construction of the localization and \cite[Proposition 4.1.7.4]{lurie-ha}.
\end{proof}

One can build $\mathrm{L}_{\alpha}$ by means of a small object argument, but it is useful to have a collection of spaces where $\alpha$-localization can be described in more elementary terms.  If $X \in \mathscr C$, then we can consider the telescope obtained by repeated application of $\alpha$:
\begin{equation} \label{eq:alpha-tel}
X[\alpha^{-1}] := \colim(X \stackrel{\alpha}{\longrightarrow} A \otimes X \stackrel{\alpha}{\longrightarrow} A \otimes A \otimes X \longrightarrow \cdots);
\end{equation}
we refer to $X[\alpha^{-1}]$ as the {\em $\alpha$-periodization} of $X$.  Later, we will give criterion under which $\mathrm{L}_{\alpha}$ is modelled by $X[\alpha^{-1}]$.

Let $\Alg(\mathscr C)$ denote the $\infty$-category of $\mathscr E_1$-monoids in $\mathscr C$.
Denote by 
\[ 
\xymatrix{\mathscr C \ar@<.4ex>[r]^-{F} & \ar@<.4ex>[l]^-{U} \Alg(\mathscr C)} 
\] the canonical adjunction.
We can $\beta$-localize algebras in $\mathscr C$, in the following sense.

\begin{cor} \label{cor:alphaloc-mon}
	There exists a unique Bousfield localization of $\Alg(\mathscr C)$ such that both $F$ and $U$ preserve $\beta$-local equivalences.
	Moreover the forgetful functor $U$ commutes with $L_\beta$.
	
	The same statement holds true with the associative operad replaced by any $\infty$-operad; in particular the analogous statement holds for the forgetful functors $\mathscr C_* \to \mathscr C$ and $\CAlg(\mathscr C) \to \mathscr C$.
\end{cor}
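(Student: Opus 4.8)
The strategy is to produce the localization from the free‑algebra monad together with the symmetric monoidal localization $L_\beta$, and to deduce uniqueness from conservativity of $U$. Fix an $\infty$-operad $\mathscr O^\otimes$ and write $T = T_{\mathscr O}$ for the free $\mathscr O$-algebra monad on $\mathscr C$, so that $\Alg_{\mathscr O}(\mathscr C) \simeq \mathrm{Mod}_T(\mathscr C)$ with $F$ and $U$ the free and forgetful functors. (For the forgetful functor $\mathscr C_* \to \mathscr C$ one takes instead the monad $X \mapsto X \amalg \1$, equivalently the operad with only a nullary and a unary operation; everything below applies verbatim.) By Lemma~\ref{lem:rholocalizationproperties}(3), $L_\beta : \mathscr C \to \mathscr C[\beta^{-1}]$ is symmetric monoidal, and it is cocontinuous since it is a left adjoint; hence it commutes with the formation of free $\mathscr O$-algebras, i.e. $L_\beta \circ T \simeq T' \circ L_\beta$ where $T'$ is the free $\mathscr O$-algebra monad on $\mathscr C[\beta^{-1}]$. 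In particular \emph{$T$ preserves $\beta$-local equivalences}, since $L_\beta(T f) \simeq T'(L_\beta f)$ is an equivalence whenever $L_\beta f$ is.

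Granting this, I would invoke (following \cite[\S 2.2.1]{lurie-ha}, or reproving it by hand) the principle that an accessible monad $T$ on a presentable $\infty$-category preserving $\beta$-local equivalences descends to an accessible monad $\bar T := L_\beta \circ T|_{\mathscr C[\beta^{-1}]}$ on $\mathscr C[\beta^{-1}]$, and that $\mathrm{Mod}_{\bar T}(\mathscr C[\beta^{-1}])$, viewed as a full subcategory of $\mathrm{Mod}_T(\mathscr C)$, is precisely the subcategory of $T$-algebras with $\beta$-local underlying object; this inclusion is reflective, the reflector sending a $T$-algebra $R$ to $L_\beta R$ with the algebra structure $\bar T(L_\beta R) \simeq L_\beta(T R) \to L_\beta R$ (well defined because $T$ inverts the $\beta$-local equivalence $R \to L_\beta R$). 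This produces the desired Bousfield localization $L^T$ on $\Alg_{\mathscr O}(\mathscr C)$; by construction $U$ commutes with $L^T$, the reflector being ``$L_\beta$ on underlying objects''. Since $U$ is conservative, $L^T f$ is an equivalence iff $U(L^T f) = L_\beta(U f)$ is, i.e. iff $U f$ is a $\beta$-local equivalence; thus $U$ preserves $\beta$-local equivalences, and so does $F$, because $U(F f) = T f$ is a $\beta$-local equivalence whenever $f$ is.

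It remains to check uniqueness. Let $L'$ be any Bousfield localization of $\Alg_{\mathscr O}(\mathscr C)$ for which $F$ and $U$ preserve $\beta$-local equivalences. If $R$ is $L'$-local and $g$ is a $\beta$-local equivalence, then $F g$ is an $L'$-equivalence, so $\Map(g, U R) \simeq \Map(F g, R)$ is an equivalence; hence $U R$ is $\beta$-local, i.e. $R$ is $L^T$-local. Consequently every $L^T$-equivalence is an $L'$-equivalence, being inverted by all $L^T$-local objects and hence by all $L'$-local objects. Conversely, if $f$ is an $L'$-equivalence then $U f$ is a $\beta$-local equivalence, so $U(L^T f) = L_\beta(U f)$ is an equivalence and $L^T f$ is an equivalence by conservativity of $U$; thus $f$ is an $L^T$-equivalence. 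So $L'$ and $L^T$ have the same local equivalences and therefore coincide. Taking $\mathscr O = \mathscr E_1$ and $\mathscr O = \mathscr E_\infty$ yields the statements for $\Alg(\mathscr C)$ and $\CAlg(\mathscr C)$, and the monad $X \mapsto X \amalg \1$ yields the statement for $\mathscr C_*$.

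The only non‑formal input — and hence the main obstacle — is the ``monad descent'' step of the second paragraph: verifying that $\bar T$ is genuinely a monad (that the unit and multiplication of $T$ descend along $L_\beta$) and that $\mathrm{Mod}_{\bar T}(\mathscr C[\beta^{-1}]) \hookrightarrow \mathrm{Mod}_T(\mathscr C)$ is reflective with the stated reflector. This is pure bookkeeping with the idempotent monad $L_\beta$ and is exactly the type of statement recorded in \cite[\S 2.2.1]{lurie-ha} (compare also \cite[Prop. 4.1.7.4]{lurie-ha}); every other ingredient is a formal manipulation of adjunctions together with conservativity of $U$.
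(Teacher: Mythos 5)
Your proof is correct and is essentially the paper's argument: the paper disposes of this corollary by citing \cite[Amplification 3.1.9]{ABHFreudenthal}, whose content is exactly the mechanism you spell out — the symmetric monoidal localization $L_\beta$ (Lemma \ref{lem:rholocalizationproperties}(3)) is a compatible localization and hence descends to a reflective localization of the category of $\mathscr O$-algebras computed by $L_\beta$ on underlying objects (\cite[Propositions 2.2.1.9 and 4.1.7.4]{lurie-ha}), with uniqueness forced by conservativity of $U$ exactly as you argue. The only quibble is presentational: your detour through a hand-built descended monad $\bar T$ (whose unit, multiplication and coherences you would otherwise have to construct) is unnecessary, since the cited result of \cite{lurie-ha} applies directly at the level of operadic algebras; this is not a gap, as you correctly defer that step to the same reference.
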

\begin{proof}
	This is the same argument as in \cite[Amplification 3.1.9]{ABHFreudenthal}.
\end{proof}

\subsubsection*{James construction}
\begin{defn} \label{def:james}
The forgetful functor $\Alg(\mathscr C) \to \mathscr C_{\1/}$ admits a left adjoint.
For $(\alpha: \1 \to A) \in \mathscr C_{\1/}$, we denote the result of
applying this left adjoint by $J_\alpha$.
\end{defn}
Note that by construction, the unit $\1 \to J_\alpha$ factors as $\1 \xrightarrow{\alpha} A \to J_\alpha$.

\begin{lem} \label{lemm:rho-equiv-J}
	For each $X \in \mathscr C$, the map $X \to J_\alpha \otimes X$ is an $\alpha$-equivalence.
\end{lem}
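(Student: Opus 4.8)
The plan is to reduce to the case $X = \1$ and then identify the $\alpha$-localization of $J_\alpha$ with the initial algebra, using Corollary~\ref{cor:alphaloc-mon}. For the reduction, note that the map in question is $\eta \otimes \id_X$, where $\eta\colon \1 \to J_\alpha$ is the unit of the algebra $J_\alpha$, and that the class of $\alpha$-equivalences is closed under $(-) \otimes X$: for an $\alpha$-local object $W$ the internal mapping object $\iMap(X, W)$ is again $\alpha$-local by Lemma~\ref{lem:rholocalizationproperties}(1), so $\Map((-) \otimes X, W) \simeq \Map((-), \iMap(X, W))$ sends $\alpha$-equivalences to equivalences. Hence it suffices to show that $\eta$ is an $\alpha$-equivalence.

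The key observation is that $\alpha$ itself is an $\alpha$-equivalence: taking $Y = \1$ in the definition of $\alpha$-locality shows precisely that $\alpha^*\colon \Map(A, W) \to \Map(\1, W)$ is an equivalence for every $\alpha$-local $W$. Now the defining adjunction of $J_\alpha$ (Definition~\ref{def:james}) identifies $\Map_{\Alg(\mathscr C)}(J_\alpha, R)$, for an $\mathscr E_1$-algebra $R$, with the fiber over the unit of $\alpha^*\colon \Map(A, UR) \to \Map(\1, UR)$. So if $R$ is an $\alpha$-local algebra in the sense of Corollary~\ref{cor:alphaloc-mon}, then $UR$ is $\alpha$-local, $\alpha^*$ is an equivalence, and $\Map_{\Alg(\mathscr C)}(J_\alpha, R)$ is contractible. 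Thus $L_\alpha J_\alpha$ is initial among $\alpha$-local algebras, hence equivalent to the reflection of the unit algebra $\1$, and the (localized) algebra morphism $\1 \to J_\alpha$ is a map between initial objects, hence an equivalence in $\Alg(\mathscr C)$. Since $U$ is conservative and commutes with $L_\alpha$, its underlying map $\eta$ is an $\alpha$-equivalence in $\mathscr C$, as required.

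I do not expect a serious obstacle: the argument is formal given Corollary~\ref{cor:alphaloc-mon} and the universal property of $J_\alpha$. The points needing a little care are the two compatibilities gluing the pieces together --- that $L_\alpha$ on $\mathscr C$ applied to the underlying map of $\1 \to J_\alpha$ agrees with $U$ applied to its $\Alg$-level localization, and that the initial object of the (reflective) subcategory of $\alpha$-local algebras is the reflection of the unit algebra. A more hands-on alternative would be to use the relative James filtration $\1 = J^{(0)} \to J^{(1)} = A \to J^{(2)} \to \cdots$, with $\colim J^{(n)} = J_\alpha$, and to check inductively via the defining pushout squares that each stage map is an $\alpha$-equivalence; but the algebraic route above is cleaner.
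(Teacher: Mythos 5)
Your proof is correct, but it runs along a different track than the paper's. The paper also reduces to $X=\1$, but then it exhibits $J_\alpha$ via a pushout: applying the left adjoint $J_{(\ph)}$ to the square $\1_+ \to A_+$, $\1_+ \to \1$ in $\mathscr C_{\1/}$ produces a pushout $F(\1) \to F(A)$, $F(\1) \to \1$ in $\Alg(\mathscr C)$ with corner $J_\alpha$; since $F$ preserves $\alpha$-equivalences (Corollary \ref{cor:alphaloc-mon}) the top map is one, and cobase-change stability of local equivalences gives that $\1 \to J_\alpha$ is one, which is then read in $\mathscr C$ via the same corollary. You instead argue directly from the universal property: $\Map_{\Alg}(J_\alpha, R)$ is the fiber of $\alpha^*\colon \Map(A, UR) \to \Map(\1, UR)$ over the unit, hence contractible whenever $UR$ is $\alpha$-local, so $L_\alpha J_\alpha$ and $L_\alpha \1$ are both initial among local algebras and the unit map localizes to an equivalence; the commutation of $U$ with $L_\alpha$ then transfers this to $\mathscr C$ (conservativity of $U$ is not actually needed at that step). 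The one implicit input you use --- that a local algebra in the sense of Corollary \ref{cor:alphaloc-mon} has $\alpha$-local underlying object --- does follow from the asserted commutation $U L_\alpha \simeq L_\alpha U$ applied to the unit $R \to L_\alpha R$, so there is no gap. What each approach buys: the paper's pushout presentation needs only that $F$ preserves local equivalences plus strong saturation, while yours is slightly more self-contained on the categorical side, avoids the pushout bookkeeping, and records the (independently useful) fact that $L_\alpha J_\alpha$ is the initial $\alpha$-local algebra, i.e.\ the $\alpha$-localization of the unit algebra. Your fallback suggestion via the James filtration would also work but is indeed less clean; the algebraic argument you give is fine as stated.
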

\begin{proof}
	We need only treat the case $X = \1$.
	For $X \in \mathscr C$ we set $X_+ = X \amalg \1 \in \mathscr C_{\1/}$; this is the left adjoint to the forgetful functor.
	Note that we have a pushout in $\mathscr C_{\1/}$
	\begin{equation*}
		\begin{CD}
			\1_+ @>>> A_+ \\
			@VVV       @VVV \\
			\1 @>>> A.
		\end{CD}
	\end{equation*}
	Applying $J_{(\ph)}$ and writing $F: \mathscr C \to \Alg(\mathscr C)$ for the free algebra functor, we obtain a pushout square in $\Alg(\mathscr C)$
	\begin{equation*}
		\begin{CD}
			F(\1) @>>> F(A) \\
			@VVV       @VVV \\
			\1 @>>> J_\alpha.
		\end{CD}
	\end{equation*}
	The top map is an $\alpha$-equivalence by Corollary \ref{cor:alphaloc-mon}, and hence so is the bottom map (which we can view in either $\mathscr C$ or $\Alg(\mathscr C)$, thanks to Corollary \ref{cor:alphaloc-mon}).
\end{proof}

\subsubsection*{Central maps}
\begin{defn}
	Let $S \in \mathscr C$.  We will say that a map $\alpha: \1 \to A$ is $S$-central if the two maps \[ \alpha \otimes \id_A \otimes \id_S, \id_A \otimes \alpha \otimes \id_S: A \otimes S \longrightarrow A \otimes A \otimes S \] are homotopic.
\end{defn}

\begin{proposition} \label{prop:central-loc}
	Suppose that $\alpha$ is $S$-central.
	\begin{enumerate}[noitemsep,topsep=1pt]
		\item The map $J_\alpha \otimes S \to J_\alpha \otimes A \otimes S$ is an equivalence.
		\item The functor $(\ph) \otimes J_\alpha \otimes S$ inverts $\alpha$-equivalences.
		\item For $X \in \mathscr C$, $J_\alpha \otimes X$ is $(\alpha \otimes S)$-local.
	\end{enumerate}
\end{proposition}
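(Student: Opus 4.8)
The plan is to establish the three parts in the order (3), then (1), then (2): part (3) is the substantive one and can be proved by hand from the $\mathscr E_1$-algebra structure on $J_\alpha$, while (1) and (2) then drop out of the formal properties of the Bousfield localizations from \S\ref{sec:james}.

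For (3), unwinding the definition of $(\alpha\otimes S)$-locality, I must show that for every $X\in\mathscr C$ the restriction map
\[
\Psi_X \;:=\; \iMap(\alpha\otimes\id_S,\,J_\alpha\otimes X)\;:\;\iMap(A\otimes S,\,J_\alpha\otimes X)\longrightarrow\iMap(S,\,J_\alpha\otimes X)
\]
is an equivalence. The key point is that $J_\alpha\otimes X$ is the free left $J_\alpha$-module on $X$ (recall $J_\alpha\in\Alg(\mathscr C)$ by Definition~\ref{def:james}), so it carries a left action; precomposing with the structure map $a:A\to J_\alpha$ produces $\lambda:A\otimes(J_\alpha\otimes X)\to J_\alpha\otimes X$. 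Using $\lambda$ I build a candidate inverse $\Phi_X:\iMap(S,J_\alpha\otimes X)\to\iMap(A\otimes S,J_\alpha\otimes X)$, namely the composite of the internal ``tensor with $A$'' map $\iMap(S,J_\alpha\otimes X)\to\iMap(A\otimes S,A\otimes J_\alpha\otimes X)$ with postcomposition by $\lambda$. One homotopy, $\Psi_X\circ\Phi_X\simeq\id$, is formal: the fresh copy of $A$ produced by $\Phi_X$ is reabsorbed by $\lambda$ after restriction along $\alpha$, using the module unit axiom together with the factorization $a\circ\alpha=(\text{unit of }J_\alpha)$ recorded just after Definition~\ref{def:james}. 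The other homotopy, $\Phi_X\circ\Psi_X\simeq\id$, is the crux: after unwinding, $\Phi_X\circ\Psi_X$ inserts a copy of $\alpha$ ``in the middle'', i.e.\ applies $\id_A\otimes\alpha\otimes\id_S:A\otimes S\to A\otimes A\otimes S$ before feeding into $\lambda$, whereas the identity would want $\alpha$ inserted ``on the outside'', i.e.\ $\alpha\otimes\id_A\otimes\id_S$; these two are homotopic by exactly the hypothesis that $\alpha$ is $S$-central, and after this substitution the same $\lambda$-unit identity collapses everything to $\id$. I expect this last homotopy to be the main obstacle, the difficulty being not the bookkeeping at the level of homotopy categories but promoting it to a coherent homotopy of maps of internal mapping objects; I would handle this by defining $\Psi_X$ and $\Phi_X$ functorially out of the algebra/module data and the internal-hom adjunctions (rather than chasing points), so that the single invocation of the $S$-centrality homotopy is the only non-formal input.

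Granting (3), part (1) is immediate: the map $J_\alpha\otimes S\to J_\alpha\otimes A\otimes S$ is $\id_{J_\alpha}\otimes(\alpha\otimes\id_S)$, which is one of the generating $(\alpha\otimes S)$-equivalences (take $Y=J_\alpha$ in $Y\otimes(\alpha\otimes\id_S)$); by (3) both its source $J_\alpha\otimes S$ and its target $J_\alpha\otimes(A\otimes S)$ are $(\alpha\otimes S)$-local; and a $\beta$-equivalence between $\beta$-local objects is an equivalence (since the localization maps $X\to L_\beta X$ are equivalences on $\beta$-local $X$). For (2), by the Yoneda lemma it suffices to show that for every $W\in\mathscr C$ and every $\alpha$-equivalence $f$ the induced map $\Map(f\otimes\id_{J_\alpha\otimes S},W)\simeq\Map(f,\iMap(J_\alpha\otimes S,W))$ is an equivalence, which holds whenever $\iMap(J_\alpha\otimes S,W)$ is $\alpha$-local (that being exactly the condition that $\Map(-,\iMap(J_\alpha\otimes S,W))$ inverts $\alpha$-equivalences). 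But $\alpha$-locality of $\iMap(J_\alpha\otimes S,W)$ unwinds, via the tensor--hom adjunction, to the statement that $\iMap(A\otimes J_\alpha\otimes S,W)\to\iMap(J_\alpha\otimes S,W)$ is an equivalence, and under the symmetry $A\otimes J_\alpha\otimes S\simeq J_\alpha\otimes A\otimes S$ this is precomposition with the equivalence of part (1); hence $\iMap(J_\alpha\otimes S,W)$ is $\alpha$-local and (2) follows.
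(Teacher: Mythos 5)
Your argument is correct, but it is organized in the reverse order from the paper's. The paper proves (1) first: the multiplication $\mu: J_\alpha \otimes A \to J_\alpha$ is a retraction of $\id\otimes\alpha$, and the single centrality homotopy (plus unitality) shows the other composite becomes the identity after $\otimes S$; then (2) follows because $(\ph)\otimes J_\alpha\otimes S$ preserves colimits and inverts the generating $\alpha$-equivalences; and (3) is a one-line consequence of (2) via the free--forgetful adjunction, rewriting $\iMap_{\mathscr C}(\alpha\otimes S, J_\alpha\otimes X)$ as $\iMap_{\mathrm{Mod}_{J_\alpha}}(\alpha\otimes S\otimes J_\alpha, J_\alpha\otimes X)$. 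You instead prove (3) directly, by exhibiting $\Phi_X$ (tensor with $A$, then act via $a:A\to J_\alpha$) as a two-sided homotopy inverse to $\iMap(\alpha\otimes S, J_\alpha\otimes X)$; your two homotopies are exactly the paper's two composites in its proof of (1) -- unitality of the $J_\alpha$-action together with $a\circ\alpha\simeq$ unit for one, and the single use of $S$-centrality plus the same unit identity for the other -- just transported to internal mapping objects. Your deduction of (1) from (3) (an $(\alpha\otimes S)$-equivalence between $(\alpha\otimes S)$-local objects is an equivalence) and of (2) from (1) (showing $\iMap(J_\alpha\otimes S, W)$ is $\alpha$-local for every $W$, using the symmetry to identify the relevant precomposition with the map of (1)) are both sound. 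What the two routes buy: the paper's is shorter, with (3) essentially free from the module-category formalism; yours avoids both the passage to $\mathrm{Mod}_{J_\alpha}$ and the appeal to saturation/generation of $\alpha$-equivalences in (2), at the cost of more explicit bookkeeping in (3). One small remark: the coherence issue you flag is not actually an obstacle -- to prove $\Psi_X$ is an equivalence it suffices to produce a two-sided inverse up to homotopy in the homotopy category (equivalences are detected there), so no coherent homotopy of internal mapping objects is needed.
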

\begin{proof}
	(1) Writing $\mu: J_\alpha \otimes A \to J_\alpha \otimes J_\alpha \to J_\alpha$ for the map induced by multiplication, the composite 
	\[ J_\alpha \xrightarrow{\id \otimes \alpha} J_\alpha \otimes A \stackrel{\mu}{\longrightarrow} J_\alpha \] is the identity by construction.
	Hence we need only prove that \[ J_\alpha \otimes A \stackrel{\mu}{\longrightarrow} J_\alpha \xrightarrow{\id \otimes \alpha} J_\alpha \otimes A \] becomes an equivalence after $\otimes S$.
	Our assumption is that
	\[
	\xymatrix{
		A\ar[r]^{\text{id}} \ar[dr]_-{\alpha \otimes \id} &  A \ar[d]^-{\id \otimes \alpha} \\
		&A\otimes A
	}
	\]
	commutes after $\otimes S$, so the result follows by tensoring with $J_{\alpha}$ on the left, $S$ on the right, and multiplying.
	In more details, we obtain the following commutative diagram
	\[
	\xymatrix{
		J_\alpha \otimes A \otimes S \ar[r]^{\id}\ar[dr]_-{\alpha \otimes \id}  &
		J_\alpha \otimes A\otimes S \ar[r]^{\mu} \ar[d]^{\id \otimes \alpha}& J_\alpha \otimes S \ar[d]^{\id \otimes \alpha}
		& 
		\\
		& J_\alpha \otimes A\otimes A \otimes S  \ar[r]^{\mu \otimes \id}        &
		J_\alpha \otimes A\otimes S, &
	}
	\]
	where in labelling the maps we have suppressed identity factors on the outer $J_\alpha$ and $S$.
	Since the path via bottom left is the identity, the result follows.
	
	(2)
	This holds since the functor preserves colimits and inverts generating $\alpha$-equivalences, by (1).
	
	(3) We must check that $\iMap_{\mathscr C}(\alpha \otimes S, J_\alpha \otimes X)$ is an equivalence.
	But $J_\alpha \otimes X$ is a module over $J_\alpha$, so this is the same as $\iMap_{\mathrm{Mod}_{J_\alpha}}(\alpha \otimes S \otimes J_\alpha, J_\alpha \otimes X)$.
	This is an equivalence since $\alpha \otimes S \otimes J_\alpha$ is, by (2).
\end{proof}

\begin{proposition} \label{prop:J-alpha-tel}
	Suppose that $\alpha$ is $S$-central.
	Suppose furthermore that the subcategory of $\mathscr C$ generated under filtered colimits by objects of the form $S \otimes X$ (for $X \in \mathscr C$) admits a functor to a $1$-category, which is conservative and preserves filtered colimits.\NB{is this really necessary?}
	Then $J_\alpha \otimes S \wequi S[\alpha^{-1}]$.
\end{proposition}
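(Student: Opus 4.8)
The plan is to identify both sides with the Bousfield localization $L_{\alpha\otimes S}(S)$, so that only one non-formal point remains — that is where the $1$-categorical hypothesis enters.

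\textbf{Identifying the two sides.} First, the natural map $S\to J_\alpha\otimes S$ of Lemma~\ref{lemm:rho-equiv-J} exhibits $J_\alpha\otimes S$ as $L_{\alpha\otimes S}(S)$: the target is $(\alpha\otimes S)$-local by Proposition~\ref{prop:central-loc}(3) (with $X=S$), and the map is an $(\alpha\otimes S)$-equivalence because, for $Z$ $(\alpha\otimes S)$-local, the tensor--hom adjunction shows $\iMap(S,Z)$ is $\alpha$-local, and since $\1\to J_\alpha$ is an $\alpha$-equivalence (Lemma~\ref{lemm:rho-equiv-J} with $X=\1$),
\[ \Map(J_\alpha\otimes S, Z)\wequi \Map(J_\alpha,\iMap(S,Z))\wequi \Map(\1,\iMap(S,Z))\wequi \Map(S,Z). \]
Second, the cocone $S\to S[\alpha^{-1}]$ is also an $(\alpha\otimes S)$-equivalence: up to the symmetry, each transition map $A^{\otimes n}\otimes S\to A^{\otimes(n+1)}\otimes S$ in \eqref{eq:alpha-tel} is $\alpha\otimes\id_S$ tensored with $A^{\otimes n}$, hence an $(\alpha\otimes S)$-equivalence, and $L_{\alpha\otimes S}$ commutes with this sequential colimit. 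Thus $L_{\alpha\otimes S}(S[\alpha^{-1}])\wequi L_{\alpha\otimes S}(S)\wequi J_\alpha\otimes S$, and composing the localization map with this identification gives a canonical map $\phi\colon S[\alpha^{-1}]\to J_\alpha\otimes S$ (also describable via the iterated multiplications $A^{\otimes n}\to J_\alpha$, compatible with the telescope because $\1\xrightarrow{\alpha}A\to J_\alpha$ is the unit of $J_\alpha$). By construction $\phi$ is an equivalence if and only if $S[\alpha^{-1}]$ is $(\alpha\otimes S)$-local, so this is all that remains.

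\textbf{Using the $1$-categorical hypothesis.} Both $J_\alpha\otimes S = S\otimes J_\alpha$ and $S[\alpha^{-1}] = \colim_n S\otimes A^{\otimes n}$ lie in the subcategory $\mathscr D\subseteq\mathscr C$ generated under filtered colimits by the $S\otimes X$; as the assumed functor $F\colon\mathscr D\to\mathscr E$ to a $1$-category is conservative and preserves filtered colimits, it suffices to show $F(\phi)$ is an isomorphism. Here one uses that $S[\alpha^{-1}]$ and $J_\alpha\otimes S$ are both $\alpha$-periodic ($\alpha\otimes\id$ being an equivalence on them — by cofinality of the shift for the telescope, and by Proposition~\ref{prop:central-loc}(1)--(2) for $J_\alpha\otimes S$), together with $S$-centrality of $\alpha$. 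The idea is to build, from the $\alpha$-periodicity of $S[\alpha^{-1}]$, a candidate inverse to $\phi$ (``divide by $\alpha$''), and to observe that $S$-centrality is exactly what makes the two ways of reinserting an $\alpha$ into an object $S\otimes A^{\otimes n}$ homotopic — so the identities exhibiting this as a genuine two-sided inverse, which a priori hold only up to coherence data in $\mathscr C$, become strict after pushing into the $1$-category $\mathscr E$, where no higher homotopies remain.

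I expect this last step to be the main obstacle: pinning down how the mild ``$S$-only'' centrality hypothesis and the $1$-categorical shadow $F$ conspire to force $(\alpha\otimes S)$-locality of the telescope, and arranging the mapping-space bookkeeping so that everything stays inside $\mathscr D$. The preceding identifications are formal consequences of Proposition~\ref{prop:central-loc} and Lemma~\ref{lemm:rho-equiv-J}.
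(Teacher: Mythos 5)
Your formal reductions are fine: identifying $J_\alpha\otimes S$ with $L_{\alpha\otimes S}(S)$ via Proposition~\ref{prop:central-loc}(3) and Lemma~\ref{lemm:rho-equiv-J}, and observing that $S\to S[\alpha^{-1}]$ is an $(\alpha\otimes S)$-equivalence, are correct. But the proof stops exactly where the content begins, and the one concrete claim you make at that point is wrong as stated. You assert that $\alpha\otimes\id$ is an equivalence on $S[\alpha^{-1}]$ ``by cofinality of the shift for the telescope.'' Cofinality only shows that the \emph{shift} map of ind-objects $S\otimes A^{\otimes\bullet}\to S\otimes A^{\otimes 1+\bullet}$ induces an equivalence on the colimit; the map $\alpha\otimes\id_{S[\alpha^{-1}]}$ is induced by \emph{inserting} $\alpha$ at a fixed spot, which is a different map of ind-objects. $S$-centrality makes the two maps homotopic levelwise, but the homotopies need not be compatible with the structure maps of the ind-objects, so one cannot conclude that the two maps of ind-objects (hence of colimits) agree. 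This is precisely the issue the $1$-categorical hypothesis is there to resolve: after applying the conservative, filtered-colimit-preserving functor $F$ to a $1$-category, levelwise equality of maps of ind-objects suffices, so $F(\text{insertion})=F(\text{shift})$ is an isomorphism, and conservativity gives that $\alpha\otimes S[\alpha^{-1}]$ is an equivalence. Your proposal instead defers the use of $F$ to a later, vaguer step (``identities become strict after pushing into $\mathscr E$''), which is not an argument.

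Moreover, your intended endgame --- proving that $S[\alpha^{-1}]$ is $(\alpha\otimes S)$-local, e.g.\ by inverting $F(\phi)$ --- is harder than necessary and not clearly achievable with the given hypotheses: $F$ is only defined on the subcategory generated by the $S\otimes X$ under filtered colimits, need not preserve the relevant colimits computing $J_\alpha$, and in any case does not see mapping spaces, so locality cannot be checked through it directly; unlike $J_\alpha\otimes S$, the bare telescope carries no $J_\alpha$-module structure from which locality would follow. The efficient way to finish, once $\alpha\otimes S[\alpha^{-1}]$ is known to be an equivalence, avoids locality of the telescope altogether: the colimit-preserving functor $(\ph)\otimes S[\alpha^{-1}]$ then inverts all $\alpha$-equivalences, so applying it to $\1\to J_\alpha$ (Lemma~\ref{lemm:rho-equiv-J}) gives $S[\alpha^{-1}]\wequi J_\alpha\otimes S[\alpha^{-1}]$, while applying $(\ph)\otimes J_\alpha\otimes S$ (which inverts $\alpha$-equivalences by Proposition~\ref{prop:central-loc}(2)) to $\1\to\1[\alpha^{-1}]$ gives $J_\alpha\otimes S\wequi J_\alpha\otimes S[\alpha^{-1}]$; combining the two yields $J_\alpha\otimes S\wequi S[\alpha^{-1}]$. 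So: supply the ind-object comparison of insertion versus shift via $F$, and replace your locality step by this two-line conclusion.
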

\begin{proof}
	We shall first prove that $\alpha \otimes \1[\alpha^{-1}]$ is an equivalence.
	To do so, write \[ A^{\otimes \bullet} = (\1 \xrightarrow{\alpha} A \xrightarrow{\id \otimes \alpha} A \otimes A \xrightarrow{\id \otimes \id \otimes \alpha} \dots) \] for the ind-object with colimit $\1[\alpha^{-1}]$.
	The map of ind-objects $\alpha: A^{\otimes \bullet} \to A^{\otimes 1+\bullet}$ consists of various maps $\alpha_n: A^{\otimes n} \to A^{\otimes 1+n}$ (induced by inserting $\alpha$ in an appropriate spot), together with homotopies making certain squares commute.
	There is another map of ind-objects $\alpha': A^{\otimes \bullet} \to A^{\otimes 1+\bullet}$ given just by shifting.
	We can similarly describe it as consisting of various maps $\alpha'_n$ and homotopies making various squares commute.
	The centrality assumption implies that $\alpha_n$ is homotopic to $\alpha_n'$.
	It follows that when applying a functor $F$ to a $1$-category, we get $F(\alpha) \wequi F(\alpha')$.\footnote{However, without further assumptions, it does not follow that $\alpha \wequi \alpha'$, since the homotopies making the squares commute may be different.}
	Since the latter is an isomorphism of ind-objects, so is the former.
	Taking $F$ to be our conservative functor preserving filtered colimits, we get the desired result.

	From this we deduce that the functor $(\ph) \otimes S[\alpha^{-1}]$ inverts $\alpha$-equivalences, and hence (applying the functor to $\1 \to J_\alpha$, which is an $\alpha$-equivalence by Lemma \ref{lemm:rho-equiv-J}) we see that $S[\alpha^{-1}] \wequi J_\alpha \otimes S[\alpha^{-1}]$.
	On the other hand, $\1 \to \1[\alpha^{-1}]$ is an $\alpha$-equivalence, which is inverted by $\otimes J_\alpha \otimes S$ (Proposition \ref{prop:central-loc}(2)), whence \[ J_\alpha \otimes S \wequi S[\alpha^{-1}] \otimes J_\alpha, \]
as required.
\end{proof}

\begin{ex} \label{ex:J-alpha-tel}
Taking $\mathscr C = \Spc_*$ and $S=S^1$, the subcategory generated under colimits by objects of the form $S^1 \wedge X$ is just $\Spc_{*,>0}$.
This has a functor to a $1$-category which is conservative and preserves filtered colimits, namely, $\pi_*$.
\end{ex}

\begin{rem}
	We could have defined $J_\alpha$ to be the free $\mathscr E_\infty$-monoid with unit $\alpha$ (instead of just asking for an $\mathscr E_1$-monoid) and no arguments in this section would be changed.
	Moreover Proposition \ref{prop:J-alpha-tel} shows that after $\otimes S$, the two possibilities coincide.
\end{rem}

\section{Some recollections abut the real-\'etale topology}
\label{sec:small-ret}
In this section, we collect various properties of real-\'etale topoi and unstable motivic homotopy theory in the real-\'etale topology.  

\subsubsection*{Real spectra}
If $A$ is a commutative ring, then its real spectrum $R\Spec A$ is the set of pairs $({\mathfrak p},\alpha)$ consisting of a prime ideal ${\mathfrak p} \subset A$ and an ordering $\alpha$ of the residue field (the notation $\op{Sper} A$ is often used in the literature).  One equips $R\Spec A$ with the topology generated by the subbasis consisting of positive principal open sets: given an element $r \in A$, we can consider those points of $R\Spec A$ for which $r >_{\alpha} 0$.  The topological space $R\Spec A$ is known to be spectral \cite[\href{https://stacks.math.columbia.edu/tag/08YF}{Tag 08YF}]{stacks-project} \cite[Remark 7.1.17]{BCR}.  

If $X$ is a scheme, then we write $RX$ for the attached real space, which is obtained by gluing real spectra (the notation $X_r$ is frequently used for this space in the literature).  The space $RX$ is {\em locally spectral} \cite[0.6.3]{real-and-etale-cohomology}.

\begin{proposition}
	\label{prop:coherence}
	If $X$ is a qcqs scheme, then $RX$ is coherent in the sense that the set of quasi-compact open subsets of $RX$ is closed under finite intersections and forms a basis for the topology of $RX$.
\end{proposition}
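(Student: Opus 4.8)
The plan is to reduce to the affine case, where $R\Spec A$ is already known to be spectral (hence coherent), and then glue. Since $X$ is quasi-compact, fix a finite cover $X = U_1 \cup \dots \cup U_n$ by affine opens $U_i = \Spec A_i$. As recalled above, $RX$ is built by gluing the real spectra of affine opens of $X$; in particular the real space functor carries open immersions of schemes to open embeddings of topological spaces and is compatible with intersections of opens. Thus each $RU_i$ is an open subspace of $RX$, $RU_i \cap RU_j = R(U_i \cap U_j)$, and $RX = RU_1 \cup \dots \cup RU_n$. Each $RU_i = R\Spec A_i$ is spectral, in particular coherent (and quasi-compact). Since $X$ is quasi-separated, each $U_i \cap U_j$ is quasi-compact, hence a finite union of affine opens, so $RU_i \cap RU_j = R(U_i\cap U_j)$ is a finite union of spectral spaces and therefore quasi-compact. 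The proposition then follows from the following soft gluing lemma: \emph{if a space $Y$ is covered by finitely many open subspaces $Y_1,\dots,Y_n$, each $Y_i$ coherent and each overlap $Y_i \cap Y_j$ quasi-compact, then $Y$ is coherent}, applied with $Y = RX$ and $Y_i = RU_i$.

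To prove the lemma I would first record two trivialities: a finite union of quasi-compact subsets of any space is quasi-compact; and if $U$ is a quasi-compact open subspace of a coherent space $Z$, then $U$ is itself coherent, since the quasi-compact opens of $U$ are precisely the quasi-compact opens of $Z$ contained in $U$ (openness of $U$ in $Z$ makes "quasi-compact open" intrinsic), these form a basis for $U$ by restricting a basis of $Z$, and they are closed under finite intersection inside $Z$, hence inside $U$. For the basis assertion of the lemma: given an open $W \subseteq Y$ and a point $x \in W$, pick $i$ with $x \in Y_i$; then $W \cap Y_i$ is open in the coherent space $Y_i$, so it contains a quasi-compact open neighbourhood $V$ of $x$ in $Y_i$, and $V$ is then a quasi-compact open of $Y$ with $x \in V \subseteq W$.

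For closure under finite intersection it suffices to treat two quasi-compact opens $V_1,V_2$ of $Y$. Covering each $V_k$ by the opens $V_k \cap Y_i$ (each a union of quasi-compact opens of $Y_i$, as $Y_i$ is coherent) and using quasi-compactness of $V_k$, we may write $V_k$ as a finite union of quasi-compact opens of $Y$, each contained in some $Y_i$; distributing $V_1 \cap V_2$ over these finite unions reduces us to proving that $W_1 \cap W_2$ is quasi-compact whenever $W_1, W_2$ are quasi-compact opens of $Y$ with $W_1 \subseteq Y_a$ and $W_2 \subseteq Y_b$. Now $Y_a \cap Y_b$ is a quasi-compact open subspace of the coherent space $Y_a$ (and of $Y_b$), hence coherent; moreover $W_1 \cap (Y_a \cap Y_b)$ is an intersection of two quasi-compact opens of $Y_a$ and $W_2 \cap (Y_a \cap Y_b)$ an intersection of two quasi-compact opens of $Y_b$, hence both are quasi-compact opens of $Y_a \cap Y_b$. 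Therefore $W_1 \cap W_2 = \bigl(W_1 \cap (Y_a\cap Y_b)\bigr) \cap \bigl(W_2 \cap (Y_a\cap Y_b)\bigr)$ is a quasi-compact open of the coherent space $Y_a \cap Y_b$, in particular quasi-compact; so $V_1 \cap V_2$ is a finite union of quasi-compact opens of $Y$ and thus a quasi-compact open. This proves the lemma, and hence the proposition.

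The argument is entirely formal once one knows that affine real spectra are spectral, so I do not expect a genuine obstacle. The only points demanding care are (i) invoking correctly that the real space functor transforms open immersions into open embeddings and intersections into intersections — this is exactly how $RX$ is assembled by gluing — and the use of quasi-separatedness of $X$ to make the overlaps $RU_i \cap RU_j$ quasi-compact; and (ii) the bookkeeping in the intersection step, namely keeping straight at each reduction which subsets are quasi-compact and which are open in which ambient space.
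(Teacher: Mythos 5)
Your argument is correct and is essentially the elaboration of what the paper leaves implicit: the proof there is simply ``immediate from the preceding observations,'' i.e.\ from $R\Spec A$ being spectral and $RX$ being glued from such real spectra, which is exactly the affine reduction plus gluing (using quasi-compactness for the finite affine cover and quasi-separatedness for quasi-compact overlaps) that you carry out in detail. Your gluing lemma and its proof are sound, so there is nothing to correct.
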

\begin{proof}
Immediate from the preceding observations.
\end{proof}

The points of $RX$ have a convenient description: they are equivalence classes of morphisms $\Spec r \to X$ where $r$ is a real closed field, where two morphisms $\Spec r \to X$ and $\Spec r' \to X$ are equivalent if we can find a common real closed subfield $r'' \subset r$, $r'' \subset r'$ such that the given morphisms factor through a morphism $\Spec r'' \to X$.

We write $\Shv(RX)$ for the $\infty$-topos of sheaves of spaces on $RX$.  The assignment $X \mapsto RX$ is functorial.  The function $supp: R\Spec A \to \Spec A$ defined by forgetting the ordering extends to a locally spectral map of locally spectral topological spaces $supp: RX \to X$.  This is in fact a natural transformation.

\begin{proposition}[{\cite[Proposition 1.7]{real-and-etale-cohomology}}]
	\label{prop:Rfiberproducts}
	Assume $\varphi: S' \to S$ is a morphism of schemes such that $\kappa(s')$ is algebraic over $\kappa(\varphi(s'))$ for every $s' \in S'$.  Given any morphism $f: X \to S$, the natural map
	\[
	R(X \times_S S') \longrightarrow RX \times_{RS} RS',
	\]
	where the fiber product on the right-hand side is formed in the category of topological spaces, is a homeomorphism.
\end{proposition}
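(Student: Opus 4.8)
The strategy is to reduce to the affine case, prove bijectivity by a fibrewise reduction to a statement about orderings of algebraic field extensions, and then promote the resulting continuous bijection to a homeomorphism; the last step is the only genuine difficulty.

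\emph{Reduction to the affine case.} The assertion is Zariski-local on each of $S$, $S'$ and $X$: the functor $R(\ph)$ sends Zariski covers to open covers, the fibre product $X\times_S S'$ and restrictions to open subschemes are compatible with such covers, and the residue-field hypothesis concerns only individual points of $S'$ and so is inherited by open subschemes. Hence we may assume $S=\Spec C$, $S'=\Spec B$, $X=\Spec A$, with $B$ a $C$-algebra each of whose residue fields is algebraic over the corresponding residue field of $C$; the map under study is then $g\colon R\Spec(A\otimes_C B)\to R\Spec(A)\times_{R\Spec(C)}R\Spec(B)$. Describing a point of the real spectrum of a commutative ring as a prime $\mathfrak p$ together with an ordering of $\kappa(\mathfrak p)$, one sees at once that $g$ is continuous (a sub-basic positive open of the target pulls back to $\{a\otimes 1>0\}$ or $\{1\otimes b>0\}$) and, more, that $g$ is a \emph{spectral} map (pulling sub-basic quasi-compact opens back to the quasi-compact opens $\{a\otimes1>0\}$, $\{1\otimes b>0\}$, hence all quasi-compact opens to quasi-compact opens). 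Now a spectral bijection that reflects the specialization order is automatically a homeomorphism: it is a homeomorphism for the constructible topologies (a continuous bijection from a compact space to a Hausdorff space), and a constructible subset of a spectral space is open exactly when it is stable under generization, so images of quasi-compact opens are again quasi-compact opens. It therefore remains to show that $g$ is bijective and reflects specializations.

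\emph{Bijectivity.} For fixed $\mathfrak p_A\in\Spec A$ and $\mathfrak p_B\in\Spec B$ lying over $\mathfrak p_C\in\Spec C$, the points of $R\Spec(A\otimes_C B)$ over $(\mathfrak p_A,\mathfrak p_B)$ are precisely the points of $R\Spec\bigl(\kappa(\mathfrak p_A)\otimes_{\kappa(\mathfrak p_C)}\kappa(\mathfrak p_B)\bigr)$, and the target decomposes compatibly; so $g$ is bijective as soon as, for all field extensions $k\subseteq K$ and $k\subseteq l$ with $l/k$ algebraic, the natural map $R\Spec(K\otimes_k l)\to R\Spec(K)\times_{R\Spec(k)}R\Spec(l)$ is bijective. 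Here $K\otimes_k l$ is integral over $K$, so each of its residue fields is an algebraic extension of $K$ generated by the image of $l$. Surjectivity: given compatible orderings $\omega_K$ of $K$ and $\omega_l$ of $l$, the ordered algebraic extension $(l,\omega_l)$ of $(k,\omega_K|_k)$ embeds over $k$ into the real closure $F$ of $(K,\omega_K)$, and the resulting homomorphism $K\otimes_k l\to F$ has a kernel which, equipped with the ordering induced from $F$, is a preimage of $(\omega_K,\omega_l)$. Injectivity: a second preimage provides a second residue field, again an algebraic extension of $K$ generated by an image of $l$, carrying an ordering restricting to $(\omega_K,\omega_l)$; embedding both residue fields over $K$ into $F$, their restrictions to $l$ must each be \emph{the} order-preserving $k$-embedding of the algebraic ordered field $(l,\omega_l)$ into $F$ — the uniqueness of this embedding is exactly where algebraicity of $l/k$ is essential — so the two embeddings of $K\otimes_k l$ into $F$ agree on $K$ and on $l$, hence agree, and the two preimages coincide.

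\emph{The topology, which is the main obstacle.} It remains to show $g$ reflects specializations. In a real spectrum $P$ specializes to $P'$ exactly when $P\subseteq P'$ as prime cones, so one must show: if $Q_1,Q_2$ are prime cones of $A\otimes_C B$ with $Q_1\cap A\subseteq Q_2\cap A$ (inside $R\Spec A$) and $Q_1\cap B\subseteq Q_2\cap B$, then $Q_1\subseteq Q_2$. Equivalently: each prime cone $Q_1$ of $A\otimes_C B$ is generated, as a cone, by the images of $Q_1\cap A$ and $Q_1\cap B$ together with the squares — that is, positivity of an element $\sum_i a_i\otimes b_i$ at a point of $R\Spec(A\otimes_C B)$ is carved out by finitely many positivity conditions pulled back from $R\Spec(A)$ and $R\Spec(B)$. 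This is a Positivstellensatz-type assertion and it genuinely requires the residue-field hypothesis (it already fails for $\R\otimes_\Q\R$, where $g$ is not even injective). I would prove it locally: near a point with support $\mathfrak q$ over $(\mathfrak p_A,\mathfrak p_B)$ the residue field $\kappa(\mathfrak q)$ is algebraic over $\kappa(\mathfrak p_A)$, so after passing to a finite subextension carrying the elements in play and choosing a primitive element, the sign of $\sum_i a_i\otimes b_i$ there is determined by the signs of the coefficients of the associated minimal polynomial (elements pulled back from $A$) together with the datum of which of its real roots one sits at (encoded by elements pulled back from $B$). Granting this last point, the quasi-compact opens of the source and target correspond under $g$, and $g$ is the asserted homeomorphism; this is in essence the content of \cite[Proposition~1.7]{real-and-etale-cohomology}.
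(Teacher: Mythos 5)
The paper itself gives no argument here --- it quotes Scheiderer's Proposition 1.7 --- so your proof has to stand on its own, and most of it does: the reduction to the affine case, the observation that a spectral bijection of spectral spaces reflecting specialization is a homeomorphism (granting the standard fact that the topological fibre product of spectral spaces along spectral maps is again spectral, with the expected constructible topology), and the fibrewise reduction of bijectivity to the Artin--Schreier statement about orderings of algebraic extensions are all correct and essentially complete; the uniqueness of the order-preserving embedding of $(l,\omega_l)$ into the real closure of $(K,\omega_K)$ is indeed exactly where algebraicity enters.

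The genuine gap is the step you yourself flag as the main obstacle, and the sketch offered does not close it. First, the ``equivalently'' is only an implication: generation of the prime cone $Q_1$ of $A\otimes_C B$ by the pullbacks of its restrictions together with squares does imply specialization reflection, but reflection only says that $Q_1$ is the \emph{saturation} of that preordering (the intersection of all prime cones containing it). The ring-level generation statement is a Positivstellensatz \emph{without denominators}; such statements are generally false or at least require real work, and nothing in your text addresses why one should hold here (your own field-level manipulations, e.g.\ dividing by $(c+\sqrt2)^2$, already use denominators, and at the ring level the support ideal of $Q_1$ must also be generated, which is far from clear). So you have replaced the statement you need by a stronger one whose truth is unestablished. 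Second, the local argument is too vague at precisely the decisive point: the assertion that ``which of the real roots one sits at'' is ``encoded by elements pulled back from $B$'' is the whole difficulty. The natural way to pin down the root is by comparisons between the primitive element (coming from $B$) and elements of $K$ (coming from $A$), i.e.\ by sign conditions on mixed elements of $A\otimes_C B$ --- exactly the conditions you are trying to decompose; moreover the coefficients of the minimal polynomial live in the residue field $\kappa(\mathfrak p_A)$, and transferring sign conditions between residue fields and the rings, and between the support of $Q_1$ and the larger support of $Q_2$, needs an argument (a representation valid in $\kappa(\mathfrak q_1)$ does not evaluate at a point with bigger support). A workable route for the missing step is to prove specialization reflection directly, e.g.\ by realizing the componentwise specializations by real closed valuation rings (Scheiderer (1.5.2), recalled later in this paper) and using algebraicity to produce a compatible real closed valuation ring for $A\otimes_C B$, or to follow Scheiderer's own argument that constructible opens of $R(X\times_S S')$ are pulled back; as written, ``granting this last point'' concedes the core of the proposition rather than proving it.
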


\begin{ex}
	\label{ex:realfibers}
Suppose $S$ is a scheme and $(s,\alpha) \in RS$.  Write $r$ for the real closure of $\kappa(s)$ with respect to $\alpha$. Given any morphism $f: X \to S$, the above result implies that $R(\Spec(r) \times_S X)$ is homeomorphic to the fiber of $RX$ over $(s,\alpha)$.  We will freely use this in the sequel.
\end{ex}

\begin{ex}
	\label{ex:cechnerve}
	If $u: X' \to X$ is an \'etale morphism, and $C(u)$ is the Čech nerve of $u$, then $R(C(u)) = C(R(u))$ by repeated application of Proposition~\ref{prop:Rfiberproducts}. 
\end{ex}

A family of morphisms $ \{\varphi_i: U_i \to X\}_{i \in I}$ is said to be {\em real surjective} if the induced maps $RU_i \to RX$ are jointly surjective.  Real surjective families are known to satisfy the axioms for the coverings of a pretopology \cite[\S 1.2]{real-and-etale-cohomology}.  The topology on $\Et(X)$ defined by real surjective families is called the {\em real-\'etale topology}; we write $X_{\ret}$ for the corresponding site.  We write $\Shv(X_{\ret})$ for the $\infty$-topos of sheaves of spaces on $X_{\ret}$.  If $f: X' \to X$ is an \'etale morphism, then Scheiderer shows that $Rf$ is a local homeomorphism \cite[Proposition 1.8]{real-and-etale-cohomology}.  

While it is difficult to directly compare the sheaves on $RX$ and real-\'etale sheaves, the preceding observation allows a comparison.  Scheiderer builds an intermediate site and establishes that topoi of sheaves of sets on $RX$ and sheaves of sets on $X_{\ret}$ are naturally equivalent \cite[Theorem 1.3]{real-and-etale-cohomology}.  Carral--Coste \cite{CarralCoste} showed that the cohomological dimension of a spectral space is bounded by its Krull dimension.  Scheiderer extended this result to locally spectral spaces \cite{SchedeiderCD} (see also \cite[\href{https://stacks.math.columbia.edu/tag/0A3G}{Tag 0A3G}]{stacks-project}) and one thus obtains bounds on the cohomological dimensions of $X_{\ret}$.  

Elmanto--Shah promoted Scheiderer's comparison to a natural equivalence of $\infty$-topoi.  In conjunction with the fact that $RX$ is locally spectral, they also deduced convergence of Postnikov towers when $X$ has finite Krull dimension by combining the $\infty$-topos enhancement of Scheiderer with a variant of Scheiderer's cohomological dimension bound established by Clausen-Mathew \cite[Theorem 3.14]{ClausenMathew}.  We combine the above observations in the following result.

\begin{theorem}[{\cite[Theorems B.10 and B.13]{elmanto2021scheiderer}}]
	\label{thm:rethypercomplete}
	Suppose $X$ is a scheme.
	\begin{enumerate}[noitemsep,topsep=1pt]
	\item There is a natural equivalence of $\infty$-topoi of the form:
	\[
	\Shv(RX) \simeq \Shv(X_{\ret}).
	\]
	\item If $X$ has locally finite Krull dimension, then $\Shv(X_{\ret})$ is hypercomplete.
	\end{enumerate}
\end{theorem}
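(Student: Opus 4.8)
The plan is to obtain both assertions by orchestrating inputs already recalled above; I sketch the structure, since a self-contained argument essentially reconstructs \cite[Theorems B.10 and B.13]{elmanto2021scheiderer}.

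For part (1), I would begin from Scheiderer's equivalence of $1$-topoi between sheaves of sets on $RX$ and on $X_{\ret}$, which is built through an intermediate site whose objects are the local homeomorphisms over $RX$ of the form $RU \to RX$ with $U \to X$ étale, together with their open subspaces. The assignment $U \mapsto (RU \to RX)$ on $\Et(X)$ is continuous — a real-surjective family $\{U_i \to X\}$ is carried to a jointly surjective family of local homeomorphisms (Scheiderer's \cite[Proposition 1.8]{real-and-etale-cohomology}), hence to an open cover after passing to espaces étalés — and it is left exact by Proposition~\ref{prop:Rfiberproducts}. Thus one has a morphism of sites, inducing a geometric morphism $\Shv(RX) \to \Shv(X_{\ret})$. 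To promote Scheiderer's set-level equivalence to an equivalence of $\infty$-topoi, I would invoke the $\infty$-categorical comparison lemma (the $\infty$-topos form of the SGA~4 théorème de comparaison, as in \cite{HTT}): it suffices that the intermediate site be a basis for both topologies and that the structure functors be continuous, cocontinuous and left exact, all of which is precisely what Scheiderer's construction provides.

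For part (2), using the equivalence of (1) it is enough to show that $\Shv(RX)$ is hypercomplete when $X$ has locally finite Krull dimension. Hypercompleteness is local on the topos, and $RX$ is locally spectral (indeed locally coherent by Proposition~\ref{prop:coherence}), so I would reduce to $RX$ spectral of finite Krull dimension $n$, noting $\dim RX \le \dim X$ via the support map $RX \to X$. The Carral--Coste bound \cite{CarralCoste}, as extended to (locally) spectral spaces by Scheiderer \cite{SchedeiderCD} (cf. \cite[\href{https://stacks.math.columbia.edu/tag/0A3G}{Tag 0A3G}]{stacks-project}), shows that $RX$ together with all of its closed subspaces has cohomological dimension $\le n$; feeding this uniform bound into the convergence criterion of Clausen--Mathew \cite[Theorem 3.14]{ClausenMathew} yields convergence of Postnikov towers in $\Shv(RX)$, equivalently hypercompleteness.

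The main obstacle is the upgrade in part (1) from a comparison of $1$-topoi to a comparison of $\infty$-topoi: an equivalence of $1$-topoi does not by itself determine the ambient $\infty$-topos — hypercomplete versus non-hypercomplete sheaves being the standard cautionary example — so one must genuinely verify that Scheiderer's comparison is realized by an honest equivalence of \emph{sites} satisfying the hypotheses of the comparison lemma, rather than only after passing to $0$-truncated objects. Once this is secured, part (2) follows formally from the cohomological dimension bounds and the Clausen--Mathew convergence result.
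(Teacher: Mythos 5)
The paper does not prove this statement itself: it is imported from Elmanto--Shah (Theorems B.10 and B.13), and the paper's surrounding discussion describes precisely the route you sketch --- Scheiderer's comparison through the intermediate site of local homeomorphisms over $RX$, promoted to an equivalence of $\infty$-topoi, combined with the Carral--Coste/Scheiderer cohomological dimension bounds and the Clausen--Mathew convergence criterion. Your proposal is therefore correct in outline and follows essentially the same approach as the cited source; the only minor slip is calling Postnikov convergence ``equivalent'' to hypercompleteness (it is stronger in general), which does not affect the deduction.
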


\begin{cor}
	\label{cor:compactgeneration}
	If $X$ is a qcqs scheme, then $\Shv(RX)$ and $\Shv(X_{\ret})$ are compactly generated, i.e., generated under filtered colimits by compact objects.
\end{cor}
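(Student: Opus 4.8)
The plan is to transport the question onto the space $RX$, where coherence does the work. By Theorem~\ref{thm:rethypercomplete}(1) there is an equivalence $\Shv(RX)\simeq\Shv(X_{\ret})$, so it suffices to show $\Shv(RX)$ is compactly generated. Since $X$ is qcqs, a finite affine cover of $X$ gives a finite cover of $RX$ by spectral, hence quasi-compact, spaces, so $RX$ is itself quasi-compact; together with Proposition~\ref{prop:coherence} this says that the set $\mathcal B$ of quasi-compact open subsets of $RX$ is a basis for the topology, contains $RX$, and is closed under finite intersections. Thus $\mathcal B$ is a distributive lattice with top element whose meets are intersections of opens.

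I would then pass from $RX$ to the small site $\mathcal B$. Because $\mathcal B$ is a basis closed under finite intersections, restriction identifies $\Shv(RX)$ with $\Shv_\tau(\mathcal B)$, where $\tau$ is the Grothendieck topology whose covering sieves are those containing a \emph{finite} family $\{U_i\hookrightarrow U\}$ with $\bigcup_i U_i=U$; quasi-compactness of the objects of $\mathcal B$ is exactly what lets one refine an arbitrary open cover to a finite subcover, so $\tau$ is the topology induced from $RX$.

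The crux is to show that the inclusion $\Shv_\tau(\mathcal B)\hookrightarrow\mathrm{PShv}(\mathcal B)$ preserves filtered colimits. For this I would verify that every $\tau$-covering sieve is compact in $\mathrm{PShv}(\mathcal B)$: the sieve generated by a finite cover $\{U_i\}_{i=1}^n$ of $U$ is the union $\bigcup_i y(U_i)$ of subobjects of the representable $y(U)$, and since the intersections $U_{i_1}\cap\dots\cap U_{i_k}$ again lie in $\mathcal B$ this union is a \emph{finite} colimit of the representables $y(U_{i_1}\cap\dots\cap U_{i_k})$ (the total-cofiber/Mayer--Vietoris description of a union of $n$ subobjects), hence compact. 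Granting this, a filtered colimit $\colim_j\mathcal F_j$ of $\tau$-sheaves, formed in presheaves, still satisfies descent, since evaluation on $y(U)$ and on each covering sieve commutes with the filtered colimit; so it is a sheaf. It follows that the sheafified representables $L_\tau y(U)$ are compact in $\Shv_\tau(\mathcal B)$ ($L_\tau$ is a left adjoint and the inclusion preserves filtered colimits), and they generate under colimits because the $y(U)$ generate $\mathrm{PShv}(\mathcal B)$ and $L_\tau$ is a localization. This exhibits $\Shv(RX)\simeq\Shv_\tau(\mathcal B)\simeq\Shv(X_{\ret})$ as compactly generated.

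The only genuinely non-formal point, and hence the main obstacle, is the compactness of covering sieves in $\mathrm{PShv}(\mathcal B)$; this is exactly where the coherence of $RX$ --- closure of quasi-compact opens under finite intersection --- enters, everything else being the standard mechanism relating a finitary Grothendieck topology to filtered colimits. Alternatively one could just cite the general fact that sheaves of spaces on a coherent topological space form a compactly generated $\infty$-category (e.g.\ from \cite{elmanto2021scheiderer}), but the route above is self-contained.
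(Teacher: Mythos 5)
Your argument is correct, and its top-level structure is the same as the paper's: transport along the equivalence $\Shv(RX)\simeq\Shv(X_{\ret})$ of Theorem~\ref{thm:rethypercomplete}(1) and exploit the coherence of $RX$ from Proposition~\ref{prop:coherence}. The difference is in how the key fact is handled: the paper simply cites \cite[Proposition 6.5.4.4]{HTT} (sheaves of spaces on a coherent topological space are compactly generated), whereas you reprove that fact by hand, passing to the basis $\mathcal B$ of quasi-compact opens, observing that the induced topology is finitary, showing that the sieves generated by finite covers are finite colimits of representables (hence compact in presheaves), and deducing that the inclusion of sheaves into presheaves preserves filtered colimits, so that the sheafified representables are compact generators. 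This buys a self-contained proof at the cost of length; you also correctly supply the quasi-compactness of $RX$ itself, which the paper's Proposition~\ref{prop:coherence} does not state explicitly but which is needed for Lurie's notion of coherence. One small point to make explicit: the sheaf condition should be verified only against the sieves \emph{generated} by finite covering families (these are the compact ones), and one then invokes the standard fact that locality with respect to this pullback-stable generating family of sieves implies locality with respect to every covering sieve of the induced topology; as stated, "evaluation on each covering sieve commutes with the filtered colimit" is not literally true for arbitrary (non-finitely-generated) covering sieves, but this is a routine fix rather than a gap.
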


\begin{proof}
	By \cite[Proposition 6.5.4.4]{HTT}, if $Z$ is a topological space that is coherent in the sense that quasi-compact opens are stable under finite intersections and form a basis for the topology of $Z$, then $\Shv(Z)$ is compactly generated.  The space $RX$ is coherent in this sense if $X$ is qcqs by Proposition~\ref{prop:coherence}, so we conclude $\Shv(RX)$ is compactly generated.  Theorem~\ref{thm:rethypercomplete}(1) then guarantees that $\Shv(X_{\ret})$ is compactly generated as well.
\end{proof}

For later use, we also recall some facts about points and specialization in the real-\'etale topology.  By a {\em real closed point} of a scheme $X$, we will mean a closed point with real closed residue field.  A ring $A$ is called {\em real henselian} if it is henselian local with real closed residue field and a {\em real closed valuation ring} if it is a valuation ring whose fraction and residue fields are real closed.  Real henselian rings are stalks in the real-\'etale topology \cite[\S 3.7]{real-and-etale-cohomology}.  If $A$ is a real closed valuation ring and $S$ is a scheme, then the morphism $\Spec A \to S$ determines a specialization in $RS$ and every specialization $x \rightsquigarrow y$ in $RS$ is obtained in this fashion \cite[(1.5.2)]{real-and-etale-cohomology}.

\subsubsection*{Comparing big and small sites}
Consider the inclusion $\iota: \Et_S \hookrightarrow \Sm_S$.  This functor induces a restriction functor at the level of presheaf categories which has a left adjoint ``extension" functor:
\[
\begin{tikzcd}
\mathrm{P}(\Et_S) \arrow[r, "\iota^p" ,shift left] & \arrow[l, "\iota_*", shift left]\mathrm{P}(\Sm_S);
\end{tikzcd}
\]
the functor $\iota^p$ is a left Kan extension.  

Consider the big site $(\Sm_S)_{\ret}$.  The functor $\iota: \Et_S \longrightarrow \Sm_S$ preserves covers by construction and also preserves fiber products, so it is a continuous morphism of sites in the sense of \cite[\href{https://stacks.math.columbia.edu/tag/00WU}{Tag 00WU}]{stacks-project}.  As a continuous morphism of sites, we conclude that restriction yields a functor 
\[
\iota_*: \Shv_{\ret}(\Sm_S) \longrightarrow \Shv(S_{\ret}),
\]
with left adjoint $\iota^*:= a_{\ret}\iota^p$.  Furthermore, $\iota_*$ commutes with formation of the associated sheaf (since \'etale morphisms are stable under composition, every cover of $X \in S_{\ret}$ in $\Sm_S$ comes from a cover in $S_{\ret}$) and thus preserves colimits.  In addition, $\iota^*$ preserves representable sheaves.

\begin{proposition}
	\label{prop:bigsmallcomparison}
	The inclusion functor $\iota: \Et_S \hookrightarrow \Sm_S$ induces a continuous morphism of sites $S_{\ret} \to (\Sm_S)_{\ret}$.  There is an induced adjunction
	\[
	\begin{tikzcd}
\Shv(S_{\ret}) \arrow[r, "\iota^*" ,shift left] & \arrow[l, "\iota_*", shift left]\Shv_{\ret}(\Sm_S),
	\end{tikzcd}
	\]
	where $\iota_*$ is colimit preserving and $\iota^*$ is fully-faithful.
\end{proposition}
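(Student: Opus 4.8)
The plan is to observe that essentially everything in the statement except the full faithfulness of $\iota^*$ has already been verified in the discussion preceding it: the functor $\iota$ preserves $\ret$-covers and fibre products, hence is a continuous morphism of sites, which furnishes the restriction functor $\iota_*$ together with its left adjoint $\iota^* = a_{\ret}\,\iota^p$; and because every $\ret$-cover in $\Sm_S$ of an object of $\Et_S$ already comes from a $\ret$-cover in $S_{\ret}$, the functor $\iota_*$ commutes with sheafification and is in particular colimit-preserving. So the substance of the proposition is the claim that $\iota^*$ is fully faithful.

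To prove that, I would show the unit $\eta\colon \id_{\Shv(S_{\ret})} \to \iota_*\iota^*$ is an equivalence. The key input is at the level of presheaves: since $\iota$ is fully faithful, its left Kan extension $\iota^p\colon \mathrm{P}(\Et_S)\to\mathrm{P}(\Sm_S)$ is fully faithful as well, i.e.\ the presheaf-level unit $\id\to\iota_*\iota^p$ is an equivalence (restricting a left Kan extension along a fully faithful functor recovers the original functor; cf.\ \cite[Proposition 4.3.2.15]{HTT}). Granting that $\iota_*$ intertwines the big- and small-site sheafifications, for $F\in\Shv(S_{\ret})$ one then gets
\[
\iota_*\iota^* F \;=\; \iota_*\bigl(a_{\ret}\,\iota^p F\bigr)\;\simeq\; a_{\ret}\bigl(\iota_*\iota^p F\bigr)\;\simeq\; a_{\ret}(F)\;\simeq\; F,
\]
the final step because $F$ is already a $\ret$-sheaf; one then checks this chain of identifications is the unit $\eta_F$.

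A slightly more robust variant, which sidesteps having to track the coherence of the sheafification-interchange through $\iota^p$, is to argue on generators: both $\iota^*$ (a left adjoint) and $\iota_*$ (shown above) preserve colimits, hence so does $\iota_*\iota^*$, and $\Shv(S_{\ret})$ is generated under colimits by the representable sheaves $y(X)$ for $X\in\Et_S$; since $\iota^p$ sends the presheaf represented by $X$ to the presheaf represented by $\iota X = X$, one computes $\iota_*\iota^* y(X)\simeq y(X)$, so $\eta$ is an equivalence on a generating family of objects and therefore an equivalence.

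The hard part is bookkeeping rather than conceptual: making the interchange $\iota_*\circ a_{\ret}\simeq a_{\ret}\circ\iota_*$ precise as an equivalence of functors (this is the content of the remark, recalled above, that $\ret$-covers of small objects are induced from the small site) and confirming that the resulting identification $\iota_*\iota^* F\simeq F$ is genuinely the adjunction unit. With that in hand the proof is formal.
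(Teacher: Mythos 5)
Your proposal is correct and follows essentially the same route as the paper: the paper also reduces full faithfulness of $\iota^*$ to the identity $\iota_*\iota^* h_X \simeq h_X$ on representables, using $\iota^* = a_\ret\iota^p$, the interchange $\iota_* a_\ret \simeq a_\ret \iota_*$, and colimit-preservation of both functors to extend to all sheaves. Your second (``generators'') variant is exactly the paper's argument, and the first variant is just a mild repackaging of it via full faithfulness of $\iota^p$ on all presheaves.
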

\begin{proof}
We have already seen that $\iota_*$ preserves colimits.
If $h_X \in \Shv(S_\ret)$ is a representable sheaf, then $\iota_*\iota^* h_X = h_X$ (since this holds for presheaves, and both $i^* = a_\ret i^p$, $i_* a_\ret = a_\ret i_*$).  Since both $\iota_*$ and $\iota^*$ preserve colimits and any sheaf may be written as a colimit of representables, we conclude that $\iota^* \iota_* \mathscr{F} \simeq \mathscr{F}$ for any sheaf.  In other words, $\iota_*$ is fully-faithful.
\end{proof}

We aim to analyze the essential image of $\iota^*$.  To do this, we combine Proposition~\ref{prop:bigsmallcomparison} and Theorem~\ref{thm:rethypercomplete} to obtain the following result.

\begin{cor}
	\label{cor:realsheavesvsbigretsheaves}
	There is a colimit preserving, fully-faithful functor
	\[
	e: \Shv(RS) \longrightarrow \Shv_{\ret}(\Sm_S).
	\]
\end{cor}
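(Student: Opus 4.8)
The plan is to obtain $e$ as a composite of two functors that are already in hand, with no new construction required. Theorem~\ref{thm:rethypercomplete}(1) supplies a natural equivalence of $\infty$-topoi $\Shv(RS) \simeq \Shv(S_{\ret})$; being an equivalence, it is automatically colimit preserving and fully faithful. Proposition~\ref{prop:bigsmallcomparison} supplies the functor $\iota^*\colon \Shv(S_{\ret}) \to \Shv_{\ret}(\Sm_S)$, which that proposition asserts is fully faithful and which, being the left adjoint in the displayed adjunction $\iota^* \dashv \iota_*$, preserves all colimits. One then simply sets
\[
e \colon \Shv(RS) \xrightarrow{\ \simeq\ } \Shv(S_{\ret}) \xrightarrow{\ \iota^*\ } \Shv_{\ret}(\Sm_S),
\]
and observes that a composite of colimit-preserving, fully-faithful functors is again colimit preserving and fully faithful.

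I do not expect a genuine obstacle here: all of the content is packaged in the two cited results, and what remains is bookkeeping. The two points to keep straight are (i) that fully-faithfulness of the composite follows from fully-faithfulness of $\iota^*$, which is precisely the identity $\iota_* \iota^* \simeq \id$ established inside the proof of Proposition~\ref{prop:bigsmallcomparison}; and (ii) that colimit-preservation of the composite reduces to colimit-preservation of $\iota^*$, which is formal from the adjunction $\iota^* \dashv \iota_*$ of that same proposition, since a left adjoint preserves all colimits. The equivalence of Theorem~\ref{thm:rethypercomplete}(1) contributes both properties for free. This functor $e$ is the one whose essential image—the ``real-étale locally constant'' sheaves—will be identified in the following results.
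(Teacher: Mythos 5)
Your proof is correct and is exactly the paper's own argument: the corollary is obtained by composing the equivalence $\Shv(RS) \simeq \Shv(S_{\ret})$ of Theorem~\ref{thm:rethypercomplete}(1) with the fully faithful, colimit-preserving left adjoint $\iota^*$ of Proposition~\ref{prop:bigsmallcomparison}. Nothing further is needed.
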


\subsubsection*{Some real-\'etale sheaves}
Suppose $A$ is a commutative ring and $a\in A$.
We will say that $a \geq 0$ if for every real closed field $r$ and every ring homomorphism $\alpha:  A \to r$, $\alpha(a) \geq 0$; the assertion that $a$ satisfies some other inequality is defined similarly.

\begin{defn}
	\label{defn:ineq}
Define subpresheaves
\[
{\mathscr O}_{\geq 0}(A) := \{ a \in A  | a \geq 0\};
\]
We define subpresheaves $\mathscr{O}_{> 0}$, $\mathscr{O}_{\leq 0}$ and $\mathscr{O}_{< 0}$ and $\mathscr{O}_{[0,1]}$ in an analogous fashion.
\end{defn}

\begin{lem}
	\label{lem:gm}
	The canonical map $\mathscr{O}_{< 0} \sqcup \mathscr{O}_{> 0} \to \mathscr{O}^{\times}$ is an isomorphism after real-\'etale sheafification.
\end{lem}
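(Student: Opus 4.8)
The plan is to check the assertion stalk-locally. By Theorem~\ref{thm:rethypercomplete}(1) the topos $\Shv(X_\ret)$ is equivalent to $\Shv(RX)$ and hence has enough points; since we are comparing sheaves of sets, it suffices to show that the relevant map is a bijection on every stalk. The stalks are the real henselian local rings $A$ (\cite[\S3.7]{real-and-etale-cohomology}), whose residue field $k$ is by definition real closed; the stalk of $a_\ret\mathscr{O}$ (where $\mathscr{O}$ is represented by $\A^1$) at such a point is $A$, and, since filtered colimits of rings commute with passage to units, the stalk of $a_\ret\mathscr{O}^\times$ at that point is $A^\times$.

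Next I would compute the stalks of $a_\ret\mathscr{O}_{>0}$ and $a_\ret\mathscr{O}_{<0}$, which, being sheafifications of subpresheaves of $\mathscr{O}$, sit inside $a_\ret\mathscr{O}$. The claim is that the stalk of $a_\ret\mathscr{O}_{>0}$ at $A$ is $\{a\in A : \bar a>0 \text{ in } k\}$, and likewise with $<$. One inclusion is immediate: a germ is represented by some $g\in\mathscr{O}_{>0}(U)$, i.e.\ $g$ is positive at every real point of $U$, in particular at the distinguished one, so the residue of $g$, hence the image of the germ in $A$, is positive in $k$. For the reverse inclusion, if $\bar a>0$ then, $k$ being real closed, $\bar a=\bar b^2$ with $\bar b\ne 0$; Hensel's lemma lifts this to $a=b^2$ with $b\in A^\times$, so $a$ is a square of a unit, hence positive at every real point of $\Spec A$, and after passing to an étale neighborhood on which $a$ and $b$ are defined and $b$ is invertible it represents a section of $\mathscr{O}_{>0}$. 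In particular every element of this stalk is a unit of $A$ — the stalk-level shadow of the fact that an everywhere-positive function becomes invertible real-étale locally (the ``$1/(x^2+1)$'' phenomenon). Consequently the sheafification of the inclusion $\mathscr{O}_{>0}\hookrightarrow\mathscr{O}$ factors through the subsheaf $a_\ret\mathscr{O}^\times$ (factorization through a subsheaf can be tested on stalks), and similarly for $\mathscr{O}_{<0}$; their coproduct is the canonical map of the statement. Note there is genuinely no such map before sheafification, since e.g.\ $2$ is positive but not a unit in $\Z$.

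Finally, since $a_\ret$ and stalk functors preserve coproducts, on the stalk at $A$ the map is the inclusion $\{\bar a<0\}\sqcup\{\bar a>0\}\hookrightarrow A^\times$. As $k$ is a real closed field it carries a unique total order, so an element of $A$ is a unit precisely when its residue is $>0$ or $<0$, and these two conditions are mutually exclusive; thus the map is a bijection on every stalk, hence an isomorphism of real-étale sheaves. I expect the only real obstacle to be the computation of the stalk of $a_\ret\mathscr{O}_{>0}$ — that is, propagating positivity from the distinguished point to a whole neighborhood, and correspondingly seeing that everywhere-positive functions are inverted by sheafification — which here collapses to the square-root trick over a real henselian ring; the rest is formal bookkeeping with sheafification, coproducts and stalks.
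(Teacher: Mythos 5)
Your proposal is correct and follows essentially the same route as the paper: check the map on stalks, i.e.\ at real henselian local rings $A$ with real closed residue field $k$, use the ordering on $k$ for injectivity/disjointness, and use the Hensel square-root trick (a unit with positive residue is a square unit, hence positive under every homomorphism to an ordered field) for surjectivity. Your extra bookkeeping — identifying the stalks of $a_\ret\mathscr{O}_{>0}$, $a_\ret\mathscr{O}_{<0}$, $a_\ret\mathscr{O}^\times$ explicitly and noting that the canonical map only exists after sheafification — is a more detailed write-up of the argument the paper gives tersely, not a different proof.
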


\begin{proof}
	We may check the isomorphism stalkwise (the sheaves being discrete).  Thus, assume $A$ is a real Henselian local ring with residue field $k$. Since $A$ admits a map to a real closed field (namely $k$), it follows that every $a \in A$ satisfies at most one of $a > 0$ or $a < 0$; hence the map is injective. Now, if the image of $a$ is positive in $k$, then $a$ is a square.  Since $a$ is a unit by assumption, it is a square unit, and therefore positive in any other ordered field.  A similar statement holds if $a$ is negative in $k$ and thus the map is also surjective.
\end{proof}



\begin{lem} \label{lemm:construct-sect}
	For any section $s \in a_\ret {\mathscr O}$, the section $1+s^2$ is invertible.
\end{lem}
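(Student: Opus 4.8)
The plan is to check invertibility on stalks. Recall that a section $f$ of a sheaf of commutative rings is a unit precisely when its image in every stalk is a unit: one direction is trivial, and for the converse the locally defined inverses agree on overlaps (an inverse in a commutative ring is unique) and hence glue to a global inverse, which one verifies to be a two-sided inverse by checking on stalks. Applying this to $f = 1+s^2 \in a_\ret\mathscr{O}(U)$, it suffices to prove: for every point $x$ of the real-\'etale site and every element $\sigma$ of the stalk $R := (a_\ret\mathscr{O})_x$, the element $1+\sigma^2 \in R$ is a unit.

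Next I would identify the stalk. By Scheiderer's results recalled in Section~\ref{sec:small-ret} (see \cite[\S 3.7]{real-and-etale-cohomology}), the stalks of $\mathscr{O}$, equivalently of its real-\'etale sheafification $a_\ret\mathscr{O}$, are real henselian local rings; in particular $R$ is local with real closed residue field $k = R/\mathfrak{m}$. Since a real closed field is formally real, $1+t^2 \neq 0$ for every $t \in k$ (indeed $1+t^2 > 0$ in the unique ordering of $k$). Hence the image of $1+\sigma^2$ in $k$ is nonzero, so $1+\sigma^2 \notin \mathfrak{m}$ and is therefore a unit of the local ring $R$. This completes the proof.

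There is essentially no obstacle here; the only facts requiring care are the standard ones that units of a sheaf of rings are detected stalkwise and that the real-\'etale stalks of $\mathscr{O}$ have real closed residue field. One can also bypass these entirely: for any homomorphism $\alpha$ from a ring to a real closed field, $\alpha(s^2)$ is a square and so $\geq 0$, whence $1+s^2$ is, locally in the real-\'etale topology, a positive section in the sense of Definition~\ref{defn:ineq}; combined with Lemma~\ref{lem:gm}, which identifies the real-\'etale sheafification of $\mathscr{O}_{<0}\sqcup\mathscr{O}_{>0}$ with the sheaf of units of $a_\ret\mathscr{O}$, this again gives invertibility of $1+s^2$. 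I would nonetheless present the stalkwise argument as the main proof, as it is the most transparent.
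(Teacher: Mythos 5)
Your proposal is correct and matches the paper's argument: the paper likewise passes to real-étale stalks, which are real henselian local rings, and notes that $1+s^2$ has positive (hence nonzero) image in the real closed residue field, so it lies outside the maximal ideal and is a unit. The extra detail you supply about detecting units stalkwise is routine and consistent with the paper's intended reasoning.
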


\begin{proof}
	Indeed if $A$ is a real henselian ring then $1+s^2 \in A$ has positive image in the residue field, whence is a unit.
\end{proof}

\subsubsection*{Real étale homotopy sheaves}
\begin{lem} \label{lem:ret-sheaves-detect}  Let $S$ be a quasi-compact, quasi-separated scheme. 
	\begin{enumerate}[noitemsep,topsep=1pt]
		\item If $S$ is the spectrum of a semilocal ring and $F \in \Shv(S_\ret)$,  then $\pi_0 F(S) \wequi (a_\ret \pi_0 F)(S)$.
		\item If  $\tau \in \{Zar, Nis\}$ and $E$ a $\tau$-hypersheaf of spaces on $S$, then $E$ is a real étale hypersheaf if and only if $\ul\pi_i^\tau E$ is a real étale sheaf for every $i$ (and every $\tau$-local base point).
		\item If in addition $S$ has finite Krull dimension, the same holds with ``sheaf'' in place of ``hypersheaf''.
	\end{enumerate}
\end{lem}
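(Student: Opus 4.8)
The plan is to establish the three parts in the order (1), (2), (3): part~(1) is a local statement that feeds the homotopy-sheaf criterion of part~(2), and part~(3) is a formal consequence of (2) once one knows that the relevant $\infty$-topoi are hypercomplete. For part~(1), the first move is to use Theorem~\ref{thm:rethypercomplete}(1) to replace $\Shv(S_\ret)$ by $\Shv(RS)$: then $F$ corresponds to a sheaf of spaces $\mathcal F$ on the locally spectral space $RS$, with $F(S)=\Gamma(RS,\mathcal F)$ and $(a_\ret\pi_0 F)(S)=\Gamma(RS,\tau_{\le 0}\mathcal F)$. The assertion becomes that $\Gamma(RS,-)$ commutes with $0$-truncation, equivalently that $\Gamma(RS,\mathcal G)$ is non-empty and connected for every connected sheaf $\mathcal G$. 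Running obstruction theory along the Postnikov tower of $\mathcal G$ (which converges because $RS$ is locally spectral of finite cohomological dimension, cf.\ Theorem~\ref{thm:rethypercomplete}(2)), both the surjectivity and the injectivity one needs follow once the positive-degree cohomology of $RS$ is small enough --- for instance, if $RS$ has cohomological dimension $\le 1$, so that every gerbe on $RS$ is neutral and all higher obstruction classes vanish. Establishing such a vanishing is where the semilocal hypothesis enters: one must show that $R\Spec A$ is cohomologically governed by the \emph{finitely many} maximal ideals of $A$ (e.g.\ by exhibiting a weak retraction of $R\Spec A$ onto a finite subspace assembled from the real closed points lying over the maximal ideals together with their specializations, reducing the vanishing to the trivial case of a finite poset). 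I expect \emph{this} structural analysis of real spectra of semilocal rings to be the main obstacle; the rest of the proof is formal.

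For part~(2), I would first record that Zariski and Nisnevich covers are real-surjective (they induce surjections of residue fields, hence of real spectra), so that every real-\'etale (hyper)sheaf is automatically a $\tau$-(hyper)sheaf and, for a real-\'etale hypersheaf, the $\tau$- and real-\'etale homotopy sheaves coincide (both equal $a_\ret$ applied to the homotopy presheaves, using $a_\ret a_\tau=a_\ret$). For the forward implication, let $E$ be a real-\'etale hypersheaf; to see $\ul\pi_i^\tau E$ is a real-\'etale sheaf I would reduce to the case $i=0$ by replacing $E$ with the based loop hypersheaf $\Omega_x^i E$ (again a real-\'etale hypersheaf), and then check that the natural map $\ul\pi_0^\tau E\to\ul\pi_0^\ret E$ of $\tau$-sheaves is an isomorphism on $\tau$-stalks. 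At a $\tau$-stalk --- the spectrum $B$ of a Henselian local ring, which is in particular semilocal --- the source is $\pi_0(E(B))$ and the target is $(a_\ret\pi_0 E)(B)$, and these agree by part~(1); hence $\ul\pi_0^\tau E=\ul\pi_0^\ret E$ is a real-\'etale sheaf.

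For the converse in part~(2), suppose all $\ul\pi_i^\tau E$ are real-\'etale sheaves and let $E\to F:=a_\ret^{\mathrm{hyp}}E$ be the real-\'etale hypersheafification. Both $E$ and $F$ are $\tau$-hypersheaves, so by Whitehead's theorem in the hypercomplete $\infty$-topos of $\tau$-hypersheaves it suffices to check that $E\to F$ induces isomorphisms on all $\ul\pi_i^\tau$. On one hand $\ul\pi_i^\ret F=\ul\pi_i^\ret E=a_\ret(\ul\pi_i^\tau E)=\ul\pi_i^\tau E$, the last equality because $\ul\pi_i^\tau E$ is assumed a real-\'etale sheaf. On the other hand $F$ is a real-\'etale hypersheaf, so by the forward implication $\ul\pi_i^\tau F=\ul\pi_i^\ret F$. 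Combining, the map $\ul\pi_i^\tau E\to\ul\pi_i^\tau F$ is an isomorphism, as required. Finally, for part~(3): when $S$ has finite Krull dimension, $\Shv_\tau(\Sm_S)$ is locally of finite cohomological dimension (each smooth $S$-scheme being finite-dimensional), hence hypercomplete, so ``$\tau$-sheaf'' and ``$\tau$-hypersheaf'' coincide; similarly $\Shv(S_\ret)$ (and its big-site variant) is hypercomplete by Theorem~\ref{thm:rethypercomplete}(2), so ``real-\'etale sheaf'' and ``real-\'etale hypersheaf'' coincide. Under these identifications the statement of (3) is precisely that of (2).
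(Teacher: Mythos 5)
Your parts (2) and (3) are essentially the paper's own argument: reduce necessity to $\ul\pi_0$ at $\tau$-stalks (which are local, hence semilocal, so part (1) applies), compare $E$ with its real-étale hypersheafification and conclude by Whitehead among $\tau$-hypersheaves, then deduce (3) from hypercompleteness of both topoi in finite Krull dimension. The problem is part (1), which is the real content of the lemma and which you explicitly defer as "the main obstacle"; moreover the route you sketch would not close it. First, cohomological dimension $\le 1$ is not an adequate hypothesis: taking $F$ to be the classifying sheaf $BG$ of a sheaf of groups $G$, the map $\pi_0 F(S) \to (a_\ret\pi_0 F)(S)$ is $H^1(RS,G) \to \ast$, so statement (1) forces vanishing of all (nonabelian) $H^1$ of $RS$ — an essentially zero-dimensional statement, not merely neutrality of gerbes plus vanishing in degrees $\ge 2$ (your "non-empty and connected" reformulation hides exactly this injectivity issue). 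Second, the structural claim you propose — a retraction of $R\Spec A$ onto a \emph{finite} subspace built from points over the maximal ideals — is false: already for $A=\R[x]_{(x)}$ the closed points of $R\Spec A$ include all orderings $P_{c\pm}$ ($c\neq 0$) and $P_{\pm\infty}$ of $\R(x)$, none of which specializes into the closed fiber, so the space of closed points is infinite. Third, your obstruction theory requires Postnikov convergence, which you justify by Theorem~\ref{thm:rethypercomplete}(2); that theorem needs locally finite Krull dimension, which is not assumed in (1) (a semilocal ring, e.g.\ a valuation ring of infinite rank, can be infinite-dimensional).

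What is actually needed, and what the paper proves, is a covering statement rather than a cohomological one: the subspace $M\subset RS$ of closed points is compact Hausdorff and admits a continuous retraction $p\colon RS\to M$ (send a point to its unique closed specialization), and — this is precisely where semilocality enters, via \cite[Lemma 19.2.2]{real-and-etale-cohomology} — $M$ is totally disconnected, hence has a basis of clopen sets. It follows that every open cover of $RS$ is refined by a \emph{finite cover by pairwise disjoint open sets}: refine the induced cover of $M$ by finitely many disjoint clopens and pull back along $p$. Granting this, (1) drops out of the explicit two-stage (Čech) formula for sheafification of the set-valued presheaf $\pi_0 F$, because for a finite disjoint cover the descent condition is vacuous ($F$ takes finite disjoint unions to products). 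This argument uses no Postnikov towers and no dimension hypotheses, and it delivers in one stroke the strong vanishing (including nonabelian $H^1$) that your approach would separately have to establish. So the proposal has a genuine gap: the semilocal analysis at the heart of (1) is missing, and the hints you give for filling it point in a direction that does not work.
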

\begin{proof}
	Point (1).  By Theorem~\ref{thm:rethypercomplete} we have a canonical equivalence $\Shv_\ret(S) \wequi \Shv(R(S))$.  Let $U \to R(S)$ be an open cover and let $V \to U \times_{R(S)} U$ be an open cover of the $2$-fold fiber product.  The diagram $V \rightrightarrows U$ can be extended to a hypercovering of $R(S)$; we refer to it as a $2$-stage hypercover.  The collection of all such $2$-stage hypercovers is filtered by refinement.  We may compute $(a_{\ret}\pi_0F)(S)$ by taking the filtered colimit over all $2$-stage hypercovers of $R(S)$ of equalizers of the form\NB{ref?}
	\[
	\colim_{V \rightrightarrows U \to R(S)}\op{Eq}(\pi_0(F(U) \rightrightarrows \pi_0F(V)).
	\]
	Since $R(S)$ is spectral it is quasi-compact and every open cover has a finite subcover.  Since $\pi_0F$ turns finite disjoint unions into products, it thus suffices to show that any open covering of $RS$ is refined by a finite cover by disjoint open subsets.  

	To produce such a refinement, we make short digression.  Let $i: M \hookrightarrow R(S)$ be the inclusion of the subspace of closed points.  The space $M$ is compact Hausdorff, and the inclusion $i$ admits a continuous retraction $p$ that is defined by sending any point to its unique closed specialization \cite[Proposition 7.1.25]{BCR}.  Furthermore, since $S$ is semi-local, $M$ is also totally disconnected by \cite[Lemma 19.2.2]{real-and-etale-cohomology}.  As a compact, totally disconnected space, $M$ has a basis of clopen sets (this is a straightforward exercise in general topology, but see, e.g., \cite[Proposition 3.1.7]{topological-groups-and-related-structures}).  
	
	Now, if $U \subset M$ is open, then $p^{-1}(U) \subset R(S)$ is the smallest open subset containing $i(U)$. In particular, if $U \subset R(S)$ is open, then $p^{-1}i^{-1}(U)$ is an open subset of $U$.  It follows that if $U = \coprod_{\alpha} U_{\alpha} \to R(S)$ is an open cover of $R(S)$, then $\coprod_{\alpha} p^{-1}i^{-1}U_{\alpha}$ is a finer covering and $\coprod_\alpha i^{-1} U_\alpha$ is a covering of $M$. By the conclusion of the previous paragraph, this covering of $M$ is refined by a finite cover by disjoint opens. Applying $p^{-1}$ yields the desired refinement of the original cover.

Point (2).  By appeal to the conclusion of Point (1), we see that for any real étale sheaf $E$ on $S$ we get $\ul\pi_i^\tau(E) = \ul\pi_i^\ret(E)$ (for any local base point).\NB{Since Zariski local rings have $\ret$-cohomological dimension $0$ \cite[Proposition 19.2.1]{real-and-etale-cohomology}, the same conclusion for sheaves of spectra is easier.}  In particular, the condition is necessary.
	
Conversely, let $E$ be a $\tau$-hypersheaf such that each $\ul\pi_i^\tau E$ is a real étale sheaf (for any local base point).  If we write $L_{\ret}E$ for the associated real étale hypersheaf of spaces, then $\ul\pi_i^\tau L_{\ret}E = \ul\pi_i^\ret L_{\ret}E = \ul\pi_i^\ret E = \ul\pi_i^\tau E$ (for any base point) by the above observation, and so the map $E \to L_{\ret}E$ induces an isomorphism on $\ul\pi_i^\tau$ (for every local base point). Since it is a map of $\tau$-hypersheaves, it is an equivalence.

Point (3). Immediate from (2), since the topos of $\tau$-sheaves on $S$ is hypercomplete \cite[Theorems 3.18 and 3.12]{ClausenMathew}\NB{correct ref?}.
\end{proof}

\begin{cor}
	\label{cor:unstableretconnectivity}
	Suppose $X \in \Shv_{\Nis}(S)$ satisfies real-\'etale descent. The sheaf $X$ is $n$-connected in $\Shv_{\ret}(S)$ if and only if it is $n$-connected in $\Shv_{\Nis}(S)$.
\end{cor}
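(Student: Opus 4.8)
The plan is to reduce the statement to the comparison of homotopy sheaves provided by Lemma~\ref{lem:ret-sheaves-detect}, together with the standard description of connectedness in an $\infty$-topos. Recall (see \cite[\S 6.5.1]{HTT}) that for $n\ge 0$ an object $Y$ of an $\infty$-topos is $n$-connected if and only if its homotopy sheaves $\ul\pi_i(Y,-)$ are terminal for all $0\le i\le n$ (the case $i=0$ in particular forces $Y\to\ast$ to be an effective epimorphism, the terminal object being inhabited), while $Y$ is $(-1)$-connected if and only if $Y\to\ast$ is an effective epimorphism, i.e.\ $\ul\pi_0 Y$ is locally inhabited. This criterion is intrinsic to the ambient topos, so the content of the corollary is that the relevant homotopy-sheaf conditions for $X$ are insensitive to whether they are tested in $\Shv_{\Nis}(S)$ or in $\Shv_{\ret}(S)$.

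First I would invoke Lemma~\ref{lem:ret-sheaves-detect}. The hypothesis says precisely that $X$ satisfies both Nisnevich and real-\'etale descent; as $X$ is in particular a real-\'etale sheaf, that lemma shows that each Nisnevich homotopy sheaf $\ul\pi_i^{\Nis}X$ is already a real-\'etale sheaf, and the identification $\ul\pi_i^{\Nis}X\wequi\ul\pi_i^{\ret}X$ recorded in its proof --- a consequence of part~(1), which identifies $\pi_0$ of sections with its real-\'etale sheafification over semilocal rings --- holds for every $i$ and every local base point. When $S$ does not have finite Krull dimension one invokes part~(2) of the lemma with ``hypersheaf'' in place of ``sheaf''; this is harmless, since homotopy sheaves, effective epimorphisms onto $\ast$, and $n$-connectedness are all unaffected by hypercompletion. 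It follows that $\ul\pi_i^{\Nis}X$ and $\ul\pi_i^{\ret}X$ are literally the same presheaf, so one is terminal exactly when the other is; and the same identification in degree $0$ shows that ``$\ul\pi_0 X$ is locally inhabited'' does not depend on the topology, using part~(1) applied to the henselian, resp.\ real-henselian, local rings that detect effective epimorphisms onto $\ast$ for the two topologies.

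Assembling these observations, for each $n$ the homotopy-sheaf conditions defining ``$X$ is $n$-connected'' hold in $\Shv_{\Nis}(S)$ if and only if they hold in $\Shv_{\ret}(S)$, which is the assertion. The only nontrivial ingredient is Lemma~\ref{lem:ret-sheaves-detect}; I do not anticipate a genuine obstacle, the sole point worth articulating being the $n=-1$ / effective-epimorphism case, handled by part~(1) of that lemma.
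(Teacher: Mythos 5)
Your argument is correct and follows essentially the same route as the paper: the paper's proof is the one-line observation that connectivity is detected on (stalks of) homotopy sheaves, which Lemma~\ref{lem:ret-sheaves-detect}(1) identifies for the Nisnevich and real-étale topologies when $X$ is a real-étale sheaf. Your write-up simply spells out the $\infty$-topos characterization of $n$-connectedness and the $n=-1$ case, which the paper leaves implicit.
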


\begin{proof}
	Since connectivity can be checked stalkwise, this follows from Lemma~\ref{lem:ret-sheaves-detect}(1).
\end{proof}

\section{Comparing real-\'etale homotopy theories} \label{sec:main1}
\subsubsection*{The real affine line}
Suppose $r$ is a real closed field.  We now recall some facts about the real spectrum of $r[x]$.  We follow the discussion of \cite[\S III.3]{KnebuschScheiderer}. The support map $supp: R\Spec r[x] \to \Spec r[x]$ has image consisting of real reduced prime ideals; these are either (a) maximal ideals with residue field $r$ or (b) the ideal $(0)$ with residue field $r(x)$.  In the first case, the only possible ordering is that on $r$. We call these the \emph{rational points of $R\aone_r$.}  In the latter case, we need to analyze orderings on $r(x)$; this is done in \cite[\S II.9]{KnebuschScheiderer}.  

Recall from \cite[Definition 2.9.3-4]{KnebuschScheiderer} that if $(M,\leq)$ is a totally ordered set, then a generalized Dedekind cut of $M$ is a pair $(L,U)$ of subsets of $M$ such that $L \cup U = M$ and $l < u$ for every $l \in L$ and $u \in U$. If $L$ is a non-empty proper subset (equivalently, both $L$ and $U$ are non-empty), then we call $(L,U)$ a proper Dedekind cut.  Finally, a Dedekind cut is called {\em free} if $L$ does not have a greatest element and $U$ does not have a least element.  

The orderings of $r(t)$ are in bijection with generalized Dedekind cuts of $r$ \cite[Corollary 2.9.6]{KnebuschScheiderer}.  To describe these cuts, recall that any ordering is determined by its cone of non-negative elements, which is what we specify in what follows.  The cuts $(r,\emptyset)$ and $(\emptyset, r)$ give rise to orderings $P_{\infty}$ and $P_{-\infty}$.  The cuts $((-\infty,c],(c,\infty))$ and $((-\infty,c),[c,\infty))$ correspond to orderings $P_{c+}$ and $P_{c-}$.  The remaining cuts correspond precisely to free Dedekind cuts \cite[Proposition 2.9.5]{KnebuschScheiderer}.

\begin{rem}
	The orderings $P_{\pm \infty}$ correspond to the sign of a function in the limit, the orderings $P_{x_\pm}$ correspond to the sign ``just to the left/right of $x$'', and the orderings $P_{\xi}$ correspond to the ``sign at $\xi$"; see the discussion of \cite[\S 2.9]{KnebuschScheiderer} for further explanations.
\end{rem}

Putting all of this together, one deduces the following result, most of which is contained in \cite[Example 3.3.14]{KnebuschScheiderer}.

\begin{proposition}
	\label{prop:pointsofa1}
If $r$ is a real closed field, then the space $R(\aone_r)$ has the following points 
\begin{enumerate}[noitemsep,topsep=1pt]
	\item A closed point $x$ for every $x \in r$, and
	\item two points $x_\pm$ that specialize to $x$; 
	\item closed points $\pm \infty$.
	\item a closed point $\xi$ for every free Dedekind cut $\xi$ of $r$.
\end{enumerate}
Moreover, the following statements hold:
\begin{itemize}[noitemsep,topsep=1pt]
	\item the space $R(\aone_r)$ has a natural linear ordering extending the ordering of $r$;
	\item if $r'/r$ is an extension of real closed fields, then the induced map $R(\aone_{r'}) \to R(\aone_r)$ is surjective and preserves the orders just described.
\end{itemize}
\end{proposition}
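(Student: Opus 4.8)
The plan is to read off the point set from the two facts recalled above, then supply the extra data --- specializations, the linear order, and functoriality in $r$ --- each of which becomes elementary once everything is phrased in terms of generalized Dedekind cuts; parts (1)--(4) together with the linear order are essentially \cite[Example 3.3.14]{KnebuschScheiderer}, so the only portion needing a genuine (if short) argument is the final bulleted assertion about $r'/r$.

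\emph{Points and specializations.} First I would combine the support analysis with \cite[Corollary 2.9.6]{KnebuschScheiderer}: the points over a maximal ideal are exactly the rational points $x\in r$, and the points over $(0)$ are the orderings of $r(t)$, hence the generalized Dedekind cuts of $r$; the enumeration of cuts recalled above splits the latter into $P_{\pm\infty}$, the $P_{c\pm}$ for $c\in r$, and the $P_\xi$ for free cuts $\xi$, which is the list (1)--(4). For the specialization claims I would use that $P\rightsquigarrow Q$ iff $f>_Q 0$ implies $f>_P 0$ for every $f\in r[t]$, together with the fact that $\operatorname{supp}$ can only grow under specialization, so that a generic point (support $(0)$) can specialize only to rational points. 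Continuity of polynomials at $x$ gives $P_{x\pm}\rightsquigarrow x$; and for $P\in\{P_{\pm\infty}\}\cup\{P_\xi:\xi\text{ free}\}$ one exhibits, for each rational $c$, a \emph{linear} $f$ with $f(c)>0$ but $f<_P 0$, so such $P$ is closed. Rational points are closed because their support is already maximal. (One may instead simply cite \cite[Example 3.3.14]{KnebuschScheiderer}.)

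\emph{The linear order and the base-change map.} I would identify $R(\aone_r)$ with the set $r\sqcup\{\text{generalized cuts of }r\}$ and order it by: the given order on $r$; on cuts, $\sigma<\sigma'$ iff $L_\sigma\subsetneq L_{\sigma'}$ (cuts being totally ordered by inclusion of their lower sets); and, for $c\in r$ and a cut $\sigma$, $c<\sigma$ iff $c\in L_\sigma$, while $\sigma<c$ iff $c\in U_\sigma$. Checking that this is a total order restricting to the order of $r$ on rational points is routine; the one place demanding attention is the triple $\{c_-,\,c,\,c_+\}$ lying over each $c\in r$, where one verifies $c_-<c<c_+$ with nothing strictly between. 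I would then describe the base-change map $f\colon R(\aone_{r'})\to R(\aone_r)$ in these terms: a generic point with cut $(L',U')$ of $r'$ goes to the generic point with cut $(L'\cap r,\,U'\cap r)$; a rational $c'\in r$ goes to itself; and a rational $c'\in r'\setminus r$ (necessarily transcendental over $r$, since $r$ is real closed) goes to the generic point with cut $(\{c\in r:c<c'\},\,\{c\in r:c>c'\})$.

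\emph{Surjectivity, order-preservation, and the main obstacle.} Surjectivity is then immediate: rational points of $R(\aone_r)$ come from $r\subseteq r'$, and a generic point given by a cut $(L,U)$ of $r$ is the image of the generic point of $R(\aone_{r'})$ attached to the cut $(L',U')$ of $r'$ with $L':=\{a\in r':a<u\text{ for all }u\in U\}$ (and $L':=r'$ if $U=\emptyset$), for which one checks $L'\cap r=L$; the only external input here is that every generalized cut of $r'$ arises from an ordering of $r'(t)$ \cite[Corollary 2.9.6]{KnebuschScheiderer}. Order-preservation follows by running through the four types of pair $A<B$ (rational/rational, rational/generic, generic/rational, generic/generic), using in each case only that intersection with $r$ preserves lower and upper sets and that $l<u$ whenever $l$ lies in a lower set and $u$ in the complementary upper set. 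There is thus no real obstacle --- the content sits entirely in the recalled facts --- and the only thing to be careful about, both in defining the order and in verifying that $f$ preserves it, is the consistent treatment of the three points over a fixed $c\in r$ (the rational point $c$ and its two generic neighbours $c_\pm$), which are invisible to the cut of $r$ alone and whose images under $f$ may well coincide.
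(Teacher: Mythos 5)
Your proposal is correct and takes essentially the same route as the paper: both reduce (1)--(4) and the linear order to \cite[Example 3.3.14]{KnebuschScheiderer} via the identification of generic points with generalized Dedekind cuts, get order-preservation from restriction of cuts, and prove surjectivity by the identical extension $L' = \{a \in r' \mid a < u \text{ for all } u \in U\}$. The only detail the paper writes out that you gloss over is the one-line verification that $(L', r'\setminus L')$ is in fact a generalized Dedekind cut (if $x \notin L'$ there is $y \in U$ with $y \le x$, so every $z \in L'$ satisfies $z < y \le x$), which is routine.
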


\begin{proof}
	The only statement here that is not contained in \cite[Example 3.3.14]{KnebuschScheiderer} is the last one.  We observed that points of $R(\aone_r)$ correspond to generalized Dedekind cuts of $r$.  If $r \subset r'$ is an inclusion of real closed fields, then any generalized Dedekind cut of $r'$ yields one of $r$ by restriction.  Thus, the statement about preserving linear orders is straightforward from the definitions given above (see the picture in \cite[Example 3.3.14]{KnebuschScheiderer}).  
	
	It remains to establish surjectivity.  To see this, it suffices to observe that any generalized Dedekind cut of $r$ can be extended to one of $r'$.  If $(L,U)$ is a generalized Dedekind cut of $r$, then define $L' = \{ x \in r' | x < U\}$ and $U' = \{x \in r' | x > L'\}$.  Now, if $x \notin L'$, then there exists $y \in U$ with $x \geq y$.  If $z \in L'$, then $z < y$ and thus $z < x$.  In other words, $x \in U'$.  We conclude that $r' = L' \cup U'$ and thus $(L',U')$ is a generalized Dedekind cut whose intersection with $r$ is $(L,U)$.
\end{proof}


Now, assume $A$ is a real closed valuation ring with residue field $k$ and fraction field $K$ (recall: both are real-closed by definition). If $a \in A$, we write $\bar{a}$ for its image in $k$.  Given an element $a \in A$, the morphism $A[t] \to A$ sending $t$ to $a$ induces a morphism $\Spec A \to \Spec A[t]$ and thus corresponds to a specialization in $R(\aone_A)$.  Also, via Proposition~\ref{prop:pointsofa1}(1,2), any $a \in A$ yields points $a, a_\pm \in R(\aone_K) \subset R(\aone_A)$.  Keeping this notation in mind, we have the following result.

\begin{lem} 
	\label{lemm:specialization-real-valn}
	Let $A$ be a real closed valuation ring, with residue field $k$ and fraction field $K$.  If $a \in A$, then the corresponding points $a, a_{\pm} \in R(\aone_K)$ described above specalize to $\bar{a} \in R(\aone_k)$; these are the only rational points of $R(\aone_K)$ that specialize into $R(\aone_k)$.
\end{lem}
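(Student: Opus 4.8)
The plan is to prove the two assertions in turn, both via the description of specializations by real closed valuation rings.

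\emph{The specializations.} For the rational point $a$, I would obtain $a \rightsquigarrow \bar a$ directly from a morphism of schemes: the substitution $t \mapsto a$ gives a ring map $A[t] \to A$, hence a morphism $\Spec A \to \aone_A$, and since $A$ is a real closed valuation ring this determines a specialization in $R(\aone_A)$ whose generic term is the image of the generic point $\Spec K \to \Spec A$ and whose closed term is the image of the closed point $\Spec k \to \Spec A$. Unwinding the composites, the generic term is $\Spec K \xrightarrow{t \mapsto a} \aone_K \subset \aone_A$, i.e.\ the point $a \in R(\aone_K)$ (recall $a \in A \subset K$), while the closed term is $\Spec k \xrightarrow{t \mapsto \bar a} \aone_k \subset \aone_A$, i.e.\ $\bar a \in R(\aone_k)$; thus $a \rightsquigarrow \bar a$. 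For $a_\pm$, I would invoke Proposition~\ref{prop:pointsofa1}(2), which gives $a_\pm \rightsquigarrow a$ inside $R(\aone_K)$; since by Example~\ref{ex:realfibers} the space $R(\aone_K)$ is homeomorphic to the fiber of $R(\aone_A) \to R(\Spec A)$ over the generic point, hence a subspace of $R(\aone_A)$, this specialization persists in $R(\aone_A)$, and composing with $a \rightsquigarrow \bar a$ by transitivity yields $a_\pm \rightsquigarrow \bar a$.

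\emph{Only these.} For the converse, suppose $x \in K$ and the rational point $t = x$ of $R(\aone_K)$ admits, in $R(\aone_A)$, a specialization lying in $R(\aone_k)$. Then there is a real closed valuation ring $B$ and a morphism $\Spec B \to \aone_A$ — i.e.\ a ring map $A[t] \to B$ — whose generic term is $t = x$ and whose closed term lies in $R(\aone_k)$. The crux is to extract the structure of $A[t] \to B$. Because the generic term equals the point $t = x$, whose residue field (computed in $R(\aone_A)$) is the real closed field $K$, and because two representatives of a point of a real space must agree over a common real closed subfield, I would argue that $K$ embeds in $\op{Frac}(B)$ and that $A[t] \to \op{Frac}(B)$ factors as $A[t] \twoheadrightarrow A[x] \hookrightarrow K \hookrightarrow \op{Frac}(B)$; since the map lands in $B$, this forces $A \subseteq B$ and $x \in B$. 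Next, the requirement that the closed term lie over the closed point of $\Spec A$ (Example~\ref{ex:realfibers} again) translates into $A \to B/\mathfrak m_B$ factoring through $k = A/\mathfrak m_A$, that is, $\mathfrak m_A \subseteq \mathfrak m_B$. Finally I would invoke the elementary fact that for two valuation rings of a common field containment of maximal ideals reverses containment of the rings: applying it to $A$ and $C := B \cap K$ (a valuation ring of $K$ with $\mathfrak m_A \subseteq \mathfrak m_C$) gives $C \subseteq A$, and since $x \in B \cap K = C$ we conclude $x \in A$. Together with the first part this identifies the rational points of $R(\aone_K)$ specializing into $R(\aone_k)$ as exactly $\{a : a \in A\}$.

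The external inputs are slight: that a morphism from the spectrum of a real closed valuation ring produces a specialization in the real space and that every specialization arises this way, and the valuation-ring lemma $\mathfrak m_A \subseteq \mathfrak m_C \Rightarrow C \subseteq A$. I expect the only delicate step to be the bookkeeping in the converse — deducing from the purely formal statement ``the generic term is the point $t = x$'' the concrete fact that $A[t] \to B$ factors through $A[x] \subseteq K \subseteq \op{Frac}(B)$, in particular that $A \subseteq B$ and $x \in B$; once that is in hand, everything else is immediate.
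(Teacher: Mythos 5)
Your proof is correct, and while the first half (producing the specializations $a, a_\pm \rightsquigarrow \bar a$ from the map $A[t] \to A$, $t \mapsto a$, plus $a_\pm \rightsquigarrow a$ and transitivity) coincides with the paper's, your converse takes a genuinely different route. The paper argues purely at the level of Zariski supports: if $x \in K \setminus A$ then $x^{-1} \in \mathfrak m_A$, so $A[t]/(\mathfrak m_A + \mathfrak p_x) \cong A[x]/\mathfrak m_A A[x] = 0$, i.e.\ the closure of $\op{supp}(x)$ in $\Spec A[t]$ misses the fiber over the closed point of $\Spec A$; since the support map carries specializations in $R(\aone_A)$ to specializations in $\Spec A[t]$, no specialization into $R(\aone_k)$ can exist. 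You instead use the other half of the cited fact from Scheiderer --- that every specialization in a real space is exhibited by a morphism from the spectrum of a real closed valuation ring $B$ --- and then extract $A[x] \subseteq B$, $\mathfrak m_A \subseteq \mathfrak m_B$, and conclude via $C := B \cap K$ and the standard valuation-ring fact that $\mathfrak m_A \subseteq \mathfrak m_C$ forces $C \subseteq A$, hence $x \in A$. Both arguments ultimately hinge on the same mechanism (for a valuation ring, $x \notin A$ gives $x^{-1} \in \mathfrak m_A$, which kills the relevant containment), but the paper's version is a contrapositive two-line computation needing only that $\op{supp}$ preserves specializations, whereas yours works with the positive direction and needs the classification of specializations by real closed valuation rings plus the bookkeeping that the generic term of the exhibiting morphism pins down $\ker(A[t] \to \op{Frac}(B)) = \mathfrak p_x$, hence $A[x] \subseteq B$. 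That bookkeeping, which you flag as the delicate step, does go through: the point $x \in R(\aone_K) \subseteq R(\aone_A)$ is the pair $(\mathfrak p_x, \text{unique ordering of } K)$, so equality of the generic term with it is exactly the statement about the kernel, and the rest is as you say. The trade-off is that your argument is slightly longer but stays entirely inside the real-spectrum formalism of specializations, while the paper's shortcut reduces immediately to commutative algebra in $\Spec A[t]$.
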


\begin{proof}
	For $a \in A$ the induced map $R\Spec A \to R\aone_S$ exhibits the specialization from $a \in R\aone_K$ to $\bar a \in R\aone_k$; the first claim follows since $a_{\pm}$ specialize to $a$.  To see that no other rational points specialize into $R(\aone_k)$, it suffices to show that if $x \in K \setminus A$, then the corresponding point of $\aone_A$ does not specialize into $\aone_k$.
	Write $\mathfrak{m} \subset A$ for the maximal ideal, and ${\mathfrak p}_x \subset A[t]$ for the ideal corresponding to $x$.  Observe that $x^{-1} \in \mathfrak{m}$ and hence $A[t]/({\mathfrak m} + {\mathfrak p}_x) \wequi A[x]/{\mathfrak m}A[x] = 0$ (where, as usual, $A[x] \subset K$ denotes the subring generated by $A$ and $x$), as needed.
\end{proof}

\subsubsection*{Real local contractibility}
\begin{cor} 
\label{cor:tensor-of-real-closed} 
If $r'/r$ is an extension of real closed fields, and $X \in \Sch_r$, then the induced map $RX_{r'} \to RX$ is surjective.
\end{cor}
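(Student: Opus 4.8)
The plan is to reduce the statement to a purely field-theoretic amalgamation property of real closed fields, and then to establish that property, either via the model theory of $\mathrm{RCF}$ or — closer to the tools developed in this section — via Noether normalization together with Proposition~\ref{prop:Rfiberproducts}.

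First I would reformulate what has to be proved. Recall that a point of $RX$ is the class of a morphism $\Spec s \to X$ with $s$ a real closed field; since $X$ is an $r$-scheme, necessarily $r \subseteq s$. To lift the class of such a morphism to $RX_{r'} = R(X \times_r r')$ it suffices to produce a real closed field $s'$ together with $r$-embeddings $s \hookrightarrow s'$ and $r' \hookrightarrow s'$: then $\Spec s' \to \Spec s \to X$ together with $\Spec s' \to \Spec r'$ defines a morphism $\Spec s' \to X \times_r r'$, and its class in $RX_{r'}$ maps to the class of $\Spec s \to X$ (the morphism $\Spec s' \to X$ factors through $\Spec s \to X$, and $s$ is a common real closed subfield of $s'$ and of $s$, so the two represent the same point of $RX$). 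Thus the corollary is equivalent to the amalgamation assertion that \emph{any two real closed field extensions $s$, $r'$ of a real closed field $r$ admit a common real closed field extension, compatibly over $r$} — the embeddings then being automatically order-preserving, since a real closed field carries a unique ordering.

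To prove this amalgamation I would use the model theory of real closed fields. By Tarski's theorem $\mathrm{RCF}$ eliminates quantifiers, hence is model complete, so the inclusions $r \hookrightarrow s$ and $r \hookrightarrow r'$ are elementary. Amalgamation of models over a common elementary submodel is then routine: form the union of the elementary diagrams of $s$ and of $r'$, in the language with a new constant for each element of $s$ and of $r'$, identifying the constants that name elements of $r$; this theory is finitely satisfiable — any finitely many constraints impose existential sentences on the $s$-constants with parameters in $r$, which hold in $r$ by $r \preceq s$, hence in $r'$ by $r \preceq r'$, where the $r'$-constraints already hold — so it has a model, which is a real closed field carrying compatible $r$-embeddings of $s$ and $r'$. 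Alternatively, and staying closer to the techniques of this section, one can reduce the corollary directly to $X = \mathbb A^d_r$: covering $X$ by affine opens and writing a general affine $r$-scheme as a cofiltered limit of ones of finite type (using that a cofiltered limit of surjections of spectral spaces along spectral maps is surjective, by compactness of the fibers in the constructible topology) reduces to $X = \Spec B$ with $B$ finitely generated over $r$; Noether normalization then provides a finite — in particular residually algebraic — surjection $\pi \colon X \to \mathbb A^d_r$, and since $X_{r'} = X \times_{\mathbb A^d_r} \mathbb A^d_{r'}$, Proposition~\ref{prop:Rfiberproducts} applied with $\varphi = \pi$ identifies $RX_{r'}$ with $RX \times_{R\mathbb A^d_r} R\mathbb A^d_{r'}$, so it suffices that $R\mathbb A^d_{r'} \to R\mathbb A^d_r$ be surjective — the $d$-variable version of Proposition~\ref{prop:pointsofa1}, obtained from the one-variable case by extending orderings of $r(t_1,\dots,t_d)$ a coordinate at a time (or by citing the analysis of real spectra of polynomial rings over a real closed field).

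The one genuinely substantive point is this amalgamation (equivalently, the surjectivity for $\mathbb A^d$): Proposition~\ref{prop:Rfiberproducts} only handles residually-algebraic base changes, so it cannot by itself see a transcendental extension $r'/r$, and the essential input is precisely the extension-of-orderings phenomenon that Proposition~\ref{prop:pointsofa1} establishes for $\mathbb A^1$. Everything else — the reduction to amalgamation, and on the other route the reduction to $\mathbb A^d$ — is formal bookkeeping. I expect the main care in the write-up to go into the passage from one to several variables (or into selecting the right reference) so as to avoid circularity.
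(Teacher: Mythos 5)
Your argument is correct, and the reduction step is essentially the one the paper makes: the paper also reduces (via Proposition~\ref{prop:Rfiberproducts}, in the guise of Example~\ref{ex:realfibers}) to showing that $R(\Spec(r''\otimes_r r'))\neq\emptyset$ for a real closed extension $r''/r$, which is exactly your amalgamation statement for the two real closed extensions $r''$ and $r'$ of $r$. Where you genuinely diverge is in how that amalgamation is proved. The paper stays inside the toolkit of the section: it inducts on the transcendence degree of $r'/r$, handling algebraic extensions by Proposition~\ref{prop:Rfiberproducts}, transcendence degree one by extending the generalized Dedekind cut (the surjectivity clause of Proposition~\ref{prop:pointsofa1}), and the infinite case by writing $r'$ as an increasing union and passing to an inverse limit of surjections of spectral spaces. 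You instead invoke Tarski's model completeness of $\mathrm{RCF}$ and amalgamate elementary diagrams by compactness; this is shorter and completely standard, at the price of importing quantifier elimination for real closed fields as an external input rather than the elementary cut-extension argument the paper already has on hand. Your second, ``geometric'' route is the weaker part of the proposal: after Noether normalization the required surjectivity of $R\A^d_{r'}\to R\A^d_r$ is essentially equivalent to the original statement (its points supported at non-generic primes have residue fields that are not rational function fields, so ``one coordinate at a time'' does not literally apply, and one is pushed back to an induction on transcendence degree of the residue field, i.e.\ to the paper's argument); since you flag this yourself and your primary route is complete, this does not affect the correctness of the proposal.
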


\begin{proof}
It suffices to show that the fibers of $RX_{r'} \to RX$ are non-empty, so we may assume that $X$ is the spectrum of a real closed field $r''$.  If $r'/r$ has transcendence degree $0$, then it is algebraic and the result follows from Proposition~\ref{prop:Rfiberproducts}.  If $r'/r$ has transcendence degree $1$, then let $t \in r'$ be transcendental over $r$.  In that case, $r'/r(t)$ is algebraic so Proposition~\ref{prop:Rfiberproducts} reduces us to showing that the ordering on $r(t)$ (induced by the inclusion $r(t) \subset r'$) extends to an ordering on $r''(t)$; this follows from the surjectivity assertion in Proposition~\ref{prop:pointsofa1}.  The case where $r'/r$ has finite transcendence degree reduces to this one by a straightforward induction.  In general, we can write $r'$ as an increasing union of real closed subfields that are of finite transcendence degree over $r$ and then $RX_{r'} \to RX$ is exhibited as an inverse limit along surjections (and thus surjective).
\end{proof}
	


Fix a scheme $X$.  Consider the projection morphism $Rp: R\aone_X \to RX$.  By Proposition~\ref{prop:pointsofa1}, the fiber over any real closed point of $X$ is isomorphic to $R\aone_r$ and thus has a natural ordering.  In particular, given two real closed points $x_1$ and $x_2$ in the fiber of $p$ over a given real closed point of $X$, if $x_1 < x_2$, then it makes sense to speak of the closed interval $[x_1,x_2]$.

Our goal here is to build special neighborhoods of open subsets of $X$ in $\aone_X$.  If $g \in \mathscr{O}(X)$, then the graph of $g$ provides a section of $p: \aone_X \to X$.  In that case, we will consider open subsets of $R(\aone_X)$ that contain tubular neighborhoods of the graph of $g$.  However, we may actually work slightly more generally: since \'etale morphisms are local homemorphisms \cite[Proposition 1.8]{real-and-etale-cohomology}, locally, $R(p)$ has additional sections beyond those that are simply sections of $p$.  For example, if $\pi: X' \to X$ is an \'etale morphism that maps a subset $V' \subset R(X')$ homeomorphically to $V \subset R(X)$, then any morphism $g': X' \to \aone_X$ yields a section $g$ of the restriction of $R(p)$ to $V$; we will refer to such a section of $R(p)$ as a {\em polynomial section} of $R(p)$.  With that in mind, we now define a class of open sets that are tubular neighborhoods of graphs of polynomial sections of the projection $R(p): R(\aone_{X}) \to R(X)$; we formalize this as follows
\begin{defn}
	\label{defn:intervalshaped}
	An open subset $U \subset R(\aone_X)$ is {\em interval-shaped} if there exist an open subset $V \subset R(X)$, and a polynomial section $g$ of $R(p)$ such that for every $x \in U$, the closed interval connecting $x$ and $gpx$ in the fiber over $px$ is contained in $U$.
\end{defn}

We shall see later that interval-shaped open subsets are homotopic to their images in $X$.  Before that, we observe that any open subset of $R(\aone_X)$ can be refined by (quasi-compact) interval-shaped open subsets.

\begin{proposition}
	\label{prop:intervalshapedcovers}
	Any open subset of $R(\aone_X)$ can be covered by (quasi-compact) interval-shaped open subsets.
\end{proposition}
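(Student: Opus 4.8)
The plan is to reduce the statement to the following: for every open $W\subseteq R(\aone_X)$ and every $w\in W$ there is a quasi-compact interval-shaped open $U$ with $w\in U\subseteq W$. Since an interval-shaped open of $R(\aone_{X_0})$ (for $X_0\subseteq X$ open) remains interval-shaped in $R(\aone_X)$ — an étale neighbourhood over $X_0$ is one over $X$, and the fibres of $R(p)$ are unchanged over $R(X_0)$ — we may work Zariski-locally and assume $X$ affine; then $\aone_X$ is qcqs, so $R(\aone_X)$ is coherent by Proposition~\ref{prop:coherence} and its quasi-compact opens form a basis. Write $q=R(p)$.

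Fix $w\in W$, put $s=q(w)$, and let $r$ be the real closure of $\kappa(s)$ for the ordering $s$. By Example~\ref{ex:realfibers} the fibre $q^{-1}(s)$ is $R(\aone_r)$, linearly ordered by Proposition~\ref{prop:pointsofa1}. As $W\cap q^{-1}(s)$ is an open neighbourhood of $w$, the description of the topology of $R(\aone_r)$ furnishes a rational point $c$ of $q^{-1}(s)$ — that is, an element $c\in r$ — with the closed interval $[w,c]$ contained in $W\cap q^{-1}(s)$: one takes $c=w$ when $w$ is rational, and otherwise ($w=x_\pm$, $w=\pm\infty$, or a free Dedekind cut) one takes $c$ to be a nearby element of $r$, on the appropriate side of $w$, lying in a small open interval around $w$ inside $W\cap q^{-1}(s)$. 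The crucial point is that $r$ is \emph{algebraic} over $\kappa(s)$, so $c$ is algebraic (and separable, the characteristic being $0$) over $\kappa(s)$; hence there are a standard étale neighbourhood $\pi\colon X'\to X$ of $\operatorname{supp}(s)$, a point $s'\in RX'$ over $s$, and a regular function $\gamma\in\mathscr O(X')$ whose value at $s'$, seen in $q^{-1}(s)=R(\aone_r)$, equals $c$.

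Using that $R\pi$ is a local homeomorphism (Scheiderer; see \S\ref{sec:small-ret}) and that $R(\aone_{X'})\cong R(\aone_X)\times_{RX}RX'$ by Proposition~\ref{prop:Rfiberproducts} (étale maps have algebraic residue-field extensions), the datum $(\gamma,s')$ yields, over a quasi-compact open $V\ni s$ of $RX$, a polynomial section $g$ of $q$ in the sense of Definition~\ref{defn:intervalshaped}, with $g(s)=c$. I would then set
\[ U := \{\, x\in q^{-1}(V) \;:\; \text{the closed interval from } x \text{ to } g(q(x)) \text{ in } q^{-1}(q(x)) \text{ lies in } W \,\}. \]
Then $w\in U$ (we arranged $[w,c]\subseteq W$), and $U\subseteq W$ (each $x\in U$ is an endpoint of its interval, which lies in $W$). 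Moreover $U$ is interval-shaped with data $(V,g)$: if $x\in U$ and $y$ lies on the interval from $x$ to $g(q(x))$, then $q(y)=q(x)\in V$ and the interval from $y$ to $g(q(y))$ is contained in that from $x$ to $g(q(x))$, hence in $W$, so $y\in U$. It remains to check that $U$ is open and quasi-compact.

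The substantial point is that $U$ is open; granting this, $U$ is also \emph{constructible} — by the Tarski–Seidenberg theorem for real spectra, the quantifier over the $\aone$-coordinate in the defining condition of $U$ (applied to the finite-type projection $\aone\times\aone\to\aone$, after transporting along the local homeomorphism and Proposition~\ref{prop:Rfiberproducts}) produces a constructible subset of the spectral space $q^{-1}(V)$ — and a constructible open subset of a spectral space is quasi-compact, so $U$ is then a quasi-compact interval-shaped open with $w\in U\subseteq W$. To prove $U$ open it suffices, $U$ being constructible, to show $U$ is stable under generization. So let $x\in U$ and $x'\rightsquigarrow x$; then $q(x')\rightsquigarrow q(x)\in V$ gives $q(x')\in V$, and one must see that the interval from $x'$ to $g(q(x'))$ lies in $W$. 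The specialization $x'\rightsquigarrow x$ in $R(\aone_X)$ is realized by a real closed valuation ring $A$ (recollections in \S\ref{sec:small-ret}), which we lift through $\pi$; the values of the coordinate of $x'$ and of $\gamma$ then lie in $A$, so — since these intervals are \emph{bounded} — convexity of valuation rings together with Lemma~\ref{lemm:specialization-real-valn} shows that every point of the interval from $x'$ to $g(q(x'))$ specializes to a point of the interval from $x$ to $g(q(x))$; as $W$ is open and that latter interval lies in $W$, the former does too. This last step — the properness (``tube lemma'') for the bounded interval bundle, in particular controlling the non-rational points (Dedekind cuts, $x_\pm$) of the fibres under specialization — is the technical heart and the step I expect to cost the most work; the rest is bookkeeping with standard étale neighbourhoods and with transporting sign conditions along the local homeomorphisms $R\pi$.
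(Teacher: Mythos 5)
Your overall strategy is the paper's: produce a rational point in the fibre near the given point, realize it as the value of a regular function on an \'etale neighbourhood, define the set of points whose interval to the resulting section stays inside the given open, and prove this set is a quasi-compact open by combining constructibility (Tarski--Seidenberg) with stability under generization checked via real closed valuation rings. But the step you yourself flag as ``the technical heart'' is a genuine gap, and it is exactly the place where the paper argues differently. In the generization check you claim that \emph{every} point of the interval in the more generic fibre specializes into the interval in the special fibre, ``by convexity together with Lemma~\ref{lemm:specialization-real-valn}''. That lemma only treats the rational points $a$ and their companions $a_\pm$ (for $a$ in the valuation ring); it says nothing about the points of $R(\aone_K)$ given by free Dedekind cuts (case (4) of Proposition~\ref{prop:pointsofa1}), and no cited statement produces a specialization of such a cut point into the special-fibre interval. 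The paper does not prove such a specialization at all: instead it disposes of the cut points by a different argument, namely that the intersection of the ambient (quasi-compact, hence constructible) open with the fibre is a finite union of open intervals, and such a union contains a free-cut point if and only if it contains all sufficiently nearby rational points, which reduces case (4) to the already-settled rational case. As written, your argument for the cut points is missing, and the route you indicate is not supported by the lemma you cite.

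A second, smaller but related gap: your constructibility claim for $U$ applies Tarski--Seidenberg to the condition ``the interval from $x$ to $g(q(x))$ lies in $W$'' where $W$ is an \emph{arbitrary} open subset; this condition is first order (with parameters in the coordinate ring) only if $W$ is constructible, which an arbitrary open need not be. The paper fixes this by first shrinking: it replaces $X$ by the affine \'etale neighbourhood $X'$ (so that $g$ becomes an honest section given by a regular function, which is also what makes the formula first order) and then replaces the ambient open by a quasi-compact, hence constructible, open neighbourhood before forming the interval-set. You note that quasi-compact opens form a basis but never perform this replacement; without it both your constructibility step and the finite-union-of-intervals argument needed for the Dedekind-cut case are unavailable. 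Both repairs are compatible with your setup, but they are needed to close the proof.
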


\begin{proof}
	Suppose $U$ is an open subset of $R(\aone_X)$, choose a real closed point $x \in U$ and write $y$ for its image under $R(p)$.  We will build an interval shaped neighborhood of $x$.  Since $U \cap R(\aone_y)$ is open, it contains an open interval $J$ around $x$.  Let $z$ be a rational point of $J$.  We claim that, after possibly shrinking $U$ further, there exists a polynomial section $g: V \to U$ with $g(y) = z$.  To see this, we may replace $X$ by its real henselization $A$ at $y$.  The rational point $z$ corresponds to a map $\bar{g}: A[t] \to A/{\mathfrak m}$, where ${\mathfrak m}$ is the maximal ideal corresponding to $z$. The required polynomial section thus corresponds to a lift $g: A[t] \to A$ of $\bar{g}$, which clearly exists.  Then, there exists an affine real \'etale neighborhood $X' \to X$ of $y$ over which $g$ is defined.
	
	Since $X' \to X$ is an \'etale neighborhood, so is $\aone_{X'} \to \aone_{X}$.  As we observed above, \cite[Proposition 1.8]{real-and-etale-cohomology} then impplies that the morphisms $R(X') \to R(X)$ and $R(\aone_{X'}) \to R(\aone_X)$ are local homeomorphisms.  Therefore, shrinking $U$ and $V$ if necessary, and after replacing $U$ by a homeomorphic open subset in $R(\aone_{X'})$, and similarly for $V$, we may replace $X$ by $X'$ and assume we have an actual section $g$ of $p: \aone_{X} \to X$.  Moreover, since $X'$ is affine, we know $R(\aone_{X'})$ is spectral and thus has a basis of quasi-compact opens.  Therefore, we may even assume $U$ is quasi-compact.  
	
	Since the open set $U$ is quasi-compact, it is constructible \cite[Corollary 7.1.13]{BCR}.  The image of $U$ under the projection $\aone_{X} \to X$ is then a constructible subset of $R(X)$ \cite[Proposition II.1.9]{andradas2012constructible}.  Consider the subset $W = \{x \in R(\aone_{X}) | [x,gpx] \subset U\}$.  If we write $X = \Spec B$, then the constructible subsets of $R(\aone_X)$ are precisely those described by first order sentences in the language of ordered fields with parameters in $B[t]$ \cite[II.1.7]{andradas2012constructible}.  With that in mind, the subset $W$ is constructible.  It follows that $W$ is quasi-compact (again \cite[Corollary 7.1.13]{BCR}) and by \cite[Corollary 7.1.22]{BCR}, to check that $W$ is open, it suffices to check that it is stable under generalization in $U$.
	
	Suppose $w \in W$ has generalization $\tilde{w}$.  Choose a real closed valuation ring $A$ and a morphism $\Spec A \to \aone_{X}$ exhibiting the specialization $\tilde{w} \rightsquigarrow w$.  Let us write $K$ and $k$ for the fraction and residue fields of $A$. The composite $\Spec A \to \aone_{X} \to X$ determines a specialization $\tilde{b} \rightsquigarrow b$ in $X$, where $\tilde{b}$ and $b$ are the images of $\tilde{w}$ and $w$.  Pulling back $p: \aone_X\to X$ along the morphism $\Spec A \to \aone_{X} \to X$ we may assume that $X = \Spec A$, and that $w$ and $\tilde{w}$ are rational points. In fact there is an element $a \in A$ such that the image in $K$ corresponds to $\tilde w$ and the image in $k$ corresponds to $w$---in the sequel we just write $\tilde w$ for $a$.  The element $g$ lifts to $A$.  In that case, we consider the fiber $R(\aone_{\tilde{b}})$.  We have to show that if $\tilde{y}$ is a point lying between $\tilde{w}$ and $g(\tilde{b})$, then $\tilde{y} \in U$.
To do this, we appeal to Proposition~\ref{prop:pointsofa1} and just check each of the four listed cases separately. 
	
	\noindent {\bf (Case 1)}  In this case, the point $\tilde{y}$ is a rational point that lies between $\tilde{w}$ and $g(\tilde{b})$ by assumption.  Now, $K$ is a real closed field by assumption and $A \subset K$.  We abuse notation and write $\tilde{w}$ also for the element of $A$ corresponding to $\tilde{w}$ and similarly for $\tilde{y}$.  In that case, we have the inequalities $\tilde{w} < \tilde{y} < g$, or the reverse.   By the equivalent characterizations of \cite[II.3.2]{andradas2012constructible}, real closed valuation rings are convex, so the inequality above guarantees that $\tilde{y} \in A$ as well.  In that case, Lemma~\ref{lemm:specialization-real-valn} allows us to conclude that $\tilde{y}$ specializes to the closed point corresponding to $\bar{\tilde{y}}$.  In that case, we conclude that $\bar{\tilde{y}}$ lies between $\bar{\tilde{w}}$ and $\bar{g}$.  The former corresponds to $w$ by construction, and the latter corresponds to $g(b)$ as required.
	
	\noindent {\bf (Case 2)}  In this case, $\tilde{y}$ is one of the points $\tilde{y}_{\pm}$.  By Lemma~\ref{lemm:specialization-real-valn}, points of this type specialize to points of type considered in the previous case, which we have already addressed. 
	
	\noindent {\bf (Case 3)} Since $g(\tilde{b})$ and $\tilde{w}$ are both finite by assumption, $\tilde{y}$ cannot be of this type. 
	
	\noindent {\bf (Case 4)} Suppose $\tilde{y}$ corresponds to a free Dedekind cut.  Since $U$ is a quasi-compact open, $U \cap R(\aone_{\tilde{b}})$ is a union of open intervals; such a union contains the point corresponding to a Dedekind cut if and only if it contains all points of type (1) sufficiently close to the cut.  Once again, this follows from the case of points of type (1).
\end{proof}

\begin{proposition} 
	\label{prop:A1-inv}
If $X$ is a scheme, then $R(p)^*: \Shv(R(X))^\comp \to \Shv(R(\aone_X))^\comp$ is fully faithful.
\end{proposition}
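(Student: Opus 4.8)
I would prove fully-faithfulness by showing that the unit $\mathscr F \to (Rp)_*(Rp)^*\mathscr F$ is an equivalence for every hypercomplete $\mathscr F$. Evaluating this on the objects of the small real-\'etale site $\Et_X$ (harmless, since a sheaf on $RX$ is determined by its restriction to that site), it amounts to the following \emph{absolute} statement: for every scheme $Y$ and every hypercomplete real-\'etale sheaf $\mathscr F$ on $RY$, the map $\Gamma(RY;\mathscr F) \to \Gamma(R\aone_Y;(Rp)^*\mathscr F)$ is an equivalence. By a standard continuity/approximation argument — the real spectrum functor commutes with cofiltered limits of affine schemes, and finite Krull dimension is seen at a finite stage — one reduces to $Y$ affine of finite Krull dimension, so that by Theorem~\ref{thm:rethypercomplete}(2) the topoi $\Shv(RY)$ and $\Shv(R\aone_Y)$ are hypercomplete and we need not worry about the distinction between sheaves and hypersheaves.

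Next I would bring in the interval-shaped opens. By Proposition~\ref{prop:coherence} the space $R\aone_Y$ is spectral, and by Proposition~\ref{prop:intervalshapedcovers} it is covered by finitely many quasi-compact interval-shaped opens; since $\Shv(R\aone_Y)$ is hypercomplete, I may refine this to a hypercover $\mathcal V_\bullet \to R\aone_Y$ all of whose terms are disjoint unions of interval-shaped opens, and compute $\Gamma(R\aone_Y;(Rp)^*\mathscr F) \simeq \lim_\Delta \Gamma(\mathcal V_\bullet;(Rp)^*\mathscr F)$. If $V$ is interval-shaped with defining data $(V_0 \subseteq RY, g)$, then $Rp(V) = V_0$ is open in $RY$, and $Rp$ is surjective; so the hypercover $\mathcal V_\bullet$ can be arranged so that its image $\mathcal U_\bullet := Rp(\mathcal V_\bullet)$ is a hypercover of $RY$ (compatibly with faces and degeneracies), giving $\Gamma(RY;\mathscr F) \simeq \lim_\Delta \Gamma(\mathcal U_\bullet;\mathscr F)$. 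Everything then reduces to the local claim that for interval-shaped $V$ with image $V_0$ the pullback map $\mathscr F(V_0) \to ((Rp)^*\mathscr F)(V)$ is an equivalence, naturally in $V$.

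For the local claim: the fibre of $Rp|_V\colon V\to V_0$ over $x_0\in V_0$ is the open subset $V \cap R\aone_r$ of the linearly ordered space $R\aone_r$ (Proposition~\ref{prop:pointsofa1}, with $r$ the real closure of $\kappa(x_0)$ along $x_0$), and by the interval-shaped condition it is star-shaped about $g(x_0)$, hence an open interval containing $g(x_0)$. Using that the endpoints of this interval are cut out by first-order formulas — precisely the constructibility input from the proof of Proposition~\ref{prop:intervalshapedcovers} — one contracts $V$ onto the graph of $g$ (which maps isomorphically to $V_0$ under $Rp$) along these intervals, reducing the local claim to the assertion that for $r$ real closed every open interval $I \subseteq R\aone_r$ is \emph{weakly contractible}, i.e.\ the constant-sheaf functor $\Spc \to \Shv(I)^\comp$ is fully faithful; this last point is then settled directly from the list of points of $R\aone_r$ in Proposition~\ref{prop:pointsofa1}, by contracting $I$ onto an interior rational point.

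The crux — and the step I expect to be genuinely delicate — is this final reduction: establishing weak contractibility of open intervals in $R\aone_r$ together with the attendant \emph{local triviality} of $Rp|_V$ over $V_0$. As the introduction puts it, this is ``essentially a claim about the structure of endpoints of intervals,'' and carrying it out requires the same case analysis as Proposition~\ref{prop:pointsofa1} (rational points, the points $x_\pm$, free Dedekind cuts) together with the constructibility arguments behind Proposition~\ref{prop:intervalshapedcovers}. A lesser but still real technical burden is arranging the interval-shaped hypercover of $R\aone_Y$ so that it descends under $Rp$ to a bona fide hypercover of $RY$, so that the two descent computations can be identified.
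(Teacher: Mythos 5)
Your overall frame (prove the unit $\mathscr F \to (Rp)_*(Rp)^*\mathscr F$ is an equivalence, and bring in interval-shaped opens via Proposition~\ref{prop:intervalshapedcovers}) matches the paper, but two of your steps have genuine gaps. First, the claim that an interval-shaped hypercover $\mathcal V_\bullet$ of $R\aone_Y$ ``can be arranged'' so that $Rp(\mathcal V_\bullet)$ is a hypercover of $RY$ is false in general: already at level $1$, two interval-shaped opens $V_i, V_j$ can be disjoint in $R\aone_Y$ while $Rp(V_i)\cap Rp(V_j)\neq\emptyset$ (two disjoint ``bands'' over the same base region), so $Rp(\mathcal V_1)$ cannot surject onto $Rp(\mathcal V_0)\times_{RY}Rp(\mathcal V_0)$ and the matching condition fails; no choice of refinement fixes this, because the failure records exactly the fiberwise combinatorics of which intervals meet. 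The paper never asserts the image is a hypercover: it works with the \emph{presheaf} pullback, for which $(p^*F)(U)\simeq F(pU)$ is automatic when $pU$ is open, computes the hypersheafification as a colimit over hypercoverings, and then proves the weaker (but sufficient) statement that $|Rp(U_\bullet)|\to RX$ is an equivalence in $\Shv(RX)^\comp$. That statement is checked stalkwise, reducing to a real closed field, where a finite configuration of intervals in $R\aone_r$ is embedded order-preservingly into $[0,1]$ and the contractibility of the geometric realization follows from the classical nerve theorem (\cite[Remark A.3.8]{lurie-ha}). This is the actual content of the proof, and your outline has no substitute for it.

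Second, your ``local claim'' that $((Rp)^*\mathscr F)(V)\simeq\mathscr F(V_0)$ for the \emph{sheaf} pullback is essentially the proposition itself relativized to $V\to V_0$, and the proposed proof---contract $V$ onto the graph of $g$ fiberwise and invoke weak contractibility of open intervals in $R\aone_r$---supplies no mechanism for passing from fiberwise contractibility to a statement about sections over $V$. The map $Rp|_V$ is not locally trivial (the fiber intervals and their endpoints jump as you move in $V_0$), these are spectral spaces rather than locally contractible ones, and there is no proper base change or homotopy-invariance input available at this stage: any ``contraction'' of $V$ inside sheaf theory is precisely the $\A^1$-invariance one is trying to prove. (A lesser point: your initial continuity reduction to $Y$ affine of finite Krull dimension is both unnecessary---the proposition is stated for hypercomplete sheaf categories on both sides---and shaky as written, since an arbitrary $\mathscr F$ on $RY$ does not descend to a finite stage of the limit.)
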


\begin{proof}

Consider the functor $p^*: {\mathrm P}(RX) \to {\mathrm P}(R\aone_X)$.
If $U \subset R\aone_X$ is open such that $p(U) \subset RX$ is also open\NB{this should always be the case}, then $(p^*F)(U) \wequi F(pU)$.\NB{ref?}
We claim that if $F \in \Shv(RX)^\comp$ and $V \subset RX$ is quasi-compact open, then $(p^*F)(R\aone_V) \wequi (L^\comp p^*F)(R\aone_V)$ (where $L^\comp$ denotes the hypersheafification functor).
Since by the previous remark this is the same as $F(V)$, we deduce that $p_*L^\comp p^* F \wequi F$, proving what we want.
In fact, since $RX$ has a basis of opens coming from $X' \to X$ étale, we may assume (replacing $X$ by $X'$) that $V=RX$.
Let $U_\bullet \to R\aone_X$ be a hypercovering.
By Proposition~\ref{prop:intervalshapedcovers}, we can refine $U_\bullet$ by a hypercovering with interval-shaped entries.\NB{details?}
It will be enough to show that for such hypercoverings we have $(p^*F)(R\aone_X) \wequi \lim_\Delta (p^*F)(U_\bullet)$ (indeed $(L^\comp p^* F)(X)$ is obtained as a filtered colimit over all hypercoverings \cite[Theorem 8.6]{DHIhypercovers}\NB{better ref?}).
Since interval-shaped subsets have open images, we have $(p^*F)(U_\bullet) \wequi F(pU_\bullet)$.
It is thus enough to show that $|pU_\bullet| \to RX$ is an equivalence in $\Shv(RX)^\comp$.
We can test this stalkwise; thus let $i: \{x\} \hookrightarrow RX$.
Note that we have $i^* p(U) = p'(i'^*(U))$ for any $U \subset RX$ with $pU$ open.

Thus we may assume that $X=x$ is the spectrum of a real closed field.
Let $\mathscr C$ be the category of quasi-compact open subsets of $R\A^1_x$ (that is, finite disjoint unions of quasi-compact open intervals\NB{i.e. $[a,b]$ where $a \in \{x_+, -\infty\}$ and $b \in \{x_-, \infty\}$ for $x \in r$}).
The object $U_\bullet$ is a hypercovering of the terminal object of $\mathscr C$.
Consider the functor $S: \mathrm{P}(\mathscr C) \to \Spc$, given by left Kan extending the functor $\mathscr C \to \Spc$ which preserves finite coproducts and sends intervals to $*$.
We need to show that $S(|U_\bullet|) = *$.
We show more generally that $S$ factors through the localization $\mathrm{P}(\mathscr C) \to \Shv(R\A^1_x) \wequi \Shv(R\A^1_x)^\comp$ (this is indeed a localization, namely at (finite) coverings, since $R\A^1_x$ is spectral \cite[Theorem 7.3.5.2]{HTT}).
Any covering $U_\bullet \to U$ in $\mathscr C$ consists of finitely many intervals covering a disjoint union of finitely many intervals.
Let $\mathscr S \subset r \cup \{\pm \infty\}$ be the set of endpoints of all the finitely many intervals obtained by iterated intersection of this initial set.
There is an order-preserving embedding of $\mathscr S$ into $[0,1]$.
This yields an embedding of the set of open intervals in $R\aone_x$ with endpoints in $\mathscr S$ into the set of open intervals in $[0,1]$, which preserves coverings and intersections.
It is thus enough to prove that if $U'_\bullet \to U' \subset [0,1]$ is (the Čech nerve of) a covering of an open interval by finitely many open intervals, then $|S'U_\bullet| \wequi *$, where $S'$ is defined analogously to $S$.
Since intervals in $[0,1]$ are contractible, this follows from \cite[Remark A.3.8]{lurie-ha}.
\end{proof}

\subsubsection*{Motivic localization and the real-\'etale topology}
We now study $\aone$-invariant real étale sheaves on smooth schemes, that is, real-étale motivic spaces. 

\begin{defn}
	If $S$ is a scheme, then we write  $\Spc_\ret(S) \subset {\mathrm P}(\Sm_S)$ for the subcategory of presheaves which are $\aone$-invariant and satisfy real-étale descent; we refer to $\Spc_{\ret}(S)$ as the category of real-\'etale motivic spaces.  
\end{defn}


Let $\Spc(S)$ be the usual category of motivic spaces, i.e., the subscategory of $\mathrm{P}(\Sm_S)$ consisting of presheaves of spaces that are $\aone$-invariant and satisfy Nisnevich descent.  Write $\mathrm{L}_{mot}$ for the motivic localization functor, which is left adjoint to the inclusion $\Spc(S) \subset \mathrm{P}(\Sm_S)$. 

Likewise, $\Shv_{\ret}(\Sm_S) \subset \mathrm{P}(\Sm_S)$ and we write $L_{\ret}^\naive$ for the left adjoint to this functor.  The inclusion $\Spc_{\ret}(S) \subset \mathrm{P}(\Sm_S)$ also has a left adjoint $\mathrm{L}_{\ret}$.  As the real-\'etale topology is finer than the Nisnevich topology, we have
\[
\Spc_{\ret}(S) := \Spc(S) \cap \Shv_{\ret}(\Sm_S) \subset \Spc(S).
\]
A priori, there is no reason for the $L_{\ret}^\naive$ to preserve $\aone$-invariant presheaves and similarly no reason for $\mathrm{L}_{mot}$ to preserve real-\'etale sheaves.  As such, there is no simple formula for $\mathrm{L}_{\ret}$ for general $S$.

\begin{proposition}
	\label{prop:modelforlret}
	If $S$ is quasi-compact and quasi-separated, then 
	\[
	\mathrm{L}_{\ret} \wequi \colim (\mathrm{L}_{mot} \to (L^\naive_{\ret}\mathrm{L}_{mot}) \to \mathrm{L}_{mot}(L^\naive_{\ret}\mathrm{L}_{mot}) \to (L^\naive_{\ret}\mathrm{L}_{mot})^{\circ 2} \to \cdots)
	\]
\end{proposition}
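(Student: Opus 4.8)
The idea is the standard "transfinite composition of two localizations" argument, but truncated because of a convergence statement. We want to show the displayed sequential colimit
\[
L := \colim\left(\mathrm{L}_{mot} \to L^\naive_{\ret}\mathrm{L}_{mot} \to \mathrm{L}_{mot}L^\naive_{\ret}\mathrm{L}_{mot} \to (L^\naive_{\ret}\mathrm{L}_{mot})^{\circ 2} \to \cdots\right)
\]
computes $\mathrm{L}_{\ret}$, i.e.\ the left adjoint to the inclusion $\Spc_\ret(S) = \Spc(S) \cap \Shv_\ret(\Sm_S) \hookrightarrow \mathrm P(\Sm_S)$. First I would record the two things we need: (i) for every presheaf $F$, $LF$ lies in $\Spc_\ret(S)$, and (ii) the natural map $F \to LF$ is initial among maps from $F$ into objects of $\Spc_\ret(S)$ — equivalently, $F \to LF$ is both an $\aone$-local equivalence and a real-\'etale-local equivalence (since the localization at the union of two sets of maps is the idempotent completion of alternating the two localizations, and the colimit above is exactly that alternation). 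Point (ii) is essentially formal: each stage map is either $\mathrm{L}_{mot}$ applied to something or $L^\naive_\ret$ applied to something, hence is a motivic equivalence or a real-\'etale equivalence, so it is a $\mathrm{L}_\ret$-equivalence; and if $X \in \Spc_\ret(S)$ then $\Map(L F, X) = \lim_n \Map((L^\naive_\ret \mathrm{L}_{mot})^{\circ n}F, X) = \Map(F,X)$ since $X$ is both a motivic space and a real-\'etale sheaf. So the content is (i): that the colimit stabilizes into $\Spc(S) \cap \Shv_\ret(\Sm_S)$.

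**Establishing (i).** Here is where qcqs is used. The colimit is filtered, so $L F$ is automatically $\aone$-invariant (a filtered colimit of $\aone$-invariant presheaves is $\aone$-invariant, since $- \times \aone$ commutes with filtered colimits of presheaves) — wait, that gives $\aone$-invariance for free, and the real issue is real-\'etale descent. I would argue: $L F$ is a filtered colimit of objects lying alternately in $\Spc(S)$ and in $\Shv_\ret(\Sm_S)$; I need the colimit to land in $\Shv_\ret(\Sm_S)$. For this I'd invoke that, over a qcqs base, $\Shv_\ret(\Sm_S)$ is generated under filtered colimits by compact objects and real-\'etale descent can be tested against the (quasi-compact) objects of a small generating site — concretely, by Corollary~\ref{cor:compactgeneration} and Theorem~\ref{thm:rethypercomplete}, $\Shv(RS)$ (hence $\Shv(S_\ret)$) is compactly generated, and more relevantly one checks descent along the real-surjective covers, which over a qcqs scheme may be taken quasi-compact (Proposition~\ref{prop:coherence}). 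Because the relevant \v{C}ech/hypercover limits are \emph{finite} in each simplicial degree up to refinement by quasi-compact covers, they commute with the filtered colimit; so $L F$ satisfies real-\'etale descent as soon as it is a filtered colimit of presheaves each of which becomes a real-\'etale sheaf after one more application of $L^\naive_\ret$, and cofinally in the diagram the terms \emph{are} real-\'etale sheaves. Dually, the terms cofinally are also motivic spaces, and (again using that $\aone$-invariance is stable under all filtered colimits of presheaves) $L F$ is $\aone$-invariant. Hence $LF \in \Spc(S) \cap \Shv_\ret(\Sm_S) = \Spc_\ret(S)$.

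**Concluding.** Given (i) and (ii), the map $F \to LF$ exhibits $LF$ as the $\Spc_\ret(S)$-localization of $F$: for any $X \in \Spc_\ret(S)$,
\[
\Map_{\mathrm P(\Sm_S)}(LF, X) \xrightarrow{\ \sim\ } \lim_n \Map\big((L^\naive_\ret \mathrm{L}_{mot})^{\circ n} F,\, X\big) \xrightarrow{\ \sim\ } \Map_{\mathrm P(\Sm_S)}(F, X),
\]
the second arrow being an equivalence termwise because $X$ is both $\aone$-invariant and a real-\'etale sheaf, so each transition map in the tower is a $\Map(-,X)$-equivalence. Since $LF \in \Spc_\ret(S)$ by (i), this says exactly $L \simeq \mathrm{L}_\ret$. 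I expect the main obstacle to be the careful justification in (i) that the filtered colimit of the alternating tower lands in $\Shv_\ret(\Sm_S)$ — i.e.\ that real-\'etale descent is preserved under this particular filtered colimit. This is precisely where quasi-compactness and quasi-separatedness enter (via reducing descent to \emph{finite} data through quasi-compact covers, as in Proposition~\ref{prop:coherence} and Corollary~\ref{cor:compactgeneration}), and it is the only non-formal ingredient; everything else is a general statement about iterating two Bousfield localizations whose union one wishes to compute.
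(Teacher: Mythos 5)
Your proposal is correct and takes essentially the same route as the paper: the only non-formal input is that $\A^1$-invariant presheaves and real-étale sheaves are each closed under filtered colimits in $\mathrm{P}(\Sm_S)$ (the latter via Theorem \ref{thm:rethypercomplete} together with the finitary/coherent nature of the real-étale site over a qcqs base, which is exactly what you invoke through Proposition \ref{prop:coherence} and Corollary \ref{cor:compactgeneration}), after which viewing the colimit along the two cofinal subsequences shows it lies in $\Spc_\ret(S)$, and the identification with $\mathrm{L}_\ret$ is the formal universal-property argument you give. The paper's proof is a terser version of precisely this argument.
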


\begin{proof}
	The categories of $\A^1$-invariant presheaves and of real étale sheaves are closed under filtered colimits in $\mathrm{P}(\Sm_S)$ (use Theorem \ref{thm:rethypercomplete}(1) and \cite[Theorem 7.3.5.2]{HTT} for the real étale case). Writing the colimit in two ways, it is seen to be both a colimit of $\aone$-invariant presheaves and of $\ret$-sheaves.
The result follows.
\end{proof}

\subsubsection*{Motivic real étale spaces} 
\label{sec:mot-ret}
\begin{lem} \label{lemm:contract-interval}
The presheaf $\mathscr O_{[0,1]} \subset {\mathscr O}$ (in the sense of Definition \ref{defn:ineq}) is contractible in $\Spc_\ret(S)$.
\end{lem}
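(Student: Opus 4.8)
The plan is to contract $\mathscr{O}_{[0,1]}$ onto its evident global section $0$ by an explicit $\A^1$-homotopy which becomes available only after real-\'etale sheafification. For $X \in \Sm_S$, a section $t \in \mathscr{O}(X)$ and a section $a \in \mathscr{O}_{[0,1]}(X) \subset \mathscr{O}(X)$, I would set
\[
H(t,a) \;:=\; \frac{t^{2}\,a}{t^{2} + (1-t)^{2}}.
\]
Since $t^{2} + (1-t)^{2} = \tfrac12\bigl(1 + (1-2t)^{2}\bigr)$, Lemma~\ref{lemm:construct-sect} shows this denominator is invertible in $a_\ret\mathscr{O}$, so $H(t,a) \in a_\ret\mathscr{O}(X)$; the assignment is manifestly natural, hence defines a morphism of presheaves $\A^1 \times \mathscr{O}_{[0,1]} \to a_\ret\mathscr{O}$. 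Moreover $H(1,a) = a$ and $H(0,a) = 0$.

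The next step is to check that $H$ factors through the subsheaf $a_\ret\mathscr{O}_{[0,1]} \subset a_\ret\mathscr{O}$. As this is an inclusion of sheaves, membership can be tested on stalks, i.e.\ over a real henselian local ring $B$: there $t^{2}+(1-t)^{2}$ has positive image in the (real closed) residue field and hence is a unit, and for any homomorphism $\beta\colon B \to r$ to a real closed field one has $0 \le \beta(a) \le 1$ because $a$ comes from $\mathscr{O}_{[0,1]}$, so the elementary inequality $0 \le \beta(t)^{2}\beta(a)/\bigl(\beta(t)^{2}+(1-\beta(t))^{2}\bigr) \le 1$ gives the claim. Composing with $a_\ret\mathscr{O}_{[0,1]} \to Y := \mathrm{L}_{\ret}\mathscr{O}_{[0,1]}$ we obtain $H\colon \A^1 \times \mathscr{O}_{[0,1]} \to Y$ with $H|_{t=1}$ the localization unit $\eta$ and $H|_{t=0}$ the constant map at $0$.

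Finally, $\Spc_\ret(S)$ is cartesian closed (real-\'etale sheafification is left exact, and $\A^1 \times (-)$ carries the generating $\A^1$-equivalences $\A^1_U \to U$ to $\A^1$-equivalences), so $\A^1 \times (-)$ preserves $\mathrm{L}_{\ret}$-equivalences; applying this to $\eta$ lets us extend $H$ to $\bar H\colon \A^1 \times Y \to Y$ with $\bar H|_{t=1} \simeq \id_Y$ and $\bar H|_{t=0} \simeq \mathrm{const}_0$. Since $\A^1 \simeq *$ in $\Spc_\ret(S)$, the inclusions $\{0\},\{1\}\colon * \to \A^1$ are homotopic, whence $\id_Y \simeq \mathrm{const}_0$; as $\mathrm{const}_0$ factors through $*$, the object $Y$ is a retract of $*$, so $Y \simeq *$, which is the assertion. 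The only genuinely non-formal input is the middle step — recognizing $a_\ret\mathscr{O}_{[0,1]}$ as the subsheaf of $a_\ret\mathscr{O}$ cut out by the pointwise conditions $0 \le (-) \le 1$, so that the containment is stalk-local — and this rests only on the fact, recalled in \S\ref{sec:small-ret}, that real henselian rings are the stalks of the real-\'etale topology; everything else (naturality, compatibility of $\mathrm{L}_{\ret}$ with $\A^1 \times (-)$, and the retract argument) is formal.
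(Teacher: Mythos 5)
Your proposal is correct and is essentially the paper's own argument: the paper likewise contracts $\mathscr O_{[0,1]}$ by multiplication with an explicit $[0,1]$-valued section of $a_\ret\mathscr O$ over $\A^1$ (there $f=(2/(1+x^2)-1)^2$, with $f(0)=1$, $f(1)=0$), made available by Lemma \ref{lemm:construct-sect}, with the inequalities checked stalkwise at real henselian rings. Your function $t^2/(t^2+(1-t)^2)$ is just a different choice of such a section (note that deducing invertibility of the denominator from $t^2+(1-t)^2=\tfrac12(1+(1-2t)^2)$ tacitly uses that $2$ is a unit in $a_\ret\mathscr O$, but your direct stalkwise positivity check already covers this), and the localization bookkeeping you spell out at the end is exactly what the paper leaves implicit.
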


\begin{proof}
We claim that there is a section $f \in (a_\ret {\mathscr O})(\aone)$ such that $f(0) = 1$, $f(1) = 0$ and for any real closed field $r$ and a homomorphism $\alpha: {\mathscr O}(\aone) \to r$ we have $0 \le \alpha(f) \le 1$.  Assuming this, the homotopy of multiplication by $f$ provides an $\aone$-homotopy contracting $\mathscr{O}_{[0,1]}$.
The desired section is $f = (2/(1+x^2)-1)^2$, which exists by Lemma \ref{lemm:construct-sect}.
The relevant inequalities are checked by elementary algebra.

\end{proof}

\begin{cor} \label{cor:rho-inv}
The map 
\[ 
\rho: S^0 \longrightarrow \Gm \in \Spc_\ret(S)_* 
\] is an equivalence.
\end{cor}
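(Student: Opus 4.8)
The plan is to reduce the corollary to the assertion that the presheaves $\mathscr O_{>0}$ and $\mathscr O_{<0}$ of Definition~\ref{defn:ineq} are contractible in $\Spc_\ret(S)$, and then to contract those presheaves by an explicit $\aone$-homotopy built from the bounded section $f$ supplied by the proof of Lemma~\ref{lemm:contract-interval}.

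First I would identify $\Gm$ with the presheaf $\mathscr O^\times$ of units, pointed at $1$. By Lemma~\ref{lem:gm} the canonical map $\mathscr O_{<0} \sqcup \mathscr O_{>0} \to \mathscr O^\times$ becomes an isomorphism after real-étale sheafification, and under it the point $1$ lies in the summand $\mathscr O_{>0}$ while $-1$ — the image under $\rho$ of the non-basepoint of $S^0$ — lies in $\mathscr O_{<0}$. Since $\mathrm{L}_\ret$ factors through real-étale sheafification (every object of $\Spc_\ret(S)$ being a real-étale sheaf) and preserves coproducts, this yields an equivalence $\mathrm{L}_\ret\Gm \simeq \mathrm{L}_\ret\mathscr O_{<0} \sqcup \mathrm{L}_\ret\mathscr O_{>0}$ in $\Spc_\ret(S)_*$, carrying the basepoint to the image of $1$ in the second factor. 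Hence $\mathrm{L}_\ret\rho\colon S^0 \to \mathrm{L}_\ret\Gm$ is an equivalence once $\mathrm{L}_\ret\mathscr O_{>0} = *$ and $\mathrm{L}_\ret\mathscr O_{<0} = *$; and since the automorphism $x \mapsto -x$ of $\aone$ interchanges $\mathscr O_{>0}$ and $\mathscr O_{<0}$, it suffices to treat $\mathscr O_{>0}$.

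To contract $\mathscr O_{>0}$, I would take the section $f \in (a_\ret\mathscr O)(\aone)$ from the proof of Lemma~\ref{lemm:contract-interval}, which satisfies $f(0) = 1$, $f(1) = 0$ and $0 \le f \le 1$ everywhere, and define a morphism of real-étale sheaves
\[ H\colon \aone \times a_\ret\mathscr O_{>0} \longrightarrow a_\ret\mathscr O_{>0}, \qquad (t,a) \longmapsto f(t)\,a + \bigl(1 - f(t)\bigr). \]
The key point is that for \emph{every} $t$ in the affine line and every $a > 0$, the value $f(t)\,a + (1-f(t))$ is a convex combination of the positive quantities $a$ and $1$ with weights $f(t), 1-f(t) \in [0,1]$, hence is again positive; so $H$ does take values in $\mathscr O_{>0}$. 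Since $H(0,-) = \mathrm{id}$ and $H(1,-)$ is the constant map $1$, the sheaf $a_\ret\mathscr O_{>0}$ is $\aone$-contractible, whence $\mathrm{L}_\ret\mathscr O_{>0} = *$, completing the proof. (Alternatively, $a \mapsto a/(1+a)$ and $b \mapsto b/(1-b)$ exhibit $\mathscr O_{>0}$ as a real-étale retract of $\mathscr O_{[0,1)}$, which is contractible by the same multiplication-by-$f$ homotopy as in Lemma~\ref{lemm:contract-interval}.)

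The only genuine obstacle is the choice of interpolating function: the naive homotopy $(t,a) \mapsto (1-t)\,a + t$ fails to stay in $\mathscr O_{>0}$ for $t \notin [0,1]$ and so does not define a map out of $\aone \times \mathscr O_{>0}$, which is exactly why one must use the bounded section $f$ of Lemma~\ref{lemm:contract-interval} in place of the coordinate. Everything else is routine bookkeeping — checking that $H$ is a well-defined map of real-étale sheaves (it is a polynomial in sections of $a_\ret\mathscr O$, and the relevant inequalities are stable under the ring maps to real closed fields that define $\mathscr O_{>0}$), and recording the two formal properties of $\mathrm{L}_\ret$ (factoring through $a_\ret$, preserving coproducts) used in the reduction.
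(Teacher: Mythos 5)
Your proof is correct and takes essentially the same route as the paper: both reduce via Lemma~\ref{lem:gm} to contracting $\mathscr O_{>0}$ by the convex-combination homotopy toward $1$, the paper parametrizing it by the contractible interval $\mathscr O_{[0,1]}$ (Lemma~\ref{lemm:contract-interval}) via $t(x-1)+1$, while you substitute the bounded section $f$ from that lemma's proof to make it an honest $\aone$-homotopy --- the same two ingredients. The only slip is in your parenthetical alternative: $b \mapsto b/(1-b)$ sends $0 \in \mathscr O_{[0,1)}$ to $0$, so it lands in $\mathscr O_{\ge 0}$ rather than $\mathscr O_{>0}$ and does not give the claimed retraction, but your main argument never uses it.
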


\begin{proof}
By Lemma \ref{lemm:contract-interval}, it suffices to prove that this is a homotopy equivalence for the interval $\mathscr{O}_{[0,1]}$.  By Lemma~\ref{lem:gm}, it suffices to show that $\mathscr{O}_{> 0}$ can be contracted using $\mathscr{O}_{[0,1]}$.  This can be done using the standard homotopy $t(x-1) + 1$.\NB{is this really legit?}
\end{proof}

\begin{lem} \label{lemm:contract}
For $X \in \Sm_S$ consider the functor 
\[ 
e_X: \Shv(RX) \wequi \Shv_\ret(\Et_X) \longrightarrow \Shv_\ret(\Sm_S). 
\]
If $U \subset R\aone_X$ is interval shaped with image $V$ in $RX$, then $e_{\aone_X}(U) \stackrel{\aone}{\wequi} e_X(V)$.
\end{lem}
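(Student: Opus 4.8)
The plan is to exhibit $e_X(V)$ as an $\aone$-deformation retract of $e_{\aone_X}(U)$ inside $\Spc_\ret(S)$, the deformation being the ``straight-line homotopy'' towards the polynomial section $g$ that is built into Definition~\ref{defn:intervalshaped}, reparametrised over the interval presheaf $\mathscr{O}_{[0,1]}$ of Definition~\ref{defn:ineq}; this reparametrisation is harmless because $\mathscr{O}_{[0,1]}$ is contractible in $\Spc_\ret(S)$ by Lemma~\ref{lemm:contract-interval}. First I would reduce to the case of an honest section. By Definition~\ref{defn:intervalshaped} there is an \'etale $\pi\colon X'\to X$, an open $V'\subset RX'$ carried homeomorphically onto $V=R(p)(U)$ by $R\pi$, and a morphism $g'\colon X'\to\aone_X$ over $X$ (equivalently $g'\in\mathscr{O}(X')$) inducing $g$. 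Writing $\pi_\aone\colon\aone_{X'}\to\aone_X$ for the base change and $p'\colon\aone_{X'}\to X'$ for the projection, set $U':=(R\pi_\aone)^{-1}(U)\cap (Rp')^{-1}(V')\subset R\aone_{X'}$. Using the identification of the fibres of $R\aone_{X'}\to RX'$ and of $R\aone_X\to RX$ over corresponding points of $V'$ and $V$ (Example~\ref{ex:realfibers}), one checks that $R\pi_\aone$ restricts to a homeomorphism $U'\xrightarrow{\sim}U$, that $Rp'(U')=V'$, and that $U'$ is interval-shaped over $X'$ with the \emph{algebraic} section $s_{g'}\colon X'\to\aone_{X'}$. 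Since $\pi,\pi_\aone$ are \'etale, $R\pi$ and $R\pi_\aone$ are local homeomorphisms, $(R\pi)_!(V')=V$, $(R\pi_\aone)_!(U')=U$, and the functors $e_{(-)}$ factor through these extension-by-zero functors (both sides are colimit preserving and agree on representables coming from \'etale schemes); hence $e_{X'}(V')\simeq e_X(V)$ and $e_{\aone_{X'}}(U')\simeq e_{\aone_X}(U)$. We may therefore assume $g=s_{g'}$ is the graph of some $g'\in\mathscr{O}(X)$.

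Next, the retraction and the homotopy. Interval-shapedness gives $R(g)(V)\subseteq U$ (for $v=R(p)(x)$ with $x\in U$ one has $R(g)(v)\in[x,R(g)R(p)x]\subseteq U$), and $R(p)(U)=V$ by hypothesis, so applying $e$ to the restrictions of $p$ and $g$ yields $r\colon e_{\aone_X}(U)\to e_X(V)$ and $i\colon e_X(V)\to e_{\aone_X}(U)$ with $r\circ i=\id$. For the other composite, consider the morphism of $S$-schemes
\[
H\colon\aone_S\times_S\aone_X\longrightarrow\aone_X,\qquad H(t,x,y)=\bigl(x,\ (1-t)y+t\,g'(x)\bigr),
\]
where $y$ is the coordinate on the $\aone_S$-factor of $\aone_X=X\times_S\aone_S$ and $t$ that on the new factor, so that $H|_{t=0}=\id_{\aone_X}$ and $H|_{t=1}=s_{g'}\circ p$. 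Restricting the parameter $t$ to the subpresheaf $\mathscr{O}_{[0,1]}\subset\aone_S$ and the source to $U$, real-\'etale sheafification produces
\[
h\colon\mathscr{O}_{[0,1]}\times_S e_{\aone_X}(U)\longrightarrow e_{\aone_X}(U),\qquad h|_{t=0}=\id,\quad h|_{t=1}=i\circ r .
\]
The one nonformal point is that this restriction is well defined, and that is exactly the interval-shaped condition: on a test scheme $T$ a point of the source is given (after a real-\'etale refinement) by $f\in\mathscr{O}_{[0,1]}(T)$ and a morphism $\phi\colon T\to\aone_X$ with $R\phi(RT)\subseteq U$, and evaluating $H(f,\phi)$ at a real point $\tau$ of $T$, the constraint $f(\tau)\in[0,1]$ puts the value on the interval joining $R\phi(\tau)\in U$ to $R(s_{g'}p)R\phi(\tau)$, hence inside $U$.

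To conclude, apply $L_\ret$. As $L_\ret$ preserves finite products and $L_\ret\mathscr{O}_{[0,1]}\simeq *$ by Lemma~\ref{lemm:contract-interval}, the object $L_\ret\bigl(\mathscr{O}_{[0,1]}\times_S e_{\aone_X}(U)\bigr)$ is canonically $L_\ret e_{\aone_X}(U)$, and the sections $t=0$ and $t=1$ of $\mathscr{O}_{[0,1]}$ induce the same map into it after $L_\ret$. Precomposing $L_\ret h$ with the two sections gives $L_\ret(\id)=L_\ret(i\circ r)$, i.e.\ $L_\ret(i)\circ L_\ret(r)=\id$; together with $L_\ret(r)\circ L_\ret(i)=\id$ this makes $L_\ret r$ an equivalence, so $e_{\aone_X}(U)$ and $e_X(V)$ agree in $\Spc_\ret(S)$, which is the assertion.

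The main obstacle is not conceptual — interval-shapedness is precisely the statement that the straight-line homotopy towards the section never leaves $U$, and Lemma~\ref{lemm:contract-interval} supplies the contractible parameter — but organisational: one must (i) turn the ``restrict $H$ to $\mathscr{O}_{[0,1]}$ and to $U$'' step into an actual morphism of real-\'etale sheaves, which requires unwinding the construction of $e_{\aone_X}$, and (ii) carry out the reduction to an honest section, which rests on the compatibility of the $e_{(-)}$ with \'etale localisation (equivalently, with extension by zero along local homeomorphisms of real spectra). Step (ii) is the part I expect to be fussiest to write out in full.
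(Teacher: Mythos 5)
Your proposal is correct and follows essentially the same route as the paper: reduce to an honest section by exploiting that \'etale maps are local homeomorphisms on real spectra, run the straight-line homotopy $(t,x)\mapsto tx+(1-t)gp$ parametrised by the contractible presheaf $\mathscr{O}_{[0,1]}$ (Lemma~\ref{lemm:contract-interval}), and verify that it preserves $e_{\aone_X}(U)$ by evaluating on real closed points, which is exactly the interval-shaped condition. The only difference is expository: you spell out the retraction/section bookkeeping and the \'etale reduction in more detail, while the paper spends its effort instead on identifying $e$ of an open defined by inequalities with the corresponding subsheaf of Definition~\ref{defn:ineq} (the point you flag as step (i)).
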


\begin{proof}
We begin by studying the functor $e_X$.  If $u: X' \to X$ is an étale morphism, we denote by $C(u) = X'^{\times_X (\bullet +1)}$ the Čech nerve.
Then by Example~\ref{ex:cechnerve} we have $R(C(u)) = C(R(u))$.  In particular (c.f. \cite[\S 6.2.3]{HTT}), 
\[ 
im(R(u)) \wequi |RC(u)| \in \Shv(X_\ret).
\]

Now, if $U \subset R(X)$ is specified by inequalities $\{f_1 > 0, \dots, f_n > 0\}$, then it is given by the image of 
\[ 
X' := X[t_1,\ldots,t_n,s_1,\ldots,s_n]/(t_i^2 - f_i,s_if_i -1) \longrightarrow X;
\]
the morphism $X' \to X$ is \'etale by construction.
We deduce that $e(U)$ is the image of $X' \to X \in \Shv_\ret(\Sm_S)$.

The functions ${\bf f} = (f_1,\ldots,f_n)$ define a morphism $X \to {\mathbb A}^n$, and there is a pullback square of the form
\[
\begin{tikzcd}
X' \arrow[r] \arrow[d]& \Gm^{\times n} \arrow[d] \\
X \arrow[r, "{\bf f}"] & {\mathbb A}^n,
\end{tikzcd}
\]
where the right-hand map is given by component-wise squaring.

On the other hand the image of the squaring map $\Gm \to \aone$ is real-étale locally given by $\mathscr{O}_{> 0}$: if $A$ is real henselian and $a \in A$, then $a$ is a square unit if and only it is positive in every real closed field to which $A$ maps.  All in all we have learned that $e$ takes subsets of $RX$ the form $\{f_1 > 0, \dots, f_n > 0\}$ to subsheaves of $X$ defined by the same inequalities (in the sense of Definition \ref{defn:ineq}).
Finite unions of such subsets can be obtained by using a finite disjoint union of schemes of the form $X'$, and infinite unions can be described via filtered colimits.
Since $e$ preserves colimits, it follows that is also sends arbitrary unions of subsets of $RX$ the previous form to analogous subsheaves of $X$.

With this preparation out of the way, we shall prove the actual result.
Recall that $U \subset R(\aone_X)$ is interval-shaped  (Definition~\ref{defn:intervalshaped}) if there exists an open subset $V \subset R(X)$ an \'etale morphism $X' \to X$ a morphism $g: X' \to \aone_X$ to which $U$ can be linearly contracted, and a lift $V \to R(X')$. To prove that $e(U) \stackrel{\A^1}{\wequi} e(V)$ we may as well replace $X'$ by $X$ (note that $V \times_{RX'} R\A^1_{X'} \wequi \A^1_V$ contains an isomorphic copy of $U$).
In that case, consider the homotopy
\[
\begin{split}
\aone \times \aone_{X} &\longrightarrow \aone_X \\
(t,x) &\longmapsto tx + (1-t)gp
\end{split}
\]
We claim that this restricts to 
\[ 
\mathscr{O}_{[0,1]} \times e_{\aone_X}(U) \longrightarrow e_{\aone_X}(U). 
\]
Assuming this, we conclude that $R(p): U \to V$ and $R(g): V \to U$ are inverse homotopy equivalences, as needed.  To establish this assertion, we unwind the definitions via the discussion of the first part of the proof.
The claim translates to the assertion that for every real closed field $r$ (over $S$), the map 
\[ 
\aone(r) \times (\aone_X)(r) \longrightarrow \aone_X(r) 
\] 
restricts to 
\[ 
\mathscr{O}_{[0,1]}(r) \times U(r) \longrightarrow U(r), 
\]
which holds by the definition of interval-shaped (Definition \ref{defn:intervalshaped}) and the description of the ordering on the affine line over a real-closed field (i.e., Proposition~\ref{prop:pointsofa1}).
\end{proof}

\begin{theorem} \label{thm:main}
If $S$ is a scheme, then the functor 
\[ 
\Shv(S_\ret) \longrightarrow \Spc_\ret(S) 
\] 
is an equivalence.
\end{theorem}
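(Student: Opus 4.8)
The functor in question is the composite $\Shv(S_\ret)\simeq\Shv(RS)\xrightarrow{\,e_S\,}\Shv_\ret(\Sm_S)$ of Theorem~\ref{thm:rethypercomplete}(1) and Corollary~\ref{cor:realsheavesvsbigretsheaves}, corestricted to $\Spc_\ret(S)$ once we know it lands there, with right adjoint given by restriction $\iota_*$ to the small real-\'etale site. The plan is to prove the theorem in three movements: a reduction, a verification that the functor is well defined and fully faithful, and the harder essential surjectivity. For the reduction, I would first use that the statement is Zariski-local on $S$ to assume $S$ quasi-compact and quasi-separated, and then write $S$ as a cofiltered limit of finite-type $\Z$-schemes with affine transition maps and invoke continuity of $\Shv((-)_\ret)$ (via Corollary~\ref{cor:compactgeneration}) and of $\Spc_\ret(-)$ to reduce to $S$ of finite Krull dimension. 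In that case $\Shv(X_\ret)$ is hypercomplete for every $X\in\Sm_S$ by Theorem~\ref{thm:rethypercomplete}(2), and, combining Lemma~\ref{lem:ret-sheaves-detect} with hypercompleteness of $\Spc(S)$, every $\A^1$-invariant real-\'etale sheaf on $\Sm_S$ is a real-\'etale hypersheaf; hence equivalences on either side may be detected on stalks at pairs $(Y,y)$ with $Y\in\Sm_S$ and $y\in RY$.

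For well-definedness: for $X\in\Sm_S$ I would check on representables that the restriction of $e_S(F)$ to $\Et_X$ is the pullback $f_X^*F$ along the induced map $f_X\colon RX\to RS$, using $R(Y\times_S X)=RY\times_{RS}RX$ from Proposition~\ref{prop:Rfiberproducts}. Consequently $e_S(F)(\aone_X)=\Gamma(R\aone_X,R(p)^*f_X^*F)$ while $e_S(F)(X)=\Gamma(RX,f_X^*F)$, and these coincide because Proposition~\ref{prop:A1-inv} gives full faithfulness of $R(p)^*$, whence $\Map(\ast,G)\simeq\Map(R(p)^*\ast,R(p)^*G)=\Map(\ast,R(p)^*G)$ for every hypercomplete $G$. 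Thus $e_S(F)$ is $\A^1$-invariant, so $e_S$ factors through $\Spc_\ret(S)$; the resulting functor is fully faithful since $e_S$ is and $\Spc_\ret(S)$ is a full subcategory of $\Shv_\ret(\Sm_S)$.

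For essential surjectivity, given $X\in\Spc_\ret(S)$ I must show the counit $e_S(\iota_*X)\to X$ is an equivalence, which I would check on the stalk at each $(Y,y)$. Base-changing $S$ to its real henselization at the image of $y$ and $Y$ accordingly, I may assume $S$ is real henselian local with real closed residue field and $y$ lies over the closed point, and must then identify both stalks with the stalk $X_s$ of $\iota_*X$ at the unique point $s$ of $RS$. For $e_S(\iota_*X)$ this is immediate from $e_S(\iota_*X)|_{\Et_Y}=f_Y^*(\iota_*X)$ and $RS=\{s\}$. For $X$ I would induct on the relative dimension $d$ of $Y$ over $S$: when $d=0$ the map $Y\to S$ is \'etale, so $RY\to RS$ is a local homeomorphism by \cite[Proposition 1.8]{real-and-etale-cohomology} and the real henselization of $Y$ at $y$ is $S$ itself, giving stalk $X_s$; when $d\ge 1$ I would choose, Zariski-locally near $y$, an \'etale map $Y\to\aone_{Y''}$ with $Y''=\mathbb A^{\,d-1}_S$, use the local homeomorphism $RY\to R\aone_{Y''}$ to reduce to $\aone_{Y''}$, and then, since interval-shaped quasi-compact opens are cofinal among neighborhoods by Proposition~\ref{prop:intervalshapedcovers} and for such a neighborhood $U$ with image $V\subset RY''$ Lemma~\ref{lemm:contract} together with $\A^1$-invariance of $X$ yields $\Map(e_{\aone_{Y''}}(U),X)\simeq\Map(e_{Y''}(V),X)$, identify the stalk of $X|_{\Et_Y}$ at $y$ with the stalk of $X|_{\Et_{Y''}}$ at the image $y''$ of $y$; since $y''$ lies over the closed point of $S$ and $Y''$ has relative dimension $d-1$, the inductive hypothesis closes the argument.

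The essential surjectivity is the heart of the matter. Within it, I expect the main obstacle to be the bookkeeping required to transport the big-site sheaf $X$ across the \'etale map $Y\to\aone_{Y''}$ and through the interval-shaped neighborhoods so that Lemma~\ref{lemm:contract} and the cofinality of Proposition~\ref{prop:intervalshapedcovers} can be applied cleanly — the genuinely geometric content (local $\A^1$-contractibility for the real-\'etale topology) having already been packaged in Propositions~\ref{prop:A1-inv} and~\ref{prop:intervalshapedcovers} and Lemma~\ref{lemm:contract}.
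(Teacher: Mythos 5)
Your reduction (Zariski descent plus continuity along cofiltered limits to get to $S$ qcqs of finite Krull dimension) and your full-faithfulness step (identify $e_S(F)|_{\Et_X}$ with the pullback of $F$ along $RX\to RS$ — legitimate here since Proposition \ref{prop:Rfiberproducts} applies because one leg is étale — and then invoke hypercompleteness and Proposition \ref{prop:A1-inv}) are essentially the paper's argument. Where you genuinely diverge is essential surjectivity. The paper argues categorically: the functor is colimit-preserving and fully faithful, so it suffices to show its image is dense, and by induction on dimension every étale $X'\to X\in\Sm_S$ lies in the closure under colimits of the image of $e_S$, because locally $X'$ is étale over some $\A^1_Y$, $\Shv(R\A^1_Y)$ is generated under colimits by interval-shaped opens (Proposition \ref{prop:intervalshapedcovers}), and $e_{\A^1_Y}(U)\stackrel{\A^1}{\wequi}e_Y(V)$ (Lemma \ref{lemm:contract}). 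You instead check the counit $e_S\iota_*X\to X$ on stalks and induct on relative dimension, using the same two geometric inputs to identify the stalk at $(\A^1_{Y''},\bar y)$ with the stalk at $(Y'',y'')$. This is a workable alternative: the finite-dimension reduction does give hypercompleteness, so stalkwise detection is legitimate, and the comparison maps $\Map(e_{Y''}(V_W),X)\to\Map(e_{\A^1_{Y''}}(W),X)$ are natural in $W$ because they are induced by the projection alone (the section $g$ enters only in proving they are equivalences), so the interleaving of interval-shaped neighborhoods of $\bar y$ with neighborhoods of $y''$ goes through. What the paper's route buys is the absence of any stalkwise bookkeeping, points of the big topos, or henselization arguments; what yours buys is a very concrete local statement (real-étale-locally, smooth schemes have the stalks of the base), at the price of more point-set care.

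Two repairable inaccuracies. First, after passing to the real henselization, $RS$ is \emph{not} a single point: the real spectrum of a real henselian local ring has many non-closed points. What is true, and what you actually need, is that every point of $RS$ specializes to the closed point $s$ (any element positive in the real closed residue field is a square unit, hence positive everywhere), so the only open neighborhood of $s$ is $RS$ itself and stalks at $s$ coincide with global sections. Second, the base change to the real henselization is not free: you would need that pro-étale base change preserves objects of $\Spc_\ret(-)$ and is compatible with $\iota_*$, $e_S$ and the formation of stalks, a continuity argument the paper does not carry out in this form. It is cleaner to drop the base change altogether and run your induction over the original base, proving that the natural map from the stalk of $\iota_*X$ at the image point in $RS$ to the stalk of $X$ at $(Y,y)$ is an equivalence; the base case $d=0$ and the inductive step read exactly as you wrote them, without any henselization.
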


\begin{proof}
We first reduce to the case that $S$ is quasi-compact, quasi-separated and has finite Krull dimension. 
Indeed both sides are Zariski sheaves, so we may assume $S$ affine (so in particular qcqs).
To conclude, we observe that also both sides are continuous: they convert cofiltered limits of quasi-compact, quasi-separated morphisms with affine transition morphisms into colimits.
This yields the desired reduction, by writing $S$ as a cofiltered limit of finite type, affine $\Z$-schemes.
The continuity assertion itself is an $\infty$-topos-theoretic enhancement of \cite[Proposition 3.4.1]{real-and-etale-cohomology} (see also \cite[Proof of Theorem 4.2]{bachmann-SHet2}).

Thus from now on we assume $S$ qcqs of finite Krull dimension.
We next prove that the functor in question is fully faithful.  Full-faithfulness is equivalent to the assertion that given $F \in \Shv(S_\ret)$, the image $eF \in \Shv_\ret(\Sm_S)$ is $\aone$-invariant.  Since $S$ is quasi-compact, quasi-separated of finite Krull dimension, by appeal to Theorem~\ref{thm:rethypercomplete} the topoi are hypercomplete. In that case, the claim follows by appeal to Proposition~\ref{prop:A1-inv} (note that $(eF)(\A^1_X) \wequi (p_*p^*q^*F)(X)$, where we denote by $\A^1_X \xrightarrow{p} X \xrightarrow{q} S$ the structure morphisms).

The functor $\Shv(S_{\ret}) \to \Spc_{\ret}(S)$ preserves colimits by definition, thus to establish the result, it suffices to show that this functor has dense image.  To this end, suppose $X \in \Sm_S$ and $X' \to X$ is an étale morphism.  We establish that $X'$ is in the closure under colimits of the image of $e_S$ (up to $\aone$-homotopy), by induction on the dimension of $X$.  If $\dim X = 0$ then $X'$ is étale over $S$, and hence in the image of $e_S$.  If $\dim X > 0$, then Zariski locally on $X'$, we may find an étale map $X' \to \aone_Y$.  It suffices to prove that such $X'$ are in the closure under colimits of the image of $e_S$.

We shall in fact show that $X'$ is in the closure under colimits of the image of $e_Y$; by our inductive assumption this is contained in the closure under colimits of the image of $e_S$.  Using Proposition~\ref{prop:intervalshapedcovers} (and once again Theorem~\ref{thm:rethypercomplete}) we see that $\Shv(R(\aone_Y))$ is generated under colimits by interval shaped $U \subset R\aone_Y$.  Since $e_{\aone_Y}(U)$ is in the image of $e_Y$ by Lemma~\ref{lemm:contract}, we conclude. 
\end{proof}

\section{Motivic real-étale localization and $\rho$-localization} \label{sec:motivic-real-etale-locn}
In this section we show that real étale localization and $\rho$-localization coincide for connected motivic spaces over any base scheme.
After some preliminary lemmas about the situation over fields, our main result (Theorem \ref{thm:main-comparison-connected}) is proved at the end of this section.

\subsubsection*{$\rho$-localization and periodization}
Recall the map $S^0 \to \Gm$ over $\Spec {\mathbb Z}$ sending the non base-point of $S^0$ to the unit $-1$.  Abusing terminology, we will write $\rho$ for the corresponding map over any arbitrary base scheme $S$ obtained by extension of scalars.
We now use the terminology of the \S\ref{sec:james}, for $\alpha = \rho$ and $\mathscr C = \Spc(S)_*$.

There are a priori two notions of $\rho$-localization on $\Spc(S)_*$, corresponding to $\rho$ or $\rho_+$.
These coincide at least for connected spaces:
\begin{lem} \label{lemm:rho-loc-rho+-loc}
Let $X \in \Spc(S)_*$.
\begin{enumerate}[noitemsep,topsep=1pt]
\item If $X$ is $\rho_+$-local it is $\rho$-local.
  In particular every $\rho$-equivalence is a $\rho_+$-equivalence.
\item If $X$ is $\rho$-local and connected, it is $\rho_+$-local.
\end{enumerate}
The same is true with $\Sigma \rho$ in place of $\rho$.
\end{lem}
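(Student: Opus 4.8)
The plan is to reformulate both localization conditions in terms of evaluation maps on the internal mapping object out of $\Gm$. Regard $\Gm = \A^1\setminus 0$ as an \emph{unpointed} scheme and let $M \in \Spc(S)_*$ be the corresponding internal mapping object into $X$ (pointed by the constant maps), with evaluation maps $\mathrm{ev}_1,\mathrm{ev}_{-1}: M \to X$ at $1$ and at $-1$; note $\Omega_\Gm X = \iMap_*(\Gm,X) = \fib(\mathrm{ev}_1: M \to X)$. Since $\rho_+$ is obtained from the unpointed inclusion $\{1,-1\}=\mu_2 \hookrightarrow \Gm$ by adjoining disjoint basepoints, $X$ is $\rho_+$-local if and only if $(\mathrm{ev}_1,\mathrm{ev}_{-1}): M \to X\times X$ (restriction along $\mu_2 \hookrightarrow \Gm$) is an equivalence; and $X$ is $\rho$-local if and only if $\mathrm{ev}_{-1}$ restricts to an equivalence $\Omega_\Gm X \xrightarrow{\sim} X$ on the fibre over the basepoint. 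The $\Sigma\rho$-variants follow from these by applying $\Omega$.

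For (1): suppose $\theta:=(\mathrm{ev}_1,\mathrm{ev}_{-1}): M \xrightarrow{\sim} X\times X$. Since $\mathrm{pr}_1\circ\theta=\mathrm{ev}_1$, transporting $\mathrm{ev}_1$ across $\theta$ turns it into $\mathrm{pr}_1$, so $\Omega_\Gm X=\fib(\mathrm{ev}_1)\simeq \fib(\mathrm{pr}_1)=\{x_0\}\times X$; and under this identification $\mathrm{ev}_{-1}=\mathrm{pr}_2\circ\theta$ restricts to $\mathrm{pr}_2|_{\{x_0\}\times X}=\id_X$. Hence $\mathrm{ev}_{-1}:\Omega_\Gm X \to X$ is an equivalence, i.e.\ $X$ is $\rho$-local. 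The ``in particular'' is then formal: the class of $\rho_+$-local objects is contained in that of $\rho$-local objects, hence every $\rho$-equivalence is a $\rho_+$-equivalence.

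For (2): suppose $X$ is $\rho$-local and connected. Then $\Omega_\Gm X\simeq X$ is connected; since $\mathrm{ev}_1:M\to X$ has a section (constants) and connected fibre over the basepoint, $M$ is connected (using the homotopy fibre sequence $\Omega_\Gm X\to M\to X$ and hypercompleteness). Thus $(\mathrm{ev}_1,\mathrm{ev}_{-1}):M\to X\times X$ is a map of connected objects, and it suffices to show it is an equivalence on loop spaces at the constant-map basepoint. Here $\Omega_{\mathrm{const}}M$ is the analogous mapping object for $G:=\Omega_{x_0}X$ in place of $X$, $\Omega(X\times X)=G\times G$, and the looped map is again the pair of evaluations, now valued in $G$. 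The key point is that $G$ is a group object (a loop space) and is still $\rho$-local, being the limit $*\times_X *$ of $\rho$-local objects (Lemma~\ref{lem:rholocalizationproperties}). Using the group structure, the split fibre sequence $\Omega_\Gm G\to M_G\xrightarrow{\mathrm{ev}_1} G$ trivializes (as a space) via $f\mapsto(f(1),\,f\cdot\mathrm{const}_{f(1)^{-1}})$, while $\mathrm{ev}_{-1}:\Omega_\Gm G\xrightarrow{\sim} G$ by $\rho$-locality of $G$; combining, $f\mapsto(f(1),f(-1)f(1)^{-1})$ is an equivalence $M_G\xrightarrow{\sim} G\times G$, and composing with the shear equivalence $(a,b)\mapsto(a,ba)$ identifies it with $(\mathrm{ev}_1,\mathrm{ev}_{-1})$. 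Hence the looped map is an equivalence, so $(\mathrm{ev}_1,\mathrm{ev}_{-1})$ is an equivalence and $X$ is $\rho_+$-local. The $\Sigma\rho$-version runs identically once one observes that if $X$ is $\Sigma\rho$-local then $\Omega_xX$ is $\rho$-local at every (local) basepoint $x$.

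I expect the substantive step to be (2): one cannot pass directly from ``equivalence over the basepoint of $X$'' to ``equivalence over all of $X$'', and the device that fixes this is the reduction to loop spaces, where the loop object becomes a group and that group structure lets one build the needed trivialization by hand. Getting the connectedness bookkeeping right (so the loop-space criterion applies) and confirming that the looped object remains $\rho$-local are where the care is required.
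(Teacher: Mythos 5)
Your proofs of (1) and of (2) for $\rho$ itself are correct, but they take a different route from the paper's. The paper observes that for any pointed $B$ there is a pushout $B \wequi B_+ \amalg_{S^0} *$, hence a fibre sequence $\iMap_*(B,X) \to \iMap_*(B_+,X) \to X$, and deduces (2) in one line from the fact that $* \to X$ is an effective epimorphism when $X$ is connected, so that pullback along it is conservative. You replace this descent step by the delooping criterion for pointed connected objects together with the group-structure trivialization $\iMap(\Gm,G) \wequi G \times \Omega_\Gm G$ for $G = \Omega X$ (which is indeed $\rho$-local, being a limit of $\rho$-local objects); this works, and incidentally your appeal to hypercompleteness for the connectivity of $M$ is unnecessary: $\Omega_\Gm X \to M$ is the pullback of the effective epimorphism $* \to X$, and $\Omega_\Gm X \wequi X$ is connected, so $M$ is connected.

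The genuine gap is the $\Sigma\rho$ statement. It does not follow ``by applying $\Omega$'': $(\Sigma\Gm)_+ \not\wequi \Sigma((\Gm)_+)$, so $(\Sigma\rho)_+$-locality of $X$ --- the assertion that the map of \emph{unpointed} internal mapping objects $\iMap(\Sigma\Gm,X) \to \iMap(S^1,X)$ is an equivalence --- is not the $\rho_+$-locality of $\Omega X$. Nor does your proof of (2) ``run identically'': your reduction to loop spaces requires the source and target of the comparison map to be connected, and while in the $\rho$ case these were $M$ and $X \times X$, in the $\Sigma\rho$ case they are the free mapping objects $\iMap(\Sigma\Gm,X)$ and $\iMap(S^1,X)$, whose fibres over the basepoint of $X$ are $\iMap_*(\Sigma\Gm,X) \wequi \Omega X$ (by $\Sigma\rho$-locality), respectively $\Omega X$; these are disconnected whenever $\ul\pi_1 X$ is nontrivial, which does happen for connected $\Sigma\rho$-local spaces (e.g.\ $\Sigma J_\rho$ has $\ul\pi_1 \wequi a_\ret\Z$ by Corollary \ref{cor:pi1-Jrho}). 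Hence checking $\Omega$ at the constant basepoint no longer detects whether the map is an equivalence. The paper's argument is uniform in $B$ and sidesteps this: the two evaluation fibrations over $X$ have equivalent fibres by $B$-locality, and conservativity of pullback along the effective epimorphism $* \to X$ gives the equivalence of total objects. Some such descent step (or an argument that handles all components and basepoints of the free mapping objects at once) must be added to your proposal before the $\Sigma\rho$ half of the lemma is established.
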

\begin{proof}
(1) The map $\rho$ is obtained from $\rho_+$ by forming a pushout in the arrow category (i.e. $A \wequi A_+ \amalg_{S^0} *$), whence it is a $\rho_+$-equivalence.
Both claims follow.

(2) Using again the natural pushout $A \wequi A_+ \amalg_{S^0} *$, we have the fiber sequence \[ \iMap_*(\ph, X) \to \iMap_*((\ph)_+, X) \to X. \]
It follows that for $X$ connected $\iMap_*(\rho, X)$ is an equivalence if and only if $\iMap_*((\rho)_+, X)$ is (use that $* \to X$ is an epimorphism of sheaves, so pullback along this map is conservative \cite[Lemma 6.2.3.16]{HTT}).

The statements about $\Sigma \rho$ are proved in the same way.
\end{proof}

There are similarly two notions of centrality, corresponding to the cartesian or smash product symmetric monoidal structures; they do \emph{not} coincide.
We will only ever use the one corresponding to smash product.
\begin{lem} \label{lem:rho-S1-central}
The map $\rho$ is $S^1$-central in $\Spc(S)_*$.
\end{lem}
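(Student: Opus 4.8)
The plan is to reduce $S^1$-centrality of $\rho$ to a statement about self-equivalences of $\A^2\setminus 0$, which is then handled using that $\SL{2}$ is generated by elementary matrices.

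Taking $\alpha = \rho$, $A = \Gm$ and the ``$S$'' of the definition to be $S^1$, the task is to produce a homotopy $\rho\wedge\id_\Gm\wedge\id_{S^1}\simeq\id_\Gm\wedge\rho\wedge\id_{S^1}$ between maps $\Gm\wedge S^1\to\Gm^{\wedge 2}\wedge S^1$. By the symmetry constraint of the smash product, $\id_\Gm\wedge\rho\wedge\id_{S^1} = (\tau\wedge\id_{S^1})\circ(\rho\wedge\id_\Gm\wedge\id_{S^1})$ with $\tau\colon\Gm^{\wedge 2}\to\Gm^{\wedge 2}$ the twist, so it suffices to show that $\tau\wedge\id_{S^1}$, restricted along $\rho\wedge\id_\Gm\wedge\id_{S^1}$, is homotopic to the identity. (It is essential not to attempt to prove $\tau\wedge\id_{S^1}\simeq\id$, which is false: both realizations of $\tau$ have degree $-1$.) Now I would fix an $\A^1$-equivalence $\Gm^{\wedge 2}\wedge S^1\simeq\A^2\setminus 0$, e.g. the standard one presenting $\A^2\setminus 0$ as the join $\Gm\star\Gm\simeq\Sigma(\Gm\wedge\Gm)$; under it $\tau\wedge\id_{S^1}$ corresponds to the coordinate transposition $\sigma\colon(u,v)\mapsto(v,u)$, a self-equivalence of $\A^2\setminus 0$ of degree $\langle -1\rangle$. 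Factor $\sigma = r\circ d$ with $r$ the action of $\left(\begin{smallmatrix}0&-1\\1&0\end{smallmatrix}\right)\in\SL{2}(\Z)$ and $d = \diag(-1,1)$. Since that matrix is a product of elementary matrices, each joined to $\id$ by the affine path $t\mapsto E_{ij}(t)$, the automorphism $r$ of $\A^2\setminus 0$ is $\A^1$-homotopic to $\id$; hence $\sigma\circ(\rho\wedge\id_\Gm\wedge\id_{S^1})\simeq d\circ(\rho\wedge\id_\Gm\wedge\id_{S^1})$. Finally, $d$ acts by the degree $\langle -1\rangle$ self-map $w$ of the first $\Gm$-factor, so this last map is $(w\circ\rho)\wedge\id_\Gm\wedge\id_{S^1}$; and $w$ may be taken to fix the non-basepoint $-1\in S^0$, because $(-1)^{-1} = -1$. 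Thus $w\circ\rho = \rho$ and $\sigma\circ(\rho\wedge\id_\Gm\wedge\id_{S^1})\simeq\rho\wedge\id_\Gm\wedge\id_{S^1}$, as needed.

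The genuine work is bookkeeping rather than conceptual. One must make the equivalence $\Gm^{\wedge 2}\wedge S^1\simeq\A^2\setminus 0$ explicit enough to verify that $\tau\wedge\id_{S^1}$ and $d$ correspond to $\sigma$ and the degree-$\langle -1\rangle$ automorphism of the first factor (possibly after absorbing a further sign on the $S^1$-coordinate, which is handled by the same trick $(-1)^{-1}=-1$); and because the elementary-matrix homotopies and $\sigma$ cannot all be basepoint-preserving simultaneously, one must upgrade the unpointed homotopies produced above to pointed ones — routine, since $\A^2\setminus 0$ and $\pone$ are $\A^1$-connected. If this becomes unwieldy, an alternative is to argue directly in $[\pone,\Gm^{\wedge 2}\wedge S^1]$ using Morel's computation of the relevant $\A^1$-homotopy sheaves in terms of Milnor–Witt $K$-theory, showing that $(\langle -1\rangle-1)$ annihilates the class of $\rho\wedge\id_\Gm\wedge\id_{S^1}$; this requires more input.
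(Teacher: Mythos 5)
Your reduction of centrality to the statement $(\tau\wedge\id_{S^1})\circ(\rho\wedge\id_{\Gm\wedge S^1})\simeq\rho\wedge\id_{\Gm\wedge S^1}$ is correct, but the pivotal identification that follows is off by a sign, and that sign is exactly where the content of the lemma lives. Under $\Gm^{\wedge 2}\wedge S^1\wequi\A^2\setminus 0$ the coordinate swap $\sigma(u,v)=(v,u)$ does \emph{not} correspond to $\tau\wedge\id_{S^1}$: it flips the two $\Gm$-factors \emph{and} reverses the suspension coordinate (this is precisely the step $F\wequi -T$ in the proof of Lemma \ref{lem:flip}). As a result your displayed conclusion $\sigma\circ(\rho\wedge\id_\Gm\wedge\id_{S^1})\simeq\rho\wedge\id_\Gm\wedge\id_{S^1}$ is false: stably it asserts $\langle -1\rangle\cdot\rho=\rho$, i.e. $2\rho=0$ in $K^{MW}_1$, which fails over $\R$; the true and needed relation is $(-\langle -1\rangle)\rho=\rho$, and the lemma is exactly the cancellation of the $S^1$-reversal sign against $\langle -1\rangle$ through $\rho$. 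The same missing sign reappears in your last step: a pointed $w$ fixing $-1$ (essentially the inversion) has stable degree $-\langle -1\rangle$, so $w\wedge\id_\Gm\wedge\id_{S^1}$ has degree $-\langle -1\rangle$, whereas $d=\diag(-1,1)$ has degree $\langle -1\rangle$; hence $d$ is not homotopic to such a smash, but rather to (inversion on the first $\Gm$) smashed with (reversal on $S^1$). You thus make two compensating sign errors, and the parenthetical proposal to ``absorb a further sign on the $S^1$-coordinate by the same trick $(-1)^{-1}=-1$'' conflates the $S^1$-reversal with the $\Gm$-inversion trick: trading one for the other is a genuine identification that must be proved, not bookkeeping to be waved through.

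Once those identifications are carried out honestly, your outline becomes essentially the paper's proof: the careful statement that the suspended flip is conjugation by $\diag(-1,1)$ followed by group inversion is Lemma \ref{lem:flip} (proved via the elementary-matrix/connectivity argument you invoke, together with Eckmann--Hilton to handle the suspension reversal), and the remaining bookkeeping is eliminated by the explicit model $R:\Gm\wedge\op{J}_1\wequi\SL{2}$ of Proposition \ref{prop:equiv-of-spheres}, under which both maps become the \emph{same} scheme map $\theta\mapsto(\Id-2\theta)\cdot\diag(-1,1)$. Two smaller corrections: the factorization should be $\sigma=d\circ r$ (as written, $r\circ d=(-\Id)\circ\sigma$); and promoting the unpointed homotopies to pointed ones is not routine from $\A^1$-connectedness alone, since $\A^2\setminus 0$ is not $\A^1$-simply connected --- one should use the H-group structure of $\SL{2}$ (or argue with suspensions), as the paper does. (Also, the real realization of $\tau$ is the swap of $S^0\wedge S^0$, which has degree $+1$, not $-1$; the correct reason $\tau\wedge\id_{S^1}\not\simeq\id$ is that its stable class is $-\langle -1\rangle\ne 1$.)
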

\begin{proof}
Recall from Proposition \ref{prop:equiv-of-spheres} that the map \[ \Gm \wedge \op{J} \to \op{SL}_2, (\lambda, \theta) \mapsto (\lambda \theta + (\Id - \theta))\cdot \op{diag}(\lambda^{-1},1), \] is an equivalence, where $\op{J}$ is the Jouanalou device on $\P^1$, i.e. the scheme of rank $1$ projection operators.
Setting $\lambda={-1}$ we find that the map $\Sigma \rho \wedge \id$ is represented by \[ \op{J} \to \op{SL}_2, \theta \mapsto (\Id - 2\theta)\cdot H, \] where $H = \op{diag}(-1,1)$.
Composing with the flip map on $\Gm \wedge \Gm$, via Lemma \ref{lem:flip} below we find that $\Sigma \id \wedge \rho$ is represented by \[ \theta \mapsto (H (\Id - 2\theta) HH)^{-1}. \] Since both $H$ and $(\Id - 2\theta)$ square to the identity, this is the same as $\theta \mapsto (\Id - 2\theta)\cdot H$, as desired.
\end{proof}

\begin{lem} \label{lem:flip} \NB{I suspect there is a reference for this}
The suspension of the flip map on $\Gm \wedge \Gm$ corresponds to the automorphism of $\SL{2}$ given by $A \mapsto (\op{diag}(-1,1) \cdot A \cdot\op{diag}(-1,1))^{-1}$.
\end{lem}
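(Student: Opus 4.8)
The plan is to reduce the statement to a computation in the Grothendieck--Witt ring. First I would record that, combining Proposition~\ref{prop:equiv-of-spheres}, Lemma~\ref{lem:JdeviceofPnproperties} and the standard equivalence $\P^1 \wequi \Sigma\Gm$, one has identifications $\SL{2} \wequi \Gm \wedge \op{J} \wequi \Sigma(\Gm \wedge \Gm) \wequi \A^2\setminus 0$ under which the map named in the lemma is the suspension $\Sigma\tau$ of the flip $\tau$ on $\Gm\wedge\Gm$, while the target automorphism $g\colon A \mapsto (HAH)^{-1}$ (with $H := \op{diag}(-1,1)$) is a pointed self-equivalence of $\SL{2}$. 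Since both maps are defined over $\Z$, by base change it suffices to compare them over $\Spec\Z$, and --- as in the proof of Proposition~\ref{prop:equiv-of-spheres}, using that $\SL{2}$ is a retract of a loop space (Lemma~\ref{lem:equivalencesofretractsofloopspaces}) together with Proposition~\ref{prop:unstable-loc} --- this reduces to checking that $\Sigma\tau$ and $g$ determine the same class in $[\SL{2},\SL{2}]_* \wequi [\A^2\setminus 0,\A^2\setminus 0]_* \wequi GW(k)$ over each prime field $k$, the last being a commutative ring under composition (Morel). Note that the comparison is insensitive to the precise choices of the above identifications, since conjugation by an equivalence acts trivially on the commutative ring $GW(k)$.

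For $g$, I would factor $g = \iota\circ c$, where $c(A) = HAH$ is conjugation by $H$ and $\iota(A) = A^{-1}$ is the group inverse, so that $[g] = [\iota]\,[c]$ in $GW(k)$. Because $\SL{2}$ is at once a co-$H$-space (it is a suspension) and an $H$-space (it is a group), the two induced products on $[\SL{2},\SL{2}]$ coincide; hence the group inverse $\iota$ of the identity agrees with the co-$H$ inverse $-\id$, which under $\SL{2}\wequi\Sigma(\Gm\wedge\Gm)$ is the map negating the suspension circle and so has class $-\langle 1\rangle$. For $c$, under the equivalence $\SL{2}\xrightarrow{\sim}\A^2\setminus 0$ reading off the first row of a matrix, conjugation by $\op{diag}(-1,1)$ becomes $(x,y)\mapsto(x,-y)$, i.e.\ the linear automorphism $\op{diag}(1,-1)$ of $\A^2\setminus 0$, whose class is $\langle -1\rangle$ by the standard formula for the $GW$-class of a linear automorphism of $\A^n\setminus 0$. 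Thus $[g] = (-\langle 1\rangle)\langle -1\rangle = -\langle -1\rangle = \epsilon$. (As a cross-check one can instead read rank and signature off the realizations: on $\SL{2}(\C)\wequi S^3$ inversion has degree $(-1)^3$ and conjugation is null-homotopic, while on $\SL{2}(\R)\wequi SO(2)$ both are $\theta\mapsto-\theta$, so their composite is the identity; this gives $(\mathrm{rank},\mathrm{sig})=(-1,1)$, i.e.\ $\epsilon$.)

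On the other side, the flip $\tau$ on $\Gm\wedge\Gm$ has class $\epsilon = -\langle-1\rangle$ by Morel's computation of the switch on $\Gm\wedge\Gm$, so $[\Sigma\tau] = \epsilon$ as well; equivalently the complex and real realizations of $\Sigma\tau$ are $\id_{S^1}$ smashed with the flip of $S^1\wedge S^1$ on $S^3$ (degree $-1$) and $\id_{S^1}$ smashed with the flip of $S^0\wedge S^0 = S^0$ on $S^1$ (degree $+1$), again $\epsilon$. Comparing, $[\Sigma\tau] = [g]$, which is the lemma. I expect the only delicate points to be the three Grothendieck--Witt identifications just invoked --- of the group inverse with $-\id$, of conjugation by $\op{diag}(-1,1)$ with $\langle -1\rangle$, and of the suspended flip with $\epsilon$ --- while the rest (keeping the chain $\SL{2}\wequi\A^2\setminus 0\wequi\Sigma(\Gm\wedge\Gm)$ straight, and descending from prime fields to $\Z$) is routine bookkeeping of the kind already carried out in the proof of Proposition~\ref{prop:equiv-of-spheres}.
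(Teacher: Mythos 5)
There is a genuine gap, and it sits precisely in the step you dismiss as routine: the reduction from $\Spec\Z$ to prime fields. Proposition~\ref{prop:unstable-loc} lets you detect \emph{contractibility} (resp.\ that a single map becomes an \emph{equivalence} after suspension) pointwise, and Lemma~\ref{lem:equivalencesofretractsofloopspaces}(1) lets you descend a homotopy $\Sigma f_1 \simeq \Sigma f_2$ to $f_1 \simeq f_2$ --- but neither statement lets you detect \emph{equality of homotopy classes} of two maps by restricting to residue fields. To run your argument you would need injectivity of the restriction map $[\SL{2},\SL{2}]_{\Spc(\Z)_*} \to \prod_k [\SL{2},\SL{2}]_{\Spc(k)_*}$ (equivalently, that a map $\Sigma(\Gm\wedge\Gm)\to\SL{2}$ over $\Z$ which is pointwise null is null), and nothing in the paper, nor in the results you cite, provides this; it is a nontrivial statement about unstable motivic homotopy over a Dedekind base, not a formal consequence of the localization square. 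This is exactly why the proof of Proposition~\ref{prop:equiv-of-spheres} could use the pointwise reduction (it only needed ``$R$ is an equivalence'') while the present lemma cannot be handled the same way. Your field-level computation is essentially correct and lands on the right class --- $[\Sigma\tau]=\epsilon=-\langle -1\rangle=[\,A\mapsto (\op{diag}(-1,1)A\op{diag}(-1,1))^{-1}]$ in $GW(k)$, consistent with the paper --- but as written it only proves the lemma over fields, whereas Lemma~\ref{lem:rho-S1-central} needs it over an arbitrary base (hence over $\Z$).

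The paper's proof avoids the local-to-global issue entirely by working directly over $\Z$: it observes (via the standard pushout presentation of $\A^2\setminus 0$) that the flip $F:(x,y)\mapsto(y,x)$ equals $-T$, where $T$ is the suspended flip; it writes down an explicit endomorphism $F'$ of $\SL{2}$, $A\mapsto \left(\begin{smallmatrix}0&1\\1&0\end{smallmatrix}\right)A\,\op{diag}(-1,1)$, strictly intertwining $F$ with the projection $\SL{2}\to\A^2\setminus 0$; it uses $\A^1$-connectedness of $\SL{2}$ over $\Z$ to delete the constant left factor, giving $F'\simeq (A\mapsto \op{diag}(-1,1)A\op{diag}(-1,1))$; and it uses Eckmann--Hilton to identify the co-$H$ inverse ($-F'$, i.e.\ $T$) with composition with the group inversion. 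If you want to salvage your route, you must either supply the missing injectivity statement over $\Z$ (which is harder than the lemma itself) or replace the pointwise reduction with an explicit homotopy over $\Z$, at which point you have essentially reproduced the paper's argument.
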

\begin{proof}
Recall that $\Sigma \Gm \wedge \Gm \wequi \A^2 \setminus 0$.
We shall first consider the map flip map \[
\begin{split}F: \A^2 \setminus 0 &\longrightarrow \A^2 \setminus 0, \\ (x, y) &\longmapsto (y,x). 
	\end{split}\]
From the pushout square 
\[
\xymatrix{
\Gm\times \Gm  \ar[r]\ar[d]  & \Gm \times \A^{1}
\ar[d] \\
\A^{1}\times \Gm  \ar[r]        & \A^{2}-\{0 \}
}
\]
we learn that $F \wequi -T$, where $T$ is the map we are interested in: $F$ flips the two copies of $\Gm$ and reverses the suspension coordinate.

Recall also that the projection $\SL{2} \to \A^2 \setminus 0$ is an equivalence.
The endomorphism $F'$ of $\SL{2}$ sending 
\[
A = \begin{pmatrix}
a&b \\ c& d
\end{pmatrix}
\]
to 
\[
\begin{pmatrix}
b & -d \\
a & -c
\end{pmatrix} = \begin{pmatrix}
0 & 1 \\ 1 & 0
\end{pmatrix}\cdot A\cdot \diag(-1,1)
\]
intertwines the projection maps with $F$\NB{formulation?} and hence corresponds to $F$ under the equivalence.
Since $\SL{2}$ is $\A^1$-connected over $\Z$ \NB{ref?}, $F'$ is homotopic to the operation
\[
A\mapsto \diag(-1,1)\cdot A\cdot \diag(-1,1).
\]
Finally, by the Eckmann--Hilton argument, the group structure on
\[
[\SL{2},\SL{2}] \wequi [\Sigma \Gm\wedge \Gm, \SL{2}]
\]
coming from the fact that the domain is a suspension coincides with
the one coming from the group structure on $\SL{2}$.
It follows that $T$ (the map we are interested in) corresponds to $-F'$, which is obtained from $F'$ by composing with the group inversion operation.
This is the desired result.
\end{proof}

Recall $J_\rho$, the James construction on $\rho$, from Definition \ref{def:james}.
Recall also the (naive) $\rho$-periodization $X[\rho^{-1}]$ from \eqref{eq:alpha-tel}.
\begin{lem} \label{lem:rho-J-tel}
Let $X \in \Spc(S)_*$ be connected.
Then $J_\rho \wedge X \wequi X[\rho^{-1}]$, and this object is the $\Sigma\rho$-localization of $X$.
\end{lem}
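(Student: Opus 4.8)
The plan is to run the machinery of Section~\ref{sec:james} with $\mathscr C = \Spc(S)_*$ and $\alpha = \rho$, but only \emph{after} smashing with $S^1$. The key input is that, by Lemma~\ref{lem:rho-S1-central}, $\rho$ is $S^1$-central; smashing the homotopy witnessing this with $\id_{X'}$ shows $\rho$ is $(S^1 \wedge X')$-central for every $X' \in \Spc(S)_*$. Consequently the hypotheses of Propositions~\ref{prop:central-loc} and~\ref{prop:J-alpha-tel} are satisfied with ``$S$'' taken to be $S^1$, and more generally with ``$S$'' any suspension $\Sigma X' = S^1 \wedge X'$. (For the conservativity hypothesis of Proposition~\ref{prop:J-alpha-tel} one invokes the motivic analogue of Example~\ref{ex:J-alpha-tel}: the subcategory generated under filtered colimits by the objects $S^1 \wedge X'$ consists of connected motivic spaces, on which the $\A^1$-homotopy sheaves $\underline{\pi}_*$ assemble to a conservative, filtered-colimit-preserving functor to a $1$-category --- reducing by continuity to $S$ of finite Krull dimension if hypercompleteness is needed for this.)

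First I would prove $J_\rho \wedge X \wequi X[\rho^{-1}]$. Proposition~\ref{prop:J-alpha-tel}, applied with ``$S$'' $= \Sigma X'$, gives $J_\rho \wedge \Sigma X' \wequi (\Sigma X')[\rho^{-1}]$, naturally in $X'$ (one can pin down the comparison map concretely as $S^0[\rho^{-1}] \wedge (\ph) \to J_\rho \wedge (\ph)$, built from the algebra structure on $J_\rho$, and check it is an equivalence on suspensions). Since $X$ is connected it is a colimit of suspensions, $X \wequi \colim_i \Sigma X'_i$, by the motivic analogue of Example~\ref{ex:J-alpha-tel} (i.e.\ $\Spc(S)_{*,>0}$ is generated under colimits by the $S^1 \wedge X'$). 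Both $(\ph) \mapsto J_\rho \wedge (\ph)$ and $(\ph) \mapsto (\ph)[\rho^{-1}]$ preserve colimits, so passing to the colimit over $i$ of the natural equivalences above yields $J_\rho \wedge X \wequi \colim_i (J_\rho \wedge \Sigma X'_i) \wequi \colim_i (\Sigma X'_i)[\rho^{-1}] \wequi X[\rho^{-1}]$.

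Next I would identify this object with the $\Sigma\rho$-localization of $X$. By Proposition~\ref{prop:central-loc}(3) (with ``$S$'' $= S^1$), $J_\rho \wedge X$ is $(\rho \wedge S^1)$-local, i.e.\ $\Sigma\rho$-local, for \emph{every} $X$; so it remains to check that the unit $X \to J_\rho \wedge X$ is a $\Sigma\rho$-equivalence. By Lemma~\ref{lemm:rho-equiv-J} it is a $\rho$-equivalence. Now the class of $\rho$-equivalences is closed under colimits, cobase change and $2$-out-of-$3$, and the colimit-preserving functor $\Sigma(\ph) = S^1 \wedge (\ph)$ preserves these closure operations; since it sends $\rho$ to $\Sigma\rho$, it carries $\rho$-equivalences to $\Sigma\rho$-equivalences. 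Applying this to the $\rho$-equivalence $X'_i \to J_\rho \wedge X'_i$, whose suspension is the unit $\Sigma X'_i \to J_\rho \wedge \Sigma X'_i$, we see each such unit is a $\Sigma\rho$-equivalence; since $\Sigma\rho$-equivalences are closed under colimits and the unit $X \to J_\rho \wedge X$ is the colimit over $i$ of these maps, it too is a $\Sigma\rho$-equivalence. Together with $\Sigma\rho$-locality of the target this shows $J_\rho \wedge X \wequi L_{\Sigma\rho}X$.

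The main obstacle --- and the reason connectedness of $X$ is indispensable --- is that $\rho$ is \emph{not} $S^1$-central ``without the $S^1$'': inserting $\rho$ into the first versus the second $\Gm$-factor of $\Gm^{\wedge 2}$ gives maps that become homotopic only after one simplicial suspension, which is precisely the content of Lemma~\ref{lem:rho-S1-central}. Hence Proposition~\ref{prop:J-alpha-tel} cannot be applied to $X$ directly, and connectedness is used exactly to reduce to the case of a suspension $\Sigma X'$, where the extra $S^1$ is available. The one remaining technical point to be careful about is the motivic analogue of Example~\ref{ex:J-alpha-tel} --- that connected motivic spaces are generated under colimits by suspensions and admit a conservative $1$-categorical invariant --- which over a completely general base may require the continuity reduction to finite Krull dimension used elsewhere in the paper.
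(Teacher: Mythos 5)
Your proposal is correct and follows essentially the same route as the paper: write the connected $X$ as a colimit of suspensions, apply Proposition~\ref{prop:J-alpha-tel} (via the $S^1$-centrality of $\rho$ from Lemma~\ref{lem:rho-S1-central}, with conservativity of $\ul\pi_*$ on $\Spc(S)_{*,>0}$ as in Example~\ref{ex:J-alpha-tel}) to the suspended pieces, and deduce $\Sigma\rho$-locality of $J_\rho \wedge X$ from Proposition~\ref{prop:central-loc}(3) together with the fact that the unit is a $\Sigma\rho$-equivalence. Your extra care about pinning down the natural comparison map $S^0[\rho^{-1}] \wedge (\ph) \to J_\rho \wedge (\ph)$ and about suspending $\rho$-equivalences to $\Sigma\rho$-equivalences only makes explicit what the paper leaves implicit.
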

\begin{proof}
For the first statement, since we can write $X$ as a colimit of suspensions (see e.g. Lemma \ref{lem:colim-of-susp} below), we reduce to the case $X = \Sigma Y$, which is treated (via Lemma \ref{lem:rho-S1-central}) in Proposition \ref{prop:J-alpha-tel} (note that, just as in Example \ref{ex:J-alpha-tel}, the subcategory of $\Spc(S)_*$ generated under colimits by suspensions is $\Spc(S)_{*,>0}$, and this admits a conservative, filtered colimit preserving functor to a $1$-category, namely $\ul\pi_*$).
For the second statement, we first show that $X \to X[\rho^{-1}]$ is a $\Sigma\rho$-equivalence, which again is seen to hold by writing $X$ as a colimit of suspensions.
Next we must prove that $J_\rho \wedge X$ is $\Sigma\rho$-local; this is Proposition \ref{prop:central-loc}(3).
\end{proof}

Since we used it above and will use it frequently in the sequel, we recall the following well-known fact.
\begin{lem} \label{lem:colim-of-susp}
Any pointed connected object in $\Spc(S)$ (or an $\infty$-topos) is a functorial colimit of suspensions.
\end{lem}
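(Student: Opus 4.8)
The plan is to exhibit $X$ as the geometric realization of the bar resolution of the suspension--loops comonad. Write $\mathcal C = \Spc(S)_*$ (or the pointed objects of the ambient $\infty$-topos) and let $\mathcal C_{\ge 1} \subseteq \mathcal C$ be the full subcategory of connected objects. Since $\Sigma Y$ is connected for every $Y$, the adjunction $\Sigma \dashv \Omega$ restricts to an adjunction $\Sigma \colon \mathcal C \rightleftarrows \mathcal C_{\ge 1} \colon \Omega$; let $T = \Omega\Sigma$ be the associated monad on $\mathcal C$.

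The key step is that $\Omega \colon \mathcal C_{\ge 1} \to \mathcal C$ is monadic. I would verify the hypotheses of the Barr--Beck--Lurie theorem: $\Omega$ has a left adjoint (namely $\Sigma$); $\Omega$ is conservative on connected objects (if $\Omega f$ is an equivalence and $f$ is a map of connected objects, then $f$ is an isomorphism on all homotopy sheaves $\underline\pi_n$, $n \ge 1$, while $\underline\pi_0$ vanishes on both sides, so $f$ is an equivalence --- a Whitehead-type statement, using $\A^1$-homotopy sheaves in the motivic case); and $\mathcal C_{\ge 1}$ is closed under geometric realizations in $\mathcal C$ with $\Omega$ preserving realizations of $\Omega$-split simplicial objects. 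In the $\infty$-topos setting the last clause is standard; it is the familiar identification $\mathcal C_{\ge 1} \simeq \mathrm{Grp}(\mathcal X)$ of pointed connected objects with group objects, the forgetful functor from the latter being manifestly monadic. In the motivic case one imports it from the Nisnevich $\infty$-topos, using that $\mathrm{L}_{mot}$ preserves finite products and all colimits and that loop spaces (and finite products) of $\A^1$-local objects are again $\A^1$-local, hence computed in $\Shv_{\Nis}(\Sm_S)$.

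Granting monadicity, $\mathcal C_{\ge 1}$ is identified with $T$-algebras, and the canonical resolution of any $T$-algebra by free $T$-algebras is a colimit diagram (it becomes split after the forgetful functor to $\mathcal C$); transporting this along the equivalence, and using that the free functor $\mathcal C \to \mathcal C_{\ge 1}$ is $\Sigma$ and that $\Sigma(\Omega\Sigma)^{n}\Omega X = (\Sigma\Omega)^{n+1}X$, one obtains a natural equivalence
\[
X \;\simeq\; \bigl|\,[n] \longmapsto (\Sigma\Omega)^{n+1} X\,\bigr|.
\]
Since each term $(\Sigma\Omega)^{n+1}X = \Sigma\!\left((\Omega\Sigma)^{n}\Omega X\right)$ is a suspension, this displays $X$ functorially as a colimit of suspensions.

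The main obstacle is the monadicity step, and within it the colimit-preservation clause together with its validity over a general base: $\Spc(S)$ is only a reflective, not left-exact, localization of an $\infty$-topos, so one must check that the relevant loop spaces, products and bar constructions are genuinely computed in the Nisnevich $\infty$-topos and unaffected by motivic localization. A technically different route avoiding the comonad is to use the delooping $X \simeq B\Omega X$ together with the skeletal filtration of the reduced bar construction $B_\bullet(\Omega X)$, whose successive cofibers are the suspensions $\Sigma^{n}(\Omega X)^{\wedge n}$; this realizes $X$ as a sequential colimit of finite colimits of suspensions, again functorially, but it relies on the same group-object input.
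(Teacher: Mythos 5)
Your argument for a general $\infty$-topos is fine, but for $\Spc(S)$ the key step — monadicity of $\Omega$ on pointed connected objects, and specifically the Barr--Beck--Lurie clause that $\Omega$ preserves realizations of $\Omega$-split simplicial objects — is exactly the hard point, and the facts you invoke do not give it. Geometric realizations in $\Spc(S)$ are computed by applying $\mathrm{L}_{mot}$ to the realization in $\Shv_\Nis(\Sm_S)$. For an $\Omega$-split simplicial object $X_\bullet$ of connected motivic spaces, topos-level monadicity does give $\Omega\,|X_\bullet|_\Nis \wequi |\Omega X_\bullet|$, but you still have to show $\Omega\,\mathrm{L}_{mot}|X_\bullet|_\Nis \wequi \Omega\,|X_\bullet|_\Nis$, i.e.\ that motivic localization does not change the loop space of this Nisnevich realization (equivalently, that this Nisnevich delooping of an $\aone$-local group object is already $\aone$-local, or at least has unchanged loops). ``$\mathrm{L}_{mot}$ preserves finite products and colimits'' and ``loops of $\aone$-local objects are $\aone$-local'' do not imply this: the realization itself need not be $\aone$-local, and $\Omega$ does not commute with $\mathrm{L}_{mot}$. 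Statements of this delooping/strong-$\aone$-invariance type are genuinely nontrivial even over a perfect field (Morel; note the present paper must invoke affine representability just to know $\SL{2} \wequi \Omega B_\Nis \SL{2}$ is motivically meaningful), and the lemma is asserted over an arbitrary base $S$, where no such results are available. The same convergence issue is hidden in your closing fallback; moreover the skeleta of the bar construction are not themselves (colimits of) suspensions, so that route does not literally produce a diagram of suspensions either. (Minor: your conservativity check via a Whitehead argument on homotopy sheaves needs hypercompleteness; the safe route is the identification of pointed connected objects with group objects in the Nisnevich topos.)

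The paper's proof is designed to dodge precisely this: it establishes the resolution \emph{upstairs}, sectionwise in $\mathrm{P}(\Sm_S)$, where it is the classical two-sided bar construction $|B(S^1,\Omega\Sigma,\Omega X)|$ for ordinary spaces, and then pushes forward along the localization $L=\mathrm{L}_{mot}$, using only that $L$ preserves colimits and the point (hence suspensions) and that $L\bigl(R(X)_{\ge 1}\bigr) \to X$ is an equivalence for $X$ connected — which uses nothing more than the fact that connectedness in $\Spc(S)$ means connectedness as a Nisnevich sheaf. No delooping or monadicity inside $\Spc(S)$ is ever needed. Your argument can be repaired along the same lines: run the $\Omega\Sigma$-bar resolution at the presheaf or Nisnevich level, where monadicity is classical, and then apply $\mathrm{L}_{mot}$; the terms remain suspensions in $\Spc(S)$ because $\mathrm{L}_{mot}$ commutes with the suspension pushout. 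As written, however, your resolution is built from the motivic monad $\Omega\Sigma_{mot}$, and its convergence is precisely the unproven claim.
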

\begin{proof}
We first note that this result is true for the usual category $\Spc$ of spaces.
For example, by classical delooping theory\todo{ref}, if $X \in \Spc_*$ is connected, then the two-sided bar construction $|B(S^1, \Omega\Sigma, \Omega X)| \to X$ is an equivalence.
The case of presheaves on a small category follows immediately, since all relevant constructions are performed sectionwise.
Now let 
\[ \xymatrix{\mathrm{P}(\mathscr C) \ar@<.4ex>[r]^-{L} & \ar@<.4ex>[l]^-{R} \mathscr D}\] be an adjunction such that (1) for $X \in \mathscr D_*$ connected the canonical map $L(R(X)_{\ge 1}) \to LRX \to X$ is an equivalence and (2) $L$ preserves terminal objects (and hence pointed objects).
Then the result holds for $\mathscr D$: just write a connected $X \in \mathscr D_*$ as $L(R(X)_{\ge 1}) \wequi L(\colim (\Sigma \dots))$, using that we have already established the result for $\mathrm{P}(\mathscr C)$.
If $\mathscr D$ is an $\infty$-topos, then we may choose $L$ to be a left exact localization which thus satisfies (1) and (2).
If $\mathscr D$ is $\Spc(S)$, then we can set $\mathscr C = \Sm_S$; (2) holds trivially and (1) is easily verified, using that being connected in $\Spc(S)$ by definition means being connected as a Nisnevich sheaf.
\end{proof}

\begin{rem} \label{rmk:susp-conn}
Note that if $S$ is a scheme and $X \in \Spc(S)_*$, then $\Sigma X \in \Spc(S)_*$ \emph{is} connected.
This follows from \cite[\S2 Corollary 3.22]{A1-homotopy-theory}.
\end{rem}

\subsubsection*{Splitting real étale covers}
Throughout this section we work over fields.

\begin{lem} \label{lem:Jrho-loops}
If $k$ is a perfect field and $X \in \Spc(k)_*$ is simply connected, then $J_\rho \wedge \Omega X \wequi \Omega(J_\rho \wedge X)$.
\end{lem}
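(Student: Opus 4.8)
The plan is to recognize both $J_\rho \wedge \Omega X$ and $\Omega(J_\rho \wedge X)$ as the $\Sigma\rho$-localization of $\Omega X$. Since $X$ is simply connected, $\Omega X$ is connected, so Lemma~\ref{lem:rho-J-tel} identifies $J_\rho \wedge \Omega X$ with $L_{\Sigma\rho}(\Omega X)$; in particular $\Omega X \to J_\rho \wedge \Omega X$ is a $\Sigma\rho$-equivalence with $\Sigma\rho$-local target. On the other hand $X$ is connected, so Lemma~\ref{lem:rho-J-tel} makes $J_\rho \wedge X \wequi L_{\Sigma\rho}(X)$ a $\Sigma\rho$-local object, and hence so is the loop space $\Omega(J_\rho \wedge X)$ by Lemma~\ref{lem:rholocalizationproperties}(2) (a loop space is a limit of $\Sigma\rho$-local objects). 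It therefore suffices to prove that applying $\Omega$ to the localization map $X \to J_\rho \wedge X$ yields a $\Sigma\rho$-equivalence $\Omega X \to \Omega(J_\rho \wedge X)$: the universal property of $\Sigma\rho$-localization then produces the desired equivalence $J_\rho \wedge \Omega X \wequi \Omega(J_\rho \wedge X)$, compatibly with all structure maps.

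To prove that remaining point I would use the telescope model of Lemma~\ref{lem:rho-J-tel}, $J_\rho \wedge X \wequi X[\rho^{-1}] = \colim_n \Gmp{n} \wedge X$, and likewise for $\Omega X$. First, over the perfect field $k$ the functor $\Omega$ commutes with this sequential colimit of connected motivic spaces: Morel's $\A^1$-homotopy sheaves of connected motivic spaces commute with filtered colimits and $\ul\pi_i(\Omega(\ph)) \cong \ul\pi_{i+1}(\ph)$, so $\Omega(X[\rho^{-1}]) \wequi \colim_n \Omega(\Gmp{n} \wedge X)$. Second, I would compare the two sequential systems $\{\Gmp{n} \wedge \Omega X\}_n$ and $\{\Omega(\Gmp{n} \wedge X)\}_n$ through the canonical assembly maps $\Gmp{n} \wedge \Omega X \to \Omega(\Gmp{n} \wedge X)$: because $X$ is simply connected, the motivic Freudenthal suspension theorem \cite{ABHFreudenthal} bounds the failure of each assembly map to be an equivalence by a connectivity that increases with $n$, so the induced map on colimits is an equivalence. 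Tracing through the identifications, this equivalence $\colim_n \Omega(\Gmp{n} \wedge X) \wequi \colim_n \Gmp{n} \wedge \Omega X$ is exactly the map induced by $\Omega X \to \Omega(J_\rho \wedge X)$, completing the argument.

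The hard part is this last comparison: controlling, uniformly in $n$, the discrepancy between $\Gm$-smashing and looping along the tower $X \to \Gm \wedge X \to \Gmp{2} \wedge X \to \cdots$. This is where the two hypotheses really enter — simple connectivity of $X$ is what makes the Freudenthal connectivity estimates both applicable and nontrivial, and perfectness of $k$ underlies both the structural results on $\A^1$-homotopy sheaves needed to interchange $\Omega$ with the $\rho$-telescope and the motivic Freudenthal theorem itself. (One could instead try to work directly with the fiber $F$ of $X \to J_\rho \wedge X$ via the fibration $\Omega F \to \Omega X \to \Omega(J_\rho \wedge X)$, but since smashing with $\Gmp{n}$ is not left exact this reduction seems to require the same homotopy-sheaf input, so I expect the Freudenthal route to be cleaner.)
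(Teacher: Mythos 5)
Your first paragraph is a correct reduction: since $X$ is simply connected, $\Omega X$ is connected, so Lemma~\ref{lem:rho-J-tel} identifies $J_\rho \wedge \Omega X$ with the $\Sigma\rho$-localization of $\Omega X$, and $\Omega(J_\rho\wedge X)=\iMap_*(S^1, J_\rho\wedge X)$ is $\Sigma\rho$-local by Lemma~\ref{lem:rholocalizationproperties}. So everything hinges on showing that $\Omega X \to \Omega(J_\rho\wedge X)$ is a $\Sigma\rho$-equivalence, i.e.\ on comparing the two telescopes. That is where the gap is. Your justification is that ``the motivic Freudenthal suspension theorem bounds the failure of each assembly map $\Gmp{n}\wedge\Omega X \to \Omega(\Gmp{n}\wedge X)$ to be an equivalence by a connectivity that increases with $n$.'' No such estimate is available, and the classical intuition behind it fails: in topology the range improves with $n$ because $S^n$-smashing raises connectivity, but $\Gm$ is $\A^1$-rigid and only $(-1)$-connected, so over a perfect field smashing with $\Gmp{n}$ \emph{preserves} but does not increase simplicial connectivity. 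Consequently the Freudenthal-type range for interchanging $S^1$-loops with $\Gmp{n}\wedge(\ph)$ is governed by the (fixed) connectivity of $X$, independent of $n$. The theorem of \cite{ABHFreudenthal} that is actually available concerns $\Omega_{\P^1}\Sigma_{\P^1}$-stabilization and measures the error in the weighted (very effective) filtration, not by a simplicial connectivity growing with the number of $\Gm$-factors; it does not yield your claim. So the crucial step — that the map of telescopes $\colim_n \Gmp{n}\wedge\Omega X \to \colim_n \Omega(\Gmp{n}\wedge X)$ is an equivalence — is exactly the content of the lemma and remains unproved in your argument. (The preliminary point that $\Omega$ commutes with the sequential colimit is fine, most simply because filtered colimits of motivic spaces are computed in presheaves and commute with finite limits.)

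The paper's proof uses a different mechanism that avoids any connectivity estimate for assembly maps: since $\Sigma\rho$-localization agrees with $(\Sigma\rho)_+$-localization on connected objects (Corollary~\ref{cor:alphaloc-mon}, Lemma~\ref{lemm:rho-loc-rho+-loc}) and the latter preserves finite products (Lemma~\ref{lem:rholocalizationproperties}(3)), the functor $J_\rho\wedge(\ph)$ preserves finite products of pointed connected spaces. Hence it carries the monoid $\Omega X$ (connected because $X$ is simply connected) to a monoid, and since it also preserves geometric realizations one gets $B(J_\rho\wedge\Omega X) \wequi J_\rho\wedge B\Omega X \wequi J_\rho\wedge X$; looping and using $\Omega B M \wequiv M$ for connected monoids — this is where perfectness of $k$ enters — gives the statement. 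If you want to salvage your outline, you would need to replace the Freudenthal step by an argument of this bar-construction/product-preservation type (or some other input), not by a stable-range estimate in $n$.
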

\begin{proof}
For $Y \in \Spc(k)$ pointed and connected, we know by Lemma \ref{lem:rho-J-tel} that $J_\rho \wedge Y \wequi L_{\Sigma \rho} Y$.
We can consider localization at $\Sigma\rho$ both in $\Spc(k)$ and $\Spc(k)_*$.
The $\Sigma\rho$-localization in $\Spc(k)$ is the same as the $(\Sigma\rho)_+$-localization in $\Spc(k)_*$, by Corollary \ref{cor:alphaloc-mon}.
For connected objects this is the same as the $\Sigma\rho$-localization in $\Spc(k)_*$, by Lemma \ref{lemm:rho-loc-rho+-loc}.
Moreover, the $\Sigma\rho$-localization in $\Spc(k)$ preserves finite products, by Lemma \ref{lem:rholocalizationproperties}(3).
All in all we conclude that the functor $J_\rho \wedge (\ph)$ preserves finite products of pointed, connected motivic spaces.

It follows that $J_\rho \wedge \Omega X$ is a monoid, and we have \[ B(J_\rho \wedge \Omega X) = |(J_\rho \wedge \Omega X)^{\times \bullet}| \wequi |J_\rho \wedge (\Omega X)^{\times \bullet}| \wequi J_\rho \wedge B\Omega X \wequi J_\rho \wedge X. \]
Taking loops on both sides concludes the proof (this is where we use that $k$ is perfect, so that $\Omega B M \wequi M$ for connected monoids $M$\NB{ref?}).
\end{proof}

\begin{cor} \label{cor:pi1-Jrho}
For any field $k$, $\ul\pi_1 \Sigma J_\rho \wequi a_\ret \Z$.
If $k$ is unorderable then $\Sigma J_\rho = *$.
\end{cor}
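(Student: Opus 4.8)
The plan is to identify $\Sigma J_\rho$ with a $\rho$-telescope and then read off $\ul\pi_1$ from Morel's computation of the homotopy sheaves of motivic spheres together with Bachmann's computation of $\rho$-periodic Milnor--Witt $K$-theory.

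First I would apply Lemma~\ref{lem:rho-J-tel} with $X = S^1$ (which is connected) to get
\[
\Sigma J_\rho = S^1 \wedge J_\rho \wequi S^1[\rho^{-1}] = \colim\bigl(S^1 \xrightarrow{\rho} \Gm\wedge S^1 \xrightarrow{\rho} \Gmp{2}\wedge S^1 \xrightarrow{\rho} \cdots\bigr).
\]
Since $\A^1$-homotopy sheaves commute with filtered colimits, this yields $\ul\pi_i(\Sigma J_\rho) \wequi \colim_n \ul\pi_i(S^1\wedge\Gmp n)$, with transition maps induced by smashing with $\rho\colon S^0\to\Gm$. For $i=1$ the cofinal terms (those with $n\ge 2$) are computed by Morel \cite{A1-alg-top}: $\ul\pi_1(S^1\wedge\Gmp n)$ is the sheaf $\ul{K}^{MW}_n$, and the transition maps become multiplication by $\rho = [-1]\in\ul{K}^{MW}_1$. (For small $n$ one has instead $\ul\pi_1(S^1)=\ul\Z$ and the nonabelian sheaf $\ul\pi_1^{\A^1}(\P^1)$, but these do not affect the colimit; alternatively, one notes that $\ul\pi_1(\Sigma J_\rho)$ is abelian, since a cofinal part of the defining diagram consists of the abelian sheaves $\ul K^{MW}_n$, and then uses the stable Hurewicz isomorphism relating $\ul\pi_1(\Sigma J_\rho)$ to the first stable homotopy sheaf of $\Sigma^\infty\Sigma J_\rho$.) Hence
\[
\ul\pi_1(\Sigma J_\rho) \wequi \colim\bigl(\ul{K}^{MW}_0 \xrightarrow{\rho} \ul{K}^{MW}_1 \xrightarrow{\rho} \ul{K}^{MW}_2 \xrightarrow{\rho} \cdots\bigr),
\]
and Bachmann's identification of $\rho$-periodized Milnor--Witt $K$-theory \cite{bachmann-real-etale} — the unstable shadow of the equivalence $\1[\rho^{-1}]\wequi\1_{\SH(S_\ret)}$ — computes this colimit as $a_\ret\Z$.

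For the second assertion, suppose $k$ is unorderable. Then $RX=\emptyset$ for every $X\in\Sm_k$ (a point of $RX$ would equip a residue field of $X$, hence $k$ itself, with an ordering), so $a_\ret\Z$ is the zero sheaf over $\Sm_k$; by the first part $\ul\pi_1(\Sigma J_\rho)=0$, so $\Sigma J_\rho$ is simply connected. On the other hand $\Sigma^\infty(\Sigma J_\rho)\wequi(\Sigma^\infty S^1)[\rho^{-1}]\wequi\1[\rho^{-1}][1]$, which vanishes because $\SH(k)[\rho^{-1}]\wequi\SH(k_\ret)=0$ over an unorderable $k$ (again \cite{bachmann-real-etale}, using $R\Spec k=\emptyset$). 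Thus $\Sigma J_\rho$ is a simply connected, stably contractible motivic space, hence contractible by the $\A^1$-Hurewicz theorem \cite{A1-alg-top}.

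The step I expect to be the main obstacle is making the identification $\colim_n\ul{K}^{MW}_n\wequi a_\ret\Z$ precise — i.e.\ extracting exactly the statement needed from \cite{bachmann-real-etale} — together with the bookkeeping that shows the unstable-to-stable comparison $\ul\pi_1(S^1\wedge\Gmp n)\to\ul K^{MW}_n$ is compatible with the $\rho$-transition maps, so that the colimit is genuinely the naive $\rho$-periodization. Everything else (commuting $\ul\pi_*$ with the filtered colimit, the Freudenthal/Hurewicz inputs, and the emptiness of real spectra over unorderable fields) is routine.
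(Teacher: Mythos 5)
Your overall skeleton (identify $\Sigma J_\rho$ with the telescope $S^1[\rho^{-1}]$, commute $\ul\pi_1$ with the filtered colimit, recognize the terms as Milnor--Witt $K$-theory with transition maps given by $\rho=[-1]$, and finish with Jacobson/Bachmann) is the same as the paper's, but the step you lean on — ``$\ul\pi_1(S^1\wedge\Gmp{n})\wequi \ul{K}^{MW}_n$ for all $n\ge 2$, computed by Morel'' — is exactly the point the paper is careful \emph{not} to use, and I do not think it is available. Morel's identifications of first nontrivial homotopy sheaves of mixed spheres come in two flavors: the simply connected range $\ul\pi_m(S^m\wedge\Gmp{q})\wequi\ul K^{MW}_q$ for $m\ge 2$ (a Hurewicz consequence), and the special case $S^1\wedge\Gmp{2}\wequi\A^2\setminus 0\wequi\SL{2}$, whose $\ul\pi_1$ is abelian for the non-formal reason that it is a group, and whose computation requires the covering-space/central-extension analysis. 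For $n\ge 3$ the space $S^1\wedge\Gmp{n}$ is a suspension of a non-connected object, and $\ul\pi_1$ of such suspensions is precisely where things are delicate (compare $\ul\pi_1(\P^1)$, which is a noncommutative extension rather than $\ul K^{MW}_1$); no computation of $\ul\pi_1(S^1\wedge\Gmp n)$ for $n\ge3$ is cited or used anywhere in the paper. Your fallback — that $\ul\pi_1(\Sigma J_\rho)$ is abelian ``since a cofinal part of the diagram consists of the abelian sheaves $\ul K^{MW}_n$'' — is circular, because the identification of those cofinal terms is exactly what is in question. The paper's way around this is its Lemma~\ref{lem:Jrho-loops}: over a perfect field, $J_\rho\wedge\Omega X\wequi\Omega(J_\rho\wedge X)$ for simply connected $X$, applied to $X=BS^1$, gives $\Sigma J_\rho\wequi\Omega(BS^1[\rho^{-1}])$, so the relevant sheaf is a $\ul\pi_2$ of the simply connected spaces $BS^1\wedge\Gmp{i}$, which \emph{is} in Morel's range; the colimit is then $a_\ret\Z$ by Jacobson (resp.\ Bachmann). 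Either supply a proof of your claim for $n\ge 3$ or adopt this loop-space maneuver. Note also that the statement is for an arbitrary field, while every Morel input (and Lemma~\ref{lem:Jrho-loops}) needs a perfect base; the paper first reduces to prime fields by essentially smooth base change, and your write-up should do the same.

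For the unorderable case your route has a further gap: from $\Sigma^\infty(\Sigma J_\rho)\wequi \1[\rho^{-1}][1]=0$ in $\SH(k)$ you cannot directly invoke the $\A^1$-Hurewicz/Whitehead theorem, since that requires vanishing of $\A^1$-homology (an $S^1$-stable invariant), and vanishing of the $\P^1$-suspension spectrum does not formally give that (nor do you have perfectness of $k$ for Whitehead without the reduction above). The paper's argument is both shorter and avoids all of this: since $\Sigma J_\rho$ is the $\Sigma\rho$-localization of $S^1$ (Lemma~\ref{lem:rho-J-tel}), one has $[\Sigma J_\rho,\Sigma J_\rho]_*\wequi[S^1,\Sigma J_\rho]_*\wequi\ul\pi_1(\Sigma J_\rho)(k)=*$ once the first part is known, so the identity map is null and $\Sigma J_\rho=*$. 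I would replace your second paragraph by this argument.
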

\begin{proof}
By essentially smooth base change\NB{details?} it suffices to prove the claims for prime fields, which are in particular perfect.
Now by Lemmas \ref{lem:Jrho-loops} and \ref{lem:rho-J-tel} we have \[ \Sigma J_\rho = J_\rho \wedge S^1 \wequi J_\rho \wedge \Omega BS^1 \wequi \Omega(J_\rho \wedge BS^1) \wequi \Omega(BS^1[\rho^{-1}]). \]
Hence \[ \ul\pi_1 \Sigma J_\rho \wequi \colim_i \ul\pi_1 (BS^1 \wedge \Gmp{i}). \]
We have (for $i>0$) $\ul\pi_1 (BS^1 \wedge \Gmp{i}) \wequi \ul{K}_i^{MW}$ by \cite[Corollary 6.43]{A1-alg-top}, and hence by \cite[Theorem 8.5]{jacobson-fundamental-ideal} (see also \cite[Corollary 19]{bachmann-real-etale}) we get $\ul\pi_1 \Sigma J_\rho \wequi a_\ret \Z$, as needed.

Now suppose $k$ is unorderable.
Note that since $\Sigma J_\rho$ is the $\Sigma \rho$-localization of $S^1$, \[ [\Sigma J_\rho, \Sigma J_\rho]_* \wequi [S^1,  \Sigma J_\rho]_* \wequi \ul\pi_1(\Sigma J_\rho)(k) = *. \]
Thus the identity of $\Sigma J_\rho$ is homotopic to the zero map, whence $\Sigma J_\rho = *$.
\end{proof}

\begin{lem} \label{lem:ret-transfers}
Let $k$ be a field of characteristic $0$ and $l/k$ be an étale algebra such that $\Spec(l) \to \Spec(k)$ is a real étale cover.
There exists a map $\tau: \Sigma J_\rho \to \Spec(l)_+ \wedge \Sigma J_\rho \in \Spc(k)_*$ which is a section of the canonical map $pr \wedge J_\rho: \Spec(l)_+ \wedge \Sigma J_\rho \to \Sigma J_\rho$.
\end{lem}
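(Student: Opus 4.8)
The plan is to produce $\tau$ as a \emph{corrected} Morel (Pontryagin--Thom) transfer. Since $\rho$ is defined over $\Z$, the object $J_\rho$ is stable under base change, so Corollary~\ref{cor:pi1-Jrho} and Lemma~\ref{lem:rho-J-tel} are available over $k$ and over the field factors of $l$. First we build the transfer. As $l$ is a finite reduced $k$-algebra there is a closed immersion $\Spec(l)\hookrightarrow\A^1_k$; its normal bundle is a line bundle on the finite scheme $\Spec(l)$, hence trivial, so homotopy purity \cite{A1-homotopy-theory} together with the collapse map gives
\[
\op{tr}_{l/k}\colon \Sigma\Gm \wequi \P^1_k \longrightarrow \P^1_k/(\P^1_k\setminus\Spec(l)) \wequi \op{Th}(N_{\Spec(l)/\A^1_k}) \wequi \Spec(l)_+\wedge\Sigma\Gm .
\]
Smashing with $J_\rho$ and invoking the equivalence $J_\rho\wedge\Sigma\Gm\wequi\Sigma J_\rho$ --- which holds because $\Sigma\rho\colon S^1\to\Sigma\Gm$ is a $\Sigma\rho$-equivalence of connected spaces, so by Lemma~\ref{lem:rho-J-tel} both sides are the $\Sigma\rho$-localisation of $S^1$ --- produces a map $\op{tr}\colon\Sigma J_\rho\to\Spec(l)_+\wedge\Sigma J_\rho$.

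We next identify the composite $(pr\wedge J_\rho)\circ\op{tr}\in[\Sigma J_\rho,\Sigma J_\rho]_*$. By Corollary~\ref{cor:pi1-Jrho} this group is $\ul\pi_1(\Sigma J_\rho)(k)\wequi(a_\ret\Z)(k)$, the ring $C(R\Spec(k),\Z)$ of locally constant $\Z$-valued functions on the real spectrum, and the relevant composite is the image of the stable class $pr\circ\op{tr}_{l/k}\in GW(k)$ under the total-signature map $GW(k)\to C(R\Spec(k),\Z)$ used in the proof of Corollary~\ref{cor:pi1-Jrho}. Now $pr\circ\op{tr}_{l/k}$ is the Euler characteristic of the dualisable object $\Sigma^\infty_+\Spec(l)$, i.e.\ the trace form $\langle\op{Tr}_{l/k}\rangle$, whose total signature is by Sylvester's theorem the function $\alpha\mapsto\#\big(\pi^{-1}(\alpha)\big)$, where $\pi\colon R\Spec(l)\to R\Spec(k)$ is the projection. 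As $\Spec(l)\to\Spec(k)$ is real surjective this function is everywhere $\ge 1$; it is not $\equiv 1$ in general, so $\op{tr}$ is not yet a section.

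To correct this, note first that $\pi$ is a local homeomorphism \cite[Proposition~1.8]{real-and-etale-cohomology}, is surjective, and is a map between Boolean spaces (the real spectrum of a field, or an Artin ring, is compact, Hausdorff and totally disconnected). A surjective local homeomorphism of Boolean spaces is a clopen covering map, and covering maps of Boolean spaces split, so there is a clopen $C\subseteq R\Spec(l)$ meeting every fibre of $\pi$ exactly once. On the other hand, writing $p\colon\Spec(l)\to\Spec(k)$ and using the projection formula $\Spec(l)_+\wedge(\ph)\wequi p_\#p^*(\ph)$, the adjunction $p_\#\dashv p^*$, base change, and the splitting $l\otimes_k l\cong l\times l'$, one finds that $[\Sigma J_\rho,\Spec(l)_+\wedge\Sigma J_\rho]_*$ contains $[\Sigma J_\rho,\Sigma J_\rho]_*$ over $l$, i.e.\ $C(R\Spec(l),\Z)$, as a direct summand, that $\op{tr}$ lies in this summand (corresponding to the constant function $1$), and that on this summand $(pr\wedge J_\rho)_*$ is the fibrewise summation $C(R\Spec(l),\Z)\to C(R\Spec(k),\Z)$ along $\pi$ --- this last point being the real-\'etale sheafification of the classical fact that the projection composed with the Morel transfer is ``integration over the fibre''. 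Taking $\tau$ to be the element of the summand corresponding to $\mathbb 1_C$, its image under $(pr\wedge J_\rho)_*$ is the fibrewise sum of $\mathbb 1_C$, which is $\equiv 1$, i.e.\ $\id_{\Sigma J_\rho}$. Hence $\tau$ is the desired section.

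The transfer construction and the Boolean-space topology are routine. The main obstacle is the last step: identifying $[\Sigma J_\rho,\Spec(l)_+\wedge\Sigma J_\rho]_*$ with the appropriate ring of locally constant $\Z$-valued functions on $R\Spec(l)$, compatibly with the projection $pr\wedge J_\rho$ and the transfer $\op{tr}$. Concretely one must verify that after $\rho$-localisation and real-\'etale sheafification the Morel transfer of a finite \'etale cover becomes fibrewise summation; this rests on the Jacobson-type computation $\ul K^{MW}_*[\rho^{-1}]\wequi a_\ret\Z$ (cf.\ \cite{jacobson-fundamental-ideal}), on base change for $J_\rho$, and on Atiyah duality for finite \'etale schemes, and is the technical core of the lemma.
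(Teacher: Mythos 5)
Your construction is essentially the paper's own: both take the Pontryagin--Thom collapse $\P^1 \to \P^1/(\P^1\setminus \Spec(l)) \wequi \P^1 \wedge \Spec(l)_+$ (trivializing the normal bundle via a primitive element), smash with $J_\rho$, and then correct the transfer by postcomposing with multiplication by a locally constant function $\chi$ on $R\Spec(l)$, acting through $\ul\pi_1 \Sigma J_\rho \wequi a_\ret\Z$ and $pr_\sharp$ --- your indicator function of a clopen section $C$ of $R\Spec(l)\to R\Spec(k)$ is the same device as the paper's footnoted choice of the indicator of the largest root in each fibre. The one step you flag as unverified, namely that after $\rho$-localization the composite with the projection becomes fibrewise summation (which in fact holds only up to signs depending on the chosen trivialization), is exactly what the paper discharges more cheaply: it observes that, up to multiplication by a unit in $\mathscr O(R\Spec(l))$ accounting for the trivialization, the assignment $\chi \mapsto (pr\wedge J_\rho)\circ \tau^\chi$ is the usual transfer, whose surjectivity after inverting $\rho$ is \cite[Corollary 21]{bachmann-real-etale}, and surjectivity alone already produces the desired section.
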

\begin{proof}
Choose a generator $f$ of $l/k$ (this is possible since $k$ has characteristic zero, using the primitive element theorem\NB{details/ref?}).
The generator $f$ induces a closed immersion $\Spec(l) \to \A^1_l$.
The minimal polynomial of $f$ yields a trivialization of the normal bundle.
We hence obtain \[ \tau^0: \P^1 \to \P^1/(\P^1 \setminus \Spec(l)) \wequi Th(N_{\Spec(l)/\P^1}) \wequi \P^1 \wedge \Spec(l). \]
Let $\chi \in\mathscr O(R \Spec(l))$ and write $pr: \Spec(l) \to \Spec(k)$ for the projection.
By Corollary \ref{cor:pi1-Jrho}, there is a corresponding map $\chi': pr^*(\Sigma J_\rho) \to pr^*(\Sigma J_\rho) \in \Spc(l)_*$.
Applying $pr_\sharp$ we obtain $(\cdot \chi): \Spec(l)_+ \wedge \Sigma J_\rho \to \Spec(l)_+ \wedge \Sigma J_\rho \in \Spc(k)$.
Set \[ \tau^\chi = (\cdot \chi) \circ (\tau^0 \wedge J_\rho). \]
We claim that for an appropriate choice of $\chi$, $\tau^\chi$ is the required section.
(Note that $\P^1 \wedge J_\rho \wequi S^1 \wedge J_\rho$.)
By Corollary \ref{cor:pi1-Jrho}, the composite $(pr \wedge J_\rho) \circ \tau^0$ corresponds to a function on $R\Spec(k)$ which we want to arrange to be the identity.
It will suffice to prove that the map $\mathscr O(R \Spec(l)) \to \mathscr O(R \Spec(k)), \chi \mapsto (pr \wedge J_\rho) \circ \tau^\chi$ is surjective.\footnote{In fact, a more careful analysis shows that taking $\chi$ to be the indicator function of the largest root in the fiber yields $\tau^\chi$ with the required property.}
To see this, note that up to multiplication by a unit in $\mathscr O(R \Spec(l)$ (corresponding to picking a different trivialization of the normal bundle), our map is the usual transfer; this was proved to be surjective in \cite[Corollary 21]{bachmann-real-etale}.
\end{proof}

\begin{rem}[Unreduced suspensions] \label{rmk:unreduced-susp}
Even if $X \in \Spc(S)$ does not admit a base point, the usual pushout defining $\Sigma X$ still makes sense and is connected, hence admits a unique base point up to homotopy.
In fact there are two obvious base points (the two tips of the cones).
Picking one of them to be the base point lifts $\Sigma X$ to $\Spc(S)_*$, and the other possible base point then defines a map $S^0 \to \Sigma X \in \Spc(S)_*$.
In fact this yields a pushout diagram (in $\Spc(S)_*$)
\begin{equation*}
\begin{CD}
S^0 @>>> \Sigma X \\
@VVV        @VVV  \\
* @>>> \Sigma(X_+).
\end{CD}
\end{equation*}
\end{rem}

\begin{lem} \label{lem:ret-rho-fields}
If $\Spec(l) \to \Spec(k)$ is a real étale cover with Čech nerve $\Spec(l)^{\times \bullet+1}$, then
\[ 
\Sigma|\Spec(l)^{\times \bullet+1}| \wedge J_\rho = *. 
\]
\end{lem}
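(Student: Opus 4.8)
The plan is to realize the Čech nerve smashed with $\Sigma J_\rho$ as a \emph{split} augmented simplicial object, and then to bridge the gap between reduced and unreduced suspensions. Write $C_\bullet = \Spec(l)^{\times\bullet+1}$ for the Čech nerve, augmented over $C_{-1} = \Spec(k)$, and let $Z_\bullet$ be its image under the colimit-preserving functor $(\ph)_+ \wedge \Sigma J_\rho \colon \Spc(k) \to \Spc(k)_*$, augmented over $Z_{-1} = \Spec(k)_+ \wedge \Sigma J_\rho \wequi \Sigma J_\rho$. First I would note that each $C_n$ is the spectrum of an étale $k$-algebra whose projection $\pi\colon C_n \to \Spec(k)$ is again a real étale cover (covers are stable under base change by Proposition~\ref{prop:Rfiberproducts}, and under composition), that $J_\rho$ is compatible with base change along $\pi$ (since $\pi^*$ is symmetric monoidal and colimit preserving it sends the free $\mathscr E_1$-algebra $J_\rho$ to $J_{\pi^*\rho}$), and hence that the smooth projection formula gives $\pi_\sharp \pi^*(\Sigma J_\rho) \wequi Z_n$. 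Applying Lemma~\ref{lem:ret-transfers} over $C_n$ (factorwise over its residue fields) to the real étale cover $C_{n+1} \to C_n$ given by the first projection, and pushing forward along $\pi$, then yields maps
\[ h_n \colon Z_n \longrightarrow Z_{n+1} \qquad (n \ge -1), \]
with $h_{-1} = \tau$ the transfer of Lemma~\ref{lem:ret-transfers}.

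Next I would verify that $\{h_n\}_{n\ge -1}$ is an extra degeneracy for $Z_\bullet \to Z_{-1}$: the identity $d_0 h_n = \id$ is exactly the splitting property of the transfer, while the identities $d_{i+1} h_n = h_{n-1} d_i$ and $s_{i+1} h_n = h_{n-1} s_i$ (for $i \ge 0$) express the naturality of the transfer under base change along the faces and degeneracies of the Čech nerve. Granting this, a split augmented simplicial object realizes to its augmentation, so $|Z_\bullet| \wequi Z_{-1} \wequi \Sigma J_\rho$; since $(\ph)_+ \wedge \Sigma J_\rho$ preserves colimits, this says that the augmentation $|C_\bullet|_+ \wedge \Sigma J_\rho \to \Sigma J_\rho$ is an equivalence.

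To conclude I would pass to the unreduced suspension. The realization $|C_\bullet|$ is connected, since $\ul\pi_0 |C_\bullet| \wequi \colim_\Delta \ul\pi_0 C_\bullet \wequi \ul\pi_0 \Spec(k) = *$ (using that $\ul\pi_0$ preserves colimits and $C_\bullet$ is the Čech nerve of a cover). Hence the two cone points of $\Sigma|C_\bullet|$ lie in a common component, so the map $S^0 \to \Sigma|C_\bullet|$ selecting them is nullhomotopic. Smashing the pushout square of Remark~\ref{rmk:unreduced-susp} (with $X = |C_\bullet|$) by $J_\rho$ and using this nullity exhibits $|C_\bullet|_+ \wedge \Sigma J_\rho = \Sigma(|C_\bullet|_+) \wedge J_\rho$ as a wedge $(\Sigma|C_\bullet| \wedge J_\rho) \vee \Sigma J_\rho$ under which the augmentation to $\Sigma J_\rho$ becomes the projection onto the second summand. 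By the previous paragraph that projection is an equivalence, hence so is the inclusion of the $\Sigma J_\rho$-summand, and therefore its cofiber is contractible; but that cofiber is $\Sigma|C_\bullet| \wedge J_\rho$, which is the assertion $\Sigma|\Spec(l)^{\times\bullet+1}| \wedge J_\rho = *$.

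The main obstacle is the bookkeeping in the second paragraph: verifying that the pushed-forward transfers genuinely satisfy the simplicial identities. This reduces to the compatibility of the Pontryagin--Thom collapse maps underlying Lemma~\ref{lem:ret-transfers} with base change along the Čech face and degeneracy maps (and with the trivializations of normal bundles coming from minimal polynomials of the generators chosen along the way); it is for this reason that Lemma~\ref{lem:ret-transfers} is most naturally established in a family form recording exactly these naturality statements. We work in characteristic zero throughout, as required by Lemma~\ref{lem:ret-transfers}.
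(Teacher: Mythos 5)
Your final step contains a genuine error. You claim $\ul\pi_0|C_\bullet| \wequi \colim_\Delta \ul\pi_0 C_\bullet \wequi \ul\pi_0\Spec(k) = *$ ``using that $C_\bullet$ is the Čech nerve of a cover''. But $\Spec(l) \to \Spec(k)$ is a cover only for the \emph{real étale} topology, while $|C_\bullet|$ and $\ul\pi_0$ are computed in $\Spc(k)$, i.e.\ in the Nisnevich-local, $\aone$-invariant setting; there the realization of the Čech nerve is the image of $\Spec(l) \to \Spec(k)$, which is in general a proper subobject of the point. For instance $k=\Q$, $l=\Q(\sqrt 2)$ is a real étale cover, but $\Spec(l)$ has no $k$-points even Nisnevich-locally over $\Spec(\Q)$, so $\ul\pi_0|C_\bullet| \neq *$, the two cone tips of the unreduced suspension are not connected over $\Spec(\Q)$, the map $S^0 \to \Sigma|C_\bullet|$ is not null, and the wedge decomposition $\Sigma(|C_\bullet|_+)\wedge J_\rho \wequi (\Sigma|C_\bullet|\wedge J_\rho)\vee \Sigma J_\rho$ on which your conclusion rests is unavailable. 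Indeed, the failure of $|C_\bullet| \to \Spec(k)$ to be an equivalence in $\Spc(k)$ is precisely what makes the lemma nontrivial; asserting it (even at the level of $\pi_0$) cannot be repaired by smashing with $J_\rho$ without circularity.

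The other gap is the one you flag yourself but do not resolve: the extra degeneracy. The transfer of Lemma~\ref{lem:ret-transfers} is not canonical --- it depends on a choice of primitive element, a trivialization of the normal bundle, and a correcting section $\chi$ produced only by a surjectivity argument --- and no base-change compatibility is proved. Even granting that the relevant squares commute up to homotopy, in an $\infty$-category a splitting of an augmented simplicial object must be given coherently (as an extension of the diagram), not merely by simplicial identities verified in the homotopy category, so you cannot conclude $|Z_\bullet|\wequi Z_{-1}$ from what you have. The paper's proof is designed to avoid exactly these two issues: it only uses the single section $\tau$ over $k$ to see that $[X\wedge\Sigma J_\rho, N]_*$ injects into $[\Spec(l)_+\wedge X\wedge\Sigma J_\rho, N]_*$ (with $N=\Sigma|C_\bullet|\wedge J_\rho$, which is connected and $\Sigma\rho$-local by Lemma~\ref{lem:rho-J-tel}), and then base changes to $l$, where the Čech nerve splits for trivial reasons and $N$ becomes contractible. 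If you want to pursue the split-simplicial route you would need to strengthen Lemma~\ref{lem:ret-transfers} to a coherent family version, which is substantially more work than the paper's argument. Finally, a minor point: the lemma is not vacuous in positive characteristic (over an unorderable field every étale algebra is vacuously a real étale cover), and that case needs the separate observation $\Sigma J_\rho = *$ from Corollary~\ref{cor:pi1-Jrho}, which your blanket restriction to characteristic zero omits.
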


\begin{proof}
If $k$ has positive characteristic then $J_\rho = *$ by Corollary \ref{cor:pi1-Jrho}.
We may thus assume that $k$ has characteristic $0$.
Set $N=\Sigma|\Spec(l)^{\times \bullet+1}| \wedge J_\rho$.
It suffices to prove that for $X \in \Spc(k)_*$ we have $[\Sigma X, N]_* = *$ (indeed $N$ is pointed and connected by Remark \ref{rmk:susp-conn}, so it suffices to show that $\ul\pi_i(N) = *$ for $i>0$, which follows from the condition).
Since $N$ is $\Sigma \rho$-local by Lemma \ref{lem:rho-J-tel}, we get $[\Sigma X, N]_* \wequi [X \wedge \Sigma J_\rho, N]_*$.
Lemma \ref{lem:ret-transfers} shows that this group injects into $[\Spec(l)_+ \wedge X \wedge \Sigma J_\rho, N]_*$.
Thus we may base change to $l$, at which point $|\Spec(l)^{\times \bullet+1}|$ becomes contractible and the result follows.
\end{proof}

\subsubsection*{$\rho$-periodization and real étale descent}
We now get back to the case of an arbitrary base.
\begin{theorem} \label{thm:main-comparison-connected}
Let $S$ be a scheme and $X \in \Spc(S)_*$ be connected.
Then \[ X[\rho^{-1}] \wequi X \wedge J_\rho \wequi L_\ret X. \]
This object is the $\rho$-localization of $X$.
\end{theorem}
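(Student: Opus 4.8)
The plan is to reduce the whole statement to a single claim — that $X\wedge J_\rho$ satisfies real-\'etale descent — and then deduce everything else formally. The first equivalence $X[\rho^{-1}]\wequi X\wedge J_\rho$, together with the fact that this object is the $\Sigma\rho$-localization of $X$, is precisely Lemma~\ref{lem:rho-J-tel}; note also that $X\wedge J_\rho\in\Spc(S)_*$ (a smash of motivic spaces) and that it is connected, being a colimit of suspensions by Lemma~\ref{lem:colim-of-susp}. Next I would record that every object $Y$ of $\Spc_\ret(S)_*$ (that is, every $\A^1$-invariant real-\'etale sheaf) is $\rho$-local: since $\mathrm L_\ret$ is symmetric monoidal ($\A^1$-localization and real-\'etale sheafification preserve finite products) and $\mathrm L_\ret\rho$ is an equivalence by Corollary~\ref{cor:rho-inv}, one has $\mathrm L_\ret(Z\wedge\Gm)\wequi\mathrm L_\ret Z$ for all $Z$, whence $\Map_*(Z\wedge\Gm,Y)\wequi\Map_*(\mathrm L_\ret Z,Y)\wequi\Map_*(Z,Y)$ compatibly with $\rho^*$. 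In particular $\mathrm L_\ret$ inverts $\rho$-equivalences, so, as $X\to X\wedge J_\rho$ is a $\rho$-equivalence (Lemma~\ref{lemm:rho-equiv-J}), we get $\mathrm L_\ret X\wequi\mathrm L_\ret(X\wedge J_\rho)$. Granting the descent claim, $\mathrm L_\ret(X\wedge J_\rho)\wequi X\wedge J_\rho$, hence $\mathrm L_\ret X\wequi X\wedge J_\rho$; and since $X\wedge J_\rho$ is then $\rho$-local with $X\to X\wedge J_\rho$ a $\rho$-equivalence, $X\wedge J_\rho\wequi\mathrm L_\rho X$.

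It remains to prove that $X\wedge J_\rho$ satisfies real-\'etale descent. I would first reduce to $S$ quasi-compact, quasi-separated and of finite Krull dimension by the continuity argument used in the proof of Theorem~\ref{thm:main} (both $\Spc(\ph)$, the $\rho$-periodization, and real-\'etale descent interact well with the relevant cofiltered limits of schemes). In that case the Nisnevich $\infty$-topos is hypercomplete, so by Lemma~\ref{lem:ret-sheaves-detect} it is equivalent to check real-\'etale descent along covers, or to check that the Nisnevich homotopy sheaves $\ul\pi_i(X\wedge J_\rho)$ are real-\'etale sheaves. Using the Nisnevich descent that $X\wedge J_\rho$ already enjoys, the description of real henselian local rings as the stalks of the real-\'etale topology, and the fact that motivic spaces are finitary (so that one may evaluate $X\wedge J_\rho$ at residue fields via a limit over \'etale neighborhoods), I would reduce the descent condition to covers $\Spec(l)\to\Spec(k)$ with $l$ an \'etale $k$-algebra which is a real-\'etale cover of a field — exactly the situation of Lemma~\ref{lem:ret-rho-fields}.

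For such a cover, write $C:=|\Spec(l)^{\times\bullet+1}|$ for the realization of the \v Cech nerve. Real-\'etale descent for the connected space $F:=X\wedge J_\rho$ along this cover is the assertion that $C\to\Spec(k)$ is an $F$-equivalence; adding disjoint base points and invoking the unreduced-suspension pushout of Remark~\ref{rmk:unreduced-susp}, this is governed by the reduced suspension $\Sigma C$, which is connected by Remark~\ref{rmk:susp-conn}. Since $F$ is $\Sigma\rho$-local (Lemma~\ref{lem:rho-J-tel}) and $\Sigma C\wedge J_\rho=*$ by Lemma~\ref{lem:ret-rho-fields}, feeding this — together with the connectivity of $F$ and the comparison of $\rho$- and $\rho_+$-localizations (Lemma~\ref{lemm:rho-loc-rho+-loc}) — back through the previous reduction yields that $C\to\Spec(k)$ is an $F$-equivalence, establishing the descent statement and hence the theorem.

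The main obstacle is this passage from the pointwise-over-fields vanishing $\Sigma C\wedge J_\rho=*$ of Lemma~\ref{lem:ret-rho-fields} to honest real-\'etale descent for $X\wedge J_\rho$ over an arbitrary base. Two points are delicate. First, one must organize real-\'etale covers so that, modulo the Nisnevich topology already respected by $X\wedge J_\rho$, they reduce to the field-level covers $\Spec(l)\to\Spec(k)$: this uses the structure of the real spectrum (Proposition~\ref{prop:pointsofa1}), the identification of real-\'etale stalks with real henselian rings, and a limit argument over \'etale neighborhoods to make sense of evaluating a motivic space at a residue field. Second, converting ``$\Sigma C\wedge J_\rho=*$'' into ``$C\to\Spec(k)$ is an $F$-equivalence'' genuinely consumes the hypothesis that $X$, hence $F$, is connected — consistent with the introduction's observation that this hypothesis is necessary. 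Once the descent is in place, the remaining identifications are formal, as explained at the outset.
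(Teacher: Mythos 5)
Your overall architecture is close to the paper's (prove that $X\wedge J_\rho$ is real-étale local, then conclude formally via Lemma \ref{lem:rho-J-tel}, Corollary \ref{cor:rho-inv} and Lemma \ref{lemm:rho-equiv-J}; the formal endgame in your first paragraph is fine). But the step you yourself flag as ``the main obstacle'' is exactly where the proposal has a genuine gap: you propose to verify the descent condition itself stalkwise, i.e.\ to reduce ``$\ul\pi_i(X\wedge J_\rho)$ is a real-étale sheaf'' (equivalently, descent along real-étale covers of arbitrary $Y\in\Sm_S$) to the field-level covers $\Spec(l)\to\Spec(k)$ using Nisnevich descent, real henselian stalks and finitarity. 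No such reduction is supplied, and none is readily available: being a sheaf for a topology is not a condition on stalks, real-étale covers of (non-real) henselian local schemes need not split or be refined by covers pulled back from the residue field (the real spectrum of a henselian local ring has many points not specializing into the closed fiber), and over a general base there is no unramifiedness theory for $\ul\pi_i$ that would let you test sections on fields. This is precisely the difficulty the paper's proof is engineered to avoid.

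The paper's mechanism is different in a way worth internalizing. One does not reduce the descent statement to fields at all. Instead, for an arbitrary real-étale Čech nerve $Y^\bullet\to Y$ over the (qcqs, finite-dimensional) base, one base-changes so $Y=S$ and proves descent for the \emph{loop space} $E=\Omega(X\wedge J_\rho)$ by a mapping-space computation: $\Map(|Y^\bullet|,E)\wequi\Map_*(\Sigma|Y^\bullet|_+\wedge J_\rho,\,X\wedge J_\rho)$ using $\Sigma\rho$-locality, and then the unreduced-suspension pushout of Remark \ref{rmk:unreduced-susp} identifies $\Sigma|Y^\bullet|_+\wedge J_\rho$ with $\Sigma J_\rho$ once one knows $\Sigma|Y^\bullet|\wedge J_\rho=*$. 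The \emph{only} statement that gets reduced to residue fields is this contractibility claim, and contractibility (unlike descent) genuinely is checkable pointwise by Proposition \ref{prop:unstable-loc}(1); the field case is Lemma \ref{lem:ret-rho-fields}, where the transfer of Lemma \ref{lem:ret-transfers} enters. Finally, connectivity of $X$ and Lemma \ref{lem:ret-sheaves-detect} upgrade ``$E$ is a real-étale sheaf'' to ``$X\wedge J_\rho$ is a real-étale sheaf.'' So the missing idea in your write-up is to suspend the Čech nerve and pass to $\Omega(X\wedge J_\rho)$, so that the global input is a contractibility statement amenable to Proposition \ref{prop:unstable-loc}, rather than attempting a stalkwise verification of the sheaf condition.
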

\begin{proof}
The first equivalence is Lemma \ref{lem:rho-J-tel}.
It remains to show that for $X \in \Spc(S)_*$ connected, $X$ is $\Sigma\rho$-local if and only if it is real étale local, if and only if it is $\rho$-local.
All three conditions are Zariski local, so we may assume $S$ affine.
Write $\mathscr C(S) \subset \Spc(S)_*$ for the subcategory of objects that are connected and $\Sigma\rho$-local.
Using Lemma \ref{lem:continuity} below, we see that $\mathscr C$ converts cofiltered limits of diagrams of affine schemes into filtered colimits in $Pr^L$.
(This is true without the connectivity assumption, and the lemma allows us to pass to the subcategory of connected objects---which is generated under colimits by suspensions.)
A similar discussion applies to the other locality conditions.
Since any affine scheme is a cofiltered limit of finite type $\Z$-schemes, this shows that in proving the result we may assume $S$ affine of finite type over $\Z$, in particular, qcqs of finite Krull dimension.

With this simplifying assumption, let us revert back to having a connected space $X \in \Spc(S)_*$.
We shall now show that $\Omega(X \wedge J_\rho) =: E$ is a real étale sheaf.
Thus let $Y^\bullet \to Y \in \Sm_S$ be the Čech nerve of a real étale cover.
We wish to prove that $\Map(Y, E) \wequi \Map(|Y^\bullet|, E)$ (beware that these are spaces of maps between \emph{unpointed} objects---we ignore the base point of $E$).
Sine the base change $\Spc(S) \to \Spc(Y)$ preserves limits, colimits and is symmetric monoidal, for this we may assume $Y=S$.
We claim that $\Sigma|Y^\bullet| \wedge J_\rho = *$.
By Proposition \ref{prop:unstable-loc} we reduce to the case $S = \Spec(k)$, which is dealt with in Lemma \ref{lem:ret-rho-fields}.
The claim is proved.
Next observe that \[ \Map(|Y^\bullet|, E) \wequi \Map_*(|Y^\bullet|_+, E) \wequi \Map_*(\Sigma |Y_\bullet|_+, X \wedge J_\rho) \wequi \Map_*(\Sigma |Y_\bullet|_+ \wedge J_\rho, X \wedge J_\rho), \] using that $X \wedge J_\rho$ is $\Sigma\rho$-local and $\Sigma |Y_\bullet|_+ \wedge J_\rho$ is the $\Sigma\rho$-localization of $\Sigma |Y_\bullet|_+$, both by Lemma \ref{lem:rho-J-tel}.
Using the pushout from Remark \ref{rmk:unreduced-susp} together with $\Sigma|Y^\bullet| \wedge J_\rho = *$ we see that $\Sigma |Y_\bullet|_+ \wedge J_\rho \wequi \Sigma J_\rho$.
Hence reversing the previous logic we find \[ \Map_*(\Sigma |Y_\bullet|_+ \wedge J_\rho, X \wedge J_\rho) \wequi \Map_*(\Sigma J_\rho, X \wedge J_\rho) \wequi \Map_*(S^1, X \wedge J_\rho) \wequi \Map_*(S^0, \Omega(X \wedge J_\rho)) \wequi \Map(*, E), \] as needed.
This finishes the proof that $E=\Omega(X \wedge J_\rho)$ is a real étale sheaf.

Since $X$ is connected, from this it follows that $X \wedge J_\rho$ is real étale local (Lemma \ref{lem:ret-sheaves-detect}) and thus $\rho$-local (Corollary \ref{cor:rho-inv}).
Since $X \to J_\rho \wedge X$ is a $\rho$-equivalence (Lemma \ref{lemm:rho-equiv-J}), this concludes the proof.
\end{proof}

\begin{lem} \label{lem:continuity} \todo{surely there must be a reference for this?}
Let $I$ be a filtered category and $\mathscr C: I \to Pr^L$.
Suppose that each $\mathscr C(i)$ is compactly generated and for each $\alpha:i \to j \in I$ the induced functor $\alpha: \mathscr C(i) \to \mathscr C(j)$ preserves compact objects.
Suppose further we are provided for each $i$ with a subset $S(i) \subset \mathscr C(i)$ of compact objects.
Write $\mathscr C_0(i) \subset \mathscr C(i)$ for the closure under colimits of $S(i)$ and suppose that for $\alpha:i \to j \in I$ we have $\alpha(\mathscr C_0(i)) \subset \mathscr C_0(j)$.

Then $\colim_I \mathscr C_0 \to \colim_I \mathscr C$ identifies the source with the full subcategory of the target obtained as the closure under colimits of the images of all the $S(i)$.
\end{lem}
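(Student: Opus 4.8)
The plan is to promote $\mathscr C_0$ to a diagram $I \to Pr^L$ equipped with a natural transformation to $\mathscr C$, to compute the induced functor $\phi$ on colimits by the theory of compactly generated $\infty$-categories, and to identify its essential image with the asserted closure under colimits. To begin, I would record that each $\mathscr C_0(i)$ is a compactly generated presentable $\infty$-category and that $\mathscr C_0$ is a diagram in $Pr^L$ with a stagewise fully faithful natural transformation to $\mathscr C$. Indeed, $\mathscr C_0(i)$ is closed under all colimits in $\mathscr C(i)$, so colimits in $\mathscr C_0(i)$ agree with those in $\mathscr C(i)$; hence the objects of $S(i)$ remain compact in $\mathscr C_0(i)$, and as they generate $\mathscr C_0(i)$ under colimits we have $\mathscr C_0(i) \wequi \mathrm{Ind}(\mathscr C_0(i)^{\omega})$ with $\mathscr C_0(i)^{\omega}$ the idempotent completion, formed inside $\mathscr C(i)$, of the closure of $S(i)$ under finite colimits. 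Since $\mathscr C(i)^{\omega}$ is closed under finite colimits and retracts we get $\mathscr C_0(i)^{\omega} \subseteq \mathscr C(i)^{\omega}$, so the inclusion $\mathscr C_0(i) \hookrightarrow \mathscr C(i)$ lies in $Pr^L$ and preserves compact objects; and for $\alpha\colon i\to j$, since $\alpha$ preserves colimits, retracts and compact objects, it carries $\mathscr C_0(i)^{\omega}$ into $\mathscr C(j)^{\omega}\cap\mathscr C_0(j) = \mathscr C_0(j)^{\omega}$, making the restriction $\mathscr C_0(i)\to\mathscr C_0(j)$ well defined, colimit-preserving and compact-preserving.

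Next I would invoke the standard computation of filtered colimits in $Pr^L$ along compact-object-preserving functors (\cite[\S 5.5.7]{HTT}; see also \cite[\S 5.3.5]{HTT}): such a colimit is again compactly generated, and, functorially in the diagram, $\colim_I\mathscr C \wequi \mathrm{Ind}(\colim^{\mathrm{Cat}_\infty}_I \mathscr C(\ph)^{\omega})$, and likewise for $\mathscr C_0$. Hence the functor $\phi\colon\colim_I\mathscr C_0\to\colim_I\mathscr C$ induced by the natural transformation identifies with $\mathrm{Ind}$ of the functor $\colim^{\mathrm{Cat}_\infty}_I\mathscr C_0(\ph)^{\omega}\to\colim^{\mathrm{Cat}_\infty}_I\mathscr C(\ph)^{\omega}$. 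Now a filtered colimit of fully faithful functors of $\infty$-categories is again fully faithful (mapping spaces in a filtered colimit are the corresponding filtered colimits of mapping spaces), and $\mathrm{Ind}$ takes fully faithful functors to fully faithful functors; therefore $\phi$ is fully faithful.

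To finish, I would identify the essential image of $\phi$ by a soft argument. Since $\phi$ is fully faithful, colimit-preserving and has presentable source, its essential image is closed under colimits in $\colim_I\mathscr C$; as it contains $\iota_i(S(i))$ for every $i$ (because $S(i)\subseteq\mathscr C_0(i)$), it contains the closure $T$ under colimits of $\bigcup_i\iota_i(S(i))$. Conversely, $\colim_I\mathscr C_0$ is generated under colimits by the images of the $S(i)$ — the structure functors $\mathscr C_0(i)\to\colim_I\mathscr C_0$ are colimit-preserving, their essential images jointly generate $\colim_I\mathscr C_0$ under colimits, and each $\mathscr C_0(i)$ is generated under colimits by $S(i)$ — so the colimit-preserving functor $\phi$ sends $\colim_I\mathscr C_0$ into $T$. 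Combining the two inclusions, $\phi$ has essential image exactly $T$, and being fully faithful it exhibits $\colim_I\mathscr C_0$ as equivalent to $T$, which is the assertion.

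The step I expect to be the real obstacle is the middle one: making precise, and functorial in the natural transformation $\mathscr C_0\Rightarrow\mathscr C$, the identification of the filtered colimit in $Pr^L$ with the $\mathrm{Ind}$-completion of the $\mathrm{Cat}_\infty$-colimit of compact objects, together with the ancillary facts that filtered colimits and $\mathrm{Ind}$ preserve full faithfulness. An alternative that avoids $\mathrm{Ind}$ is to verify full faithfulness of $\phi$ directly on compact generators, using that every compact object of a filtered $Pr^L$-colimit is a retract of the image of a compact object of some $\mathscr C(k)$ and that mapping spaces between such images are computed as filtered colimits of mapping spaces in the $\mathscr C(k)$; the stagewise full faithfulness of $\mathscr C_0(k)\hookrightarrow\mathscr C(k)$ then yields the comparison term by term.
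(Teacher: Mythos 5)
Your proposal is correct, but it runs along a genuinely different track from the paper's argument. The paper computes $\colim_I \mathscr C$ as an explicit localization of the sections category $Sect(\mathscr C)$ (the lax limit along the right adjoints $\alpha_*$): it exhibits the localization functor by the formula $(L_0 X)_i = \colim_{\alpha: i \to j}\alpha_* X_j$, which makes sense because compact generation plus compact-preserving transition functors force each $\alpha_*$ to preserve filtered colimits, and then deduces full faithfulness of $\colim_I\mathscr C_0 \to \colim_I\mathscr C$ from the fact that the (sectionwise) right adjoint $i_*$ of $Sect(\mathscr C_0) \to Sect(\mathscr C)$ preserves filtered colimits and hence commutes with $L_0=L$. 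You instead stay inside $Pr^L_\omega$: you check that each $\mathscr C_0(i)$ is compactly generated with $\mathscr C_0(i)^\omega = \mathscr C(i)^\omega \cap \mathscr C_0(i)$, invoke the identification of filtered colimits in $Pr^L$ along compact-preserving functors with $\mathrm{Ind}$ of the $\mathrm{Cat}_\infty$-colimit of compact objects (HTT \S5.5.7), and conclude full faithfulness from the facts that filtered colimits and $\mathrm{Ind}$ preserve fully faithful functors; the identification of the essential image is then the same soft closure-under-colimits argument in both treatments. Both proofs hinge on exactly the same hypotheses in the same places (compactness of $S(i)$ and preservation of compacts by the transition maps), so neither is more general; yours has the advantage of resting on citable statements from HTT rather than the somewhat sketchy explicit localization the paper constructs by hand, while the paper's route avoids the $\mathrm{Ind}$-completion bookkeeping (in particular the functoriality of the identification $\colim_{Pr^L}\mathscr C \simeq \mathrm{Ind}(\colim_{\mathrm{Cat}_\infty}\mathscr C(\ph)^\omega)$ in the transformation $\mathscr C_0 \Rightarrow \mathscr C$, which, as you note yourself, is the one step you would need to nail down carefully) and gives an explicit description of objects of the colimit that the paper does not otherwise need.
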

\begin{proof} \NB{this is a little bit sketchy}
Write $Sect(\mathscr C)$ for the category of sections of $\mathscr C$, that is, families of objects $X_i \in \mathscr C(i)$ together with maps $\alpha(X_i) \to X_j$ for every $\alpha: i \to j$.
Write $\alpha_*: \mathscr C(j) \to \mathscr C(i)$ for the right adjoint and $\omega_\alpha: X_i \to \alpha_* X_j$ for the adjoint map.
There is a localization $L$ of $Sect(\mathscr C)$ onto objects where each $\omega_\alpha$ is an equivalence, and $LSect(\mathscr C) \wequi \colim_I \mathscr C$ (combine \cite[Corollary 5.5.3.4, Proposition 5.5.3.13, Corollary 3.3.3.2]{HTT}.
For $(X_i)_i \in Sect(\mathscr C)$, let $(L_0 X)_i = \colim_{\alpha: i \to j} \alpha_* X_j$.
This upgrades to a functor $L_0: Sect(\mathscr C) \to Sect(\mathscr C)$.
Using that each $\alpha_*$ preserves filtered colimits, we see that $L_0 X$ is $L$-local, which in particular implies that the two canonical maps $L_0 \to L_0^2$ are equivalences.
It follows \cite[Proposition 5.2.7.4]{HTT} that $L_0$ is a localization, which is at least as strong as $L$.
But $L_0 L \wequi L$, so $L_0 = L$.

The same discussion applies to $Sect(\mathscr C_0)$.
Write $i: Sect(\mathscr C_0) \to Sect(\mathscr C)$ for the canonical functor, and $i_*$ for its right adjoint (which is computed sectionwise).
Then $i_*$ preserves filtered colimits (filtered colimits being computed sectionwise, and $\mathscr C_0(j) \to \mathscr C(j)$ preserving compact generators) and hence commutes with $L=L_0$.
From this we conclude that \[ i: LSect(\mathscr C_0) \wequi \colim_I \mathscr C_0 \to \colim_I \mathscr C \wequi Sect(\mathscr C) \] is fully faithful.
The generation under colimits is clear since this holds for $Sect(\mathscr C_0)$.
\end{proof}

\section{Real realizations} \label{sec:realizations}
In this section we collect some complementary results.
Some of them are independent of the rest of the article, but seem to fit thematically and do not seem to be recorded in the literature.

\subsection{Real étale localization of nilpotent motivic spaces}
We study the behavior of the functor $L_\ret: \Spc(k) \to \Spc(k)$ when $k$ is a field.
\begin{lem} \label{lemm:Lret-fiber}
If $F \to X \to B \in \Spc(k)$ is a fiber sequence, such that $B$ is connected and $\ul\pi_1 B$ a real étale sheaf, then $L_\ret F \to L_\ret X \to L_\ret B$ is also a fiber sequence.
\end{lem}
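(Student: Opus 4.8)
The plan is to realize the fibration $F \to X \to B$ as a bar construction and push $L_\ret$ through it, imitating the proof of Lemma \ref{lem:Jrho-loops}. Since $B$ is connected, so is $L_\ret B$: by Lemma \ref{lem:colim-of-susp}, $B$ is a colimit of suspensions, $L_\ret$ preserves colimits (Theorem \ref{thm:main-comparison-connected}), and colimits of connected objects are connected. Writing $G := \Omega B$ for the grouplike loop monoid, the standard description of a fibration over a connected base gives $B \wequi BG$ and $X \wequi F_{hG} \wequi |[n] \mapsto G^{\times n} \times F|$, with $X \to B$ realized as $|[n] \mapsto G^{\times n} \times F| \to |[n] \mapsto G^{\times n}|$ and fiber $F$ over the base point. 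Applying $L_\ret$ and commuting it past the geometric realizations, $L_\ret X \wequi |[n] \mapsto L_\ret(G^{\times n} \times F)|$ and $L_\ret B \wequi |[n] \mapsto L_\ret(G^{\times n})|$.

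The crucial point is the product formula $L_\ret(G^{\times n} \times F) \wequi (L_\ret G)^{\times n} \times L_\ret F$ (together with its special case $F = *$). Granting this, the two simplicial objects above are the bar constructions $B(*, L_\ret G, L_\ret F)$ and $B(*, L_\ret G, *)$, so that $L_\ret X \wequi (L_\ret F)_{h L_\ret G}$ lies over $L_\ret B \wequi B(L_\ret G)$; moreover $L_\ret G$ is grouplike, as one sees by applying $L_\ret$ and the product formula to the shear equivalence $G \times G \xrightarrow{\sim} G \times G$, $(a,b) \mapsto (a,ab)$. Hence $(L_\ret F)_{h L_\ret G} \to B(L_\ret G)$ is a fibration with fiber $L_\ret F$, and $L_\ret F \to L_\ret X \to L_\ret B$ is a fiber sequence, as claimed.

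It remains to prove the product formula, and this is where the hypothesis on $\ul\pi_1 B$ enters. When all factors are connected it is exactly the product-preservation property of $L_\ret = J_\rho \wedge (\ph)$ on connected motivic spaces established inside the proof of Lemma \ref{lem:Jrho-loops}. In general $G$ and $F$ are disconnected, but $\ul\pi_0 G = \ul\pi_1 B$ is a real étale sheaf by hypothesis, and (using the exact sequence $\ul\pi_1 X \to \ul\pi_1 B \to \ul\pi_0 F \to *$, Lemma \ref{lem:ret-sheaves-detect}, and Morel's theorem that $\A^1$-homotopy sheaves over a perfect field are $\A^1$-invariant) so is $\ul\pi_0 F$, hence so is $\ul\pi_0$ of $G^{\times n} \times F$. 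I would then reduce to the connected case using that real étale sheaves are real-étale-locally constant (Theorem \ref{thm:main} together with Theorem \ref{thm:intro}): after base change along a real étale cover of $\Spec k$ trivializing this $\ul\pi_0$-sheaf, the space $G^{\times n} \times F$ becomes a disjoint union of connected spaces, $L_\ret$ commutes with the disjoint union, and one applies the connected product formula summand by summand; since both sides satisfy real étale descent, checking the equivalence after such a base change suffices.

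The main obstacle will be precisely this last reduction: establishing the product formula in the disconnected setting, which requires showing $\ul\pi_0 F$ is a real étale sheaf, making precise the compatibility of $L_\ret$ with base change along real étale covers of $\Spec k$, and handling that the components of $F$ need not be abstractly isomorphic. Everything else is formal once Theorem \ref{thm:main-comparison-connected} and the connected product formula are available; a preliminary reduction to perfect (indeed prime) fields by essentially smooth base change, as in the proof of Corollary \ref{cor:pi1-Jrho}, is also in order, so that Morel's theorem applies.
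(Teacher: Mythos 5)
Your high-level strategy (bar construction, product preservation of the localization, descent in the target) is reasonable, but as written the proposal has a genuine gap at exactly the step you flag as the "main obstacle". Your reduction of the product formula to the connected case rests on the claim that $\ul\pi_0 F$ is a real étale sheaf, deduced from the sequence $\ul\pi_1 X \to \ul\pi_1 B \to \ul\pi_0 F \to *$. This fails twice over: the sequence only identifies $\ul\pi_0 F$ as a quotient of $\ul\pi_1 B$ when $X$ is connected, which is not assumed; and even then a quotient (in Nisnevich sheaves) of a real étale sheaf need not be a real étale sheaf. More decisively, the claim itself is false under the lemma's hypotheses: take the trivial fibration $\Gm \to \Gm \times B \to B$ with $B$ simply connected, so $\ul\pi_1 B = *$ is certainly a real étale sheaf, yet $\ul\pi_0 F = \Gm$ is not a real étale sheaf (the real étale topology is not subcanonical), and no real étale cover of $\Spec k$ will make it constant. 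So the proposed passage to a cover "trivializing" $\ul\pi_0$ of $G^{\times n} \times F$ cannot work, and the disconnected case of your product formula is not established by this route. The difficulty is self-inflicted: you identify $L_\ret$ with $J_\rho \wedge (\ph)$, which is only valid for connected spaces (Theorem \ref{thm:main-comparison-connected}). The honest localization functor $L_\ret$ preserves finite products of \emph{arbitrary} motivic spaces, since by Proposition \ref{prop:modelforlret} it is a filtered colimit of composites of $L^\naive_\ret$ (left exact) and $\mathrm{L}_{mot}$ (finite-product preserving), and filtered colimits commute with finite products; the paper itself invokes this in Lemma \ref{lem:lretpreservesnilpotence}. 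Using that fact, your bar argument needs no connectivity of $G$ or $F$ at all, though you should then carefully justify (i) $X \wequi |B(*,\Omega B, F)|$ in $\Spc(k)$ (via the fact that connectedness means Nisnevich-connectedness, so $* \to B$ and hence $F \to X$ are effective epimorphisms in the Nisnevich topos, and then reflecting along $\mathrm{L}_{mot}$), and (ii) the principal-bundle/descent step in the $\infty$-topos $\Spc_\ret(k) \wequi \Shv(k_\ret)$ (Theorem \ref{thm:main}). Notice that such an argument would apparently never use the hypothesis that $\ul\pi_1 B$ is a real étale sheaf, which should give you pause and demands a careful check.

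For comparison, the paper's proof is entirely different and much shorter: after disposing of positive characteristic (where $L_\ret$ is trivial), it uses the explicit formula of Proposition \ref{prop:modelforlret} writing $L_\ret$ as an iterated colimit of $L^\naive_\ret$ and $\mathrm{L}_{mot}$; the naive sheafification preserves fiber sequences, and the hypothesis that $\ul\pi_1 B$ is already a real étale sheaf guarantees (via \cite[Theorems 2.3.8 and 2.3.3]{AWW}) that at every stage the fundamental group of the base is unchanged and strongly $\aone$-invariant, so that $\mathrm{L}_{mot}$ also preserves the fiber sequence, and one passes to the colimit. So even if you repair your argument along the lines above, it is a genuinely different (and more delooping-theoretic) route than the one in the paper; as submitted, however, the disconnected-case reduction is a real gap.
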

\begin{proof}
If $char(k) > 0$, then $L_\ret = 0$ and there is nothing to prove.  We may thus assume that $char(k) = 0$.  Recall the description of $\mathrm{L}_{\ret}$ from Proposition~\ref{prop:modelforlret}: $\mathrm{L}_{\ret}$ may be written as an infinite composite of $L^\naive_{\ret}$ and $\mathrm{L}_{mot}$.  It is clear that $L^\naive_{\ret}$ preserves fiber sequences.  On the other hand since $\ul\pi_1 B$ is already a real-\'etale sheaf, $L^\naive_{\ret}$ does not modify it, and hence neither does ${\mathrm L}_{mot}$ by \cite[Theorem 2.3.8]{AWW}.  Thus at any stage $\ul\pi_1 B$ is strongly $\aone$-invariant, and so ${\mathrm L}_{mot}$ preserves the fiber sequence by \cite[Theorem 2.3.3]{AWW}.
\end{proof}

\begin{lem} \label{lemm:Lret-conn}
If $X \in \Spc(k)$ is $n$-connected, then $L_\ret X$ is again $n$-connected.
\end{lem}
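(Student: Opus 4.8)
The plan is to reduce everything to the explicit model for $\mathrm L_{\ret}$ recorded in Proposition~\ref{prop:modelforlret}. I would first dispose of the case $\mathrm{char}(k)>0$: then $R\Spec k=\emptyset$ (a field of positive characteristic is not formally real), so by Theorems~\ref{thm:main} and~\ref{thm:rethypercomplete} the $\infty$-category $\Spc_{\ret}(k)$ is terminal and $\mathrm L_{\ret}X=\ast$, which is $n$-connected for every $n$. So I may assume $\mathrm{char}(k)=0$; in particular $k$ is perfect.

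By Proposition~\ref{prop:modelforlret}, $\mathrm L_{\ret}X$ is the colimit, computed in $\mathrm P(\Sm_k)$, of a sequence $X=T_0\to T_1\to T_2\to\cdots$ with $T_{2i+1}=L^{\naive}_{\ret}T_{2i}$ and $T_{2i+2}=\mathrm L_{mot}T_{2i+1}$ (and $T_0=\mathrm L_{mot}X=X$ since $X\in\Spc(k)$). Each $T_i$ is a Nisnevich sheaf: this is clear for $T_0$ and for the $T_{2i+2}\in\Spc(k)$, while each $T_{2i+1}$ is a real-étale sheaf, hence a Nisnevich sheaf since the real-étale topology is finer than the Nisnevich one. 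As the colimit $\mathrm L_{\ret}X$ is itself a real-étale, hence Nisnevich, sheaf, the $\mathrm P$-colimit agrees with the colimit of the (same) diagram computed in the $\infty$-topos $\Shv_{\Nis}(\Sm_k)$. Since $\tau_{\le n}$, being a left adjoint, preserves colimits, and a filtered colimit of terminal objects is terminal, the $n$-connected objects of an $\infty$-topos are stable under filtered colimits; so it will be enough to show that each $T_i$ is $n$-connected in $\Shv_{\Nis}(\Sm_k)$.

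I would prove this by induction on $i$, the base case $i=0$ being the hypothesis on $X$. The inductive step uses two inputs. First, $L^{\naive}_{\ret}$ restricted to $\Shv_{\Nis}(\Sm_k)$ is the real-étale sheafification $\Shv_{\Nis}(\Sm_k)\to\Shv_{\ret}(\Sm_k)$, which is a left exact localization; being a left exact left adjoint, it preserves terminal objects and $n$-connected morphisms, hence carries an $n$-connected Nisnevich sheaf to a real-étale sheaf that is $n$-connected for the real-étale topology — and by Corollary~\ref{cor:unstableretconnectivity} such a sheaf is then also $n$-connected for the Nisnevich topology. Second, Morel's unstable $\A^1$-connectivity theorem \cite{A1-alg-top} (applicable since $k$ is perfect) asserts that $\mathrm L_{mot}$ sends an $n$-connected Nisnevich sheaf to an $n$-connected motivic space, i.e.\ again to an $n$-connected Nisnevich sheaf. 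Alternating these two facts along the sequence completes the induction, and the colimit remark of the previous paragraph then finishes the proof.

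The only real subtlety I anticipate is bookkeeping which topology connectivity is measured in as one runs through the alternating composite: $L^{\naive}_{\ret}$ naturally outputs real-étale connectivity whereas $\mathrm L_{mot}$ needs Nisnevich connectivity as input, and it is precisely Corollary~\ref{cor:unstableretconnectivity} that lets one pass back and forth at each stage. (One must also check that the hypotheses of Morel's theorem are met, but over a field of characteristic zero perfectness is automatic, so there is no issue there.)
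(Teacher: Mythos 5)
Your proof is correct and follows essentially the same route as the paper: reduce via the colimit model of Proposition~\ref{prop:modelforlret} to the two functors $\mathrm L_{mot}$ and $L^{\naive}_{\ret}$, handle the former by Morel's unstable connectivity theorem and the latter by left exactness of sheafification together with Corollary~\ref{cor:unstableretconnectivity}, after disposing of positive characteristic. You merely spell out the details (stability of $n$-connectedness under filtered colimits, the Nisnevich/real-étale bookkeeping) that the paper leaves implicit.
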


\begin{proof}
By appeal to Proposition~\ref{prop:modelforlret} it again suffices to prove this for ${\mathrm L}_{mot}$ and $L^\naive_{\ret}$, and we may again assume that $char(k) = 0$.  That $\mathrm{L}_{mot}$ preserves connectivity is precisely Morel's unstable connectivity theorem, i.e., \cite[Theorem 6.38]{A1-alg-top}.  That $L^\naive_{\ret}$ preserves connectivity follows from Corollary~\ref{cor:unstableretconnectivity}.
\end{proof}

\begin{lem} \label{lemm:real-et-loc-approx}
If $X \in \Spc(k)_*$ is connected, and $\ul\pi_i X$ is a real étale sheaf for $i \le n$, then $X \to L_\ret X$ induces an isomorphism on $\ul\pi_i$ for $i \le n$.
\end{lem}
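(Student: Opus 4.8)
The plan is to compare $X$ with its Postnikov truncation $\tau_{\le n}X$, over which $L_\ret$ acts trivially, and to control the fiber by Lemma~\ref{lemm:Lret-conn}. When $char(k)>0$ the real spectrum of $\Spec k$ is empty, so the terminal sheaf is the only real étale sheaf; the hypothesis then forces $\ul\pi_i X = \ast$ for $i\le n$, i.e.\ $X$ is $n$-connected, and the claim follows at once from Lemma~\ref{lemm:Lret-conn}. So I would henceforth assume $char(k)=0$ (the argument below in fact applies verbatim in positive characteristic, since there $L_\ret$ is the constant functor $\ast$ on connected spaces). Set $B := \tau_{\le n}X \in \Spc(k)_*$ and let $F\to X\to B$ be the associated fiber sequence in $\Spc(k)$; by the theory of the $\A^1$-Postnikov tower of a pointed connected motivic space (see \cite{A1-alg-top}, and recall that $\mathrm{L}_{mot}$ preserves fiber sequences over a base with strongly $\A^1$-invariant $\ul\pi_1$, exactly as used in the proof of Lemma~\ref{lemm:Lret-fiber}) this exists with $F$ being $n$-connected, with $\ul\pi_i F \wequi \ul\pi_i X$ for $i>n$, and with $X\to B$ inducing an isomorphism on $\ul\pi_i$ for $i\le n$.

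First I would note that $B$ is already real étale local: its Nisnevich homotopy sheaves are $\ul\pi_i X$ for $i\le n$ (real étale sheaves by hypothesis) and $0$ for $i>n$, so Lemma~\ref{lem:ret-sheaves-detect}(3), applied with $\tau=\Nis$ over $S=\Spec k$ (which has Krull dimension $0$), shows that $B$ satisfies real étale descent; being moreover $\A^1$-invariant, $B\in\Spc_\ret(k)$, so the unit $B\to L_\ret B$ is an equivalence. Next I would apply Lemma~\ref{lemm:Lret-fiber} to $F\to X\to B$: the base $B$ is connected and $\ul\pi_1 B$ is a real étale sheaf (it is $\ul\pi_1 X$ when $n\ge 1$, and $B=\ast$ when $n=0$), so one gets a fiber sequence $L_\ret F\to L_\ret X\to L_\ret B\wequi B$. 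Since $F$ is $n$-connected, Lemma~\ref{lemm:Lret-conn} gives that $L_\ret F$ is $n$-connected, hence $\ul\pi_i L_\ret F = 0$ for $i\le n$; the long exact sequence of this fiber sequence then shows that $L_\ret X\to B$ induces an isomorphism on $\ul\pi_i$ for $i\le n$. Finally, the naturality square relating $X\to B$, the unit $X\to L_\ret X$, and the equivalence $B\wequi L_\ret B$ identifies this isomorphism with the one induced by $X\to L_\ret X$ on $\ul\pi_i$ for $i\le n$, which is the assertion.

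The only non-formal ingredient is the behavior of the $\A^1$-Postnikov tower of a pointed connected motivic space — that $\tau_{\le n}X$ is again a motivic space with the expected homotopy sheaves, and that the fiber sequence $F\to X\to B$ persists in $\Spc(k)$ — which is Morel's theory and is already implicit in Lemma~\ref{lemm:Lret-fiber}; the remainder is a diagram chase with the long exact sequence, plus the input of Lemma~\ref{lem:ret-sheaves-detect}. I therefore do not anticipate a serious obstacle here.
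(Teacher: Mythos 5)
Your proposal is correct and follows essentially the same route as the paper: the paper's proof also forms the fiber sequence $F \to X \to X_{\le n}$ with $F$ $n$-connected, applies Lemma~\ref{lemm:Lret-fiber} to get $L_\ret F \to L_\ret X \to L_\ret X_{\le n}$, identifies $L_\ret X_{\le n} \wequi X_{\le n}$ via Lemma~\ref{lem:ret-sheaves-detect}, and uses Lemma~\ref{lemm:Lret-conn} to see $L_\ret F$ is still $n$-connected. Your extra verifications (the characteristic-$p$ case, the hypothesis on $\ul\pi_1 B$, the naturality square) are just spelled-out details that the paper leaves implicit.
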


\begin{proof}
The fiber sequence $F \to X \to X_{\le n}$ has $F$ $n$-connected and induces a fiber sequence $L_\ret F \to L_\ret X \to L_\ret X_{\le n}$ by appeal to Lemma \ref{lemm:Lret-fiber}.  Now $L_\ret(X_{\le n}) \wequi X_{\le n}$ by Lemma \ref{lem:ret-sheaves-detect}(2), and $L_\ret F$ is still $n$-connected (Lemma \ref{lemm:Lret-conn}), so $L_\ret(X)_{\le n} \wequi X_{\le n}$ as needed.
\end{proof}

Recall the notion of a nilpotent motivic space from \cite{AFHLocalization}.
\begin{lem}
	\label{lem:lretpreservesnilpotence}
If $X \in \Spc(k)$ is nilpotent, then $L_\ret X$ is also nilpotent.
\end{lem}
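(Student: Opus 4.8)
The plan is to reduce $X$ to a principal Postnikov tower and transport it through $L_\ret$. If $char(k)>0$ then $L_\ret X = *$ by the vanishing of $L_\ret$ recorded in the proof of Lemma~\ref{lemm:Lret-fiber}, and the point is nilpotent; so I may assume $char(k)=0$. By the structure theory of nilpotent motivic spaces \cite{AFHLocalization}, $X$ carries a \emph{principal Postnikov tower}: a tower $\cdots \to Y_i \to Y_{i-1} \to \cdots \to Y_0 = *$ with $X \wequi \lim_i Y_i$, where each $Y_i$ is the fiber of a map $k_i\colon Y_{i-1} \to K(A_i, n_i)$ with $A_i$ strictly $\A^1$-invariant and $n_i \ge 2$, and the connectivity of $Y_i \to Y_{i-1}$ tends to $\infty$; conversely, any motivic space admitting such a convergent tower is nilpotent (the limit of the $\ul\pi_1$'s is an iterated central extension of the trivial group, hence nilpotent, and it acts nilpotently on the higher homotopy sheaves because each stage is a principal, hence simple, fibration). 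The feature I exploit is that the $k$-invariant targets $K(A_i,n_i)$ are simply connected, so $\ul\pi_1 K(A_i,n_i)=0$ is trivially a real étale sheaf.

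Next I would apply $L_\ret$ to each fiber sequence $Y_i \to Y_{i-1} \xrightarrow{k_i} K(A_i,n_i)$. Since $K(A_i,n_i)$ is connected with real étale $\ul\pi_1$, Lemma~\ref{lemm:Lret-fiber} shows that $L_\ret Y_i \to L_\ret Y_{i-1} \xrightarrow{L_\ret k_i} L_\ret K(A_i,n_i)$ is again a fiber sequence, so $L_\ret Y_i$ is a principal fibration over $L_\ret Y_{i-1}$; and Lemma~\ref{lemm:Lret-conn} shows that $L_\ret K(A_i,n_i)$ is $(n_i-1)$-connected, in particular connected and simply connected. Thus $(L_\ret Y_i)_i$ is a tower of principal fibrations with $L_\ret Y_0=*$ whose targets have connectivity tending to $\infty$. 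Granting the identification $L_\ret X \wequi \lim_i L_\ret Y_i$, this exhibits a convergent principal Postnikov tower on $L_\ret X$, and the converse direction of the structure theory quoted above yields that $L_\ret X$ is nilpotent.

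The remaining, and main, point is therefore the identification $L_\ret X \wequi \lim_i L_\ret Y_i$. The subtlety is that $L_\ret$ does not obviously commute with this limit: Lemma~\ref{lemm:Lret-fiber} only governs fibrations whose base has real étale $\ul\pi_1$, and this fails for the stages $Y_i$ themselves, so one cannot directly push the highly connected maps $X \to Y_i$ through $L_\ret$. I would resolve this by first establishing that $L_\ret$ preserves $n$-connectivity of \emph{morphisms}, not merely of objects: using the formula $L_\ret \wequi \colim(\mathrm{L}_{mot} \to L^\naive_\ret \mathrm{L}_{mot} \to \mathrm{L}_{mot}L^\naive_\ret \mathrm{L}_{mot} \to \cdots)$ of Proposition~\ref{prop:modelforlret}, one uses that $L^\naive_\ret$ is left exact and preserves connectivity (Corollary~\ref{cor:unstableretconnectivity}), that $\mathrm{L}_{mot}$ preserves connectivity of morphisms (a relative form of Morel's unstable connectivity theorem \cite[Theorem 6.38]{A1-alg-top}), and that the class of $n$-connected morphisms is closed under filtered colimits. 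Granting this, the maps $X \to Y_i$ being $c_i$-connected with $c_i \to \infty$ force the maps $L_\ret X \to L_\ret Y_i$ to be $c_i$-connected; combined with the fact that the fibers of the tower $(L_\ret Y_i)$ become arbitrarily highly connected, so that $\lim_i L_\ret Y_i$ agrees with $L_\ret Y_i$ through degree $\sim c_i$, it follows that $L_\ret X \to \lim_i L_\ret Y_i$ is an isomorphism on every homotopy sheaf, hence an equivalence.

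The step where I expect to spend the most care is precisely this compatibility of $L_\ret$ with the Postnikov-type limit, funneled through the connectivity-of-morphisms property of $L_\ret$; everything else is bookkeeping with Lemmas~\ref{lemm:Lret-fiber} and~\ref{lemm:Lret-conn} and the structural characterization of nilpotence.
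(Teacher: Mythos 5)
Your reduction to the principal Postnikov tower, and the use of Lemmas~\ref{lemm:Lret-fiber} and~\ref{lemm:Lret-conn} to see that $L_\ret$ of each stage is again a tower of principal fibrations with fibers of growing connectivity, is exactly the first half of the paper's argument. The gap is in your treatment of the remaining step, the identification $L_\ret X \wequi \lim_i L_\ret Y_i$. You propose to get it from the claim that $L_\ret$ preserves $n$-connectedness of \emph{morphisms}, and you justify the motivic ingredient of that claim by appeal to ``a relative form of Morel's unstable connectivity theorem''. But \cite[Theorem 6.38]{A1-alg-top} is the absolute statement about spaces; a relative statement for arbitrary $n$-connected maps of Nisnevich sheaves is not proved in this paper, is not a formal consequence of the absolute one (the $\aone$-localization of the fiber need not be the fiber of the localization, precisely because of $\pi_1$-issues), and cannot be salvaged here by restricting to nilpotent maps, since in the iteration $\colim(\mathrm{L}_{mot} \to L^\naive_\ret\mathrm{L}_{mot} \to \cdots)$ of Proposition~\ref{prop:modelforlret} the intermediate objects have no nilpotence or sheaf-theoretic control. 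Similarly, Corollary~\ref{cor:unstableretconnectivity} is an object-level comparison for sheaves already satisfying real-étale descent, so even the $L^\naive_\ret$ half of your claim needs a separate (stalkwise, relative) argument. As written, the step you yourself flag as the one needing the most care is exactly the step that is not established.

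The paper avoids this issue entirely: instead of pushing connectivity of the maps $X \to X_N$ through $L_\ret$, it verifies directly that $X \to X' := \lim_i L_\ret X_i$ has the universal property of $L_\ret X$. The target is real-étale local, being a limit of such objects, and for any $Y \in \Spc_\ret(k)$ one has $\Map(X,Y) \wequi \Map(X',Y)$ because $\Spc_\ret(k) \wequi \Shv(k_\ret)$ (Theorem~\ref{thm:main}) is Postnikov complete (Theorem~\ref{thm:rethypercomplete}), so one may assume $Y$ is $n$-truncated, and then both mapping spaces agree with $\Map(X_N,Y) \wequi \Map(L_\ret X_N, Y)$ for $N$ large; this uses only the object-level connectivity statement of Lemma~\ref{lemm:Lret-conn}. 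If you want to keep your outline, replace the ``connectivity of morphisms'' step by this mapping-space argument (or else supply a genuine proof of the relative connectivity statement, which would be a nontrivial addition rather than bookkeeping).
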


\begin{proof}
Since the Postnikov tower of $X$ admits a principal refinement \cite[Theorem 3.1.13]{AFHLocalization}, there are fiber sequences 
\[ 
X_{i+1} \longrightarrow X_i \longrightarrow B_i, 
\] 
where $X \wequi \lim_i X_i$, $B_i$ is simply connected, $X_0 = *$ and the connectivity of $B_i$ tends to infinity with $i$. Moreover each $B_i$ is a grouplike commutative monoid (so in particular nilpotent). Lemma~\ref{lemm:Lret-fiber} shows that these fiber sequences are preserved by $L_\ret$, and Lemma~\ref{lemm:Lret-conn} shows that each $\mathrm{L}_\ret B_i$ is simply connected, and has connectivity tending to infinity with $i$. Since $L_\ret$ preserves finite products, $L_\ret B_i$ is still a grouplike commutative monoid, and so nilpotent. By induction and appeal to \cite[Corollary 3.3.7]{AFHLocalization}, one deduces that each $\mathrm{L}_\ret X_i$ is nilpotent, and so is $X' := \lim_i L_\ret X_i$.

To conclude the proof, we must show that the map $X \to X'$ exhibits the target as $\mathrm{L}_\ret$ of the source.  Thus we must prove that whenever $Y \in \Spc_\ret(k)$ then $\Map(X, Y) \wequi \Map(X', Y)$.  Since $\Spc_\ret(k) \wequi \Shv(k_\ret)$ (by Theorem \ref{thm:main}) which is Postnikov complete (Theorem~\ref{thm:rethypercomplete}), we may assume for this that $Y$ is $n$-truncated for some $n$.  But then for $N$ sufficiently large we get 
\[ 
\Map(X, Y) \wequi \Map(X_N, Y) \wequi \Map(L_\ret X_N, Y) \wequi \Map(X', Y), 
\] 
as required.
\end{proof}

\subsection{Realization of singular varieties} \label{sec:real-realn}
Recall the functor \[ r_\R: \Ft_\R \to (\text{topological spaces}) \] sending a scheme $X$ to its set of real points $X(\R)$, with the strong topology.
It preserves disjoint unions, open and closed immersions, Zariski coverings, finite products (reduce to the affine case using the previous properties) and fiber products (reduce to the separated (e.g. affine) case using the previous properties, then embed the fiber product into the product).
\begin{lem} \label{lemm:rR-abstract-blowup}
Let
\begin{equation*}
\begin{CD}
W @>>> Y \\
@VVV @VVV \\
Z @>>> X
\end{CD}
\end{equation*}
be an abstract blowup square in $\Ft_\R$.
Then the image under $r_\R$ is a pushout square and a homotopy pushout square.
\end{lem}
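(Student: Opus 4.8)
The plan is to read off the structure of the square of real points from the properties of $r_\R$ already recorded, and then feed in two standard facts from real algebraic geometry. Since $r_\R$ preserves fiber products, the image square is still cartesian; since it preserves closed immersions, $Z(\R) \to X(\R)$ and (its base change) $W(\R) \to Y(\R)$ are closed embeddings of topological spaces; and since $r_\R$ preserves open immersions and commutes with passage to complements, the defining isomorphism $Y \setminus W \isomto X \setminus Z$ of the abstract blowup square gives a homeomorphism $Y(\R) \setminus W(\R) \isomto X(\R) \setminus Z(\R)$. So on real points we have a cartesian square in which one leg is a closed inclusion and the complementary open parts are identified.

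To see that the square is a pushout, first reduce to $X$ affine using that $r_\R$ commutes with Zariski covers (so both assertions can be checked Zariski-locally on $X$); then all the spaces in sight are locally compact and Hausdorff. The first standard input is that $r_\R$ carries a proper morphism to a topologically proper map; applied to $Y \to X$ this makes $Y(\R) \to X(\R)$ proper, hence closed. Therefore the map $q \colon Z(\R) \sqcup Y(\R) \to X(\R)$ is a closed surjection --- surjective because $Z(\R)$ covers $Z(\R)$ and $Y(\R) \setminus W(\R)$ maps onto $X(\R) \setminus Z(\R)$ --- and thus a topological quotient map. Its fibers are singletons over $X(\R) \setminus Z(\R)$, while over $z \in Z(\R)$ the fiber is $\{z\}$ together with the fiber of $W(\R) \to Z(\R)$ over $z$ (using $W \cong Z \times_X Y$). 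The equivalence relation generated by $q$ is thus exactly the one identifying $W(\R) \subseteq Y(\R)$ with its image in $Z(\R)$, so $X(\R) \cong Z(\R) \sqcup_{W(\R)} Y(\R)$.

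For the homotopy pushout it suffices to know that the closed inclusion $W(\R) \hookrightarrow Y(\R)$ is a cofibration, since a pushout of topological spaces along a cofibration automatically computes the homotopy pushout. This is the second standard input: $W(\R)$ is a closed semialgebraic subspace of the (locally) semialgebraic space $Y(\R)$, and a closed semialgebraic subspace admits a semialgebraic neighborhood that semialgebraically deformation retracts onto it (e.g.\ via triangulation of semialgebraic sets; see \cite{BCR}), so $(Y(\R), W(\R))$ is an NDR pair and the inclusion is a cofibration. Combined with the previous paragraph, this proves the lemma.

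The real content is in the two imported facts --- topological properness of $r_\R$ applied to a proper morphism, and the cofibration property of closed inclusions on real points --- together with the small reduction to the affine (hence Hausdorff, locally compact) case that makes ``proper $\Rightarrow$ closed'' legitimate; once these are in place, the identification of the pushout is a routine point-set argument, and I expect that bookkeeping, rather than any genuinely hard step, to be the main obstacle.
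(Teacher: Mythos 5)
Your proof is correct and follows essentially the same route as the paper: identify $r_\R(X)$ set-theoretically with the pushout, use that properness of $Y\to X$ (and the closed immersion $Z\to X$) makes the map from $Z(\R)\sqcup Y(\R)$ closed so that the pushout identification is topological, and invoke triangulation/semialgebraic NDR structure to see the closed inclusions are cofibrations, making the pushout a homotopy pushout. The only cosmetic differences are that the paper phrases the point-set step as a continuous closed bijection $r_\R(Z)\amalg_{r_\R(W)}r_\R(Y)\to r_\R(X)$ rather than as a quotient map with identified fibers, and it proves the ``proper $\Rightarrow$ closed on real points'' input as a separate lemma (via Chow's lemma) where you cite it as standard.
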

\begin{proof}
We have bijections of sets \[ r_\R(X) \wequi r_\R(Z) \amalg (r_\R(X) \setminus r_\R(Z)) \wequi r_\R(Z) \amalg (r_\R(Y) \setminus r_\R(W)). \]
Since $r_\R(W) \wequi r_\R(Z) \times_{r_\R(X)} r_\R(Y)$ we deduce that the map \[ X' := r_\R(Z) \amalg_{r_\R(W)} r_\R(Y) \to r_\R(X) \] is bijective.
Since $r_\R(Z) \amalg r_\R(Y) \to X'$ is surjective and $r_\R(Z) \amalg r_\R(Y) \to X$ is closed (use Lemma \ref{lemm:prop-proj} below), $X' \to X$ is also closed.\NB{If $f: A \to B$ is surjective, $g: B \to C$ such that $gf$ is closed then for $Y \subset B$ closed also $g(Y) = g(f(f^{-1}(Y)))$ is closed.}
Thus $X' \to X$ is continuous, bijective and closed, whence an isomorphism; consequently the square is a pushout square.
Since the horizontal maps are cofibrations (this follows from\cite{Lojasiewicz}\NB{more explicit reference?}), the pushout is a homotopy pushout.
\end{proof}

\begin{lem} \label{lemm:prop-proj}
If $f: X \to Y \in \Ft_\R$ is proper, then $r_\R(f)$ is closed.
\end{lem}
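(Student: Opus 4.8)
The plan is to deduce the statement from the classical fact over $\C$ by passing to $C_2$-fixed points. First, closedness of $r_\R(f)$ can be checked on an open cover of $Y$: if $\{Y_i\}$ covers $Y$ and each $r_\R(f^{-1}(Y_i)\to Y_i)$ is closed, then so is $r_\R(f)$, since $r_\R(f)(C)\cap Y_i(\R) = r_\R(f^{-1}(Y_i)\to Y_i)\big(C\cap f^{-1}(Y_i)(\R)\big)$. As base change of a proper morphism is proper, I may assume $Y$ affine; then $Y(\R)$ is a real algebraic subset of some $\R^n$, hence locally compact Hausdorff, and $X$ is separated over $\R$ (being proper over an affine scheme), so $X(\R)$ is Hausdorff too.

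Next I would use the relation with complex points: for $Z \in \Ft_\R$ the analytic space $Z(\C)$ carries an action of complex conjugation, and $Z(\R) = Z(\C)^{C_2}$, naturally in $Z$ and compatibly with the topologies. Since the fixed locus of a continuous involution of a Hausdorff space is closed, $X(\R)\hookrightarrow X(\C)$ and $Y(\R)\hookrightarrow Y(\C)$ are closed subspaces, and $r_\R(f)$ is the restriction of $f(\C)\colon X(\C)\to Y(\C)$ to them.

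The key input is that $f(\C)$ is \emph{topologically proper} (preimages of compacta are compact) when $f$ is proper. This is classical; for completeness one proves it via Chow's lemma: there is a projective surjective $g\colon X'\to X$ with $X'$ projective over $Y$, so that $X'(\C)\to Y(\C)$ factors as a closed embedding $X'(\C)\hookrightarrow \P^N(\C)\times Y(\C)$ followed by the projection, which is proper since $\P^N(\C)$ is compact; moreover $g(\C)$ is surjective because the fibers of $g$ are nonempty finite type $\C$-schemes, hence have $\C$-points by the Nullstellensatz; thus for compact $K\subseteq Y(\C)$ the set $f(\C)^{-1}(K)$ is the continuous image under $g(\C)$ of the compact set $(f\circ g)(\C)^{-1}(K)$, hence compact.

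Putting this together, $r_\R(f)$ is the restriction of a proper map to closed subspaces, hence proper, and a proper continuous map into a locally compact Hausdorff space is closed; this finishes the proof. The step that requires care — and the reason to route through $\C$ — is that the naive reduction to the projective case fails on real points: in Chow's lemma $X'(\R)\to X(\R)$ need not be surjective (e.g.\ the normalization of $\Spec\R[x,y]/(x^2+y^2)$ has empty real locus, while $\Spec\R[x,y]/(x^2+y^2)$ has a real point), so closedness cannot be descended along it. One could instead argue directly via the valuative criterion of properness for the real closed valuation rings arising from germs of semialgebraic curves, combined with the curve selection lemma, but the fixed-point argument seems shortest.
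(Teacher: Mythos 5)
Your proof is correct, and it takes a genuinely different route from the paper's. You pass to complex points, identify $r_\R(X)$ with the closed fixed locus of conjugation on the Hausdorff space $X(\C)$, prove topological properness of $f(\C)$ via Chow's lemma over $\C$ (where surjectivity on points is restored by the Nullstellensatz), and conclude since a topologically proper map to the locally compact Hausdorff space $Y(\R)$ is closed. The paper instead stays entirely over $\R$ and confronts head-on the very pitfall you identify (a Chow cover $X' \to X$ need not be surjective on real points): it invokes the refined form of Chow's lemma from Deligne's note on Nagata compactification, producing for each affine open $U \subset X$ a blowup $X'_U \to X$ with center \emph{outside} $U$ (hence an isomorphism over $U$) and with $X'_U \to Y$ projective; taking the disjoint union over an affine cover yields $X' \to X$ projective over $Y$ with $r_\R(X') \to r_\R(X)$ surjective, so closedness descends along this surjection and the projective case is handled by compactness of $r_\R(\P^n)$, exactly parallel to your use of compactness of $\P^N(\C)$. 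Your argument buys a reduction to a classical complex-analytic fact at the cost of the (standard but extra) compatibility $Z(\R) = Z(\C)^{C_2}$ and point-set facts about proper maps; the paper's argument is self-contained within the real realization functor and makes the surjectivity-on-real-points issue explicit, which is also the reason its citation is to the refined Chow statement rather than the textbook one. Your counterexample (the normalization of $\Spec \R[x,y]/(x^2+y^2)$) correctly shows why the naive descent along an arbitrary Chow cover would fail, so your cautionary remark is well taken — it is precisely what the paper's choice of blowup centers is designed to avoid.
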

\begin{proof}
The problem being local on $Y$, we may assume $Y$ affine.
Let $U \subset X$ be affine.
By Chow's lemma (see e.g. \cite[Corollaire 1.4]{deligne-nagata-compact}), we can find $X'_U \to X$ with $X'_U \to X$ a blowup with center outside of $U$, and $X'_U \to Y$ projective.
Apply this to all open subsets $U$ in an affine open covering of $X$ and take the disjoint union of the resulting schemes.
We obtain $X' \to X$ such that $X' \to Y$ is projective and $r_\R(X') \to r_\R(X)$ is surjective.
To prove that $r_\R(f)$ is closed it thus suffices to prove that $r_\R(X' \to Y)$ is closed, i.e., we may assume that $X$ is projective.
In other words we are given a factorization of $f$ as $X \hookrightarrow \P^n \times Y \to Y$.
Since $r_\R$ preserves closed immersions, we need only treat the map $\P^n \times Y \to Y$.
This case holds because $r_\R$ preserves finite products and $r_\R(\P^n)$ is compact.
\end{proof}

We can left Kan extend the functor \[ \Ft_\R \xrightarrow{r_\R} (\text{topological spaces}) \to \Spc \] to obtain \[ r_\R: {\mathrm P}(\Ft_R) \to \Spc, \] still preserving finite products and coproducts.
\begin{proposition} \label{prop:rR-cdh}
The functor $r_\R: {\mathrm P}(\Ft_\R) \to \Spc$ factors through $\Shv_{cdh}(\Ft_\R)$.
\end{proposition}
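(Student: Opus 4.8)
The plan is to exploit that $r_\R$ is a left Kan extension, hence colimit-preserving, together with the universal property of the cdh-localization $\mathrm{P}(\Ft_\R) \to \Shv_{cdh}(\Ft_\R)$: it suffices to show that $r_\R$ carries every cdh-local equivalence to an equivalence in $\Spc$, and for that it is enough to treat a generating set. I would invoke the standard fact that (over a Noetherian finite-dimensional base, in particular over a field) the cdh topology is generated by the Nisnevich topology together with abstract blow-up squares (Voevodsky; see also the appendix of \cite{bachmann-norms}). Thus I must check that $r_\R$ inverts (i) the local equivalence $\emptyset \to h_\emptyset$, (ii) the local equivalences $h_U \cup_{h_{U\times_X V}} h_V \to h_X$ attached to Nisnevich distinguished squares, and (iii) the local equivalences $h_Z \cup_{h_W} h_Y \to h_X$ attached to abstract blow-up squares.

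Case (i) is immediate, since $\emptyset \in \Ft_\R$ has empty set of real points and $r_\R$ preserves the initial object. Case (iii) is exactly Lemma~\ref{lemm:rR-abstract-blowup}: because $r_\R$ preserves colimits and fiber products, $r_\R(h_Z \cup_{h_W} h_Y)$ is the homotopy pushout of $Z(\R) \leftarrow W(\R) \to Y(\R)$ with $W(\R) \simeq Z(\R)\times_{X(\R)} Y(\R)$, and the lemma identifies this homotopy pushout with $X(\R) = r_\R(h_X)$. So the only real work is in case (ii), i.e.\ Nisnevich descent for $r_\R$.

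For (ii), given a Nisnevich distinguished square with $U\hookrightarrow X$ open and $e\colon V\to X$ étale inducing an isomorphism on the reduced complement of $U$, I would first record that $e$ induces a local homeomorphism $V(\R)\to X(\R)$ on real points (classical, via the implicit function theorem over $\R$; it can alternatively be deduced along the lines of \cite[Proposition 1.8]{real-and-etale-cohomology}), which restricts to a homeomorphism over the closed set $(X\setminus U)(\R)$. Since $U(\R)\hookrightarrow X(\R)$ is an open immersion, it follows that $r_\R(h_{U\sqcup V}) = U(\R)\sqcup V(\R)\to X(\R)$ is a surjective local homeomorphism. Because $r_\R$ preserves colimits and fiber products, $r_\R$ applied to the Nisnevich-local equivalence $|h_{(U\sqcup V)}^{\times_X \bullet+1}| \to h_X$ is the augmented simplicial diagram $[n]\mapsto (U(\R)\sqcup V(\R))^{\times_{X(\R)}(n+1)} \to X(\R)$, whose realization I want to identify with $X(\R)$. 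This holds because $U(\R)\sqcup V(\R)\to X(\R)$ is an effective epimorphism in the $\infty$-topos $\Shv(X(\R))$, so the realization of its \v{C}ech nerve recovers $X(\R)$ inside $\Shv(X(\R))$; applying the colimit-preserving shape functor $\Shv(X(\R))\to\Spc$ and using that all the fiber powers appearing are semialgebraic sets, hence paracompact and locally contractible (indeed triangulable, by {\L}ojasiewicz), so that their shapes coincide with their homotopy types, yields the desired equivalence.

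The main obstacle is precisely this last step: verifying that $r_\R$ sends Nisnevich distinguished squares (equivalently, the \v{C}ech nerves of Nisnevich covers) to homotopy colimit diagrams. One must be somewhat careful, since the naive strict pushout $U(\R)\sqcup_{(U\times_X V)(\R)}V(\R)$ is homeomorphic to $X(\R)$ but this by itself does \emph{not} make it a homotopy pushout; the argument genuinely uses tameness of semialgebraic sets and maps. An acceptable alternative, if one prefers to keep this section short, is simply to cite that the real-realization functor is known to satisfy Nisnevich (in fact real-étale) descent and take (ii) for granted.
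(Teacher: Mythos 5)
Your proposal is correct, and its core coincides with the paper's: use that $r_\R$ preserves colimits to reduce to a generating set of cdh-local equivalences, and kill the abstract blow-up generators by Lemma~\ref{lemm:rR-abstract-blowup}. The difference is in how much is made explicit. The paper's proof is a one-liner citing Lemma~\ref{lemm:rR-abstract-blowup} and the cd-structure description of the cdh topology \cite{Voevodskycd}; the Nisnevich distinguished squares (and the empty scheme) are left tacit, the Nisnevich descent of topological realization being regarded as standard --- indeed the paper only invokes it later, via \cite[Lemma 5.4, Theorem 5.5]{dugger2001hypercovers-topology}, in the proof of Theorem~\ref{thm:Lret-rR}. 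You instead spell out all three classes of generators and supply an argument for the Nisnevich case: \'etale maps give local homeomorphisms on real points, so $U(\R)\sqcup V(\R)\to X(\R)$ is a surjective local homeomorphism; since $r_\R$ preserves fiber products, the terms of the \v{C}ech nerve are again real point sets of finite type schemes, hence triangulable semialgebraic sets, and the identification of the realization of the nerve with $X(\R)$ follows either by your shape-theoretic argument (local contractibility via triangulability \cite{Lojasiewicz} makes the comparison of shape and homotopy type work) or, more economically, by quoting \cite{dugger2001hypercovers-topology} as the paper does elsewhere. One small point worth keeping in mind: you pass freely between inverting the square-pushout maps and inverting the \v{C}ech nerves of the associated two-element covers; that these generate the same strongly saturated class is exactly the completeness/regularity/boundedness of Voevodsky's combined cd-structure, i.e.\ the content of the citation \cite{Voevodskycd}, so it should be flagged rather than treated as automatic. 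In sum, your route buys completeness --- it makes visible the Nisnevich step that the paper's ``immediate'' glosses over, and you rightly caution that a point-set pushout or \v{C}ech identification is not automatically a homotopy colimit without tameness --- while the paper buys brevity by outsourcing that step to the literature.
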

\begin{proof}
Via Lemma \ref{lemm:rR-abstract-blowup}, this is immediate from the definition of cdh descent in terms of abstract blowup squares \cite{Voevodskycd}.
\end{proof}

\begin{cor}
Consider the adjunction \[ e: {\mathrm P}(\Sm_R) \leftrightarrows {\mathrm P}(\Ft_\R): e^*. \]
Then $r_\R$ inverts the counit $ee^* \to \id$.
\end{cor}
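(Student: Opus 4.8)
The plan is to exploit Proposition~\ref{prop:rR-cdh}. Writing $r_\R \simeq \bar r_\R \circ L_{cdh}$ for the factorization through the cdh-localization $L_{cdh}\colon {\mathrm P}(\Ft_\R) \to \Shv_{cdh}(\Ft_\R)$, it suffices to prove that $L_{cdh}$ already inverts the counit $ee^* \to \id$. Now $L_{cdh}$ and $ee^*$ both preserve colimits (write $j\colon \Sm_\R \hookrightarrow \Ft_\R$, so $e = j_!$ is left Kan extension and $e^* = j^*$ is restriction; both are colimit-preserving), and every presheaf is a colimit of representables, so we reduce to showing that $L_{cdh}(ee^* h_X) \to L_{cdh}(h_X)$ is an equivalence for each $X \in \Ft_\R$. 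The formula for left Kan extension identifies $ee^* h_X$ with $\colim_{(U,u) \in (\Sm_\R)_{/X}} h_U$, and the counit with the map induced by the structure morphisms $u\colon U \to X$.

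Next I would dispose of two reductions and then induct. If $X$ is smooth, the comma category $(\Sm_\R)_{/X}$ has a terminal object, so $ee^* h_X \to h_X$ is already an equivalence of presheaves. For general $X$, a smooth scheme is reduced, hence $ee^* h_X = ee^* h_{X_{\mathrm{red}}}$; and $X_{\mathrm{red}} \to X$ is a cdh-cover (the abstract blowup square with vertices $\emptyset, \emptyset, X_{\mathrm{red}}, X$), so $L_{cdh} h_X \simeq L_{cdh} h_{X_{\mathrm{red}}}$. Thus we may assume $X$ reduced, and proceed by Noetherian induction on $\dim X$. Since $\R$ has characteristic $0$, Hironaka's resolution of singularities gives a proper morphism $p\colon X' \to X$ with $X'$ smooth that is an isomorphism over a dense open $X \setminus Z$, with $Z \subsetneq X$ closed of strictly smaller dimension; putting $W = (p^{-1}Z)_{\mathrm{red}}$ produces an abstract blowup square with vertices $W, X', Z, X$. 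Hence the presheaf pushout $P := h_Z \amalg_{h_W} h_{X'}$ maps to $h_X$ by a cdh-local equivalence. Applying the colimit-preserving functor $ee^*$ and then $L_{cdh}$, and invoking the induction hypothesis for $Z$ and $W$ (of smaller dimension) and the smooth case for $X'$, we get $L_{cdh}(ee^* P) \simeq L_{cdh}(h_Z) \amalg_{L_{cdh}(h_W)} L_{cdh}(h_{X'}) \simeq L_{cdh}(h_X)$, the last step being cdh-descent for the abstract blowup square.

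The remaining — and in my view main — obstacle is to compare $L_{cdh}(ee^* h_X)$ with $L_{cdh}(ee^* P)$, that is, to show that $ee^* = j_!j^*$ (equivalently, the restriction $j^*$) carries the cdh-local equivalence $P \to h_X$ to a cdh-local equivalence; more generally, that $ee^*$ respects cdh-local equivalences, so that $ee^*$ and $L_{cdh}$ commute. I would reduce this to the generators of the class of cdh-local equivalences: for a Nisnevich-covering sieve it is elementary, since the pullback of a Nisnevich cover along a morphism from a smooth scheme is again a Nisnevich cover by smooth schemes; for the pushout map attached to an abstract blowup square it follows, again using resolution of singularities, from analysing the pullback of the square along morphisms from smooth schemes. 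In other words, this step amounts to the assertion that $j\colon \Sm_\R \hookrightarrow \Ft_\R$ is a continuous and cocontinuous morphism of sites for the cdh topology, so that $j^*$ commutes with cdh-sheafification. Granting this together with resolution of singularities, the induction closes and $L_{cdh}$ — hence $r_\R$ — inverts the counit; that the resulting equivalence is the one induced by the counit is automatic from naturality.
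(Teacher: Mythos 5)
Your first step---factoring $r_\R$ through $L_{cdh}\colon \mathrm{P}(\Ft_\R)\to \Shv_{cdh}(\Ft_\R)$ via Proposition~\ref{prop:rR-cdh}---is exactly how the paper uses that proposition, and your formal reductions (colimits of representables, the smooth case, passage to $X_{\mathrm{red}}$) are fine. But the step you yourself flag as the main obstacle is a genuine gap, and the sketch you offer does not close it. To run your induction you must know that $ee^*=j_!j^*$ carries the cdh-local equivalence $P=h_Z\amalg_{h_W}h_{X'}\to h_X$ to a cdh-local equivalence, i.e.\ in effect that restriction along $j\colon \Sm_\R\hookrightarrow\Ft_\R$ commutes with cdh-localization. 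This is not an elementary site-theoretic point: abstract blowup squares do not live in $\Sm_\R$, so the ``cdh topology on $\Sm_\R$'' is only an induced topology, and pulling back an abstract blowup square along a morphism $U\to X$ with $U$ smooth produces a square of generally \emph{singular} schemes of the same dimension---so ``analysing the pullback along morphisms from smooth schemes'' just reproduces the problem you started with rather than reducing it. The unproved commutation statement is of essentially the same depth as the theorem you are trying to establish (Voevodsky's comparison of the cdh-local homotopy theories of smooth and finite type schemes), so as written the argument does not close.

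The paper avoids all of this with a one-line input: by \cite[Lemma 4.3]{VoevodskyNisvscdh}, finite type schemes over a characteristic-zero field are cdh-locally smooth, which yields directly that the counit $ee^*\to\id$ is a cdh-local equivalence; combined with Proposition~\ref{prop:rR-cdh} (your factorization step) that is the entire proof. So either cite that lemma at the point where you need it, or accept that the real work of a self-contained argument is precisely the descent statement you are assuming---for instance, producing cdh-(hyper)covers of an arbitrary $X\in\Ft_\R$ by smooth schemes via iterated resolution and verifying the resulting descent for $ee^*h_X$---rather than the Noetherian induction scaffolding, which by itself proves nothing new.
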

\begin{proof}
Since finite type schemes are cdh-locally smooth \cite[Lemma 4.3]{VoevodskyNisvscdh}, the map $ee^* \to \id$ is a cdh-local equivalence.
The result thus follows from Proposition \ref{prop:rR-cdh}.
\end{proof}

\begin{rem} \label{rmk:rR-invert}
It is clear that $r_\R: {\mathrm P}(\Ft_\R) \to \Spc$ preserves the generating $\aone$-equivalences $X \times \aone \to X$, and also colimits.
Hence it preserves the strong saturation of the union of cdh equivalences and generating $\aone$-equivalences.
\end{rem}

\begin{ex} \label{ex:Omega-G-1}
Let $G$ be a split reductive group and $X = \Omega_\Gm {\mathrm L}_{mot} G \in \Spc(\R)$.
By \cite[Theorem 15]{bachmann-grassmann}, we know that $X \stackrel{mot}{\wequi} Gr_G$.
Here $Gr_G$ is a singular (ind-)scheme, which we make into a presheaf on $\Sm_\R$ by restriction.
Using Remark \ref{rmk:rR-invert}, we find that $r_\R(X) \wequi r_\R(Gr_G)$, where on the right hand side we just mean the real points of $Gr_G$ with the strong topology.
\end{ex}

\begin{rem} \label{rmk:R-C2}
The analog of Remark \ref{rmk:rR-invert} for complex realization and the $h$-topology also holds.\footnote{It is clear that $r_\C$ preserves surjections of finite type schemes. With this input one may copy the above proof to see that $r_\C$ inverts cdh equivalences, which is all we need for the rest of this remark.}\NB{I used to have a reference for this but cannot find it anymore.}
Now write \[ r_{C_2}: \Ft_\R \to \Spc^{C_2} \wequi \Fun(Orb_{C_2}, \Spc) \] for the $C_2$-equivariant realization functor, sending $X \in \Ft_\R$ to the space $X(\C)$ with the $C_2$-action by Galois conjugation.
Thus as a presheaf, $r_{C_2}(X)(C_2/C_2) = r_\R(X)$ and $r_{C_2}(X)(C_2/e) = r_{\C}(X)$.
It follows that the left Kan extension \[ r_{C_2}: {\mathrm P}(\Ft_\R) \to \Spc^{C_2} \] also inverts cdh-equivalences, $\aone$-equivalences, and their strong saturation.
\end{rem}

\begin{ex}
Continuing with Example \ref{ex:Omega-G-1}, using Remark \ref{rmk:R-C2} we find that \[ r_{C_2}(\Omega_\Gm G) \wequi r_{C_2}(Gr_G) \wequi \Omega_\sigma r_{C_2}(G). \]
Here $\Omega_\sigma$ denotes loops with respect to the sign representation sphere $S^\sigma \in \Spc^{C_2}_*$, and the last equivalence follows from \cite[Theorems 1.1 and 1.2]{CrabbMitchell}.
\end{ex}

\begin{rem}
Using the discussion in Example \ref{ex:Omega-G-1}, Theorem 6.6(3) of \cite{ABHWhitehead} implies \cite[Proposition 3.6]{HahnWilson}.
\end{rem}

\subsubsection*{Real realization and rét-localization}
\begin{theorem} \label{thm:Lret-rR}
The two functors \[ r_\R, L_\ret: \Spc(\R) \to \Spc (\wequi \Shv_\ret(\R) \wequi \Spc_\ret(\R)) \] are canonically equivalent.
\end{theorem}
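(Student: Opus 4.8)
The plan is to factor $r_\R$ canonically through the real-\'etale localization and then observe that the resulting functor out of $\Spc_\ret(\R)$ is forced to be the canonical equivalence. First I would recall that $r_\R\colon \mathrm P(\Sm_\R)\to\Spc$ preserves colimits and descends to $\Spc(\R)$: this is implicit in the statement, and follows from Remark~\ref{rmk:rR-invert} (it inverts cdh-equivalences, hence the coarser Nisnevich ones, and $\aone$-equivalences), together with the fact that the restriction to $\mathrm P(\Sm_\R)$ of the left Kan extension $r_\R\colon\mathrm P(\Ft_\R)\to\Spc$ agrees with the left Kan extension from $\Sm_\R$ (both preserve colimits and agree on schemes).

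The key step is to show the stronger statement that $r_\R\colon\Spc(\R)\to\Spc$ inverts real-\'etale-local equivalences, and hence factors through $\Spc_\ret(\R)=\Spc(\R)\cap\Shv_\ret(\Sm_\R)$. Since the real-\'etale topology comes from a pretopology and $r_\R$ preserves colimits, it suffices to check that $r_\R$ sends $|\check C(\coprod_i U_i\to X)|\to X$ to an equivalence for every real surjective family $\{U_i\to X\}$ of \'etale maps with $X\in\Sm_\R$. Because $r_\R$ preserves finite products and fibre products of schemes (\S\ref{sec:real-realn}), applying it yields the map $|\check C(\coprod_i U_i(\R)\to X(\R))|\to X(\R)$ of topological spaces, and the point is that $\coprod_i U_i(\R)\to X(\R)$ is a jointly surjective family of local homeomorphisms: it is surjective because a point of $RU_i$ lying over a given $x\in X(\R)\subset RX$ has residue field a finite, formally real extension of $\R$, hence equal to $\R$, so it is an honest $\R$-point over $x$; and each $U_i(\R)\to X(\R)$ is a local homeomorphism since $U_i\to X$ is \'etale. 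Passing to a refinement by an open cover, the associated Čech nerve realizes to $X(\R)$ by the standard descent statement for open covers of locally contractible, paracompact spaces (the real points of finite type $\R$-schemes being of this kind). I expect this topological descent step to be the main obstacle; everything else is formal.

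Granting it, $r_\R$ factors canonically as $\Spc(\R)\xrightarrow{L_\ret}\Spc_\ret(\R)\xrightarrow{\bar r_\R}\Spc$ with $\bar r_\R$ colimit-preserving, by the universal property of the Bousfield localization (colimits in $\Spc_\ret(\R)$ being $L_\ret$ of colimits in $\Spc(\R)$). By Theorem~\ref{thm:main} and Theorem~\ref{thm:rethypercomplete} we have $\Spc_\ret(\R)\wequi\Shv(\R_\ret)\wequi\Shv(R\Spec\R)=\Shv(\mathrm{pt})=\Spc$ (using that $\R$ is real closed, so $R\Spec\R$ is a point), under which $\bar r_\R$ becomes a colimit-preserving endofunctor of $\Spc$. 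Any such functor is determined up to canonical equivalence by its value on the terminal object, which here is $r_\R$ applied to the terminal object $\Spec\R\in\Spc(\R)$, namely $\Spec\R(\R)=\mathrm{pt}$; hence $\bar r_\R$ is canonically the identity, i.e.\ the chosen equivalence $\Spc_\ret(\R)\wequi\Spc$, and therefore $r_\R\simeq L_\ret$ canonically. As a sanity check consistent with the rest of the paper, note that $r_\R(\rho)\colon S^0\to\Gm(\R)=\R^\times$ is the inclusion of $\{1,-1\}$, an equivalence, matching the fact that $L_\ret$ inverts $\rho$; one could alternatively run the connected case directly via $L_\ret X\wequi X[\rho^{-1}]$ from Theorem~\ref{thm:main-comparison-connected}, but the argument above treats all of $\Spc(\R)$ uniformly.
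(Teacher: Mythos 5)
Your argument is essentially the paper's: show that $r_\R$ inverts real-étale equivalences, so that it factors through $L_\ret$, and then observe that two colimit-preserving endofunctors of $\Spc \wequi \Spc_\ret(\R)$ agreeing on the point must agree; the only difference is that the paper outsources the first step to the proofs of Dugger--Isaksen (Lemma 5.4 and Theorem 5.5 of their topological hypercovers paper), whereas you sketch it by hand, and that sketch is exactly where your argument is incomplete.

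Concretely, the gap is in the sentence ``passing to a refinement by an open cover, the associated Čech nerve realizes to $X(\R)$.'' What you must show is that the realization of the (levelwise) Čech nerve of the \emph{original} family $\coprod_i U_i(\R) \to X(\R)$ of local homeomorphisms is $X(\R)$; knowing this for a refining open cover does not formally imply it for the original cover. Nor can you argue via ``Čech nerve of an effective epimorphism in $\Spc$'': since $r_\R$ only preserves \emph{strict} fiber products of schemes, the simplicial space $[n] \mapsto \mathrm{Sing}\bigl((\coprod_i U_i(\R))^{\times_{X(\R)}(n+1)}\bigr)$ is not the $\infty$-categorical Čech nerve of $\mathrm{Sing}(\coprod_i U_i(\R)) \to \mathrm{Sing}(X(\R))$ (strict intersections of opens are not homotopy pullbacks of their singular complexes). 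The standard repair is a bisimplicial argument: choose an open cover $\{V_j\}$ of $X(\R)$ admitting sections $s_j: V_j \to \coprod_i U_i(\R)$, and consider the bisimplicial space with $(m,n)$-term the fiber product over $X(\R)$ of the $m$-th term of the Čech nerve of $\{V_j\}$ with the $n$-th term of the Čech nerve of $\coprod_i U_i(\R)$; realizing in the $m$-direction recovers the Čech nerve of the original cover (open-cover descent, levelwise), while for fixed $m$ the augmented simplicial object in the $n$-direction has an extra degeneracy coming from $s_j$, hence realizes to the Čech nerve of $\{V_j\}$, which realizes to $X(\R)$. Alternatively, simply cite Dugger--Isaksen as the paper does. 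Your remaining steps (surjectivity of $\coprod_i U_i(\R) \to X(\R)$ from formal reality of the residue fields, the factorization through $\Spc_\ret(\R) \wequi \Spc$, and the identification of colimit-preserving endofunctors of $\Spc$ by their value on the terminal object) are correct and match the paper.
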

\begin{proof}
The functor $r_\R: \Spc(\R) \to \Spc$ inverts real étale equivalences (since real étale covers are étale maps that are surjective on real points, this follows from the proofs of \cite[Lemma 5.4, Theorem 5.5]{dugger2001hypercovers-topology}) and hence factors through $L_\ret$.
Since $\Spc_\ret(\R) \wequi \Spc$, we thus have two colimit preserving endofunctors of $\Spc$.
As such they are determined by where they send $*$, and hence are equivalent (both preserving $*$).
\end{proof}

\subsection{Real \'etale realization of Eilenberg--Mac Lane spaces}
Our aim in this section is to study the result of applying $L_\ret$ to the motivic Eilenberg--Mac Lane spaces $K(\Z(m), n)$ and $K(\tilde\Z(m), n)$.
Since they are defined over $\Q$ and stable under base change, applying $L_\ret$ produces a constant sheaf, corresponding to the space obtained from real realization $r_\R$ (see Theorem \ref{thm:Lret-rR}).
Since $r_\R$ preserves finite products, these are grouplike commutative monoids and even $\Z$-modules.
In other words, they are generalized Eilenberg--Mac Lane spaces and so split according to their homotopy groups.
The same is thus true for $L_\ret$ over any field.
Moreover, $L_\ret K(\Z(m), n+1) \wequi B L_\ret K(\Z(m), n)$ and so on.
To describe $L_\ret K(\Z(m), n)$ for all $n \ge m$ it will thus suffice to describe the constant sheaves $\pi_* L_\ret K(\Z(m),m)$.
The ring structure on the motivic Eilenberg--Mac Lane spectrum provides us with multiplication maps $K(\Z(m), m) \wedge K(\Z(n), n) \to K(\Z(m+n), m+n)$ which turns the bigraded abelian group $\pi_* L_\ret(K(\Z(\star),\star))$ into a commutative ring.
Similar observations hold for $K(\tilde Z(m), n)$.

\begin{theorem} \label{thm:EM}
	Work over a field $k$.
	For $n \ge m$, the sheaves $L_\ret K(\Z(m), n)$ and $L_\ret K(\tilde\Z(m), n)$ are constant sheaves on $k_\ret$ (in addition to being connective $\Z$-modules).
	
	We have $\ul\pi_2 L_\ret K(\tilde \Z(2), 2) \wequi \Z$; pick a generator $t$.
	There is additionally a canonical element $\rho \in \ul\pi_0 L_\ret K(\tilde\Z(1), 1)$.
	We have \[ \ul\pi_* L_\ret K(\Z(\star), \star) \wequi \Z[\rho, t]/(2\rho) \quad\text{and}\quad \ul\pi_* L_\ret K(\tilde\Z(\star), \star) \wequi \Z[\rho, t]/(2t\rho). \] \NB{pictures?}
\end{theorem}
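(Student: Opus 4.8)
The overall plan is to reduce to the base field $\R$, where by Theorem~\ref{thm:Lret-rR} the functor $L_\ret$ is the real realization $r_\R$, and then to compute $r_\R$ of these Eilenberg--Mac Lane spaces weight by weight. As already noted before the statement, the motivic EM spaces are defined over $\Q$ and stable under base change, so $L_\ret$ of them over any field is the constant sheaf on the corresponding real realization; since $r_\R$ preserves products and the relative suspension $B$, and the spaces in play are commutative group objects, it suffices to compute the bigraded abelian group $R_{j,m} := \ul\pi_j r_\R K(\Z(m),m)$ (and its analogue $\widetilde R_{j,m}$ with $\tilde\Z$), together with the ring structure induced by the motivic cohomology products $K(\Z(m),m)\wedge K(\Z(n),n)\to K(\Z(m+n),m+n)$; recall that $K(\Z(m),n)=B^{n-m}K(\Z(m),m)$ for $n\ge m$ and $r_\R$ commutes with $B$.

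First I would settle the weight-$0$ row. One has $\ul\pi_0 K(\Z(m),m)=\ul K^M_m$ and $\ul\pi_0 K(\tilde\Z(m),m)=\ul K^{MW}_m$, and $\ul\pi_0$ of a real étale sheaf is the real étale sheafification of $\ul\pi_0$ (Lemma~\ref{lem:ret-sheaves-detect}). Over $\R$ the divisible part of $K^M_m(\R)$ is annihilated by $a_\ret$ --- positive units become squares in real henselian rings, exactly as in Lemma~\ref{lem:gm} --- leaving $R_{0,m}=\Z/2$, generated by $\rho^m$ with $\rho=[-1]\in\R^\times=K^M_1(\R)$; and $a_\ret\ul K^{MW}_m\wequi a_\ret\ul I^m\wequi a_\ret\Z$, which is Jacobson's theorem as used in Corollary~\ref{cor:pi1-Jrho}, so $\widetilde R_{0,m}=\Z$, again generated by $\rho^m$ (note $2\rho\ne 0$ here, in contrast to the $\Z$-case). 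This fixes the bottom rows of both rings, identifies $\rho$, and already accounts for the sole structural difference between the two answers. The generator $t\in\widetilde R_{2,2}$ is then defined as a generator of $\ul\pi_2 r_\R K(\tilde\Z(2),2)$, once that group has been shown to be $\Z$.

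The technical heart is the computation of $R_{j,m}$ and $\widetilde R_{j,m}$ for $j\ge 1$. I would proceed by induction on the weight, using the bonding map $\Gm\wedge K(\tilde\Z(m-1),m-1)\to K(\tilde\Z(m),m)$ (cup product with the tautological class of $\Gm$). Since $K(\tilde\Z(m),m)$ is $(m{-}1)$-truncated apart from $\ul\pi_0$ --- its homotopy sheaves are the cohomology sheaves $\mathcal H^{m-j}(\tilde\Z(m))$, which vanish for $j\ge m$ --- and since after $r_\R$ the functor $\Gm\wedge(\ph)$ becomes the identity ($r_\R\Gm\simeq S^0$) while $\rho$ becomes an equivalence $S^0\simeq S^0$, the bonding maps realize to a cofiber sequence $r_\R K(\tilde\Z(m-1),m-1)\xrightarrow{\rho} r_\R K(\tilde\Z(m),m)\to C_m$ whose cofiber $C_m$ is identified inductively. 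A key point is that $L_\ret$ preserves the truncatedness of $K(\tilde\Z(m),m)$, so that $\ul\pi_j r_\R K(\tilde\Z(m),m)=0$ for $j>m$; this does not follow formally (since $L_{mot}$ does not preserve truncation in general) and must be extracted from the Postnikov tower of the EM space via Lemmas~\ref{lemm:Lret-fiber}, \ref{lemm:Lret-conn} and \ref{lemm:real-et-loc-approx}. The outcome is: $\widetilde R_{j,m}=\Z$ for $j=0$ and for $j=m$ (which forces $m$ even), $\widetilde R_{j,m}=\Z/2$ for $j$ even with $0<j<m$, and $\widetilde R_{j,m}=0$ otherwise; for $\Z$-coefficients the same, except that $R_{0,m}=\Z/2$ for $m\ge 1$. (As a sanity check, the ``middle'' $\Z/2$'s reflect the fact that $r_\R$ of the motivic cohomology spectrum is $\bigvee_{n\ge 0}\Sigma^{2n}H\F_2$, while the two free columns are the Witt-theoretic contributions.) The free class at the top degree $j=m$ is, via the ring structure, equal to $t^{m/2}$.

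Finally, assembling: both rings are generated by $\rho\in R_{0,1}$ and $t\in R_{2,2}$, with $\rho^a t^b$ a generator of $R_{2b,\,a+2b}$, hence are quotients of $\Z[\rho,t]$. In the $\Z$-case the relation $2\rho=0$ is forced already in $R_{0,1}=\Z/2$ (indeed $(-1)^2=1$ in $\R^\times$). In the $\tilde\Z$-case $2\rho\ne 0$, but $\widetilde R_{j,m}=\Z/2$ for $0<j<m$ gives $2\rho^{a}t^{b}=0$ whenever $a,b\ge 1$, the minimal instance being $2t\rho=0$ in $\widetilde R_{2,3}$; no further relations are needed, since $\Z[\rho,t]/(2\rho)$ and $\Z[\rho,t]/(2t\rho)$ already have the computed bigraded ranks in every bidegree. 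I expect the main obstacles to be (i) the truncation-preservation of $L_\ret$ on $K(\tilde\Z(m),m)$, and (ii) the inductive identification of the cofibers $C_m$ --- equivalently, pinning down the polynomial generator $t$ and showing that the middle groups $\widetilde R_{j,m}$ ($0<j<m$) are exactly $\Z/2$; by contrast the $\Z$-case relation $2\rho=0$ is immediate.
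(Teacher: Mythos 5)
Your outline gets the easy parts right (reduction to constant sheaves and to the base $\R$ via Theorem \ref{thm:Lret-rR}, the weight-zero row via $a_\ret \ul K^M_\star$ and $a_\ret \ul K^{MW}_\star \wequi a_\ret\Z$, and the purely formal deduction of the presentations $\Z[\rho,t]/(2\rho)$ and $\Z[\rho,t]/(2t\rho)$ \emph{once} the bigraded groups are known), but the computational core is asserted rather than proved, and the mechanism you propose cannot produce it. Your induction on the weight rests on two steps you explicitly defer: (i) that $L_\ret$ preserves the truncation of $K(\tilde\Z(m),m)$, and (ii) that the cofiber $C_m$ of the realized bonding map can be "identified inductively." Lemmas \ref{lemm:Lret-fiber}, \ref{lemm:Lret-conn} and \ref{lemm:real-et-loc-approx} control connectivity and low-degree homotopy, not truncation from above, so (i) does not follow from them; and (ii) is not a formal step --- identifying $C_m$ \emph{is} the theorem. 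In particular nothing in your proposal can detect the integral classes: the only inputs you invoke (sheafified Milnor/Milnor--Witt $K$-theory and rigidity-style mod $2$ facts) see only torsion phenomena, and the stable picture is of no help either, since by the stable comparison theorem $L_\ret\Sigma^\infty X \wequi \Sigma^\infty X[\rho^{-1}]$ and, e.g., $H\Z[1/2][\rho^{-1}]=0$ (as $h=2$ acts invertibly while $\rho h=0$), whereas unstably $\ul\pi_4 L_\ret K(\Z[1/2](2),4)\wequi\Z[1/2]$. So the $\Z$'s in the answer are invisible to every tool you put on the table.

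This is exactly where the paper's proof spends its effort, and your proposal has no substitute for those inputs: Proposition \ref{prop:EM-finite} (rigidity) for the mod $p$ and mod $2$ rows; Lemma \ref{lemm:freudenthal} (motivic Freudenthal) together with the stable comparison to get the low-degree vanishing of Lemma \ref{lemm:EM-technical}; and, crucially, the slice filtration of $\mathrm{KGL}$ plus real Bott periodicity (Lemma \ref{lemm:ret-pi4}, Proposition \ref{prop:EM-Q}) to locate the Bott class and show $\ul\pi_4 L_\ret K(\Q(2),4)\ne 0$, i.e.\ that $t$ lives in weight $2$. Even granting all of that, the passage to integral coefficients is not automatic: the paper needs a divisibility argument to construct an integral lift of $t$, a Bockstein computation to show the ring map $\Z[t,\rho]/2\rho\to \ul\pi_*L_\ret K(\Z(\star),\star)$ is surjective (hence an isomorphism by dimension count against $\F_2[\tau,\rho]$), and the pushout square relating $K(\tilde\Z(\star),\star)$, $K(\Z(\star),\star)$, $I^\star$ and $K^M_\star/2$ to deduce the $\tilde\Z$ case, none of which appears in your sketch. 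As written, the proposal is a plausible shape for the answer (your bigraded table is correct) but not a proof: you would need either to import the paper's stable/$\mathrm{KGL}$ ingredients, or to replace them by a genuine computation such as the $C_2$-equivariant Dold--Thom identification of $L_\ret K(\Z(m),2m)$ with the fixed points of $\Omega^\infty\Sigma^{m+m\sigma}H\underline{\Z}$ mentioned in the paper's remark, and then actually compute that positive cone.
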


\begin{rem}
We know that $K(\Z(m), 2m)$ is the infinite symmetric power of $(\P^1)^{\wedge n}$.
Using Remark \ref{rmk:R-C2}, we see that $r_{C_2}(K(\Z(m), 2m)$ is the infinite symmetric power of $\P^1(\C)^{\wedge n}$, with the canonical action.
By an equivariant Dold--Thom theorem, this is the same as the equivariant ``Eilenberg-Mac Lane space'' $\Omega^\infty \Sigma^{m+m\sigma} H\underline{\Z}$.
Theorem \ref{thm:EM} is thus equivalent to determining the fixed points of this $C_2$-space.
\end{rem}

\begin{rem}[A question of O. Röndigs]
By the above theorem we have $K_1 := L_\ret K(\Z(1), 1) \wequi \Z/2$ (on a generator called $\rho$) and $K_2 := L_\ret K(\Z(2), 2) \wequi \Z/2 \oplus \Z[2]$ (on generators called $\rho^2$ and $t$).
It follows that $K_1 \otimes_\Z K_1 \wequi \Z/2 \oplus \Z/2[1]$.
The multiplicative structure yields a map $K_1 \otimes_\Z K_1 \to K_2$, one component of which takes the form $\delta: \Z/2[1] \to \Z[2]$.
There are two such maps (in $\Z$-modules), the zero map and the integral Bockstein.
Our proof of Theorem \ref{thm:EM} shows that $\delta$ is indeed the Bockstein.
(Equivalently, the generator of the summand $\Z/2[1]$ in $K_1 \otimes_\Z \Z/2$ has non-vanishing square.)
\end{rem}

\subsubsection*{Finite coefficients}
We begin with the following computation.
\begin{proposition} \label{prop:EM-finite}
	Let $p$ be an odd prime.
	We have \[ \ul\pi_* L_\ret K(\Z/2(\star), \star) \wequi \Z/2[\rho,\tau] \quad\text{and}\quad \ul\pi_* L_\ret K(\Z/p(\star), \star) \wequi \Z/p[t]. \]
	Here $\rho \in \pi_0 L_\ret K(\Z/2(1), 1)$, $\tau \in \pi_1 L_\ret K(\Z/2(1), 1)$ and $t \in \pi_2 L_\ret K(\Z/p(2), 2)$.
\end{proposition}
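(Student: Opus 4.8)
The plan is to reduce the whole statement to a computation over $\R$, where $L_\ret$ agrees with real realization $r_\R$ by Theorem~\ref{thm:Lret-rR}, and then to evaluate $r_\R$ on the motivic Eilenberg--Mac Lane spaces by passing to their $C_2$-equivariant analogues and quoting the classical $RO(C_2)$-graded homotopy of constant Eilenberg--Mac Lane spectra. As in the discussion preceding Theorem~\ref{thm:EM}, each $K(\Z/n(m),m)$ is defined over $\Q$ and stable under base change, so over any field $k$ the sheaf $L_\ret K(\Z/n(m),m)$ is constant on $k_\ret$ with value $r_\R K(\Z/n(m),m)$, and the bigraded ring $\ul\pi_\star L_\ret K(\Z/n(\star),\star)$ is the constant sheaf on $\pi_\star r_\R K(\Z/n(\star),\star)$ with its ring structure inherited from the multiplication on $H\Z/n$. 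Hence it is enough to treat $k=\R$.

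Next I would identify $r_\R K(\Z/n(m),m)$. Starting from the Suslin--Voevodsky model $K(\Z(m),2m)\simeq_{mot}\op{Sym}^\infty((\P^1)^{\wedge m})$ and its mod-$n$ variant, and using that $r_\R=(r_{C_2}(-))^{C_2}$ together with the fact that the algebraic symmetric power realizes to the topological symmetric power of $(\C P^1)^{\wedge m}=S^{m+m\sigma}$ with its conjugation $C_2$-action, the equivariant Dold--Thom theorem gives $r_\R K(\Z/n(m),2m)\simeq\Omega^\infty\big((\Sigma^{m+m\sigma}H\underline{\Z/n})^{C_2}\big)$ with $\underline{\Z/n}$ the constant Mackey functor. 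Since $L_\ret$ preserves colimits and finite products it commutes with the simplicial classifying-space functor; combined with $K(\Z/n(m),2m)=B_{S^1}^{m}K(\Z/n(m),m)$ and the fact that $L_\ret K(\Z/n(m),m)$ is grouplike, this yields
\[
L_\ret K(\Z/n(m),m)=\Omega^{m}_{S^1}L_\ret K(\Z/n(m),2m)\simeq\Omega^\infty\big((\Sigma^{m\sigma}H\underline{\Z/n})^{C_2}\big),
\]
so that $\pi_i L_\ret K(\Z/n(m),m)\cong\pi^{C_2}_{i-m\sigma}H\underline{\Z/n}$, with the bigraded ring structure matching the multiplication on $\pi^{C_2}_{\star}H\underline{\Z/n}$ in the region of degrees $a-m\sigma$, $a,m\ge 0$.

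Finally I would read off the answer from the known $RO(C_2)$-graded homotopy. For $n=2$: in the relevant region one has $\pi^{C_2}_{a-m\sigma}H\underline{\F_2}=\F_2\cdot u_\sigma^{a}a_\sigma^{m-a}$ for $0\le a\le m$ and $0$ otherwise, where $a_\sigma\in\pi^{C_2}_{-\sigma}$ and $u_\sigma\in\pi^{C_2}_{1-\sigma}$; identifying $a_\sigma$ with the class in bidegree $(0,1)$, i.e.\ $\rho$, and $u_\sigma$ with the class in bidegree $(1,1)$, i.e.\ $\tau$, gives $\ul\pi_\star L_\ret K(\Z/2(\star),\star)\cong\Z/2[\rho,\tau]$. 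For $n=p$ odd, the transfer on the constant Mackey functor $\underline{\F_p}$ is multiplication by $2$, a unit, so $\pi_0\Phi^{C_2}H\underline{\F_p}=0$ and hence $\Phi^{C_2}H\underline{\F_p}=0$; this forces $\pi^{C_2}_{\star}H\underline{\F_p}$ to be computed by the Borel part $EC_{2+}\wedge H\underline{\F_p}$, and a short computation isolates the part relevant to the diagonal spaces as $\F_p$ sitting in the even bidegrees $(2k,2k)$ and generated multiplicatively by the class $t$ in bidegree $(2,2)$, giving $\ul\pi_\star L_\ret K(\Z/p(\star),\star)\cong\Z/p[t]$.

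The main obstacle I expect is the last step: pinning down not just the additive structure but the ring structure on the relevant corner of $\pi^{C_2}_{\star}H\underline{\Z/n}$, and in the odd case correctly ruling out the odd bidegrees and showing the even part is genuinely polynomial on a single degree-$(2,2)$ class (rather than, say, polynomial on a degree-$(1,1)$ class). The supporting subtlety is that the unstable realizations are only laxly compatible with $\Omega^\infty$, which is why the argument must be routed through the symmetric-power model for $K(\Z/n(m),2m)$ and the colimit-preserving functor $B_{S^1}$ rather than directly through $\Omega^\infty_{\P^1}$; getting these compatibilities exactly right is where the real work lies.
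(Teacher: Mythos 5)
Your proposal is correct in outline, but it follows a genuinely different route from the paper. The paper never leaves the motivic world: it uses that the homotopy sheaves of $K(\Z/2(\star),\star)$ are $K^M_\star/2[\tau]$ and those of $K(\Z/p(\star),\star)$ are $H^{\star-*}_\et(\ph,\mu_p^{\otimes\star})$, evaluates these on real closed fields and, via rigidity, on henselian local rings with real closed residue field, concludes that the real étale sheafifications of the homotopy sheaves are \emph{constant} of the stated form, and then observes that $L_\ret^\naive K\wequi L_\ret K$ because constant real étale sheaves are $\A^1$-invariant (Proposition \ref{prop:A1-inv}, Lemma \ref{lem:ret-sheaves-detect}). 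You instead reduce to $k=\R$, identify $L_\ret$ with $r_\R$ (Theorem \ref{thm:Lret-rR}), pass through the $C_2$-Betti realization, the symmetric-power model and the equivariant Dold--Thom theorem, and read off the answer from the known $RO(C_2)$-graded homotopy of $H\underline{\F_2}$ and $H\underline{\F_p}$. This is precisely the equivalent reformulation recorded in the remark following Theorem \ref{thm:EM}; so your argument deduces the proposition from the classical equivariant computation, whereas the paper's proof is designed to be independent of it (and in effect to recover it). Your route has a very short endgame once the equivariant input is granted; the paper's route is self-contained over an arbitrary field and obtains constancy of the sheaves as an output rather than an input.

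The step you should shore up is the reduction to $\R$. The assertion that over any field $L_\ret K(\Z/n(m),m)$ is the constant sheaf with value $r_\R K(\Z/n(m),m)$ is only stated informally in the paper's preamble to Theorem \ref{thm:EM}; Theorem \ref{thm:Lret-rR} itself only covers the base $\R$. To justify it you need $L_\ret$ to commute with base change along (pro-)essentially smooth extensions of fields (in the spirit of the essentially smooth base change invoked in Corollary \ref{cor:pi1-Jrho} and the continuity argument in the proof of Theorem \ref{thm:main}), or else exactly the stalkwise rigidity computation that constitutes the paper's own proof --- so as written there is a mild risk of circularity, since the constancy you quote is part of what this proposition is meant to establish. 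Lesser points of the same kind: the mod-$n$ variant of the $\op{Sym}^\infty((\P^1)^{\wedge m})$ model and the multiplicativity of the $C_2$-realization need (standard) references, and in the odd-prime case the vanishing of $\Phi^{C_2}H\underline{\F_p}$ and the identification of genuine with homotopy fixed points should be made explicit to pin down both the additive answer and the polynomial ring structure on the degree-$(2,2)$ class. None of these is fatal, but the base-change/constancy step is the one piece of genuine work your write-up currently delegates to an unproved assertion.
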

\begin{proof}\NB{references to Bloch-Kato?}
	The key idea for both assertions is the same: we can explicitly determine $L_\ret^\naive K$ (where $K$) is the relevant motivic Eilenberg--Mac Lane space), namely it is constant of the claimed form.
	Since constant real étale sheaves are $\A^1$-invariant (Proposition \ref{prop:A1-inv}), this must also be $L_\ret K$.
	
	We begin with $K(\Z/2(\star), \star)$.
	It is well-known that $\pi_* K(\Z/2(\star), \star) \wequi K_\star^M/2[\tau]$.
	For any real closed field $r$ and any $i \ge 0$ we have $K_i^M(r)/2 \wequi H^i_\et(r, \Z/2) \wequi H^i(B\Z/2, \Z/2) = \Z/2$.
	The same holds for sections over any henselian local ring with real closed residue field, by rigidity  \cite[Theorem 3.4.4]{EHIK}.
	Consequently the associated real étale sheaves of the homotopy sheaves are of the claimed (in particular constant) form.
	
	The argument for odd primes is the same, using that $\pi_* K(\Z/2(\star), \star) \wequi H^{\star - *}_\et(\ph, \mu_p^{\otimes \star}) \wequi \Z/p[t]$ for some element $0 \ne t \in H^2(r, \mu_p^{\otimes 2})$.
	(Noting that $r$ has $p$-étale cohomological dimension $0$ for this one need only observe that $\mu_p^{\otimes 2} \wequi \Z/p$ whereas $\mu_p(\R) = \{1\}$.)
\end{proof}

\subsubsection*{Analyzing the slice filtration using the stabilization theorem}
\begin{lem} \label{lemm:freudenthal}
	Let $k$ be a perfect field of exponential characteristic $e$ and let $E \in \Sigma^{2n,n} \SH(k)^\veff[1/e]$ (for some $n \ge 1$).
	Then the canonical map $\Sigma^\infty \Omega^\infty E \to E$ has fiber in $\Sigma^{4n,2n} \SH(k)^\veff[1/e]$.
\end{lem}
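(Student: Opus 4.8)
The plan is to recognise the map $\Sigma^\infty\Omega^\infty E \to E$ as the comparison of $E$ with the first Taylor polynomial of the endofunctor $\Sigma^\infty\Omega^\infty$ of $\SH(k)[1/e]$ (whose linearisation is the identity), and then to read off the effectivity of the fibre from the higher Goodwillie layers, which are extended powers. First I would write $E = \Sigma^{2n,n}F = (\pone)^{\wedge n}\wedge F$ with $F \in \SH(k)^\veff[1/e]$. Since $k$ is perfect and $e$ is invertible, very effective spectra are connective for the homotopy $t$-structure, so $E$ is $(n-1)$-connected; as $n \ge 1$, the space $\Omega^\infty E$ is connected and the Taylor tower of $\Sigma^\infty\Omega^\infty$ converges on $E$.

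Next I would invoke motivic Goodwillie calculus (equivalently, the stabilization theorem): the counit $\varepsilon\colon \Sigma^\infty\Omega^\infty E \to E$ is the first stage of a convergent tower under $\Sigma^\infty\Omega^\infty E$ whose $j$-th graded piece is the extended power $D_j(E) := (E^{\wedge j})_{h\Sigma_j}$, with $D_1(E) = E$ producing $\varepsilon$. Hence $\fib(\varepsilon)$ is the limit of a tower whose successive fibres are the $D_j(E)$ for $j \ge 2$.

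It then remains to bound the effectivity of these layers. For $j \ge 2$ one has $E^{\wedge j} = \Sigma^{2nj,nj}(F^{\wedge j})$, and $F^{\wedge j}$ is again very effective, so $E^{\wedge j} \in \Sigma^{2nj,nj}\SH(k)^\veff[1/e]$; since $(\pone)^{\wedge c}$ is very effective for every $c \ge 0$, smashing with it preserves $\SH(k)^\veff[1/e]$, and therefore $\Sigma^{2nj,nj}\SH(k)^\veff[1/e] \subseteq \Sigma^{4n,2n}\SH(k)^\veff[1/e]$ because $nj \ge 2n$. As $D_j(E)$ is a colimit of copies of $E^{\wedge j}$ and $\Sigma^{4n,2n}\SH(k)^\veff[1/e]$ is closed under colimits, each $D_j(E)$ with $j \ge 2$ lies in $\Sigma^{4n,2n}\SH(k)^\veff[1/e]$. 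Finally, the $D_j(E)$ become more and more connected as $j \to \infty$, so $\fib(\varepsilon)$ maps by a highly connected map to every finite stage of the tower; each such finite stage is a finite iterated extension of the $D_j(E)$ ($j \ge 2$) and hence lies in $\Sigma^{4n,2n}\SH(k)^\veff[1/e]$, and the connectivity of these comparison maps forces $\fib(\varepsilon)$ into $\Sigma^{4n,2n}\SH(k)^\veff[1/e]$ as well.

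The main obstacle is the input from Goodwillie calculus: one needs that the Taylor tower of $\Sigma^\infty\Omega^\infty$ converges on the connective object $E$ in the motivic category and that its layers are the expected extended powers $(E^{\wedge j})_{h\Sigma_j}$. This is exactly where the hypotheses that $k$ be perfect and $e$ be invertible are used, via the good behaviour of the homotopy $t$-structure and Morel's connectivity theorem. If one prefers to cite the stabilization theorem of \cite{ABHFreudenthal} directly, in the form of a connectivity estimate on $\fib(\Sigma^\infty\Omega^\infty E \to E)$, the proof collapses to the effectivity bookkeeping of the third paragraph.
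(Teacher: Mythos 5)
Your strategy is genuinely different from the paper's, but it rests on an input that is not available: the existence of a Taylor tower for $\Sigma^\infty\Omega^\infty$ on $\SH(k)[1/e]$ that (i) converges on $E$, and (ii) has layers identified as the extended powers $(E^{\wedge j})_{h\Sigma_j}$, in a form strong enough to control \emph{effectivity} and not merely simplicial connectivity. This is precisely the piece of technology that motivic homotopy theory currently lacks: $\Spc(k)_*$ is not an $\infty$-topos (motivic localization is not left exact), so the classical Blakers--Massey/cross-effect arguments identifying the layers of $\Sigma^\infty\Omega^\infty$ do not transfer, and convergence of such towers is not known; moreover the statement to be proved concerns membership in $\Sigma^{4n,2n}\SH(k)^\veff$, i.e.\ $\Gm$-twisted effectivity, which Morel's connectivity theorem and the homotopy $t$-structure alone do not see. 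Appealing to ``motivic Goodwillie calculus'' here outsources something strictly harder than the lemma itself --- indeed the metastable results of \cite{ABHFreudenthal} exist exactly because this tower is not available. Your fallback sentence is also not accurate as stated: the stabilization theorem of \cite{ABHFreudenthal} is an \emph{unstable} statement about the unit $X \to \Omega^\infty\Sigma^\infty X$ for a motivic space $X$; it does not directly give an effectivity estimate on $\fib(\Sigma^\infty\Omega^\infty E \to E)$ in $\SH(k)$, and converting the former into the latter is the actual content of the proof.

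That conversion is how the paper argues: it first notes $\Sigma^\infty\Omega^\infty E \in \Sigma^{2n,n}\SH(k)^\veff$, so the fiber lies in $\Sigma^{2n-1,n}\SH(k)^\veff$, then reduces the claim to a statement after applying $\Omega^\infty$ (using that $\Omega^\infty$ commutes with the relevant ``$S^{4n,2n}$-truncation'' and is conservative on this subcategory), splits $\Omega^\infty\Sigma^\infty\Omega^\infty E \wequi \Omega^\infty E \times F$ using the grouplike monoid structure, and finally applies the unstable Freudenthal theorem to $\Omega^\infty E$ to bound $\Omega F$. If you want to salvage your route, you would have to prove the layer identification and convergence (with very effective bounds) from scratch; also note that your last step --- ``the connectivity of the comparison maps forces $\fib(\varepsilon)$ into $\Sigma^{4n,2n}\SH(k)^\veff$'' --- is not automatic, since $\SH(k)^{eff}$ is not closed under limits, and would itself need an argument (e.g.\ exactness of the effective cover functor plus left separatedness of the homotopy $t$-structure). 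The effectivity bookkeeping for the layers ($E^{\wedge j} \in \Sigma^{2nj,nj}\SH(k)^\veff$, closure under the homotopy-orbit colimit) is correct, but it is the easy part.
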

\begin{proof}
	We know that $\Sigma^\infty \Omega^\infty \in \Sigma^{2n,n} \SH(k)^\veff$ \cite[Proposition 3.2.12]{ABHFreudenthal} and hence the fiber lives in $\Sigma^{2n-1,n} \SH(k)^\veff$.
	In order to show that the fiber lies in $\Sigma^{4n,2n} \SH(k)^\veff$, it will thus be enough to show this after applying $\Omega^\infty$.
	(Indeed by \cite[Proposition 3.2.12]{ABHFreudenthal} again, $\Omega^\infty$ commutes with the ``$S^{4n-,2n}$-truncation'', but this truncation also lies in $\Sigma^{2n-1,n} \SH(k)^\veff$ by what we just said. It remains to observe that the restriction of $\Omega^\infty$ to this subcategory is conservative.)
	We can write $\Omega^\infty \Sigma^\infty \Omega^\infty E \wequi \Omega^\infty E \times F$, where $F$ is $\Omega^\infty$ of the fiber (indeed since $E' := \Omega^\infty \Sigma^\infty \Omega^\infty E$ is a monoid we can add the canonical map $F \to E'$ and the map $\Omega^\infty E \to E'$ given as $\Omega^\infty$ of a unit map. This results in a map of grouplike commutative monoids $F \times \Omega^\infty E \to E'$ over $\Omega^\infty E$ which induces an equivalence of the fibers, and hence is an equivalence).
	Applying the Freudenthal suspension theorem (infinite loop version) \cite[Theorem 6.3.4]{ABHFreudenthal} to $\Omega^\infty E$ we see that the fiber of $\Omega^\infty E \to \Omega^\infty \Sigma^\infty \Omega^\infty E$ lies in $O(S^{4n-1,2n})$.
	But this fiber is $\Omega F$, yielding the desired conclusion.
\end{proof}

\begin{lem} \label{lemm:EM-technical}
	We have $0 = \ul\pi_i L_\ret K(\Z[1/2](3), 6)$ for $i \le 5$ and $\ul\pi_3 L_\ret K(\Z(2), 4) = 0$.
\end{lem}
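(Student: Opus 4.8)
The plan is to transport the computation to the stable setting, where $\rho$-inversion is real realisation (Bachmann), and to control the gap between a motivic Eilenberg--Mac Lane space and its suspension spectrum using Lemma~\ref{lemm:freudenthal}. Since $L_\ret = 0$ over fields admitting no ordering, I would first reduce to $\mathrm{char}(k)=0$, so the exponential characteristic is $1$ and Lemma~\ref{lemm:freudenthal} applies. Both spaces of interest lie in the ``stable range'': $K(\Z[1/2](3),6) \wequi \Omega^\infty_{\P^1} E$ with $E = \Sigma^{6,3} H\Z[1/2] \in \Sigma^{6,3}\SH(k)^\veff$ (Freudenthal index $n=3$; here $H\Z[1/2]$ is effective, being a filtered colimit of copies of $H\Z$), and $K(\Z(2),4) \wequi \Omega^\infty_{\P^1}E'$ with $E' = \Sigma^{4,2}H\Z \in \Sigma^{4,2}\SH(k)^\veff$ ($n=2$). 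By Lemma~\ref{lemm:freudenthal} the fibre $G$ of the counit $\Sigma^\infty_{\P^1}\Omega^\infty_{\P^1}E\to E$ lies in $\Sigma^{12,6}\SH(k)^\veff$, resp.\ $\Sigma^{8,4}\SH(k)^\veff$, hence is $5$-connected, resp.\ $3$-connected.

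Second, I would compute $\rho^{-1}$ of the relevant spectra. By \cite{bachmann-real-etale} stable real-\'etale localisation is $\rho^{-1}(\ph)$. Since $2\rho = 0$ already in $\pi_{-1,-1}(H\Z)$ (as $(-1)^2 = 1$ in any $\mathcal O^\times$), the spectrum $\rho^{-1}H\Z$ is $2$-torsion, whence $\rho^{-1}E \wequi \Sigma^{6,3}\bigl((\rho^{-1}H\Z)[1/2]\bigr) = 0$. For $E'$, from $H\Z\xrightarrow{2}H\Z\to H\Z/2$ and $2 = 0$ on $\rho^{-1}H\Z$ one obtains $\rho^{-1}H\Z/2 \wequi \rho^{-1}H\Z\oplus\Sigma\rho^{-1}H\Z$; since $\ul\pi_n(\rho^{-1}H\Z/2) \wequi a_\ret\Z/2$ for all $n\ge 0$ (the stable counterpart of Proposition~\ref{prop:EM-finite}: $H^*_\et(r;\Z/2) = \Z/2[\rho]$ over real closed $r$, together with rigidity), induction gives $\ul\pi_n(\rho^{-1}H\Z) = 0$ for $n$ odd. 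As $\Gm$ acts invertibly on a $\rho$-inverted spectrum, $\rho^{-1}E' \wequi S^2_s\wedge\rho^{-1}H\Z$, so $\ul\pi_3(\rho^{-1}E') \wequi \ul\pi_1(\rho^{-1}H\Z) = 0$.

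The bridge is the following. Because $\Sigma^\infty_{S^1}$ is symmetric monoidal and preserves colimits and $\Omega^\infty_{\P^1}E$ is connected, Theorem~\ref{thm:main-comparison-connected} yields
\[
\Sigma^\infty_{S^1}\bigl(L_\ret\Omega^\infty_{\P^1}E\bigr)\wequi\Sigma^\infty_{S^1}\bigl((\Omega^\infty_{\P^1}E)[\rho^{-1}]\bigr)\wequi\bigl(\Sigma^\infty_{S^1}\Omega^\infty_{\P^1}E\bigr)[\rho^{-1}];
\]
on the $\rho$-inverted object $\Gm$ acts invertibly, so its homotopy sheaves coincide with those of the $\P^1$-stable object $\rho^{-1}\Sigma^\infty_{\P^1}\Omega^\infty_{\P^1}E$. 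Since $\rho^{-1}(\ph)$ preserves connectivity, the fibre sequence $\rho^{-1}G\to\rho^{-1}\Sigma^\infty_{\P^1}\Omega^\infty_{\P^1}E\to\rho^{-1}E$ gives $\ul\pi_i\Sigma^\infty_{S^1}(L_\ret\Omega^\infty_{\P^1}E)\wequi\ul\pi_i(\rho^{-1}E)$ for $i\le 2n-1$ (so $i\le 5$ for $E$ and $i\le 3$ for $E'$). For $E$ this vanishes for $i\le 5$, so since $L_\ret K(\Z[1/2](3),6)$ is connected the motivic Hurewicz theorem forces $\ul\pi_i = 0$ for $i\le 5$, which is the first claim. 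For $E'$ we get $\ul\pi_3\Sigma^\infty_{S^1}(L_\ret K(\Z(2),4)) \wequi \ul\pi_3(\rho^{-1}E') = 0$; now $L_\ret K(\Z(2),4)$ is $1$-connected (Lemma~\ref{lemm:Lret-conn}) and, since $L_\ret$ preserves finite products, is a generalised motivic Eilenberg--Mac Lane space $\prod_{j\ge 2}K(A_j,j)$, so its degree-$3$ stable homotopy sheaf equals $A_3$ (cross terms appear only from degree $4$, and $\ul\pi_3\Sigma^\infty_{S^1}K(A_2,2) = 0$); hence $\ul\pi_3 L_\ret K(\Z(2),4) = A_3 = 0$.

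I expect the main obstacle to be making the third paragraph rigorous: identifying the stabilised $\rho$-telescope of the unstable space with the $\rho$-inverted stable spectrum and matching $S^1$- with $\P^1$-stable homotopy sheaves on $\rho$-local objects, and — for the second assertion — extracting $\ul\pi_3$ from the stabilisation (which uses the Eilenberg--Mac Lane splitting and the vanishing $\ul\pi_3\Sigma^\infty_{S^1}K(A,2) = 0$), since the plain motivic Hurewicz/Freudenthal range only produces a surjection onto, rather than an isomorphism with, $\ul\pi_3$ of the suspension spectrum.
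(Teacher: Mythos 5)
Your stable input is exactly the paper's: you apply Lemma~\ref{lemm:freudenthal} to $\Sigma^{6,3}H\Z[1/2]$ and $\Sigma^{4,2}H\Z$, and you compute $\rho^{-1}H\Z[1/2]=0$ (your $2\rho=0$ in $\ul\pi_{-1,-1}H\Z$ is the same fact as the paper's $h\rho=0$ with $h=2$ on $H\Z[1/2]$) and $\ul\pi_{\mathrm{odd}}\rho^{-1}H\Z=0$ via the splitting $\rho^{-1}H\Z/2\wequi\rho^{-1}H\Z\oplus\Sigma\rho^{-1}H\Z$, just as in the paper. Where you diverge is the unstable--stable bridge: the paper cites the main theorem of \cite{bachmann-real-etale} to identify $\ul\pi_i L_\ret\Sigma^\infty K\wequi\ul\pi_i\Sigma^\infty K[\rho^{-1}]$ and then uses only that $L_\ret K$ is a grouplike commutative monoid, so that $\ul\pi_i L_\ret K\to\ul\pi_i\Sigma^\infty_\ret L_\ret K$ is a \emph{split injection}; you instead route through Theorem~\ref{thm:main-comparison-connected}, an $S^1$-versus-$\P^1$ comparison for $\rho$-periodic objects (which you rightly flag as needing justification --- it is precisely what the stable reference supplies), and then Hurewicz-type arguments to descend from stable to unstable homotopy sheaves.

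Two concrete soft spots in that last step. First, for the degree $\le 5$ claim, ``connected plus motivic Hurewicz'' is not by itself enough: vanishing of $\ul\pi_1$ of the suspension spectrum only kills the abelianization of $\ul\pi_1$ of the space, so you must invoke the grouplike commutative monoid (infinite loop space) structure of $L_\ret K$ --- at which point the split-injection argument already finishes both claims and the Hurewicz induction is superfluous. Second, for $\ul\pi_3$ you insert a splitting of $L_\ret K(\Z(2),4)$ as $\prod_j K(A_j,j)$ and the assertion $\ul\pi_3\Sigma^\infty_{S^1}K(A_2,2)=0$. The splitting needs more than ``$L_\ret$ preserves finite products'': it uses the $\Z$-module (constant-sheaf) structure that the paper sets up at the start of the section; and the vanishing of $\ul\pi_3$ of the suspension spectrum of a $K(A,2)$ is a nontrivial side computation ($H_3(K(A,2))=0$ and the $Sq^2$-dual differential $\Gamma(A)\twoheadrightarrow A/2$) that you assert without proof. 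It is true, but entirely avoidable: since $L_\ret K(\Z(2),4)$ is a grouplike commutative monoid, $\ul\pi_3$ of the space injects into $\ul\pi_3$ of its suspension spectrum, which you have already shown vanishes. With those repairs your argument is correct, but it then collapses onto the paper's proof.
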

\begin{proof}
	Applying Lemma \ref{lemm:freudenthal} to $E = \Sigma^{6,3} H\Z[1/2]$ we deduce that $\Sigma^\infty K(\Z[1/2](3), 6) \to \Sigma^{6,3} H\Z[1/2]$ has fiber in $\SH(k)_{\ge 6}$.
	It follows that $\ul\pi_i \Sigma^\infty K(\Z[1/2](3), 6)[\rho^{-1}] \wequi \ul\pi_i \Sigma^{6,3}H\Z[1/2,\rho^{-1}] = 0$ (note that multiplication by $h = 1 + \langle -1 \rangle$ is an isomorphism on $H\Z[1/2]$, whence multiplication by $0=\rho \cdot h$ is an isomorphism on the right hand side).
	The left hand term is the same as $\ul\pi_i \Sigma^\infty_\ret L_\ret K(\Z[1/2](3), 6) \wequi \ul\pi_i L_\ret \Sigma^\infty K(\Z[1/2](3), 6)$ by the main result of \cite{bachmann-real-etale}.
	But since $L_\ret K(\Z[1/2](3), 6)$ is a grouplike commutative monoid, the map $\ul\pi_i L_\ret K(\Z[1/2](3), 6) \to \ul\pi_i \Sigma^\infty_\ret L_\ret K(\Z[1/2](3), 6)$ is a split injection, whence the first result.
	
	The second result is proved similarly: $\Sigma^\infty K(\Z(2), 4) \to \Sigma^{4,2} H\Z$ has fiber in $\SH(k)_{\ge 4}$ and so after inverting $\rho$ we get an isomorphism on $\pi_3$.
	It remains to observe that $\pi_* H\Z[1/\rho] \wequi \Z/2[t^2]$ with $|t^2|=2$.
	(The fiber sequence $H\Z \xrightarrow{h} H\Z \to H\Z/2$ yields $L_\ret H\Z/2 \wequi L_\ret H\Z \oplus \Sigma L_\ret H\Z$. We have $\pi_* L_\ret H\Z = 0$ for $i<0$ and $\pi_{**} L_\ret H\Z/2 \wequi \pi_{**}H\Z/2[\rho^{-1}] = \Z/2[\tau,\rho^{\pm}]$ and hence $\pi_* L_\ret H\Z/2 \wequi \Z/2[t]$; the claim follows.)
\end{proof}

\begin{lem} \label{lemm:ret-pi4}
	We have $\ul\pi_4 L_\ret K(\Z[1/2](2), 4) \wequi \Z[1/2]$.
\end{lem}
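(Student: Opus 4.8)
The plan is to compute $\ul\pi_4 L_\ret K(\Z[1/2](2),4)$ by identifying $L_\ret K(\Z[1/2](2),4)$, via real realization, with the $C_2$-fixed points of an explicit equivariant Eilenberg--Mac Lane spectrum. First I would reduce to the case $k=\R$: by the discussion preceding Theorem \ref{thm:EM}, $L_\ret K(\Z[1/2](2),4)$ is a constant sheaf whose value is computed, after base change to $\R$, by the real realization functor (Theorem \ref{thm:Lret-rR}). The equivariant Dold--Thom description recorded after Theorem \ref{thm:EM} gives $r_{C_2}K(\Z(2),4)\wequi\Omega^\infty\Sigma^{2+2\sigma}H\underline\Z$; taking $C_2$-fixed points and inverting $2$ then yields $r_\R K(\Z[1/2](2),4)\wequi\Omega^\infty\bigl((\Sigma^{2+2\sigma}H\underline\Z[1/2])^{C_2}\bigr)$, and hence $\ul\pi_4 L_\ret K(\Z[1/2](2),4)\wequi\pi_4\bigl((\Sigma^{2+2\sigma}H\underline\Z[1/2])^{C_2}\bigr)$.

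Write $X=\Sigma^{2+2\sigma}H\underline\Z[1/2]$. The next step is to run the isotropy separation cofiber sequence $X_{hC_2}\to X^{C_2}\to\Phi^{C_2}X$. Here $\Phi^{C_2}X\wequi\Sigma^2\Phi^{C_2}(H\underline\Z)[1/2]$, and $\pi_*\Phi^{C_2}(H\underline\Z)\wequi\Z/2[c]$ with $|c|=2$ --- this is the source of the computation $\pi_* H\Z[\rho^{-1}]\wequi\Z/2[t^2]$ used in the proof of Lemma \ref{lemm:EM-technical} --- so $\Phi^{C_2}(H\underline\Z[1/2])=0$ and the cofiber sequence degenerates to an equivalence $X^{C_2}\wequi X_{hC_2}$.

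To finish, I would observe that the underlying spectrum of $X$ is $\Sigma^4 H\Z[1/2]$, carrying the $C_2$-action coming from the representation sphere $S^{2+2\sigma}$. Since $2+2\sigma$ has even dimension, the generator of $C_2$ acts on the underlying $S^4$ by an orientation-preserving, hence nullhomotopic-to-the-identity, self-map; as $\mathrm{Aut}_{\mathrm{Sp}}(\Sigma^4 H\Z)$ is the discrete group $\{\pm1\}$ acting through orientations, the Borel $C_2$-spectrum underlying $X$ is therefore trivial. Consequently $X_{hC_2}\wequi\Sigma^4 H\Z[1/2]\wedge B(C_2)_+\wequi\Sigma^4 H\Z[1/2]$, using that $\tilde H_*(BC_2;\Z[1/2])=0$. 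Thus $\ul\pi_4 L_\ret K(\Z[1/2](2),4)\wequi\pi_4\Sigma^4 H\Z[1/2]\wequi\Z[1/2]$, as required.

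The main obstacle is verifying the two geometric inputs --- the vanishing of $\Phi^{C_2}(H\underline\Z)$ after inverting $2$ and the Borel-triviality of the underlying $C_2$-action on $\Sigma^{2+2\sigma}H\underline\Z$ --- after which the computation is formal. As a consistency check one can also run a purely motivic argument: after the first reduction, $\ul\pi_4 L_\ret K(\Z[1/2](2),4)$ is a retract of $\ul\pi_4(\Sigma^\infty K(\Z[1/2](2),4))[\rho^{-1}]$; Lemma \ref{lemm:freudenthal} together with $H\Z[1/2][\rho^{-1}]=0$ reduces this to the bottom homotopy sheaf of the fiber of the stabilization map, and the finite-coefficient computation of Proposition \ref{prop:EM-finite} --- applied through the fiber sequence $K(\Z[1/2](2),4)\xrightarrow{p}K(\Z[1/2](2),4)\to K(\Z/p(2),4)$ for odd $p$, whose image under $L_\ret$ is again a fiber sequence by Lemma \ref{lemm:Lret-fiber} --- together with Lemma \ref{lemm:EM-technical} then forces the answer to be $\Z[1/2]$.
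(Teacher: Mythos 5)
Your main argument is correct, and it takes a genuinely different route from the paper. The paper proves Lemma \ref{lemm:ret-pi4} purely motivically: it runs the slice filtration of $\mathrm{KGL}[1/2]$, producing four cofiber sequences of grouplike commutative monoids, kills the odd-weight layers and the top of the tower by connectivity estimates (Lemma \ref{lemm:freudenthal}, Lemma \ref{lemm:EM-technical}, and the vanishing of $\eta$ versus $L_\ret(\eta)=2$), and then reads off $\ul\pi_4$ from $L_\ret \Omega^\infty\mathrm{KGL}[1/2] \wequi B\mathrm{O}[1/2]\times \Z[1/2]$ via real Bott periodicity. You instead carry out the strategy the paper only records as a remark after Theorem \ref{thm:EM}: identify $L_\ret K(\Z(2),4)$, via constancy and Theorem \ref{thm:Lret-rR} together with the symmetric-power/equivariant Dold--Thom description (Remark \ref{rmk:R-C2}), with the genuine $C_2$-fixed points of $\Omega^\infty\Sigma^{2+2\sigma}H\underline{\Z}$, and then compute by isotropy separation: $\Phi^{C_2}H\underline{\Z}$ has homotopy $\Z/2[c]$ (the $C_2$-analogue of the fact $\pi_*H\Z[\rho^{-1}]\wequi\Z/2[t^2]$ that the paper itself uses), so it dies after inverting $2$, while the Borel piece is trivial because the underlying $C_2$-action on $\Sigma^4H\Z[1/2]$ is through a degree-$(+1)$, hence nullhomotopic-to-identity, automorphism of a spectrum with homotopy-discrete automorphism space, and $\widetilde H_*(BC_2;\Z[1/2])=0$. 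Your route buys a conceptually transparent equivariant computation (and in fact recovers the whole fixed-point spectrum in this range), at the cost of importing the equivariant Dold--Thom theorem and the computation of $\Phi^{C_2}H\underline{\Z}$ as external inputs; the paper's route stays inside motivic homotopy theory over a general field and only needs real Bott periodicity. One caveat: your closing ``consistency check'' does not by itself reprove the lemma --- mod-$p$ information (Proposition \ref{prop:EM-finite}) plus the connectivity statements of Lemma \ref{lemm:EM-technical} pin down $\ul\pi_4$ only up to a torsion-free $\Z[1/2]$-module with the correct mod-$p$ reductions (e.g.\ a product of $p$-adic rings is not excluded); some genuinely integral or rational input, such as the Bott periodicity step in the paper's proof or your equivariant computation, is needed, and you should also avoid invoking Proposition \ref{prop:EM-Q}, whose proof in the paper depends on this lemma.
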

\begin{proof}
	Recall the slice filtration $f_\bullet \mathrm{KGL} = \Sigma^{2\bullet,\bullet} \mathrm{kgl} \to \mathrm{KGL}$, with cofibers given by $\Sigma^{2i,i} H\Z$.\NB{ref?}
	Inverting $2$ and taking infinity loops we obtain cofiber sequences of grouplike commutative monoids in $\Spc(k)$ as follows
	\begin{enumerate}[noitemsep,topsep=1pt]
		\item $F_1 \to F_0 \to K(\Z[1/2], 0)$
		\item $F_2 \to F_1 \to K(\Z[1/2](1), 2)$
		\item $F_3 \to F_2 \to K(\Z[1/2](2), 4)$
		\item $F_4 \to F_3 \to K(\Z[1/2](3), 6)$.
	\end{enumerate}
	Applying $L_\ret$ preserves these cofiber sequences of grouplike commutative monoids, and so we can use long exact sequences to analyze the homotopy groups.
	To do this, observe the following:
	\begin{enumerate}[noitemsep,topsep=1pt]
		\item $L_\ret K(\Z[1/2], 0) = a_\ret \Z[1/2]$ is discrete.
		\item $L_\ret K(\Z[1/2](1), 2) \wequi (L_\ret B\Gm) \otimes \Z[1/2] = 0$ (recall that $L_\ret B\Gm \wequi \P^\infty(\R) \wequi K(\Z/2, 1)$).
		\item $L_\ret K(\Z[1/2](3), 6)$ is $5$-connected by Lemma \ref{lemm:EM-technical}.
		\item $L_\ret F_4$ is $4$-connected.
	\end{enumerate}
	We shall give a proof of (4) at the end.
	Combining points (3,4) and fiber sequence (4) shows that $F_3$ is $4$-connected.
	Fiber sequence (3) now shows that $\ul\pi_4 L_\ret K(\Z[1/2](2), 4) \wequi \ul\pi_4 L_\ret  F_2$.
	Fiber sequence (2) and point (2) show that $\ul\pi_4 L_\ret  F_2 \wequi \ul\pi_4 L_\ret  F_1$, and similarly fiber sequence (1) and point (1) show that  $\ul\pi_4 L_\ret  F_1 \wequi \ul\pi_4 L_\ret  F_0$.
	But $L_\ret F_0 \wequi B\mathrm{O}[1/2] \times \Z[1/2]$, so that $\ul\pi_4L_\ret F_0 = \Z[1/2]$ by Bott periodicity.\NB{ref?}

	Now we prove (4). The argument is very similar to the ones in Lemma \ref{lemm:EM-technical}. We have $F_4 \in O(S^{8,4})$ and hence, using that $F_4$ is a commutative monoid and Lemma \ref{lemm:freudenthal} we obtain a split injection $\ul\pi_i L_\ret F_4 \to \ul\pi_i L_\ret f_4 \mathrm{KGL}[1/2]$ for $i < 8$. The right hand side vanishes e.g. since $\eta$ acts by zero (since it does on $f_4 \mathrm{KGL}$) but also by an isomorphism (since $L_\ret(\eta) = 2$).
\end{proof}

\begin{proposition}\label{prop:EM-Q}
	We have $\ul\pi_* L_\ret K(\Q(\star), \star) \wequi \Q[t]$ for some $t \in \ul\pi_2 L_\ret K(\Q(2), 2)$.
\end{proposition}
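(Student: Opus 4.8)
The plan is to compute $L_\ret$ of the rational motivic Eilenberg--Mac Lane spaces by feeding the rational slice filtration of algebraic $K$-theory into the computation of $L_\ret F_0$ from Lemma~\ref{lemm:ret-pi4}. For the set-up: each $K(\Q(m),n)$ is $\Omega^\infty$ of a shift of $H\Q$, hence a $\Q$-module object of $\Spc(k)$, so (as $L_\ret$ preserves finite products) $L_\ret K(\Q(m),n)$ is a generalized Eilenberg--Mac Lane sheaf with $\Q$-vector space homotopy sheaves and $L_\ret K(\Q(m),n+1)\wequi BL_\ret K(\Q(m),n)$. Writing $M_m := L_\ret K(\Q(m),m)$, it then suffices to determine the sheaves $\ul\pi_* M_m$ together with the graded ring structure coming from the pairings $K(\Q(a),a)\wedge K(\Q(b),b)\to K(\Q(a+b),a+b)$. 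First I would dispose of the small cases directly: $M_0$ is the constant sheaf $\Q$ in degree $0$; $M_1=L_\ret(\ul{\mathscr O}^\times\otimes\Q)=0$ since $L_\ret\ul{\mathscr O}^\times\wequi a_\ret\Z/2$; and, using Proposition~\ref{prop:modelforlret} together with the fact that $\ul K^M_m$ of a real closed field is $2$-torsion modulo uniquely divisible elements, $\ul\pi_0 M_m=0$ for $m\ge 1$, so each such $M_m$ is $0$-connected.

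Next I would bring in the slice filtration. Rationally $\mathrm{kgl}_\Q\wequi\bigoplus_{j\ge 0}\Sigma^{2j,j}H\Q$, so the slice tower of $\mathrm{kgl}_\Q$ splits multiplicatively; applying $\Omega^\infty$, the cofiber sequences of grouplike commutative monoids $F_{i+1}^\Q\to F_i^\Q\to K(\Q(i),2i)$ used in the proof of Lemma~\ref{lemm:ret-pi4} become split, hence are preserved by $L_\ret$, and iterating gives an equivalence of $\mathscr E_\infty$-ring spaces $L_\ret F_0^\Q\wequi\prod_{j\ge 0}L_\ret K(\Q(j),2j)\wequi\prod_{j\ge 0}B^j M_j$ (the partial products stabilizing in each degree because $L_\ret F_N^\Q$ is highly connected). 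On the other hand, rationalizing the identification $L_\ret F_0\wequi B\mathrm O[1/2]\times\Z[1/2]$ of Lemma~\ref{lemm:ret-pi4}, the left side is $\Omega^\infty \mathrm{ko}_\Q$, so $\ul\pi_* L_\ret F_0^\Q\wequi\Q[\alpha]$ with $|\alpha|=4$ (the constant sheaf $\Q$ in degrees $0,4,8,\dots$). Since $B^jM_j$ is $j$-connected for $j\ge1$, comparing homotopy sheaves in degree $n$ yields, for every $n\ge1$,
\[ \bigoplus_{j=1}^{n-1}\ul\pi_{n-j}M_j \;\wequi\; \begin{cases} \Q, & 4\mid n,\\ 0, & \text{otherwise.} \end{cases} \]

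Finally I would read off the answer. For $n\not\equiv 0\pmod 4$ this sum vanishes, so $\ul\pi_d M_m=0$ whenever $d,m\ge1$ and $d+m\not\equiv 0\pmod 4$. Taking $n=4$ and using $M_1=0$ and $\ul\pi_1M_3=\ul\pi_4 L_\ret K(\Q(3),6)=0$ (the rationalization of Lemma~\ref{lemm:EM-technical}) gives $\ul\pi_2M_2\wequi\Q$; since all other contributions to $\ul\pi_4 L_\ret F_0^\Q$ vanish, a generator $t$ of $\ul\pi_2M_2$ maps isomorphically onto $\ul\pi_4 L_\ret F_0^\Q=\Q\alpha$, and, $\ul\pi_*L_\ret F_0^\Q\wequi\Q[\alpha]$ being a polynomial ring, $t^k\ne0$ for all $k$. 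Because $t^k$ lies in $\ul\pi_{2k}M_{2k}$ (from the pairings, of total degree $4k$), the summand $\ul\pi_{2k}M_{2k}$ is nonzero and hence exhausts $\ul\pi_{4k}L_\ret F_0^\Q\wequi\Q$, forcing $\ul\pi_{2k}M_{2k}\wequi\Q$ and every other summand to vanish. Thus $M_{2k}$ is the constant sheaf $\Q$ in degree $2k$, $M_m=0$ for $m$ odd, and therefore $\ul\pi_* L_\ret K(\Q(\star),\star)\wequi\Q[t]$ with $t\in\ul\pi_2 L_\ret K(\Q(2),2)$. The delicate points I expect are in the middle step — checking that $L_\ret$ is compatible with the rational slice tower and that $L_\ret F_0^\Q\wequi\Omega^\infty \mathrm{ko}_\Q$ is an equivalence of ring spaces (the rational shadow of Lemma~\ref{lemm:ret-pi4}, whose multiplicativity was not made explicit there) — and in the last step, where the multiplicative structure is genuinely needed: counting dimensions of $\ul\pi_* L_\ret F_0^\Q$ alone does not locate the surviving classes $t^k$ in the correct weight.
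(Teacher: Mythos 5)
Your overall strategy is the paper's: rationally split algebraic $K$-theory into motivic Eilenberg--Mac Lane factors, identify $L_\ret$ of the $K$-theory space with $B\mathrm{O}_\Q$ via real Bott periodicity, use multiplicativity to see that the homotopy is polynomial on a single degree-$4$ class, and invoke Lemma~\ref{lemm:ret-pi4} to place that class in weight $2$ (the paper cites the splitting $B\GL_\Q \wequi \prod'_m K(\Q(m),2m)$ directly rather than re-deriving it from the slice tower of $\mathrm{kgl}_\Q$, but this is the same input). However, there is a genuine gap in your bookkeeping step: the claim that $\ul\pi_0 M_m = \ul\pi_0 L_\ret K(\Q(m),m) = 0$ for $m \ge 1$, which you justify by Proposition~\ref{prop:modelforlret} plus the structure of $K^M_m$ of a real closed field. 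That justification only controls the stalks of the \emph{naive} real-\'etale sheafification, and those do not vanish rationally: the uniquely divisible part survives sheafification (already for $m=1$ the stalk of $a_\ret(\mathscr O^\times\otimes\Q)$ at a real henselian local ring $A$ is $A^\times\otimes\Q \ne 0$, and similarly symbols of totally positive units contribute in higher weights). What kills this divisible part is the interleaved $\A^1$-localization --- morally, that positive units are $\A^1$-homotopic to $1$ in $\Spc_\ret$, which is exactly the nontrivial content of Lemma~\ref{lemm:contract-interval} and Corollary~\ref{cor:rho-inv} in weight one --- and you supply no such argument in higher weights. Since your identification $\ul\pi_4 L_\ret F_0^\Q \wequi \ul\pi_3 M_1 \oplus \ul\pi_2 M_2 \oplus \ul\pi_1 M_3$ silently discards the term $\ul\pi_0 M_4$ (and the $j$-connectedness of $B^jM_j$ rests on the same claim), the conclusion $\ul\pi_2 M_2 \wequi \Q$ is not yet established. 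Note also that this vanishing is itself a special case of the statement being proved, so assuming it as input is circular unless argued independently.

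The gap is avoidable, and the paper's proof shows how: since $\ul\pi_n$ of the weak product $\prod'_m L_\ret K(\Q(m),2m)$ is the direct sum over $m$ of $\ul\pi_n L_\ret K(\Q(m),2m)$ (no connectivity of the factors is needed for this, as the weak product is a filtered colimit of finite products), the one-dimensionality of the total group $\ul\pi_4 \wequi \Q$ already forces \emph{exactly one} weight to contribute; one then never has to prove by hand that the weight $1$, $3$, $4$ summands vanish, and Lemma~\ref{lemm:ret-pi4} identifies the surviving weight as $2$. If you reorganize your argument this way (working with the $(m-1)$-connected spaces $L_\ret K(\Q(m),2m)$ rather than delooping down to $M_m$), your use of the rationalized Lemma~\ref{lemm:EM-technical} and the unproven $\ul\pi_0$-vanishing both become unnecessary, and the rest of your argument --- including the final multiplicative step locating $t^k$ in weight $2k$, which does require the splitting to be weight-graded multiplicatively, as you correctly flag --- goes through as in the paper.
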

In other words, $L_\ret K(\Q(m), m) = 0$ for $m$ odd, and $L_\ret K(\Q(m),m) \wequi K_\ret(\Q,m)$ for $m$ even.
\begin{proof}
	We may as well prove that $L_\ret K(\Q(m), 2m) = 0$ for $m$ odd, and $L_\ret K(\Q(m),2m) \wequi K_\ret(\Q,2m)$ for $m$ even, and show that the generators are related by multiplication.
	Recall\NB{ref?} the splitting \[ B\GL_\Q \wequi {\prod_{m \ge 0}}' K(\Q(m), 2m), \] where $\prod'$ denotes the weak product (filtered colimit of finite products).
	Applying $L_\ret$ to this equivalence and using that the real points of $\GL$ are given by the orthogonal group $\mathrm{O}$ we obtain the right hand equivalence in the following \[ {\prod_{m \ge 0}}' K(\Q,4m) \wequi B\mathrm{O}_\Q \wequi {\prod_{m \ge 0}}' L_\ret K(\Q(m), 2m). \]
	The left hand equivalence is just real Bott periodicity.
	Recalling that $B\GL_\Q = \Omega^\infty \mathrm{KGL}_\Q$ is a ring, we see that both sides of this equivalence are rings, and the equivalence is compatible with the ring structure.\footnote{Note that the ring structure on $\Sigma^{2*,*} H\Z$ can be \emph{defined} as the ring structure on $s_* \mathrm{KGL}$. This implies that the ring structure on the homotopy groups of the right hand side is compatible with the ring structure we have been using.}
	On homotopy groups, the ring structure on the left hand side is polynomial on a generator $t$ in degree $4$.\NB{ref?}
	The same must thus be true on the right hand side.
	It follows that there is precisely on $m \ge 0$ such that $\ul\pi_4 L_\ret K(\Q(m), 2m) \ne 0$.
	This is where $t$ lives, and our task is to show that $m=2$.
	This is clear from Lemma \ref{lemm:ret-pi4}.
\end{proof}

\subsubsection*{Integral coefficients}
We can now put everything together.
\begin{proof}[Proof of Theorem \ref{thm:EM}.]\NB{All of this seems unreasonably convoluted.}
	Noting that $L_\ret$ preserves cofiber sequences of commutative monoids, we see that $(L_\ret K(\Z(2),2))/p \wequi L_\ret(K(\Z/p(2),2))$, and so on.
	We first want to find $t \in \pi_2 L_\ret K(\Z(2), 2)$ lifting the element from the mod $p$ and rational computations of Propositions \ref{prop:EM-finite} and \ref{prop:EM-Q}.
	Let $E = L_\ret K(\Z(2), 2)(\Q) \wequi L_\ret K(\Z(2), 2)(\R)$.
	We know that $\pi_3(E/p) = 0$ for every $p$, by Proposition \ref{prop:EM-finite}.
	It follows that $\pi_2(E)$ is torsion-free.
	Lemma \ref{lemm:ret-pi4} shows that $\pi_2(E) \otimes \Z[1/2] \wequi \Z[1/2]$.
	Since $\pi_1(E) = 0$ by Lemma \ref{lemm:EM-technical} and $\pi_2(E/2) \wequi \Z/2$ Proposition \ref{prop:EM-finite}, we deduce that $\pi_2(E)/2 = \Z/2$.
	Pick $t_0 \in \pi_2(E)$ with non-zero image in $\pi_2(E)/2 = \Z/2$.
	Note that for an odd prime, $\pi_2(E)/p = \pi_2(E)[1/2]/p$ and hence, an element of $\pi_2(E)$ is divisible by $p$ if and only if its image in $\pi_2(E)[1/2]$ is divisible by $p$.
	It follows that $t_0$ can be divided in $\pi_2(E)$ by odd primes only finitely many times; let $t$ be obtained by doing as many divisions as possible.
	Then $t_0$ and $t$ have the same image in $\Z/2$.
	By what we just said, the image of $t$ in $\pi_2(E)[1/2] \wequi \Z[1/2]$ is not divisible by any odd primes, and hence, generates.
	Thus $t$ has the desired properties.
	
	The element $\rho \in \pi_0 L_\ret K(\tilde \Z(1), 1)$ of course comes from $S^0 \xrightarrow{\rho} \Gm \to K(\tilde \Z(1), 1)$.
	It follows that $\rho \in \pi_0 L_\ret K(\Z(1), 1) \wequi \pi_0 \Gm(\R) \wequi \Z/2$ is a generator, and $2\rho = 0$ in this group.
	We can form maps $\Z[t],\Z[\rho] \to L_\ret K(\Z(\star), \star)$, where the source denotes free $E_1$-rings over $\Z$ (with the homotopy of polynomial rings).
	Using the multiplication in the target we can form a homotopy ring map $\Z[t,\rho] \to L_\ret K(\Z(\star), \star)$ which factors over the cofiber of multiplication by $2\rho$ (since $2\rho =0$ in the target ring).
	We thus get $\Z[t,\rho] \to \Z[t,\rho]/2\rho \xrightarrow{f} L_\ret K(\Z(\star), \star)$, where the long composite is a homotopy ring map but the map $f$ need not be.
	Since the first map is surjective on homotopy groups, $f$ is in fact a ring map on homotopy groups.
	We claim that this is the desired isomorphism, equivalently, that $f$ is an equivalence of spectra.
	To do this it suffices to check on homotopy groups after $\otimes_\Z R$, where $R=\Q$ and $R= \F_p$ for all $p$.
	For $R = \Q$ or $R = \F_p$, $p$ odd, we get $\pi_* (\Z[t,\rho]/2\rho \otimes R) \wequi R[t]$ and thus $\pi_*(f \otimes R)$ is a ring map, whence an isomorphism.
	Now we work modulo $2$.
	We have \[ \Z[t,\rho]/2\rho \wequi \Z[t] \oplus \rho\F_2[t,\rho]. \]
	Using that $\F_2\{a\} \otimes_\Z \F_2 \wequi \F_2\{a\} \oplus \F_2\{\epsilon a\}$ (here $a$ is the name for the generator of some free $\F_2$-module of rank $1$), we can pick the monomial basis in the above expression and find \[ \pi_* (\Z[t,\rho]/2\rho \otimes \F_2) \wequi  \F_2[t, \rho] \oplus \epsilon \cdot \rho\F_2[t, \rho]. \]
	The map $\Z[t,\rho]/2\rho \to \pi_*(\Z[t,\rho]/2\rho \otimes \F_2)$ is onto the first summand of this expression.
	Recall that if $M$ is any $\Z$-module, then $M/2$ acquires the Bockstein endomorphism $\beta: M/2 \to \Sigma M/2$ (it is the composition of the boundary map $M/2 \to \Sigma M$ and the projection $\Sigma M \to \Sigma M/2$); we also denote the effect on homotopy groups by $\beta: \pi_*(M/2) \to \pi_{*-1}(M/2)$.
	For $M=\F_2\{a\}$ the Bockstein is given by $\beta(a) = 0$ and $\beta(\epsilon a) = a$.
	Consequently on $\pi_* (\Z[t,\rho]/2\rho \otimes \F_2)$ we have $\beta(\epsilon x) = x$.
	The map \[ \pi_*(f \otimes \F_2): \pi_* (\Z[t,\rho]/2\rho \otimes \F_2) \to \pi_* L_\ret K(\Z/2(\star), \star) \wequi \F_2[\tau, \rho] \] is a ring map on the subring $\F_2[t, \rho]$ sending $t$ to $\tau^2$ and $\rho$ to $\rho$.
	It is also compatible with Bocksteins, and hence must send $\epsilon \cdot x$ to an element with Bockstein $f(x)$.
	The Bocksteins on $\F_2[\tau,\rho]$ are generated by $\beta(\tau) = \rho$; in other words\footnote{It is well-known that $\beta(\tau) = \rho$. Since $\beta^2 = 0$ (by the Adem relations) it follows that $\beta(\rho) = 0$. The formula now follows from the Leibniz rule (i.e. the Cartan formula).} \[ \beta(\tau^a \rho^b) = \begin{cases} 0 & $a$ \text{ even} \\ \tau^{a-1}\rho^{b+1} & $a$ \text{ odd} \end{cases}. \]
	In particular the kernel $K$ of the Bockstein is contained in the image of $f$.
	Since \[ \beta f(\epsilon \cdot t^a \rho^{b+1}) = f\beta(\epsilon \cdot t^a \rho^{b+1}) = f(t^a\rho^{b+1}) = \tau^{2a}\rho^{b+1} \] we deduce that \[ f(\epsilon \cdot t^a \rho^{b+1}) \in \tau^{2a+1}\rho^b + K. \]
	This implies that $\tau^{2a+1}\rho^b$ is in the image of $f$ (since $K$ is) and, consequently, $f$ is surjective.
	Since this is a map of degreewise finite dimensional $\F_2$-vector spaces of equal dimensions, it must be an isomorphism.
	
	We now deal with $K(\tilde\Z(\star), \star)$.
	We have the pushout square of commutative monoids \cite[Lemma 4.3]{bachmann-etaZ}
	\begin{equation*}
		\begin{CD}
			K(\tilde\Z(\star), \star) @>>> K(\Z(\star), \star) \\
			@VVV                                @VVV \\
			I^{\star} @>>> K_\star^M/2.
		\end{CD}
	\end{equation*}
	This is preserved by $L_\ret$.
	Noting that $L_\ret I^\star \wequi L_\ret^\naive I^\star \wequi \Z$ and $L_\ret K_\star^M/2 \wequi L_\ret^\naive K_\star^M/2 \wequi \Z/2$ and considering the result long exact sequence, the second claim follows.
\end{proof}

\begin{rem}
	The normed structure on $\tilde H\Z$ provides us with a ``twisted cup square'' map \[ K(\tilde\Z(n), 2n) \xrightarrow{u} p_* p^* K(\tilde\Z(n), 2n) \xrightarrow{\mu} K(\tilde\Z(2n), 4n). \]
	Here $p: \Spec(k[i]) \to \Spec(k)$ is the canonical map, $u$ denotes the twisted diagonal, and $\mu$ is the norm.
	Composing with the projection to $K(\tilde\Z(2n), 4n)_{\le 2n} \wequi K(I^n, 2n)$ and applying $\Omega^n L_\ret$ yields a map \[ L_\ret K(\tilde\Z(n), n) \to K(\Z, n). \]
	For $n$ even we wonder if this is the projection to the summand generated by $t^{n/2}$.\NB{justification?}
\end{rem}

\begin{footnotesize}
\bibliographystyle{alpha}
\bibliography{ret}
\end{footnotesize}
\Addresses
\end{document}